\address{Research and Education Center for Natural Sciences, Keio University, 4-1-1 Hiyoshi, Kohoku-ku, Yokohama, Kanagawa, 223-8521, Japan}
\email{isoshima@keio.jp}
\address{Department of Frontier Media Science, Meiji University, 4-21-1 Nakano, Nakano-ku, Tokyo 164-8525, Japan}
\email{suzukit519@meiji.ac.jp}
\DeclarePairedDelimiter{\abs}{\lvert}{\rvert}
\theoremstyle{plain}
\newtheorem{thm}{Theorem}[section]
\newtheorem{prop}[thm]{Proposition}
\newtheorem{lem}[thm]{Lemma}
\newtheorem{cor}[thm]{Corollary}
\newtheorem*{thm*}{Theorem}
\newtheorem*{cor*}{Corollary}
\newtheorem*{prop*}{Proposition}
\theoremstyle{definition}
\newtheorem{dfn}[thm]{Definition}
\newtheorem{rem}[thm]{Remark}
\newtheorem{exm}[thm]{Example}
\newtheorem{que}[thm]{Question}
\newtheorem*{que*}{Question}
\newtheorem*{con*}{Conjecture}
\newtheorem*{nota*}{Notation}
\begin{document}

\title{The non-simply connected Price twist for the 4-sphere}

\author{Tsukasa Isoshima}
\author{Tatsumasa Suzuki}
\subjclass{57K45, 57R65, 57K10, 57K40}
\keywords{4-manifolds, Gluck twist, Price twist, pochette surgery, torus surgery, handle calculus}
\date{\today}

\begin{abstract}
A cutting and pasting operation on a $P^2$-knot $S$ in a $4$-manifold is called the Price twist. The Price twist for the $4$-sphere $S^4$ yields at most three $4$-manifolds up to diffeomorphism, namely, the $4$-sphere $S^4$, the other homotopy $4$-sphere $\Sigma_{S}(S^4)$ and a non-simply connected $4$-manifold $\tau_{S}(S^4)$. In this paper, we study some properties and diffeomorphism types of $\tau_{S}(S^4)$ for $P^2$-knots $S$ of Kinoshita type. 
\end{abstract}

\maketitle

\section{Introduction}\label{sec:intro}

A surface knot is a closed surface embedded in a 4-manifold. Given a 4-manifold and a surface knot in the 4-manifold, we may change the 4-manifold by a surgery on the surface knot, that is, an operation that cuts a neighborhood of the surface knot and reattaches it. The {\it Gluck twist} is arguably the most familiar operation of this type. 
For a 4-manifold $X$ and a 2-knot $K$ in $X$ with normal Euler number $e(K)=0$, the Gluck twisted 4-manifold $\Sigma_K(X)$ is defined as follows: $\Sigma_K(X)= (X-{\rm int}(N(K))) \cup_{\iota} S^2 \times D^2$, where $N(K)$ is a tubular neighborhood of $K$ and $\iota$ is a self-diffeomorphism of $S^2 \times S^1$ defined by $\iota(z,e^{i\theta})=(ze^{i\theta},e^{i\theta})$. Note that a 2-knot is a surface knot in the case where the surface is the 2-sphere $S^2$. It is known \cite{zbMATH03180037} that the Gluck twisted 4-manifold $\Sigma_K(S^4)$ is a homotopy 4-sphere, and hence it is homeomorphic to $S^4$ by Freedman's theory \cite{zbMATH03838948}. Moreover, there exist some studies showing that $\Sigma_K(S^4)$ is diffeomorphic to $S^4$ for some $K$ (see \cite{zbMATH03180037, key440561m, zbMATH06126098, zbMATH07570610} for example).

We have another surgery, the {\it Price twist}, which is an operation that cuts a neighborhood of a $P^2$-knot and reattaches it. Note that a $P^2$-knot is a surface knot in the case where the surface is the real projective plane $\mathbb{R}P^2$. Price \cite{MR436151} showed that the Price twist for a 4-manifold $X$ and a $P^2$-knot $S$ yields at most three $4$-manifolds up to diffeomorphism, namely, $X$, $\Sigma_{S}(X)$ and $\tau_{S}(X)$. Note that $\Sigma_{S}(X)$ may be diffeomorphic to $X$, but we see that $\tau_S(X)$ is not homotopy equivalent to $X$ since $H_1(\tau_{S}(X)) \not\cong H_1(X)$ by the Mayer-Vietoris exact sequence. For the second Price twist $\Sigma_{S}(X)$, \cite{MR1721575} says that if $S=K \# P_0^{\pm2}$ for a 2-knot $K$ with $e(K)=0$ and the unknotted $P^2$-knot $P_0^{\pm2}$ with $e(P_0^{\pm2})=\pm2$, then $\Sigma_{S}(X)$ is diffeomorphic to the Gluck twisted 4-manifold $\Sigma_{K}(X)$. However, to the best of the authors' knowledge, the third Price twist $\tau_{S}(X)$ has not been studied so far. In this paper, we study some properties and diffeomorphism types of $\tau_{S}(S^4)$ for $P^2$-knots $S$ of Kinoshita type. Note that a $P^2$-knot $S$ in $S^4$ is said to be of {\it Kinoshita type} if $S$ is the connected-sum of a 2-knot $K$ and the unknotted $P^2$-knot $P_0$. It is not yet known whether there exists a $P^2$-knot which is not of Kinoshita type.

In Section \ref{sec:property}, we study some properties of $\tau_{K\#P_0}(S^4)$. We first study a relationship between the Price twist and pochette surgery. 

Let $e_K: P_{1,1} \to X$ be the embedding that the cord is trivial and the $2$-knot $(S_{1,1})_{e_K}$ in $(P_{1,1})_{e_K}$ is equal to $K$ (for details, see Subsection \ref{subsec:pochettesurgery} or \cite[Section 1]{MR4619857}). 

\begin{prop*}[Proposition \ref{prop:pricepochette}]
The Price twist for $S^4$ on a $P^2$-knot of Kinoshita type is a special case of pochette surgery. 
Namely, the Price twists $S^4$, $\Sigma_{K\#P_0}(S^4)$ and $\tau_{K\#P_0}(S^4)$ are diffeomorphic to the pochette surgeries $S^4(e_K,1/0,0)$, $S^4(e_K,1/0,1)$ and $S^4(e_K,2,0)$, respectively. 
\end{prop*}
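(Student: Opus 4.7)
The plan is to identify a tubular neighborhood of a Kinoshita $P^2$-knot $S=K\# P_0$ in $S^4$ with the image of the specific pochette embedding $e_K(P_{1,1})$ appearing in the statement, and then match the three gluing recipes defining the Price twist with the three prescribed pochette surgery parameters $(1/0,0)$, $(1/0,1)$ and $(2,0)$.

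First I would decompose the tubular neighborhood $N(K\# P_0)\subset S^4$ as the boundary connected sum $N(K)\natural N(P_0)$. Here $N(K)\cong S^2\times D^2$ because the normal Euler number of a $2$-knot in $S^4$ is zero, and $N(P_0)$ is the twisted $D^2$-bundle over $\mathbb{R}P^2$ with $e(P_0)=\pm 2$. On the pochette side, $P_{1,1}$ is built out of an $S^2\times D^2$ piece (containing the $2$-sphere $S_{1,1}$) and an $S^1\times D^3$ piece (containing the cord), joined along a $3$-ball; for the embedding $e_K$ in the statement the cord is trivial and $(S_{1,1})_{e_K}=K$. Recognising $N(P_0)$ inside $S^1\times D^3$ by taking the quotient of a $D^2$-bundle over the meridional $S^1$ by the appropriate orientation-reversing identification, I would produce a diffeomorphism $e_K(P_{1,1})\cong N(K)\natural N(P_0)=N(S)$ that matches the standard framings on the two sides.

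Next I would study the common boundary $\partial N(S)=\partial e_K(P_{1,1})$, pin down its standard meridian/longitude basis coming from the pochette decomposition, and rewrite each of the three Price-twist regluings in that basis. The trivial regluing clearly corresponds to the pochette surgery $(1/0,0)$, since the latter is by definition the trivial pochette surgery; the Gluck-type involution, which acts on the $S^2$-factor and is trivial on the cord factor, matches the twist parameter $r=1$ with trivial slope, i.e.\ $(1/0,1)$. These two cases essentially amount to unpacking definitions and applying the observation from \cite{MR1721575} that $\Sigma_{K\#P_0}(S^4)$ is the Gluck twist $\Sigma_K(S^4)$.

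The main obstacle is the third identification $\tau_{K\# P_0}(S^4)\cong S^4(e_K,2,0)$. Here the Price involution is determined by the non-orientable $\mathbb{R}P^2$ structure and acts on the $S^1$-factor of $\partial(S^1\times D^3)$ in a way whose slope must be computed carefully; the factor of $2$ is dictated by the double cover $S^2\to \mathbb{R}P^2$. I expect to prove this by drawing explicit handle diagrams for $N(P_0)$ and the $S^1\times D^3$-piece of $P_{1,1}$, expressing both boundary gluings as Dehn surgeries along the cord meridian with no twist on $S_{1,1}$, and checking that the slope of the Price involution is exactly $2$. Once the three boundary gluings are matched, the proposition follows because both operations modify only a neighborhood of $S=e_K(P_{1,1})$ and leave the complement fixed.
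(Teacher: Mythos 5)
There is a genuine gap at the foundation of your strategy. You propose to produce a diffeomorphism $e_K(P_{1,1})\cong N(K)\natural N(P_0)=N(S)$ and then compare the two operations by matching regluing maps on the common boundary. But no such diffeomorphism exists: $\partial P_{1,1}=\partial\bigl(S^1\times D^3\natural D^2\times S^2\bigr)\cong \#^2 S^1\times S^2$, whose fundamental group is free of rank $2$, whereas $\partial N(K\#P_0)$ is the quaternion space $S^3/Q$ (the paper recalls this in Subsection 2.1), whose fundamental group is the finite group $Q$. So the Price twist and the pochette surgery excise genuinely different compact pieces of $S^4$ with non-homeomorphic boundaries, and there is no single $3$-manifold on which to ``rewrite each of the three Price-twist regluings'' in a pochette meridian/longitude basis. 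In particular your plan for the third case --- computing ``the slope of the Price involution'' on $\partial N(S)$ and finding it equal to $2$ --- is not meaningful, because slopes in the sense of Theorem \ref{thm:three conditions} live in $H_1(\#^2 S^1\times S^2)$, not in $H_1(S^3/Q)$.

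The comparison has to be made at the level of the resulting closed $4$-manifolds, not of the excised pieces. That is what the paper does: the first two identifications are exactly your definitional unpacking (trivial surgery is trivial; $(e_K,1/0,1)$ is the Gluck twist on $(S_{1,1})_{e_K}=K$, combined with $\Sigma_{K\#P_0}(S^4)\cong\Sigma_K(S^4)$ from \cite{MR1721575}), but the third identification is proved by handle calculus. One starts from the handle diagram of $S^4$ adapted to the trivial cord (Figure \ref{fig: trivial case S4}), applies the handle-diagram description of pochette surgery with slope $2$ and mod $2$ framing $0$ from \cite{zbMATH07751599} and \cite[Subsection 2F]{MR4619857}, and observes that the output coincides with Price's handle diagram of $\tau_{K\#P_0}(S^4)$ (Figure \ref{fig:Pricetwist3}). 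Your intuition that ``the factor of $2$ is dictated by the double cover $S^2\to\mathbb{R}P^2$'' is reasonable motivation, but to turn it into a proof you would need to replace the neighborhood-matching step with an argument comparing the two closed manifolds directly, e.g.\ via such handle diagrams.
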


A pochette surgery is a cutting and pasting operation on the boundary connected sum $S^1 \times D^3 \natural D^2 \times S^2$ embedded in a 4-manifold. For details, see Subsection \ref{subsec:pochettesurgery}. Using this proposition, we have the following. Here, we write $\tau_{K \# P_0}(S^4)$ as $\tau_K$ for short, and $S(M)$ (resp. $\widetilde{S}(M)$) is the 4-manifold obtained by spinning (resp. twist-spinning) a 3-manifold $M$. The lens space of $(p,q)$-type is denoted by $L(p,q)$.

\begin{cor*}[Corollary \ref{cor:homology}]
The integral homology group $H_n(\tau_K)$ of $\tau_{K}$ is 
$$
H_n(\tau_K) \cong
\begin{cases}
   \mathbb{Z}&(n=0, 4), \\
   \mathbb{Z}_{2}&(n=1, 2), \\
   0&(n=3).  
\end{cases}
$$ 
In particular, the Price twist $\tau_K$ is not an integral homology $4$-sphere, but a rational homology $4$-sphere.
\end{cor*}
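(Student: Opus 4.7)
The plan is to combine Proposition \ref{prop:pricepochette} with a Mayer--Vietoris computation and then use Poincar\'e duality to fill in the remaining groups. By that proposition, $\tau_K$ is diffeomorphic to the pochette surgery $S^4(e_K,2,0)$, so I would work with the decomposition
\[
\tau_K = X \cup_\phi P_{1,1},\qquad X=S^4\setminus\operatorname{int} e_K(P_{1,1}),
\]
where $\phi$ is the boundary identification of $\partial P_{1,1}\cong(S^1\times S^2)\#(S^1\times S^2)$ prescribed by the slope $(2,0)$.

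I would start by recording the homology of the three pieces: $P_{1,1}\simeq S^1\vee S^2$ gives $H_1(P_{1,1})\cong H_2(P_{1,1})\cong\mathbb Z$, Alexander duality applied to $e_K(P_{1,1})\subset S^4$ gives $H_1(X)\cong H_2(X)\cong\mathbb Z$ with the rest of the reduced homology vanishing, and $\partial P_{1,1}$ has $H_1\cong H_2\cong\mathbb Z^2$. As a sanity check, the trivial slopes $(1/0,0)$ make the Mayer--Vietoris map
\[
f_1=(i_*\circ\phi_*,-j_*)\colon H_1(\partial P_{1,1})\to H_1(X)\oplus H_1(P_{1,1})
\]
an isomorphism over $\mathbb Z$, correctly recovering $H_1(S^4)=0$. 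I would then analyse how the change from $(1/0,0)$ to $(2,0)$ modifies $\phi_*$: translating the convention of Subsection \ref{subsec:pochettesurgery}, the first slope $2=2/1$ corresponds to a ``$2$-twist'' along the cord class, so one column of the $2\times 2$ matrix representing $f_1$ picks up a factor of $2$. Computing its Smith normal form then yields $H_1(\tau_K)\cong\mathbb Z_2$.

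Once $H_1(\tau_K)\cong\mathbb Z_2$ is in hand, the remaining groups are formal. Since the Price twist replaces $N(S)$ by a diffeomorphic copy along an orientation-preserving gluing, $\tau_K$ is a closed oriented $4$-manifold, so $H_0(\tau_K)=H_4(\tau_K)=\mathbb Z$ and $\chi(\tau_K)=\chi(S^4)=2$. The torsion of $H_1$ forces $b_1=0$, and the Euler-characteristic identity $2=2-2b_1+b_2$ then forces $b_2=0$, so $H_2(\tau_K)$ is pure torsion; Poincar\'e duality together with the universal coefficient theorem gives
\[
H_2(\tau_K)\cong\operatorname{Ext}(H_1(\tau_K),\mathbb Z)\cong\mathbb Z_2,\qquad H_3(\tau_K)\cong\operatorname{Hom}(H_1(\tau_K),\mathbb Z)=0.
\]
Since $H_*(\tau_K;\mathbb Q)\cong H_*(S^4;\mathbb Q)$ but $H_1(\tau_K)\neq 0$, the manifold $\tau_K$ is a rational but not an integral homology $4$-sphere.

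The step I expect to be the main obstacle is the explicit translation of the pochette-surgery slope $(2,0)$ into the action of $\phi_*$ on $H_1(\partial P_{1,1})$: one must unwind the convention of Subsection \ref{subsec:pochettesurgery} into an honest integer matrix, keeping track of how the framings of the cord and of the $2$-knot $K$ interact, before the Smith-normal-form step above becomes unambiguous. The rest of the argument is bookkeeping.
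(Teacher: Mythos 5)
Your proposal is correct, but it takes a genuinely different route from the paper. The paper's proof is essentially a two-line citation: after invoking Proposition \ref{prop:pricepochette} to identify $\tau_K$ with $S^4(e_K,2,0)$, it simply applies the already-recorded homology formula for pochette surgeries on $S^4$ (Proposition \ref{prop:homology pochette surgery of $4$-sphere}, quoted from the pochette-surgery literature), together with Remark \ref{rem:link0} to the effect that the triviality of the cord of $e_K$ lets one take the linking number $\ell=0$, so that $H_1\cong H_2\cong\mathbb{Z}_{p+q\ell}=\mathbb{Z}_2$. You instead re-derive the relevant special case of that proposition by hand: Alexander duality for the pieces, Mayer--Vietoris for $H_1$, and then Euler characteristic plus Poincar\'e duality and universal coefficients for $H_2$ and $H_3$. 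This is a perfectly sound and more self-contained argument, and the step you flag as the main obstacle --- unwinding the slope convention into the matrix of $\phi_*$ on $H_1(\partial P_{1,1})$ --- is exactly the content that the cited proposition packages, so quoting it would let you skip that bookkeeping entirely. One point to make explicit in your version: the cokernel of $f_1$ is $\mathbb{Z}_{\lvert p+q\ell\rvert}$, where $\ell$ is the linking number of the embedding, so obtaining $\mathbb{Z}_2$ from the slope $2=2/1$ genuinely requires $\ell=0$; this is guaranteed by the triviality of the cord of $e_K$ (Remark \ref{rem:link0}) and is the one hypothesis your sketch uses only implicitly.
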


\begin{prop*}[Proposition \ref{prop:pao}]
For the unknotted $2$-knot $O$ in $S^4$, $\tau_O$ is diffeomorphic to $S(L(2, 1))$. 
\end{prop*}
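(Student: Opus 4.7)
The plan is to invoke Proposition \ref{prop:pricepochette} to reduce the claim to showing that the pochette surgery $S^4(e_O, 2, 0)$ is diffeomorphic to $S(L(2,1))$. Since $O$ is the unknotted $2$-knot, the embedding $e_O \colon P_{1,1} \to S^4$ is the standard (trivial) embedding of the pochette, so both the embedded pochette $(P_{1,1})_{e_O}$ and its complement in $S^4$ admit very simple handle decompositions.

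First I would write down an explicit Kirby diagram for $S^4(e_O, 2, 0)$ from the definition of pochette surgery given in Subsection \ref{subsec:pochettesurgery}. For the trivial embedding the complement of the pochette in $S^4$ is again a standard pochette, and the surgery coefficients $(2, 0)$ prescribe a specific self-diffeomorphism of the boundary $\#_2(S^1 \times S^2)$; tracing this through yields a Kirby diagram with one $1$-handle and one $2$-handle whose framing is dictated by the coefficient $2$.

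Next I would produce a Kirby diagram for $S(L(2,1))$ via the alternative description
$S(M) = (M \times S^1) \setminus N(\{\mathrm{pt}\} \times S^1) \cup (S^2 \times D^2)$,
i.e.\ sphere-surgery on an $S^1$-fiber in $M \times S^1$. Starting from the standard handle decomposition of $L(2,1) = \mathbb{R}P^3$ (one $0$-handle, one $1$-handle, one $2$-handle of framing $2$, and one $3$-handle), crossing with $S^1$ doubles each handle, and the subsequent sphere-surgery cancels a complementary $1$-/$2$-handle pair, leaving a diagram of the same shape as that produced by the pochette surgery.

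The proof is then completed by matching the two Kirby diagrams via handle slides and cancellations, identifying the framing-$2$ coefficient of the pochette surgery with the $(2,1)$-curve appearing in the handle decomposition of $L(2,1)$. I expect the main obstacle to be precisely this last comparison: the two diagrams arise from rather different constructions, so one must be careful with the orientation conventions underlying the pochette coefficients while choosing a sequence of handle moves that brings the two presentations into the same form.
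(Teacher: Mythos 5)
Your proposal follows essentially the same route as the paper's proof: reduce to the pochette surgery $S^4(e_O,2,0)$ via Proposition \ref{prop:pricepochette}, draw its handle diagram, build a handle diagram of $S(L(2,1))$ by performing the sphere surgery on the $S^1$-fiber of $L(2,1)\times S^1$ starting from the standard handle decomposition of $L(2,1)$, and then match the two diagrams by handle calculus. The paper carries out exactly this comparison (Figures \ref{fig:trivial tau}--\ref{fig:anotherspin21}), so your plan is a faithful outline of the actual argument.
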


We next calculate the fundamental group of some $\tau_{K}$. The $(p,q)$-torus knot is denoted by $T_{p,q}$. 
Let $k$ be a knot in $S^3$, $x$ a point of $k$, $B$ the subset $S^3-N(x)$ of $S^3$ and $k_0$ a tangle in $B$. 
We call the $2$-knot $S(k)$ defined by
$$(S^4,S(k))=\partial(B \times D^2, k_0 \times D^2)$$
the \textit{spun knot} of a $1$-knot $k$. 
Note that $S(T_{2,1})$ is the unknotted 2-knot $O$ in $S^4$.

We remark that we can check by handle calculus that $\tau_{S(T_{2,n})}$ is diffeomorphic to $\tau_{S(T_{2,-n})}$.

\begin{thm*}[Theorem \ref{thm:pi1oftaut22n+1}]
The fundamental group $\pi_1(\tau_{S(T_{2,2n+1})})$ is isomorphic to the dihedral group $D_{\abs{2n+1}}$.
\end{thm*}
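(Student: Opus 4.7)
The plan is to apply van~Kampen's theorem using Proposition~\ref{prop:pricepochette} to describe the gluing concretely. Write $K = S(T_{2,2n+1})$ and $S = K \# P_0$. First, applying van~Kampen to the connected sum $S = K \# P_0$ (together with $\pi_1(S^4 \setminus P_0) \cong \mathbb{Z}/2$), one obtains $\pi_1(S^4 \setminus S) \cong \pi_1(S^4 \setminus K)/N$, where $N$ is the normal closure of $\mu_K^2$ for a meridian $\mu_K$ of $K$. Since $K$ is the spun knot of $T_{2,2n+1}$,
\[
\pi_1(S^4 \setminus K) \cong \pi_1(S^3 \setminus T_{2,2n+1}) \cong \langle x, y \mid x^2 = y^{2n+1}\rangle,
\]
with meridian chosen as $\mu_K = xy^{-n}$ (as verified by abelianization).

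Next, I would use Proposition~\ref{prop:pricepochette}, which identifies $\tau_{S(T_{2,2n+1})}$ with the pochette surgery $S^4(e_K, 2, 0)$, to analyze the effect on $\pi_1$. Applying van~Kampen to the decomposition $\tau_S(S^4) = (S^4 \setminus N(S)) \cup_\phi X_2$, where $X_2$ is the reglued disk bundle over $\mathbb{R}P^2$, the key geometric input is that the $(2, 0)$-regluing imposes exactly the relation $\mu_K^2 = 1$ already present in $\pi_1(S^4 \setminus S)$, and no further relations. Combining with the complement computation,
\[
\pi_1(\tau_{S(T_{2,2n+1})}) \cong \langle x, y \mid x^2 = y^{2n+1},\ (xy^{-n})^2 = 1\rangle.
\]

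The final step is algebraic. Since $x^2 = y^{2n+1}$ is central, expanding $(xy^{-n})^2 = 1$ and multiplying on the right by $x$ yields $xy^{n+1}x^{-1} = y^n$. Setting $a = xyx^{-1}$, this reads $a^{n+1} = y^n$, while centrality of $x^2$ gives $a^{2n+1} = xy^{2n+1}x^{-1} = y^{2n+1}$. Combining these two relations forces $a = y^{-1}$, i.e., $xyx^{-1} = y^{-1}$, and substituting back gives $y^{2n+1} = 1$ and hence $x^2 = 1$. The presentation collapses to the standard dihedral presentation $\langle x, y \mid x^2 = 1,\ y^{2n+1} = 1,\ xyx^{-1} = y^{-1}\rangle \cong D_{\abs{2n+1}}$.

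The hardest part will be the middle step: rigorously extracting the relations imposed by the pochette-surgery parameters $(2,0)$ via a handle-calculus analysis on $\partial P_{1,1}$, and verifying that the $\tau$-regluing does not kill any further elements of $\pi_1(S^4\setminus S)$ (unlike the original and second Price twists, which yield simply connected manifolds). The algebraic reduction in the final step is then routine, hinging on the centrality of $x^2$ in the torus-knot group.
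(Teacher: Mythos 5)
Your route is genuinely different from the paper's: the paper reads a two-generator presentation of $\pi_1(\tau_{S(T_{2,2n+1})})$ directly off the handle diagram in Figure \ref{fig:S(T_{2,2n+1})} and collapses it to the standard dihedral presentation purely by Tietze transformations, never invoking van Kampen or the torus-knot group. Your target presentation $\langle x,y \mid x^2=y^{2n+1},\ (xy^{-n})^2=1\rangle$ is correct, your first reduction $\pi_1(S^4\setminus(K\#P_0))\cong\pi_1(S^4\setminus K)/\langle\langle\mu_K^2\rangle\rangle$ is a standard amalgamation with $\pi_1(S^4\setminus P_0)\cong\mathbb{Z}_2$, and I verified the closing algebra: with $a=xyx^{-1}$ one gets $a^{n+1}=y^n$ and $a^{2n+1}=y^{2n+1}$ from centrality of $x^2$, hence $a=y^{-1}$, $y^{2n+1}=1$, $x^2=1$, giving $D_{\abs{2n+1}}$.

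The genuine gap is exactly the step you flag yourself: the assertion that the $\tau$-regluing imposes only $\mu_K^2=1$ and nothing more, i.e.\ that $E(S)\hookrightarrow\tau_S(S^4)$ is a $\pi_1$-isomorphism. All the topological content of the theorem lives here --- for the other two regluings ($f(S_{-1})=S_{-1}$ or $S_1$) the analogous seam kills the whole group --- and your write-up offers no argument for it. Working directly with the seam $\partial N(S)\cong S^3/Q$ forces you to trace the exceptional fibers through $f$ and through $\pi_1(\partial N(S))\to\pi_1(N(S))\cong\mathbb{Z}_2$, which is delicate. The pochette description you cite gives a cleaner fix: van Kampen applied to $E((P_{1,1})_{e_K})\cup_g P_{1,1}$ yields $\pi_1(\tau_K)\cong\pi_1(E((P_{1,1})_{e_K}))/\langle\langle g_*(m)\rangle\rangle$, since $m$ bounds in $P_{1,1}$ while $l$ survives as the free $\mathbb{Z}$ factor; for the trivial cord with linking number $0$ (Remark \ref{rem:link0}) one has $\pi_1(E((P_{1,1})_{e_K}))\cong\pi_1(E(K))$ with $m\mapsto\mu_K$ and $l\mapsto1$, so \emph{any} word $g_*(m)$ in the free group on $m,l$ with $m$-exponent sum $2$ maps to $\mu_K^2$, independently of which diffeomorphism realizes the slope $2$. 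Until something of this sort is written down the proof is incomplete at its crux; with it, your argument becomes a valid and more conceptual alternative to the paper's handle calculus, identifying $\pi_1(\tau_K)$ with $\pi_1(E(K))/\langle\langle\mu_K^2\rangle\rangle$ for any such $K$.
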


To the best of the authors' knowledge, this is the first example of a rational homology 4-sphere whose fundamental group is a dihedral group. 

Based on Proposition \ref{prop:pao}, using Theorem \ref{thm:pi1oftaut22n+1}, we compare $\tau_{S(T_{2,2n+1})}$ with $S(M)$, $\widetilde{S}(M)$ and the Pao manifolds that are known as rational homology 4-spheres. 
Note that $S(L(2,1))$, $\widetilde{S}(L(2,1))$ and the Pao manifold $L_2$ (see Figure \ref{fig:Pao}) are diffeomorphic to one another.

\begin{figure}
    \centering
\begin{overpic}[scale=0.6]
{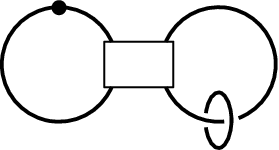}
\put(46,28){$n$}
\put(85,0){$0$}
\put(99,41){$\varepsilon$}
\put(155,32){$\cup$ $3$-handle}
\put(167,19){$4$-handle}
\end{overpic}
\caption{Handle diagrams of the Pao manifolds $L_n$ $(\varepsilon=0)$ and $L_n'$ $(\varepsilon=1)$.}
\label{fig:Pao}
\end{figure}

\begin{cor*}[Corollary \ref{cor:not spin case}]
The Price twists $\tau_{S(T_{2,2n+1})}$ and $\tau_{S(T_{2,2m+1})}$ are not homotopy equivalent to each other if $\abs{2n+1} \neq \abs{2m+1}$. In particular, when $n \neq -1,0$, $\tau_{S(T_{2,2n+1})}$ is homotopy equivalent to neither $S(M)$ nor $\widetilde{S}(M)$ for any closed $3$-manifold $M$.
\end{cor*}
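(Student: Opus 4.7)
The plan is to translate both claims into statements about fundamental groups via Theorem \ref{thm:pi1oftaut22n+1}.

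For the first claim, the dihedral group $D_{\abs{2n+1}}$ has order $2\abs{2n+1}$, so $D_{\abs{2n+1}}$ and $D_{\abs{2m+1}}$ are non-isomorphic whenever $\abs{2n+1} \neq \abs{2m+1}$. Since $\pi_1$ is a homotopy invariant, this forces $\tau_{S(T_{2,2n+1})}$ and $\tau_{S(T_{2,2m+1})}$ to be non-homotopy-equivalent.

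For the second claim, assume $n \neq -1, 0$, so that $k := \abs{2n+1}$ is an odd integer with $k \geq 3$. I would first record (or verify by a straightforward van Kampen argument using the standard decomposition $S(M) = ((M \setminus {\rm int}(D^3)) \times S^1) \cup (S^2 \times D^2)$, and the analogous one for $\widetilde{S}(M)$ with a twisted gluing) the well-known identification
\[
\pi_1(S(M)) \cong \pi_1(\widetilde{S}(M)) \cong \pi_1(M).
\]
Hence it suffices to prove that $D_k$ is not isomorphic to the fundamental group of any closed $3$-manifold when $k \geq 3$ is odd.

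Suppose for contradiction that $M$ is a closed $3$-manifold with $\pi_1(M) \cong D_k$. Since $D_k$ is finite, the universal cover $\widetilde{M}$ is a closed simply connected $3$-manifold, hence diffeomorphic to $S^3$ by Perelman's resolution of the Poincar\'e conjecture. Thus $D_k$ acts freely on $S^3$. By Milnor's classical theorem on free finite group actions on spheres, every element of order $2$ in $D_k$ must lie in its center. However, for odd $k \geq 3$ the group $D_k$ contains $k$ reflections of order $2$, while its center is trivial, yielding a contradiction.

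The main obstacle is that the two ingredients invoked---the Poincar\'e conjecture and Milnor's theorem on free involutions on spheres---are non-trivial external inputs, but both fit the present situation cleanly. The remaining work is only the van Kampen computation for spun and twist-spun 4-manifolds together with bookkeeping on orders and centers of dihedral groups, which is essentially formal.
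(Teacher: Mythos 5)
Your proof is correct and follows the same overall strategy as the paper: both claims are reduced via Theorem \ref{thm:pi1oftaut22n+1} to statements about $\pi_1$, the first by comparing orders of dihedral groups, the second by showing $D_{\abs{2n+1}}$ is not the fundamental group of any closed $3$-manifold. The only difference is in how that last fact is justified. The paper invokes the elliptization conjecture to write $M \cong S^3/\pi_1(M)$ and then cites Orlik's classification to see that dihedral groups do not act freely on $S^3$; you instead pass to the universal cover (using only the Poincar\'e conjecture rather than full geometrization) and rule out a free $D_k$-action on $S^3$ by Milnor's theorem that every order-$2$ element of a group acting freely on a mod-$2$ homology sphere is central, which fails for the $k$ non-central reflections in $D_k$ ($k \ge 3$ odd, trivial center). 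Both routes are valid; yours is marginally more self-contained on the group-theoretic side, and in fact Milnor's theorem already applies to the universal cover viewed merely as a simply connected closed $3$-manifold (hence a mod-$2$ homology sphere by Poincar\'e duality and Hurewicz), so with that observation your argument would not need Perelman's work at all.
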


\begin{cor*}[Corollary \ref{cor:tauandPao}]
The Price twist $\tau_{S(T_{2,2n+1})}$ is not homotopy equivalent to any Pao manifold for each $n \neq -1,0$. 
\end{cor*}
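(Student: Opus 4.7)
The plan is to distinguish $\tau_{S(T_{2,2n+1})}$ from every Pao manifold via their fundamental groups. By Theorem \ref{thm:pi1oftaut22n+1}, we have $\pi_1(\tau_{S(T_{2,2n+1})}) \cong D_{\abs{2n+1}}$, and for $n \neq -1, 0$ the index satisfies $\abs{2n+1} \geq 3$, so this dihedral group is non-abelian.

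First, I would compute $\pi_1(L_n)$ and $\pi_1(L_n')$ directly from the handle diagram in Figure \ref{fig:Pao}. Neither the $3$-handle nor the $4$-handle contributes to $\pi_1$, so a presentation with one generator per $1$-handle and one relator per $2$-handle can be read off the attaching link in the standard way. The expectation, consistent with the identification $L_2 \cong S(L(2,1))$ (which forces $\pi_1(L_2) \cong \pi_1(L(2,1)) \cong \mathbb{Z}/2$), is that $\pi_1(L_n)$ and $\pi_1(L_n')$ are cyclic, and in particular abelian, for every $n$ and every $\varepsilon \in \{0,1\}$.

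Granting this, the conclusion is immediate: a homotopy equivalence induces an isomorphism of fundamental groups, but no abelian group can be isomorphic to the non-abelian group $D_{\abs{2n+1}}$. Hence $\tau_{S(T_{2,2n+1})}$ is not homotopy equivalent to any $L_n$ or $L_n'$ when $n \neq -1, 0$.

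The main obstacle is the $\pi_1$-computation for the Pao manifolds themselves: one has to trace the attaching circles of the $2$-handles (with framings $n$, $0$, and $\varepsilon$) through the $1$-handles carefully and verify that the resulting relators collapse the free group on the $1$-handle generators to a cyclic quotient. Once that is in hand, the remainder of the proof is a one-line application of the functoriality of $\pi_1$ under homotopy equivalence, combined with the non-commutativity of $D_{\abs{2n+1}}$ for $\abs{2n+1} \geq 3$.
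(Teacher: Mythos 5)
Your proposal is correct and follows essentially the same route as the paper: the paper's proof is precisely the chain $\pi_1(L_k)\cong\pi_1(L_k')\cong\mathbb{Z}_{|k|}\not\cong D_{\abs{2n+1}}\cong\pi_1(\tau_{S(T_{2,2n+1})})$, where the cyclicity of $\pi_1(L_k)$ and $\pi_1(L_k')$ is the fact you flag as the remaining computation (it is recorded in Subsection \ref{subsec:paomfd} and also follows from $L_k\cong S(L(k,q))$). The only thing left to make your argument complete is to actually verify that computation rather than state it as an expectation.
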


We also compare $\tau_K$ with $4$-manifolds $M(p,q,r;\alpha,\beta,\gamma)$ constructed by Iwase (see Subsection \ref{subsec:iwasemfd}) that are also known as rational homology 4-spheres if $\alpha \not= 0$. It is known \cite[Section 6]{MR1091159} that $H_n(\tau_K) \cong H_n(M(p,q,r;\pm2,\beta,\gamma))$.

\begin{cor*}[Corollary \ref{cor:tauandIwasemfd}]
The Price twist $\tau_{S(T_{2,2n+1})}$ is not homotopy equivalent to any Iwase manifold $M(p,q,r;\alpha,\beta,\gamma)$ for each $n \neq -1,0$.  
\end{cor*}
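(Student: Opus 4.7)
The plan is to leverage Theorem \ref{thm:pi1oftaut22n+1}, which identifies $\pi_1(\tau_{S(T_{2,2n+1})})$ with the dihedral group $D_{|2n+1|}$, and to distinguish this from the fundamental group of any Iwase manifold with matching rational homology. By Corollary \ref{cor:homology} we have $H_1(\tau_{S(T_{2,2n+1})}) \cong \mathbb{Z}/2$, and the computation of $H_\ast(M(p,q,r;\alpha,\beta,\gamma))$ cited from \cite[Section 6]{MR1091159} then forces $\alpha = \pm 2$ if a homotopy equivalence is to exist. It therefore suffices to show that $\pi_1(M(p,q,r;\pm 2,\beta,\gamma))$ is never isomorphic to $D_{|2n+1|}$ when $|2n+1| \geq 3$, i.e.\ when $n \neq -1,0$.

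First I would read off a presentation of $\pi_1(M(p,q,r;\alpha,\beta,\gamma))$ directly from the handle description recalled in Subsection \ref{subsec:iwasemfd}. In Iwase's torus-surgery construction, the 1-handles contribute free generators and the 2-handles impose relators whose word structure reflects the triple $(p,q,r)$ and the surgery coefficients $(\alpha,\beta,\gamma)$. Using Tietze transformations, in particular using certain 2-handle relators to eliminate generators coming from 1-handles, I would simplify the presentation and, after specializing $\alpha = \pm 2$, conclude that the resulting group is abelian. Since its abelianization must agree with $H_1 = \mathbb{Z}/2$, this forces $\pi_1(M(p,q,r;\pm 2,\beta,\gamma)) \cong \mathbb{Z}/2$.

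With that in hand the conclusion is immediate: for $|2n+1| \geq 3$, the dihedral group $D_{|2n+1|}$ has order $2|2n+1| \geq 6$ and is non-abelian, so it cannot be isomorphic to a cyclic group of order $2$. Hence no homotopy equivalence between $\tau_{S(T_{2,2n+1})}$ and $M(p,q,r;\alpha,\beta,\gamma)$ can exist. The main obstacle is the bookkeeping in the presentation step: extracting the relators from Iwase's handle diagram and simplifying them carefully enough to establish abelianness \emph{uniformly} across all admissible parameter choices $(p,q,r;\pm 2,\beta,\gamma)$. Once that algebraic computation is in place, the remainder of the argument is a one-line comparison of group orders and commutativity.
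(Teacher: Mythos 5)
The reduction to $\alpha=\pm2$ via Corollary \ref{cor:homology} matches the paper, but the key step of your argument --- that $\pi_1(M(p,q,r;\pm2,\beta,\gamma))$ is abelian, hence $\mathbb{Z}/2$ --- is false, and no amount of careful bookkeeping in the handle presentation will rescue it. By \cite[Proposition 7.1]{MR1091159} and \cite[Theorem 1.3 (iv)]{MR941924}, $\pi_1(M(p,q,r;\pm2,\beta,\gamma))\cong\pi_1(M(p,q,r;\pm2,\beta,0))\cong\pi_1\bigl(S^3_{\pm2/\beta}(T_{p,q})\bigr)$, the fundamental group of a Dehn surgery on a torus knot. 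By Moser's classification such a surgery is typically a Seifert fibered space over $S^2$ with three exceptional fibers of orders $p$, $q$, $|pq\beta\mp2|$, whose fundamental group is non-abelian even though its abelianization is $\mathbb{Z}_2$. Concretely, $M(2,3,0;2,1,0)\cong S(S^3_{2}(T_{2,3}))$ has fundamental group $\pi_1(S^3_2(T_{2,3}))$, a non-abelian finite spherical space form group fibered over the $(2,3,4)$ triangle orbifold. So the class of groups you must exclude is much larger than $\mathbb{Z}/2$, and the one-line comparison of orders and commutativity at the end does not go through.

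The paper closes this gap differently: it observes that $M(p,q,r;\pm2,\beta,0)$ is diffeomorphic to $S(M)$ or $\widetilde{S}(M)$ for a closed $3$-manifold $M$, so its fundamental group is $\pi_1(M)$, and then invokes Corollary \ref{cor:not spin case}. The substance there is that if $\pi_1(M)$ were the finite group $D_{\abs{2n+1}}$, elliptization would force $M\cong S^3/D_{\abs{2n+1}}$; but the dihedral group of order $2\abs{2n+1}\ge 6$ does not act freely on $S^3$ (it fails the standard $2p$-conditions for spherical space form groups). If you want to salvage your approach, you would need to replace ``the group is abelian'' with ``the group is the fundamental group of a Seifert fibered rational homology sphere with $H_1=\mathbb{Z}_2$, and no such group is $D_{\abs{2n+1}}$,'' which is essentially the paper's argument in disguise.
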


In Section \ref{sec:diffeo type}, we study diffeomorphism types of $\tau_K$ for ribbon 2-knots $K$. We first show the following theorem by handle calculus.

\begin{thm*}[Theorem \ref{thm:double}]
Let $K$ be a ribbon $2$-knot in the $4$-sphere $S^4$. 
Then, the Price twist $\tau_K$ is diffeomorphic to the double $DF(K\#P_0)$ of the $2$-handlebody $F(K\#P_0)$. 
\end{thm*}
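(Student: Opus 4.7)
The plan is to prove Theorem~\ref{thm:double} by explicit Kirby calculus, using Proposition~\ref{prop:pricepochette} to identify $\tau_K$ with the pochette surgery $S^4(e_K,2,0)$.

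First, I would fix a ribbon presentation of $K$ (say, $n+1$ trivial $2$-spheres fused by $n$ bands) and draw the corresponding Kirby diagram of $(S^4,K)$. From this diagram both the embedded pochette of $e_K$ (whose $2$-sphere $(S_{1,1})_{e_K}$ is $K$ and whose cord is trivial) and the $2$-handlebody $F(K\#P_0)$ are directly visible: $F(K\#P_0)$ appears as the union of the $n$ $1$-handles and $n$ $2$-handles associated with the ribbon bands, together with one additional $2$-handle of framing $\pm 2$ coming from the $P_0$ summand. The double $DF(K\#P_0)$ then admits the standard doubled diagram, obtained by adjoining a $0$-framed meridional $2$-handle to each $2$-handle of $F(K\#P_0)$ and capping off with $n$ $3$-handles and a $4$-handle.

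Next, I would translate the pochette surgery $S^4(e_K,2,0)$ into an explicit modification of this diagram. The surgery removes the $1$-handle/$2$-handle pair constituting the pochette $P_{1,1}$ and reglues it according to the slope $(2,0)$, which, at the level of handles, inserts a new $2$-handle of framing $2$ along the cord together with its $0$-framed dual. By Proposition~\ref{prop:pricepochette} the resulting $4$-manifold is $\tau_K$.

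The decisive step is to show that this modified diagram coincides, after a sequence of handle slides and $1$-$2$ handle cancellations, with the standard doubled diagram of $F(K\#P_0)$. The ribbon hypothesis on $K$ is used crucially here, since it supplies enough unknottedness for the handle slides along the ribbon bands to produce, on the nose, the meridional dual $2$-handles of $DF(K\#P_0)$, while the unused ribbon $1$-handles become the expected $3$-handles after cancellation. The principal obstacle is careful bookkeeping of framings throughout this sequence of moves; once this is carried out, the two diagrams agree and the theorem follows.
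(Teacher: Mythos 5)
Your outline points at the right target---exhibiting the handle diagram of $\tau_K$ (equivalently, of $S^4(e_K,2,0)$) and matching it with the standard doubled diagram of $F(K\#P_0)$, which is the diagram of $F(K\#P_0)$ with a $0$-framed meridian attached to every $2$-handle plus $3$- and $4$-handles---and your entry point via Proposition \ref{prop:pricepochette} is harmless, since the pochette description produces the same diagram as the direct Price-twist description in Figure \ref{fig:Pricetwist3}. But the ``decisive step'' is exactly where the proof lives, and as stated it cannot be carried out with the moves you allow. You propose to reach the doubled diagram using only handle slides and $1$-$2$ cancellations. Neither of these operations increases the number of $2$-handles, yet the diagram of $\tau_K$ you start from has only the $2$-handles of $E(K\#P_0)$ plus the single $0$-framed meridian of the $2$-framed handle coming from the surgery, whereas $DF(K\#P_0)$ requires a $0$-framed meridional dual for \emph{every} $2$-handle of $F(K\#P_0)$. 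Those extra $2$-handles have to be created, and the only way to do so without changing the manifold is to introduce cancelling pairs. The paper's mechanism (Lemma \ref{lem:predouble}) is to create a cancelling $2$-$3$ pair---consuming the $3$-handles that the ribbon structure guarantees are present in $E(K)$, one per edge of the tree-shaped diagram---and then to slide the resulting $0$-framed unknot through the diagram (Figures \ref{fig:localA} and \ref{fig:localB}) until it sits as a meridian of each $2$-handle in turn. Your appeal to ``enough unknottedness from the ribbon hypothesis'' gestures at why these slides succeed, but the hypothesis is really used twice: once to get the tree-shaped diagram of $E(K)$ with the correct count of $3$-handles available for $2$-$3$ creation, and once to ensure the created $0$-framed unknot can be propagated to every edge. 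Without naming the $2$-$3$ creation, the handle count alone shows the argument cannot close.

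A secondary, smaller omission: you assert that the slides produce the meridional duals ``on the nose'' and that the remaining issue is ``bookkeeping of framings.'' In the paper's calculus the framings are forced to be $0$ automatically because the created handle is a $0$-framed unknot split off from the rest of the diagram and the slides are over $0$-framed meridians; the genuine work is the combinatorial propagation along the tree, not the framing arithmetic. If you repair the first gap by incorporating the $2$-$3$ pair creation and the explicit slide sequence of Lemma \ref{lem:predouble}, your argument becomes essentially the paper's proof.
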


Note that a handle diagram of $F(K\#P_0)$ is given in Figure \ref{fig:QuasiExterior}. Using this theorem, we introduce two kinds of handle calculus for $\tau_K$, which we call a {\it deformation} $\alpha$ and a {\it deformation} $\beta$ (Propositions \ref{prop:alpha} and \ref{prop:beta}, respectively). Then, we show the following main theorem by using deformations $\alpha$ and $\beta$.

\begin{figure}[htbp]
    \centering
\begin{overpic}[scale=0.6
]
{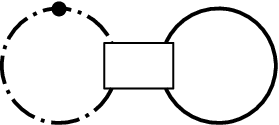}
\put(46,16){$2$}
\put(-10,32){$K$}
\put(97,32){$0$}
\end{overpic}
\caption{A simplified handle diagram of a $2$-handlebody $F(K\#P_0)$. For the definition of this diagram, see Section \ref{sec:diffeo type}. }
\label{fig:QuasiExterior}
\end{figure}

\begin{thm*}[Theorem \ref{thm:1-fusion}]
Let $K$ be a ribbon $2$-knot of $1$-fusion. Then, $\tau_K$ is diffeomorphic to $\tau_{S(T_{2,n})}$, where $n=\det(K)$. 
\end{thm*}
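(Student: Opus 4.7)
The plan is to combine Theorem~\ref{thm:double} with iterated applications of the deformations $\alpha$ and $\beta$. By Theorem~\ref{thm:double}, the Price twists $\tau_K$ and $\tau_{S(T_{2,n})}$ are diffeomorphic to the doubles $DF(K\#P_0)$ and $DF(S(T_{2,n})\#P_0)$, respectively. Hence it suffices to prove $DF(K\#P_0) \cong DF(S(T_{2,n})\#P_0)$ whenever $n=\det(K)$, and I would attack this by reducing both sides to a common normal form on the simplified handle diagram of Figure~\ref{fig:QuasiExterior}.

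First, I would spell out the handle diagram of $F(K\#P_0)$ explicitly for a 1-fusion ribbon $2$-knot $K$. Such a $K$ is determined by a single fusion band joining two unknotted $2$-spheres, which combinatorially records a word $w$ in the free group on two generators; in the convention of Figure~\ref{fig:QuasiExterior} this produces a handle diagram consisting of a 1-handle, a 2-framed 2-handle coming from the $P_0$ summand, and a 0-framed 2-handle attached along a curve that encodes $w$. Next I would iteratively apply the deformations $\alpha$ and $\beta$ from Propositions~\ref{prop:alpha} and~\ref{prop:beta}. Each deformation is a Kirby-type move on $F(K\#P_0)$ whose effect on the 2-handle attaching curve is an elementary transformation of $w$ (cyclic permutation, conjugation, or letter cancellation), so an induction on the length of $w$ should drive $w$ to the standard alternating word realizing the spun torus knot $S(T_{2,m})$ for some integer $m$.

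Finally, I would identify $m$ with $\det(K)$. Because both deformations preserve the diffeomorphism type of $DF(K\#P_0)$, the residual integer $m$ must agree with any invariant of this double that is tracked along the reduction; the natural choice is the exponent sum of $w$, and one checks that for a ribbon $2$-knot of 1-fusion this quantity equals $\det(K)$, via the Alexander polynomial read off the associated two-generator, one-relator presentation of the knot group. Since evidently $\det(S(T_{2,n}))=n$, this yields $DF(K\#P_0) \cong DF(S(T_{2,n})\#P_0)$, and the theorem follows. The principal obstacle, I expect, is the middle step: formalizing the action of $\alpha$ and $\beta$ on $w$ and verifying by careful induction that these two moves together always suffice to reach the standard alternating form. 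Once that combinatorial reduction is in place, the remainder of the argument is essentially bookkeeping.
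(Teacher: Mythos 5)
Your overall strategy is the same as the paper's: Theorem \ref{thm:double} turns $\tau_K$ into the double $DF(K\#P_0)$, and the reduction to $\tau_{S(T_{2,n})}$ is carried out by iterating the deformations $\alpha$ and $\beta$ on the resulting handle diagram. However, there are two genuine gaps in your sketch, both located exactly where you defer the work.

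First, the combinatorial reduction. You propose an induction on the length of an arbitrary word $w$, asserting that $\alpha$ and $\beta$ act by ``cyclic permutation, conjugation, or letter cancellation.'' That is not what these moves do: $\alpha$ inverts a single occurrence of a generator (Corollary \ref{cor:alpha}), and $\beta$ transfers the square relation $a^2=1$ to a conjugate generator (Corollary \ref{cor:beta}); cancellation and conjugation come from isotopy and handle slides, which you would also need to invoke. More importantly, the paper does not run an induction on an arbitrary word at all. It first puts the $2$-knot into the normal form $R(m_1,n_1,\ldots,m_s,n_s)$ of a $1$-fusion ribbon presentation (Figure \ref{fig:pre1-fusion}), so that the attaching curve is a concatenation of full-twist boxes, and then applies $\alpha$ to each box to reduce its twist count modulo $2$ (Figure \ref{fig:1-fusionpre}) and $\beta$ to merge the pieces into the alternating form of $\tau_{S(T_{2,n})}$. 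Without first establishing this geometric normal form, your induction has no well-defined starting point and no guarantee that the available moves terminate at the alternating word; this is the main content of the proof, not bookkeeping. (A minor symptom of the same issue: for a $1$-fusion $K$ the diagram of $F(K\#P_0)$ has two dotted circles, not one, as forced by $\chi(F(K\#P_0))=1$.)

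Second, the identification $n=\det(K)$. The exponent sum of $w$ is not an invariant of your reduction --- the deformation $\alpha$ inverts letters, so it changes exponent sums --- and it does not equal $\det(K)$ even in the standard form (for $S(T_{2,2n+1})$ the word $(x_2x_1)^n$ has exponent sum $2n$ while the determinant is $2n+1$). The paper instead computes $\det(K)=\abs{\Delta_K(-1)}$ directly from the closed formula for the Alexander polynomial of $R(m_1,n_1,\ldots,m_s,n_s)$, observes that this value depends only on the parities recorded by the $\alpha$-reduction, and matches it with $\det(R(1,1,\ldots,1,1))=\det(S(T_{2,p}))=p$. You would need to replace your ``exponent sum'' argument with something of this kind for the statement $n=\det(K)$ to follow.
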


Note that by Theorem \ref{thm:pi1oftaut22n+1} (Corollary \ref{cor:not spin case}), Theorem \ref{thm:1-fusion} classifies the diffeomorphism types of $\tau_K$ completely for ribbon 2-knots $K$ of 1-fusion.

As a corollary of Theorem \ref{thm:1-fusion}, we have especially the following.

\begin{cor*}[Corollary \ref{cor:2-bridge}]
Let $k$ be a $2$-bridge knot. Then, $\tau_{S(k)}$ is diffeomorphic to $\tau_{S(T_{2, n})}$, where $n=\det(k)$. 
\end{cor*}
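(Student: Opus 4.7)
The plan is to deduce this corollary directly from Theorem \ref{thm:1-fusion}. Two facts about spun knots of 2-bridge knots need to be verified before applying the theorem: first, that $S(k)$ is a ribbon 2-knot of 1-fusion; second, that $\det(S(k)) = \det(k)$.

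For the first fact, I would use the 2-bridge decomposition $k = t_1 \cup t_2$ in $S^3 = B_1 \cup B_2$, where each $t_i$ is a trivial 2-tangle in the ball $B_i$. Choosing the spinning axis to pass through one of the bridge arcs, the spinning construction $(S^4, S(k)) = \partial(B \times D^2, k_0 \times D^2)$ produces a 2-knot that bounds an immersed 3-ball built from two unknotted $2$-disks (coming from the two trivial tangles) joined by a single band with one ribbon singularity. This exhibits $S(k)$ as a ribbon 2-knot of 1-fusion. Equivalently, since $\pi_1(S^4 \setminus S(k)) \cong \pi_1(S^3 \setminus k)$ admits a 2-generator Wirtinger presentation for any 2-bridge $k$, the spun knot has a ribbon presentation with a single fusion band. (This is the standard correspondence between bridge number of $k$ and fusion number of $S(k)$.)

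For the second fact, the Alexander polynomial is preserved under spinning, i.e., $\Delta_{S(k)}(t) = \Delta_k(t)$ up to units, so $\det(S(k)) = |\Delta_{S(k)}(-1)| = |\Delta_k(-1)| = \det(k)$. Applying Theorem \ref{thm:1-fusion} with $K = S(k)$ and $n = \det(S(k)) = \det(k)$ then gives $\tau_{S(k)} \cong \tau_{S(T_{2,n})}$, completing the argument.

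The main obstacle is the first step: writing down an explicit 1-fusion ribbon presentation of $S(k)$ starting from a 2-bridge diagram of $k$. This is essentially classical (going back to Yajima), but one should exhibit the two unknotted 2-spheres and the single fusion band concretely, so that the hypothesis of Theorem \ref{thm:1-fusion} is unambiguously satisfied. The determinant computation and the final invocation of Theorem \ref{thm:1-fusion} are then immediate.
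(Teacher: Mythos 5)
Your proposal is correct and follows essentially the same route as the paper: establish that $rf(S(k))=1$ for a $2$-bridge knot $k$, note that spinning preserves the Alexander polynomial up to units so that $\det(S(k))=\det(k)$, and then invoke Theorem \ref{thm:1-fusion}. The only difference is that the paper simply cites \cite[Proposition 4]{zbMATH01117469} for the fusion-number fact, whereas you sketch the classical geometric argument behind it.
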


See Example \ref{exm:2-plat 2-knot} for an example of Theorem \ref{thm:1-fusion}, which is a 2-plat 2-knot.

Let $D(k)$ denote a knot diagram of a ribbon 1-knot $k$ and $R(D(k))$ denote a ribbon 2-knot obtained by taking the double of a ribbon disk properly embedded in $D^4$ that bounds $k$ described by $D(k)$. 

\begin{cor*}[Corollary \ref{cor:1-fusion}]
Let $k$ be a ribbon $1$-knot of $1$-fusion. 
Then, there exists a knot diagram $D(k)$ of $k$ such that $rf(R(D(k))) \le 1$ and $\tau_{R(D(k))}$ is diffeomorphic to $\tau_{S(T_{2,n})}$, where $n=\sqrt{\det(k)}$. 
\end{cor*}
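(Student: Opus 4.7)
The plan is to apply Theorem \ref{thm:1-fusion} to the ribbon $2$-knot $K = R(D(k))$ arising from an appropriate diagram $D(k)$. Since $k$ is a ribbon $1$-knot of $1$-fusion, I would first choose $D(k)$ to be a $1$-fusion diagram, that is, a diagram displaying $k$ as the band sum of a $2$-component unlink $U_1 \sqcup U_2$ along a single band $b$. Such a diagram determines a ribbon disk $\Delta = \Delta_1 \cup b \cup \Delta_2$ properly embedded in $D^4$ bounding $k$, and by definition $R(D(k)) \subset S^4$ is the double $\Delta \cup_k \bar\Delta$ of $\Delta$.

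The first step is to verify $rf(R(D(k))) \le 1$. The doubling construction carries the $1$-fusion structure of $\Delta$ to a $1$-fusion structure on $R(D(k))$: each trivial disk $\Delta_i$ (pushed slightly into $D^4$) doubles to an unknotted $2$-sphere $\Sigma_i \subset S^4$, and the band $b \subset S^3$ doubles to a single $4$-dimensional band joining $\Sigma_1$ and $\Sigma_2$. Hence $R(D(k))$ is presented by two unknotted $2$-spheres and at most one fusion band, so $rf(R(D(k))) \le 1$.

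The second step is to identify $\det(R(D(k)))$ in terms of $\det(k)$. Both determinants can be read off from the same band data. For a $1$-fusion ribbon $1$-knot the Alexander polynomial factors classically as $\Delta_k(t) \doteq h(t)\,h(t^{-1})$ for some $h(t) \in \mathbb{Z}[t]$ of degree one, where $h(t)$ is determined by the signed intersections of $b$ with the two disks bounded by $U_1, U_2$. A Fox calculus computation on the $1$-fusion presentation of $R(D(k))$, equivalently on the handle diagram of $F(R(D(k)) \# P_0)$ used in Section \ref{sec:diffeo type}, then yields $\Delta_{R(D(k))}(t) \doteq h(t)$. Evaluating at $t = -1$ gives $\det(k) = h(-1)^2$ and $\det(R(D(k))) = \abs{h(-1)} = \sqrt{\det(k)}$, which is in particular an integer.

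Combining these two steps with Theorem \ref{thm:1-fusion} applied to $K = R(D(k))$ yields $\tau_{R(D(k))}$ diffeomorphic to $\tau_{S(T_{2,n})}$ with $n = \det(R(D(k))) = \sqrt{\det(k)}$, as required. The main obstacle is the Alexander polynomial bookkeeping in the second step: one must verify carefully that the same integer band data produce $h(t)\,h(t^{-1})$ for the $1$-knot $k$ and $h(t)$ for the $2$-knot $R(D(k))$. A clean approach is to use the handle diagram of $F(R(D(k)) \# P_0)$ developed in Section \ref{sec:diffeo type}, which encodes the band $b$ explicitly and allows both determinants to be compared by inspection.
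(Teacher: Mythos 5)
Your proposal takes essentially the same route as the paper: pick a one-fusion diagram $D(k)$ so that $rf(R(D(k)))\le 1$, establish $\Delta_k(t)\doteq\Delta_{R(D(k))}(t)\,\Delta_{R(D(k))}(t^{-1})$ so that $\det(R(D(k)))=\sqrt{\det(k)}$, and apply Theorem \ref{thm:1-fusion}; the paper completes the Alexander-polynomial bookkeeping you defer by first normalizing $k$ to the form $k(m_1,n_1,\ldots,m_s,n_s)$ (band twists and band self-crossings do not change $\Delta_k$) and then computing the factor $f(t)$ explicitly from the ribbon presentation. One incidental slip: the factor $h(t)$ need not have degree one (for $10_{99}$ it is $(t^2-t+1)^2$), though nothing in your argument actually uses that claim.
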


See Subsection \ref{subsec:diffeo type of Price twist} for some concrete examples of Corollary \ref{cor:1-fusion} (Examples \ref{exm:12 cross}, \ref{exm:pretzel} and \ref{exm: 2-bridge ribbon}).

In Example \ref{exm:12 cross}, we deal with ribbon 1-knots up to 12 crossings. Let $k^*$ denote the mirror image of a 1-knot $k$. For a ribbon 1-knot $k$ up to 12 crossings, it is known that the fusion number $rf(k)$ of $k$ except for $12a_{631}$, $12a_{990}$, $12n_{553}$, $12n_{556}$, $3_1\#6_1\#3_1^*$ and $3_1\#3_1\#3_1^*\#3_1^*$ is 1. The fusion numbers $rf(12a_{631})$, $rf(12a_{990})$ and $rf(3_1\#6_1\#3_1^*)$ are less than or equal to 2, and $rf(12n_{553})$, $rf(12n_{556})$ and $rf(3_1\#3_1\#3_1^*\#3_1^*)$ are equal to 2 (see Remark \ref{rem:1-fusion} and Table \ref{tab:12 crossing ribbon 1-knot}).
We also deal with ribbon pretzel knots (Example \ref{exm:pretzel}) and all 2-bride ribbon knots (Example \ref{exm: 2-bridge ribbon}).

\begin{prop*}[Proposition \ref{prop:irregularcase}]
There exist knot diagrams $D(12n_{553})$, $D(12n_{556})$, $D(3_1\#6_1\#3_1^*)$ and $D(3_1 \# 3_1 \# 3_1^* \# 3_1^*)$ such that the Price twists $\tau_{R(D(12n_{553}))}$, $\tau_{R(D(12n_{556}))}$, $\tau_{R(D(3_1\#6_1\#3_1^*))}$ and $\tau_{R(D(3_1 \# 3_1 \# 3_1^* \# 3_1^*))}$ are diffeomorphic to one another. 
\end{prop*}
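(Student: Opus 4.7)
The plan is to prove the proposition by explicit handle calculus, since each of the four knots has fusion number exceeding one and so Theorem \ref{thm:1-fusion} cannot be applied directly.

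First I would fix, for each knot $k \in \{12n_{553},\, 12n_{556},\, 3_1\#6_1\#3_1^*,\, 3_1 \# 3_1 \# 3_1^* \# 3_1^*\}$, a specific $2$-fusion ribbon diagram $D(k)$. These choices are not canonical and must be made with an eye toward later simplification: the structural similarity between the four resulting handle diagrams is precisely what will drive the proof. By Theorem \ref{thm:double}, each Price twist $\tau_{R(D(k))}$ is diffeomorphic to the double $DF(R(D(k))\#P_0)$; expanding the simplified schematic of Figure \ref{fig:QuasiExterior} yields, for each of the four knots, an explicit Kirby diagram consisting of dotted $1$-handles together with framed $2$-handles.

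Next, I would apply the deformations $\alpha$ and $\beta$ of Propositions \ref{prop:alpha} and \ref{prop:beta}, together with standard handle slides, isotopies, and $1$-$2$ handle cancellations, to reduce each of the four Kirby diagrams to one common normal form. The four reductions would be presented as four sequences of figures terminating at the same diagram, which simultaneously exhibits all the asserted diffeomorphisms. A natural guess for this common model is a diagram already encountered in the $1$-fusion analysis, in which case the proposition would additionally identify the common Price twist with some $\tau_{S(T_{2,n})}$; the value of $n$ is constrained by the fact that the four knots share the same determinant.

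The main obstacle is combinatorial rather than conceptual. With fusion number $2$, each ribbon presentation contributes an additional $1$-handle/$2$-handle pair beyond what appears in the $1$-fusion case of Theorem \ref{thm:1-fusion}, and the order in which the deformations $\alpha$ and $\beta$ are applied must be orchestrated so that this extra pair can be cancelled while leaving the framings of the remaining handles in a form consistent across all four knots. The argument is therefore quite sensitive to the choice of ribbon diagrams $D(k)$ made at the outset, and verifying that the terminal diagrams truly coincide requires careful bookkeeping of framings and slide corrections at each intermediate stage.
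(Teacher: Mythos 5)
Your overall strategy coincides with the paper's: fix explicit ribbon diagrams for the four knots, pass to handle diagrams of the Price twists via Theorem \ref{thm:double}, and reduce all four to one common diagram using the deformations $\alpha$ and $\beta$ together with slides, isotopies and cancellations. The paper carries this out in the algebraic shorthand of $\tau$-presentations ($\tau$-handle calculus), transforming each of the four presentations read off from Figures \ref{fig:12n553}, \ref{fig:12n556}, \ref{fig:316131} and \ref{fig:31313131andtau} into
$$\left\langle x_1,x_2,x_3 \,\middle|\, x_1^2=1,\ x_1(x_2x_1)x_2(x_2x_1)^{-1}=1,\ x_2(x_3x_2)x_3(x_3x_2)^{-1}=1 \right\rangle,$$
the $\tau$-handle diagram of Figure \ref{fig:asimple2fusion}.

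There are, however, two problems. First, your ``natural guess'' for the common model is wrong, and pursuing it would make the reduction fail: the common Price twist is \emph{not} diffeomorphic to any $\tau_{S(T_{2,n})}$, since its fundamental group is the infinite Coxeter group $W(3,3,\infty)$ (Remark \ref{rem:rf=2 example}), which is not a dihedral group; by Theorem \ref{thm:pi1oftaut22n+1} no identification with the $1$-fusion family is possible. The extra $1$-handle/$2$-handle pair you hope to cancel cannot be cancelled -- the terminal diagram genuinely retains three dotted circles -- and this non-identification is precisely what the paper exploits afterwards to certify $rf(R(D(k)))=2$ for these knots. Second, the proposal is a plan rather than a proof: for a statement of this kind the entire mathematical content is the explicit sequence of slides and $\alpha$/$\beta$ deformations taking each of the four diagrams to the common one, and without exhibiting those sequences (or the equivalent chain of $\tau$-presentation manipulations) nothing has been established.
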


Note that the fundamental group $\pi_1(\tau_{R(D(k))})$ for any $2$-knot $R(D(k))$ in Proposition \ref{prop:irregularcase} is not isomorphic to $D_{|2n+1|}$ for each integer $n$. 
Thus, we have $rf(R(D(k)))=2$ from Proposition \ref{prop:pao} and Theorems \ref{thm:pi1oftaut22n+1} and \ref{thm:1-fusion}. 
This implies that Proposition \ref{prop:pao} and Theorems \ref{thm:pi1oftaut22n+1} and \ref{thm:1-fusion} provide one approach to proving that the fusion number of a ribbon 2-knot is 2 (see also Remarks \ref{rem:rf=2 example}, \ref{rem:12a990} and \ref{rem:fusion 2 case}). 

It is known \cite[Theorem 1]{zbMATH01117469} that $rf(S(T_{p,q}))=\min\{p,q\}-1$. 
We will show that the fundamental groups of $\tau_{R(D(12n_{553}))}$, $\tau_{R(D(12n_{556}))}$ and $\tau_{R(D(3_1\#6_1\#3_1^*))}$ for knot diagrams $D(10_{99})$, $D(12n_{553})$, $D(12n_{556})$ and $D(3_1\#6_1\#3_1^*)$ are isomorphic to the Coxeter group $W(3,3,\infty)$ (see Remarks \ref{rem:rf=2 example} and \ref{rem:fusion 2 case}(1)). 
We will also show that the fundamental groups of $\tau_{R(D(12a_{427}))}$ for a knot diagram $D(12a_{427})$ is isomorphic to the Coxeter group $W(3,5,\infty)$ (see Remark \ref{rem:fusion 2 case}(2)). 
Note that the dihedral group $D_{\abs{2n+1}}$ that is the fundamental group of $\tau_{S(T_{2,2n+1})}$ is also a Coxeter group.

\begin{que*}[Question \ref{que:coxeter}]
Is the fundamental group of $\tau_{S(T_{p,q})}$, a Coxeter group?
\end{que*}

\begin{que*}[Question \ref{que:n-fusion}]
Let $K$ be a ribbon 2-knot of $n$-fusion for $n \ge 2$. Is $\tau_K$ diffeomorphic to $\tau_{S(T_{n+1,m})}$ for some integer $m \ge n+1$?
\end{que*}

We finally study a double covering of $\tau_{S(T_{2,2n+1})}$. Recall that a Pao manifold is denoted by $L_n$ (see Subsection \ref{subsec:paomfd}). 

\begin{prop*}[Proposition \ref{prop:covering}]
There exists a double cover $\Sigma_2(\tau_{S(T_{2,2n+1})})$ of $\tau_{S(T_{2,2n+1})}$ such that $\Sigma_2(\tau_{S(T_{2,2n+1})})$ is diffeomorphic to $L_{2n+1} \# S^2 \times S^2$. 
\end{prop*}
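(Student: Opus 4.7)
The plan is to realize the cover by explicit handle calculus on the diagram of $\tau_{S(T_{2,2n+1})}$.

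By Theorem~\ref{thm:pi1oftaut22n+1}, $\pi_1(\tau_{S(T_{2,2n+1})})\cong D_{\abs{2n+1}}$. Because $2n+1$ is odd, the abelianization is $\mathbb{Z}_2$, agreeing with $H_1(\tau_{S(T_{2,2n+1})})\cong\mathbb{Z}_2$ from Corollary~\ref{cor:homology}, and the unique index-$2$ subgroup is cyclic of order $\abs{2n+1}$. I would take $\Sigma_2(\tau_{S(T_{2,2n+1})})$ to be the associated connected double cover; its fundamental group is $\mathbb{Z}_{\abs{2n+1}}$, which matches $\pi_1(L_{2n+1}\# S^2\times S^2)$.

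Since $T_{2,2n+1}$ is a $2$-bridge knot of determinant $\abs{2n+1}$, Corollary~\ref{cor:2-bridge} together with Theorem~\ref{thm:double} gives $\tau_{S(T_{2,2n+1})}\cong DF(S(T_{2,2n+1})\#P_0)$, and Figure~\ref{fig:QuasiExterior} provides a handle diagram for $F(S(T_{2,2n+1})\#P_0)$: one $1$-handle together with a $2$-handle of framing $2$ (the $P_0$ factor, responsible for the $\mathbb{Z}_2$ in $\pi_1$) and a $0$-framed $2$-handle whose attaching circle encodes the spun torus knot and links the $1$-handle according to the pattern determined by $2n+1$. The next step is to apply the standard handle-diagrammatic recipe for double covers of $4$-manifolds: unwrap the $1$-handle, lift the $+2$-framed handle of the $P_0$ factor to the $2$-handle corresponding to the orientable double cover of its normal $D^2$-bundle, and lift the $0$-framed $2$-handle (which links the $1$-handle an odd number of times) to a single handle that covers its attaching circle twice, yielding a curve carrying the full $(2,2n+1)$-twisting. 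Doubling the resulting $2$-handlebody produces a handle diagram for $\Sigma_2(\tau_{S(T_{2,2n+1})})$.

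The last step is to simplify this diagram: after a sequence of handle slides and cancellations, the lifted $P_0$-handle pairs with a dual handle to split off a copy of $S^2\times S^2$ (standard rather than twisted, because both $\tau_{S(T_{2,2n+1})}$ and its double cover are spin, so $w_2$ of the summand vanishes), while the remaining handles give precisely Figure~\ref{fig:Pao} with $\varepsilon=0$ and framing $2n+1$, i.e.\ $L_{2n+1}$.

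The main obstacle will be the handle calculus in the previous two paragraphs: correctly tracking the framings and linkings when lifting the $0$-framed $2$-handle, since its attaching circle must cover the spun torus knot curve and produce exactly the Pao curve with framing $2n+1$ after the Kirby moves. Verifying that the extra $S^2$-bundle summand is indeed $S^2\times S^2$ rather than the twisted $S^2$-bundle can be settled either by a direct Wu-formula computation of $w_2$ on the lifted diagram, or from the observation that $L_{2n+1}$ carries a spin structure, so the spin structure on the cover forces the summand to be untwisted.
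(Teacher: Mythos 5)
Your overall strategy coincides with the paper's: both arguments identify the unique index-two subgroup $\mathbb{Z}_{\abs{2n+1}}\le D_{\abs{2n+1}}$, build a handle diagram of the associated connected double cover by the standard lifting procedure (Gompf--Stipsicz, Subsection 6.3), and then simplify by Kirby calculus until the diagram of $L_{2n+1}\#S^2\times S^2$ appears. However, your description of the lift contains a genuine error. In a connected double cover of a closed $4$-manifold, the attaching circle of every $2$-handle is null-homotopic in the total space, hence lies in the kernel of the defining homomorphism $\pi_1\to\mathbb{Z}_2$; its preimage is therefore two disjoint embedded lifts, and every $2$-handle lifts to \emph{two} $2$-handles upstairs. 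Concretely, in the diagram of $\tau_{S(T_{2,2n+1})}$ the long $0$-framed attaching circle realizes the relation $x_1(x_2x_1)^nx_2(x_2x_1)^{-n}=1$ and runs $2n+1$ times over each of the two dotted circles, for a total exponent $2(2n+1)\equiv 0\pmod 2$; it does not lift to ``a single handle that covers its attaching circle twice.'' A diagram built according to your recipe would have the wrong number of handles (and the wrong Euler characteristic) and would not present the double cover.

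Beyond that, the part of the argument that constitutes the actual proof --- the sequence of handle slides and cancellations turning the lifted diagram into $L_{2n+1}\#S^2\times S^2$ --- is precisely what you defer as ``the main obstacle.'' That computation is the entire content of the paper's proof (it occupies six figures there, after the diagram of $\tau_{S(T_{2,2n+1})}$ is first normalized using the deformation $\alpha$ and some slides so that the lift becomes tractable, and the case $n=0$ is handled separately via $\tau_O$), and nothing in your proposal substitutes for it. Your closing remark on distinguishing $S^2\times S^2$ from the twisted bundle is also not yet usable as stated: inferring that the cover is spin from the fact that $L_{2n+1}\#S^2\times S^2$ is spin would be circular, so you would need an independent verification (for instance the Wu-formula computation you mention) or, as in the paper, simply read the untwisted summand off the final diagram.
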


\section*{Organization}
In Section \ref{sec:preliminaries}, we review precise definitions and properties of the Price twists (Subsection \ref{subsec:pricetwist}), pochette surgery (Subsection \ref{subsec:pochettesurgery}), the spun and twist-spun 4-manifolds (Subsection \ref{subsec:spinmfd}), the Pao manifolds (Subsection \ref{subsec:paomfd}) and the Iwase manifolds (Subsection \ref{subsec:iwasemfd}). 
In Sections \ref{sec:property} and \ref{sec:diffeo type}, we prove the propositions and theorems mentioned in Section \ref{sec:intro}.
In Section \ref{sec:main theorem for pochette}, we rephrase some theorems in Section \ref{sec:diffeo type} in terms of pochette surgery by using the relationship shown in Section \ref{sec:property}. 

\section{Preliminaries}\label{sec:preliminaries}
In this paper, unless otherwise stated, we suppose that every $3$ or $4$-manifold is compact, connected, oriented and smooth, that every surface knot is a closed, connected surface smoothly embbeded in a closed $4$-manifold and that every map is smooth. 

\subsection{Price twist}\label{subsec:pricetwist}
Let $X$ be a closed $4$-manifold and $S$ a $P^2$-knot in $X$ with normal Euler number $e(S)=\pm2$. The \textit{Price twist} is a cutting and pasting operation along $S$. 
The boundary $\partial N(S)$ of a tubular neighborhood $N(S)$ with $e(S)=\pm 2$ is diffeomorphic to the Seifert fibered space $M(S^2; 0, (2, \pm1), (2, \pm1), (2, \mp1))$ in the notation of \cite[Section 4]{csavk2024survey}. 
Hence, the closed 3-manifold $\partial N(S)$ is the quaternion space (i.e. $\partial N(S)$ is diffeomorphic to $S^3/Q$, where $Q$ is the quaternion group) with three exceptional fibers $S_0$, $S_1$ and $S_{-1}$ as in Figure \ref{fig:NSandfibers}. 
Their indices are $\pm2$, $\pm2$ and $\mp2$. 
Let $S_{-1}$ be the fiber with index $\mp2$. Price \cite{MR436151} showed that the Price twisted 4-manifold $(X-\mathrm{int}(N(S))) \cup_{f} N(S)$ yields at most three closed $4$-manifolds up to diffeomorphism, namely, 
\begin{itemize}
\item $X$ if $f(S_{-1})=S_{-1}$, 
\item $\Sigma_{S}(X)$ if $f(S_{-1})=S_1$ and 
\item $\tau_{S}(X)$ if $f(S_{-1})=S_0$, 
\end{itemize}
where $f:\partial N(S) \to \partial(X-\mathrm{int}(N(S)))$ is a diffeomorphism map. 

\begin{figure}[htbp]
    \centering
\begin{overpic}[scale=0.6
]
{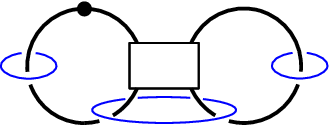}
\put(43,16){$\mp2$}
\put(88,32){$0$}
\put(-12,15){\color{blue}$S_0$}
\put(103,15){\color{blue}$S_{-1}$}
\put(45,-8){\color{blue}$S_1$}
\end{overpic}
\caption{A handle diagram of $N(S)$ and three exceptional fibers $S_0$, $S_1$ and $S_{-1}$ in $\partial{N(S)}$ with normal Euler number $e(S)=\pm2$.}
\label{fig:NSandfibers}
\end{figure}

It is obvious from the Mayer-Vietoris exact sequence that $H_1(\tau_{S}(X)) \not \cong H_1(X)$ (see also \cite{MR4071374, MR1721575}). 
In particular, if $X$ is the $4$-sphere $S^4$, $\tau_{S}(S^4)$ is not simply connected. 
We call the 4-manifold $\tau_{S}(S^4)$ a \textit{non-simply connected Price twist} for $S^4$ along $S$. 

A $P^2$-knot $S$ in $S^4$ is said to be \textit{of Kinoshita type} if $S$ is the connnected sum of a $2$-knot and the unknotted $P^2$-knot $P_0^{\pm2}$ with normal Euler number $\pm2$. 
It is conjectured that every $P^2$-knot in $S^4$ is of Kinoshita type. 
In this paper, we will deal with $P^2$-knots of Kinoshita type. 

A handle diagram of the Price twist is depicted as follows. 
Let a dotted circle with a label $K$ denote the exterior $E(K)$ of a $2$-knot $K$ in $S^4$ as in Figure \ref{fig:EKexterior} (for details, see \cite{MR1721575} for the notation). 
Then, we can depict a handle diagram of $E(K \# P_0^{\pm2})$ as in Figure \ref{fig:exterior}, where $k=n_2-n_1+1$ and $n_i$ is the number of $i$-handles of $E(K)$ ($i=1,2$). 
For example, if $K$ is the spun trefoil knot $S(T_{2,3})$, handle diagrams of $E(K)$ and $E(K\#P_0^{\pm2})$ are shown in Figures \ref{fig:EKexm} and \ref{fig:EKPexm}, respectively.
Handle diagrams of the three Price twisted $4$-manifolds $S^4$, $\Sigma_{K \# P_0^{\pm2}}(S^4)$ and $\tau_{K \# P_0^{\pm2}}(S^4)$ are obtained by adding a $0$-framed unknot to the handle diagram of $E(K\#P_0^{\pm2})$ as in Figure \ref{fig:Pricetwist1}, \ref{fig:Pricetwist2} and \ref{fig:Pricetwist3}, respectively by \cite[Subsection 5.5]{gompf20234}.

\begin{figure}[htbp]
    \centering
\begin{overpic}[scale=0.6
]
{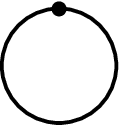}
\put(-20,75){$K$}
\put(115,50){$\cup$ $k$ $3$-handles}
\end{overpic}
\caption{A handle diagram of the exterior $E(K)$ of a 2-knot $K$ in $S^4$. 
}
\label{fig:EKexterior}
\end{figure}

\begin{figure}[htbp]
    \centering
\begin{overpic}[scale=0.6]
{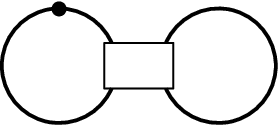}
\put(42,18){$\pm2$}
\put(-10,32){$K$}
\put(97,32){$0$}
\put(115,23){$\cup$ $k$ $3$-handles}
\end{overpic}
\caption{A handle diagram of the exterior $E(K \# P_0^{\pm2})$ of a 2-knot $K$ and the unknotted $P^2$-knots $P_0^{\pm2}$ in $S^4$. 
}
\label{fig:exterior}
\end{figure}

\begin{figure}[htbp]
    \centering
\begin{overpic}[scale=0.6]
{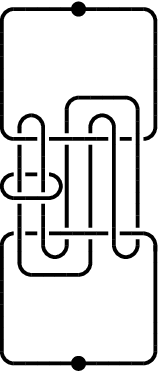}
\put(-8,47){$0$}
\put(41,47){$0$}
\put(85,53){$\cup$ $3$-handle}
\end{overpic}
\caption{A handle diagram of the exterior $E(S(T_{2,3}))$.}
\label{fig:EKexm}
\end{figure}

\begin{figure}[htbp]
    \centering
\begin{overpic}[scale=0.6]
{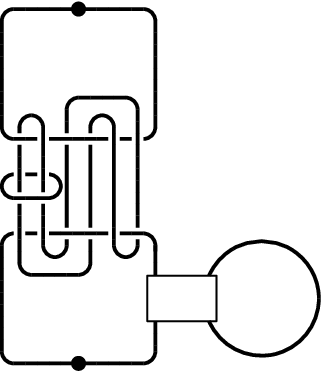}
\put(43,17){$\pm2$}
\put(-8,47){$0$}
\put(41,47){$0$}
\put(82,32){$0$}
\put(105,53){$\cup$ $3$-handle}
\end{overpic}
\caption{A handle diagram of the exterior $E(S(T_{2,3})\#P_0^{\pm2})$.}
\label{fig:EKPexm}
\end{figure}

\begin{figure}[htbp]
    \centering
\begin{overpic}[scale=0.6]
{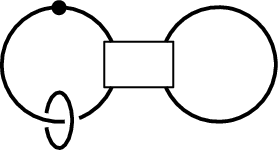}
\put(42,27){$\pm2$}
\put(-10,41){$K$}
\put(28,0){$0$}
\put(97,41){$0$}
\put(115,32){$\cup$ $k+1$ $3$-handles}
\put(159.5,19){$4$-handle}
\end{overpic}
\caption{A handle diagram of the trivial Price twisted $4$-manifold $S^4$. 
}
\label{fig:Pricetwist1}
\end{figure}

\begin{figure}[htbp]
    \centering
\begin{overpic}[scale=0.6]
{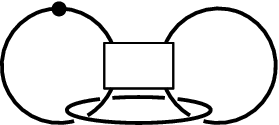}
\put(42,18){$\pm2$}
\put(-10,32){$K$}
\put(44.5,-10){$\pm1$} 
\put(97,32){$0$}
\put(115,23){$\cup$ $k+1$ $3$-handles}
\put(159.5,10){$4$-handle}
\end{overpic}
\caption{A handle diagram of the Price twisted $4$-manifold $\Sigma_{K \# P_0^{\pm2}}(S^4)$. 
}
\label{fig:Pricetwist2}
\end{figure}

\begin{figure}[htbp]
    \centering
\begin{overpic}[scale=0.6]
{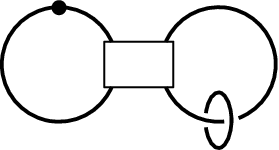}
\put(42,27){$\pm2$}
\put(-10,41){$K$}
\put(85,0){$0$}
\put(97,41){$0$}
\put(115,32){$\cup$ $k+1$ $3$-handles}
\put(159.5,19){$4$-handle}
\end{overpic}
\caption{A handle diagram of the Price twisted $4$-manifold $\tau_{K \# P_0^{\pm2}}(S^4)=\tau_K$. 
}
\label{fig:Pricetwist3}
\end{figure}

\begin{rem}
One can check by handle calculus that the diffeomorphism type of each Price twist for $S^4$ along each $P^2$-knot $S=K\#P_0^{\pm2}$ of Kinoshita type is determined regardless of the normal Euler number of the unknotted $P^2$-knot $P_0^{\pm2}$. 
Thus, in the following, we will consider only the unknotted $P^2$-knot $P_0^{+2}$ with normal Euler number $2$, and write it as $P_0$.
\end{rem}

\begin{nota*}
In this paper, we write $\tau_{K \# P_0}(S^4)$ as $\tau_K$, for short.
\end{nota*}

\begin{rem}
It is known \cite[Theorem 0.1]{MR1721575} that for a 2-knot $K$ in a 4-manifold $X$ with normal Euler number 0 and an unknotted $P^2$-knot $P_0$ with normal Euler number $\pm2$, $\Sigma_{K \# P_0}(X)$ is diffeomorphic to the Gluck twisted 4-manifold for $X$ on $K$.
\end{rem}

For more details, see \cite{MR4071374, MR436151} for example.

\subsection{Pochette surgery}\label{subsec:pochettesurgery}
Let $X$ be a closed $4$-manifold and $E(Y)$ the exterior $X-\mathrm{int}(N(Y))$ of a submanifold $Y$ of $X$, where $N(Y)$ is a tubular neighborhood of $Y$. 
The boundary connected sum $P_{1, 1}:=S^1\times D^3\natural D^2\times S^2$ is called a \textit{pochette}.  
A \textit{pochette surgery} is a cutting and pasting operation along the pochette $P_{1, 1}$. 
Let $e:P_{1, 1} \to X$ be an embedding, $Q_e$ the image $e(Q)$ of a subset $Q$ of $P_{1, 1}$ and $g:\partial{P_{1, 1}}\to\partial E((P_{1, 1})_e)$ a diffeomorphism. 
In the following, we fix an identification $\partial{P_{1, 1}}=\partial E((P_{1, 1})_e)=\#^2 S^1 \times S^2$.

The $4$-manifold $E((P_{1, 1})_e)\cup_{g}P_{1, 1}$ obtained by the pochette surgery on $X$ using $e$ and $g$ is denoted by $X(e, g)$. 
The 4-manifold $X(e, g)$ is also called the pochette surgery on $X$ for $e$ and $g$. 
We call the curves $l:=S^1\times \{*\}$ and $m:=\partial D^2\times \{*\}$ on $\partial P_{1, 1}$ a \textit{longitude} and a \textit{meridian} of $P_{1, 1}$, respectively. 

In the diffeomorphism type of $X(e,g)$, a framing around the knot $g(m)$ of $\partial P_{1,1}=\#^2 S^1 \times S^2$ only affects the parity of its framing coefficient $\varepsilon_0$.
The remainder $\varepsilon$ when the integer $\varepsilon_0$ is divided by $2$ is called a \textit{mod} $2$ \textit{framing}. 
For details on the definition of a mod $2$ framing around $g(m)$, see \cite{MR4619857} or \cite{zbMATH07751599}. 

Let $p$ and $q$ be coprime integers and $g_*:H_1(\partial P_{1,1}) \to H_1(\partial P_{1,1})$ the induced isomorphism of the diffeomorphism $g$. 
By \cite[Section 2]{iwase20044}, the homology class $g_*([m])=p[m]+q[l]$ in the first homology $H_1(\partial P_{1, 1})$ is determined by $p/q \in \mathbb{Q} \cup \{\infty\}$ up to the sign of $p$. 
The following theorem immediately follows from the observations above.
\begin{thm}[{\cite[Theorem 2]{iwase20044}}]
\label{thm:three conditions}
The diffeomorphism type of $X(e, g)$ is determined by the following data:

\def\labelenumi{(\theenumi)}
\begin{enumerate}
  \item An embedding $e:P_{1, 1}\to X$.
  \item A slope $p/q$ of the homology class $g_*([m])=p[m]+q[l]$ in $H_1(\partial P_{1, 1})$. 
  \item A mod $2$ framing $\varepsilon$ around the knot $g(m)$ in $\#^2 S^1 \times S^2$.
\end{enumerate}
\end{thm}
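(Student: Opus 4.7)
The plan is to show that the theorem follows immediately from the two observations stated just before it, which together pin down the gluing map $g$ up to equivalences that do not change $X(e,g)$.

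Once $e$ is fixed, the exterior $E((P_{1,1})_e)$ is determined, so $X(e,g) = E((P_{1,1})_e) \cup_g P_{1,1}$ depends on $g$ only up to the natural equivalence $g \sim \phi \circ g \circ \psi$, where $\phi$ and $\psi$ are boundary restrictions of self-diffeomorphisms of $E((P_{1,1})_e)$ and of $P_{1,1}$, respectively. Since $P_{1,1} = S^1 \times D^3 \natural D^2 \times S^2$ admits a handle decomposition consisting of a $0$-handle, a $1$-handle (producing the $S^1 \times D^3$ summand), and a $2$-handle (producing the $D^2 \times S^2$ summand), the data of the gluing $g$ reduces to the image of the attaching sphere of the $2$-handle, namely the framed knot $g(m) \subset \partial E((P_{1,1})_e) = \#^2 S^1 \times S^2$.

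By the second preceding observation, citing \cite[Section 2]{iwase20044}, the homology class $g_*([m]) = p[m] + q[l]$ in $H_1(\partial P_{1,1})$ is determined by the slope $p/q \in \mathbb{Q} \cup \{\infty\}$ up to the sign of $p$; after possibly precomposing with an appropriate self-diffeomorphism of $P_{1,1}$, the isotopy class of the knot $g(m)$ inside $\partial P_{1,1}$ is thus captured by datum (2). By the first preceding observation, the integer framing coefficient $\varepsilon_0$ of $g(m)$ affects $X(e,g)$ only through its parity, since adding an even number to $\varepsilon_0$ can be realized by sliding the $2$-handle of $P_{1,1}$ over its $1$-handle, a move that extends to a self-diffeomorphism of $P_{1,1}$; this yields datum (3). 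Combining the three data, the equivalence class of the gluing is completely determined, so is the diffeomorphism type of $X(e,g)$.

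The main obstacle in a fully detailed proof would be the parity assertion: one must check carefully via handle calculus that changes of the framing coefficient by $\pm 2$, and only such changes, are realized by self-diffeomorphisms of $P_{1,1}$ extending over the gluing. This technical point is treated in \cite{MR4619857} and \cite{zbMATH07751599}, and once granted, the remaining argument is a direct assembly of the two preceding observations as outlined above.
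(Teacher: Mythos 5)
Your proposal follows the same route as the paper, which offers no independent proof: it states that the theorem ``immediately follows from the observations above'' and attributes the result to \cite[Theorem 2]{iwase20044}. Your assembly of the two preceding observations (parity of the framing, and the slope determining the homology class $g_*([m])$ up to sign) together with the upside-down handle decomposition of $P_{1,1}$ is a reasonable expansion of that one-line justification, and your deferral of the parity claim to \cite{MR4619857} and \cite{zbMATH07751599} matches how the paper handles it.

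One caution, though: the sentence ``after possibly precomposing with an appropriate self-diffeomorphism of $P_{1,1}$, the isotopy class of the knot $g(m)$ inside $\partial P_{1,1}$ is thus captured by datum (2)'' is where essentially all of the mathematical content of \cite[Theorem 2]{iwase20044} lives, and it does \emph{not} follow from the stated observation, which only concerns the homology class. In $\#^2 S^1\times S^2$ there are many non-isotopic knots realizing the same class in $H_1$, so one must show that any two gluings $g,g'$ with $g_*([m])=\pm g'_*([m])$ differ by diffeomorphisms that extend over $P_{1,1}$ and over $E((P_{1,1})_e)$; this requires Laudenbach--Po\'enaru--type extension results and the classification of circles in $\#^2 S^1\times S^2$ up to the action of the diffeomorphism group, not just a homological computation. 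Since the paper itself delegates this to Iwase--Koda, your write-up is acceptable as a gloss on the citation, but it should not be read as a self-contained proof: the ``appropriate self-diffeomorphism'' must be produced, and that is the hard part.
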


Let $g_{p/q,\varepsilon}:\partial{P_{1, 1}}\to \partial{P_{1, 1}}$ be a diffeomorphism which satisfies $g_{p/q,\varepsilon *}([m])=p[m]+q[l]$ and the mod $2$ framing of $g_{p/q,\varepsilon}(m)$ is $\varepsilon$ in $\{0, 1\}$. 
By Theorem~\ref{thm:three conditions}, we can write $X(e, p/q, \varepsilon)$ as $X(e, g_{p/q, \varepsilon})$. 
In this paper, we define the mod $2$ framing $\varepsilon$ so that the trivial surgery is $X(e, 1/0, 0)$. 
From the construction, any pochette surgery for ($e, 1/0, 1$) is nothing but the Gluck twist along $(S_{1,1})_e$, where $S_{1,1}$ is the subset $\{*\}\times S^2$ of $P_{1, 1}$. 

Let $ST$ be the solid torus $S^1 \times D^2$ and $e_0: S^1 \times ST \to X$ an embedding of $S^1 \times ST$ into a 4-manifold $X$. 
A \textit{torus surgery} (\textit{logarithmic transformation}) on $X$ is an operation that removes the interior of $(S^1 \times ST)_{e_0}$ in $X$ with trivial normal bundle and glues $S^1 \times ST$ by a diffeomorphism $g_0: \partial(S^1 \times ST) \to \partial E((S^1 \times ST)_{e_0})$. 

Fix an identification between $\partial(S^1 \times ST)$ and $\partial E((S^1 \times ST)_{e_0})$. 
The pochette $P_{1,1}$ is diffeomorphic to $S^1 \times ST \cup H$, where $H$ is a $0$-framed $2$-handle attached to $S^1 \times ST$ along $S^1 \times \{*\} \times \{*\}$. 
Fix an identification between $S^1 \times ST \cup H$ and $P_{1, 1}$. 
The curves $\{*\} \times S^1 \times \{*\}$ and $\{*\} \times \{*\} \times \partial D^2$ are nothing but $m$ and $l$ of $P_{1, 1}$, respectively. 
Then, the set $\{[m], [l], [s]\}$ is a basis of $H_1(S^1 \times \partial ST)$, where $s:=S^1 \times \{*\} \times \{*\}$. 

The diffeomorphism type of the torus surgery $E((S^1 \times ST)_{e_0}) \cup_ {g_0} (S^1 \times ST)$ on $X$ is determined by $e_0$ and $(g_0)_{*}([m])=\alpha[m]+\beta[l]+\gamma[s]$ in $H_1(S^1 \times \partial ST)$. 
If $e_0=e|_{S^1 \times ST}$, then we see that a pochette surgery with $e$ and $g$ is a torus surgery with $e_0$ and $g_0$. 
Therefore, any pochette surgery on $X$ is nothing but a torus surgery on $X$. 

For the definition of the linking number for an embedding $e: P_{1,1} \to S^4$, see \cite[Subsection 2D]{MR4619857}. 
In \cite{MR4619857}, the homology groups of the pochette surgery $S^4(e, p/q, \varepsilon)$ are detected. 
\begin{prop}[{\cite[Proposition 2.5]{MR4619857}}]
\label{prop:homology pochette surgery of $4$-sphere}
Let $e: P_{1,1} \to S^4$ be an embedding with linking number $\ell$. 
Then, we have
\begin{enumerate}
\item[(i)] If $p+q\ell\neq 0$, then
$$
H_n(S^4(e,p/q,\varepsilon))\cong
\begin{cases}
    \mathbb{Z} & (n=0, 4), \\
    \mathbb{Z}_{p+q\ell} & (n=1, 2), \\
    0 & (n=3).
\end{cases}
$$
\item[(ii)] If $p+q\ell=0$, then 
$$
H_n(S^4(e,p/q,\varepsilon))\cong
\begin{cases}
    \mathbb{Z} & (n=0, 1, 3, 4), \\
    \mathbb{Z}^2 & (n=2). 
\end{cases}
$$
\end{enumerate}
\end{prop}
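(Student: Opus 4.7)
The plan is to apply the Mayer--Vietoris sequence to the decomposition $S^4(e, p/q, \varepsilon) = E \cup_{g_{p/q, \varepsilon}} P_{1,1}$, where $E = E((P_{1,1})_e)$, and to read off the first homology from the boundary identification, using the basis $\{[m], [l]\}$ of $H_1(\partial P_{1,1}) = \mathbb{Z}^2$. First I would collect the homology of each piece. Since $P_{1,1} \simeq S^1 \vee S^2$, one has $H_1(P_{1,1}) = \mathbb{Z}\langle [l] \rangle$ and $H_2(P_{1,1}) = \mathbb{Z}\langle [S_{1,1}] \rangle$, while Alexander duality in $S^4$ gives $H_1(E) \cong H^2(P_{1,1}) \cong \mathbb{Z}$ and $H_2(E) \cong H^1(P_{1,1}) \cong \mathbb{Z}$.

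Next I would pin down the inclusion-induced maps on $H_1$. On the $P_{1,1}$ side, $[m]$ bounds the disk $D^2 \times \{*\}$ inside $D^2 \times S^2$ and $[l]$ generates $H_1(P_{1,1})$; on the $E$ side, $[m]$ is a meridian of $S_{1,1}$, hence a generator of $H_1(E)$, and the key assertion is that $(i_E)_*[l] = \ell \cdot (i_E)_*[m]$ in $H_1(E)$. Indeed, the longitude $l$ bounds a properly embedded $2$-disk $D \subset S^4$ (since $S^4$ is simply connected), and by the definition of the linking number the algebraic count $D \cdot S_{1,1}$ equals $\ell$; consequently $D \cap E$ is a $2$-chain in $E$ exhibiting $[l] - \ell \cdot [m] = 0$. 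Writing $g_{p/q, \varepsilon *}([m]) = p[m] + q[l]$ and $g_{p/q, \varepsilon *}([l]) = r[m] + s[l]$ with $ps - qr = \pm 1$, the Mayer--Vietoris connecting map $\phi \colon H_1(\partial P_{1,1}) \to H_1(E) \oplus H_1(P_{1,1})$ is then represented by
\[
\begin{pmatrix} p + q\ell & r + s\ell \\ 0 & -1 \end{pmatrix},
\]
whose Smith normal form is $\mathrm{diag}(|p + q\ell|, 1)$. Hence $H_1(S^4(e, p/q, \varepsilon)) = \mathrm{coker}(\phi)$ equals $\mathbb{Z}_{p+q\ell}$ when $p + q\ell \neq 0$ and $\mathbb{Z}$ when $p + q\ell = 0$.

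To finish I would invoke Poincar\'e duality together with the universal coefficient theorem: for a closed oriented $4$-manifold these give $\mathrm{Tor}\, H_2 \cong \mathrm{Tor}\, H_1$ and $H_3 \cong \mathrm{Hom}(H_1, \mathbb{Z})$, while surgery preserves Euler characteristic so that $\chi = 2$. In case (i), $H_1$ is finite, forcing $H_3 = 0$ and $\mathrm{rk}\, H_2 = 0$, whence $H_2 = \mathbb{Z}_{p+q\ell}$. In case (ii), $H_1 = \mathbb{Z}$ forces $H_3 = \mathbb{Z}$ and $\mathrm{rk}\, H_2 = 2$, and since the torsion of $H_2$ vanishes one obtains $H_2 = \mathbb{Z}^2$. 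The main obstacle I anticipate is the second step: a careful verification that the linking-number convention of \cite{MR4619857} is exactly the disk-intersection quantity used above; once that identification is secured the rest of the argument is routine.
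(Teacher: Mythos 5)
The paper does not prove this proposition; it is quoted verbatim from \cite[Proposition 2.5]{MR4619857}, so there is no in-paper argument to compare against. Your reconstruction is correct: the Mayer--Vietoris computation of $H_1$ from the matrix $\left(\begin{smallmatrix} p+q\ell & r+s\ell \\ 0 & -1\end{smallmatrix}\right)$, followed by Poincar\'e duality, the universal coefficient theorem, and invariance of $\chi$ under the surgery, is exactly the standard route and yields both cases. The only point to tighten is the phrase ``properly embedded $2$-disk'' for a null-homology of $l$ (an immersed disk, or any $2$-chain meeting $S_{1,1}$ transversely, suffices and is what the linking-number convention of \cite{MR4619857} actually pairs against); as you note, once $(i_E)_*[l]=\ell\,(i_E)_*[m]$ is secured the rest is routine.
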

Since $p$ and $q$ are coprime, we have that $p+q\ell=0$ if and only if $(p,q)=(\ell,-1), (-\ell,1)$. 

\begin{rem}
In Proposition \ref{prop:homology pochette surgery of $4$-sphere}, the case where $\ell=0$ is first proven in \cite[Theorem 1.1]{okawa}. 
\end{rem}

Consider $P_{1,1}$ as $D^2\times S^2\cup h^1$, where $h^1$ is a $1$-handle. 
The $1$-handle gives a properly embedded, simple arc in $E((S_{1,1})_e)$ by taking the core of $h^1$.
We call this arc a \textit{cord} of the embedding $e:P_{1, 1}\to X$. 
If a cord is boundary parallel, then the cord is said to be \textit{trivial}. 

\begin{rem}\label{rem:link0}
If a cord of an embedding $e:P_{1, 1}\to X$ is trivial, then we can make $\ell=0$.  
For details, see \cite{MR4619857}. 
\end{rem}

\subsection{4-manifolds obtained by spinning 3-manifolds}\label{subsec:spinmfd}
In this subsection, we review closed 4-manifolds $S(M)$ and $\widetilde{S}(M)$.

Let $M$ be a closed $3$-manifold and $B^3$ an open $3$-ball embedded in $M$. 
Then, 4-manifolds $S(M)$ and $\widetilde{S}(M)$ are defined by Plotnick \cite{zbMATH03796838} as follows: 
\begin{eqnarray*}
S(M)&:=&(M-B^3)\times S^1\cup_{\mathrm{id}_{S^2 \times S^1}} S^2 \times D^2, \\
\widetilde{S}(M)&:=&(M-B^3)\times S^1\cup_{\iota} S^2 \times D^2,
\end{eqnarray*}
where $\iota$ is the self-diffeomorphism of $S^2 \times S^1$ defined by $\iota(z, e^{i\theta})=(ze^{i\theta},e^{i\theta})$, which is  not isotopic to the identity $\mathrm{id}_{S^2 \times S^1}$. 
The 4-manifolds $S(M)$ and $\widetilde{S}(M)$ are called the \textit{spin} and \textit{twist-spin} of $M$, respectively. 
The 4-manifolds $S(M)$ and $\widetilde{S}(M)$ are also called the \textit{spun} and \textit{twist-spun $4$-manifold} of $M$, respectively. 

It is known that $\pi_1(S(M)) \cong \pi_1(\widetilde{S}(M)) \cong \pi_1(M)$ and $H_2(S(M)) \cong H_2(\widetilde{S}(M)) \cong H_1(M) \oplus H_2(M)$ \cite{MR922225}. 
Thus, if $M$ is an integral (resp. a rational) homology $3$-sphere, then $S(M)$ and $\widetilde{S}(M)$ are integral (resp. rational) homology $4$-spheres.
It is known \cite{plotnick1986equivariant} that $S(L(p,q))$ is diffeomorphic to $\widetilde{S}(L(p,q))$, where $L(p,q)$ is the lens space of $(p,q)$-type.

\subsection{4-manifolds constructed by Pao}\label{subsec:paomfd} 
Let $N_0$ and $N_1$ be $4$-manifolds diffeomorphic to $D^2 \times T^2$. 
We identify $\partial N_0$ and $\partial N_1$ with $\partial D^2 \times T^2=T^3$ and identify $T^3 = S_1^1 \times S_2^1 \times S_3^1$ with $\mathbb{R}^3/\mathbb{Z}^3$. 
Let $\alpha:\mathrm{GL}(3;\mathbb{Z}) \times \mathbb{R}^3 \to \mathbb{R}^3$ be the action defined by $\alpha(A,\bm{x})=A\bm{x}$, where $A$ is an element of $\mathrm{GL}(3;\mathbb{Z})$. 
We define a self-diffeomorphism $f_A$ of $\partial D^2 \times T^2$ as 
$$f_A([x_1,x_2,x_3])=[(x_1,x_2,x_3){}^t\! A],$$
where $(x_1,x_2,x_3)={}^t\! \bm{x}$. 
Let $m$, $n$, $p$ and $q$ be integers such that $\mathrm{gcd}(m, n)=\mathrm{gcd}(n,p,q)=1$. 
We define an element $A(n;p,q;m)$ of $\mathrm{GL}(3;\mathbb{Z})$ as 
$$
A(n;p,q;m)
=
\begin{pmatrix}
   ma   & mb     & \alpha\\
   na   & nb     & \beta\\
   na+q & nb-p & 0
\end{pmatrix}
,$$
where $a$, $b$, $\alpha$ and $\beta$ are integers such that $ap+bq=1$ and $\alpha n-\beta m=1$. 
Let $c: D^3 \to D^2 \times S_3^1$ be an embedding, $i: S^2 \to \partial c(D^3)$ a diffeomorphism and $h:=i \times \mathrm{id}_{S^1}$. 
Then, we define a closed 4-manifold $L(n;p,q;m)$ as 
$$L(n;p,q;m)=D^2 \times S^2 \cup_h(N_0 - (\mathrm{int}(c(D^3)) \times S^1))\cup_{f_{A(n;p,q;m)}}N_1.$$
We call the closed $4$-manifold $L(n; p, q; m)$ a {\it Pao manifold of type $(n; p, q; m)$}. 
The following classification result exists for the Pao manifolds. 
\begin{thm}[{\cite[Theorem V.1]{MR431231}}]
The Pao manifold $L(n; p, q; m)$ is diffeomorphic to either $L(n; 0, 1; 1)$ or $L(n; 1, 1; 1)$. 
\end{thm}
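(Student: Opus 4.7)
The plan is to translate the classification into a double coset problem in $\mathrm{GL}(3;\mathbb{Z})$. Fix the identification $H_1(T^3) = \mathbb{Z}^3$, and let $\Gamma \subset \mathrm{GL}(3;\mathbb{Z})$ denote the subgroup of matrices induced on $H_1$ by orientation-preserving self-diffeomorphisms of $D^2 \times T^2$. Since the meridian class $[\partial D^2 \times \{\mathrm{pt}\}]$ is the unique primitive element (up to sign) that bounds in $D^2 \times T^2$, the group $\Gamma$ is precisely the stabilizer of the line $\mathbb{Z} e_1$; it consists of matrices fixing the meridian direction and acting on the remaining two coordinates by an arbitrary element of $\mathrm{GL}(2;\mathbb{Z})$ together with meridional shears. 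Since $N_0$ and $N_1$ play symmetric roles, the diffeomorphism type of $N_0 \cup_{f_A} N_1$ depends only on the double coset $\Gamma A \Gamma$. The attachment of $D^2 \times S^2$ via the fixed map $h = i \times \mathrm{id}_{S^1}$ is uniform in the data, so the same double coset controls the full Pao manifold $L(n;p,q;m)$.

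Next I would reduce $A(n;p,q;m)$ to a canonical form. The choice of auxiliary integers $(a,b,\alpha,\beta)$ satisfying $ap+bq=1$ and $\alpha n - \beta m = 1$ should be absorbed by operations in $\Gamma$, confirming that the double coset genuinely depends only on $(n;p,q;m)$. Then, using the coprimality conditions $\gcd(m,n) = \gcd(n,p,q) = 1$, I would carry out an explicit Smith-normal-form-like reduction: first simplify the entries of the first column by meridional shears on each side, then apply $\mathrm{GL}(2;\mathbb{Z})$ changes of basis on each $T^2$ factor to push the pair $(p,q)$ and the integer $m$ into a small set of representatives. Throughout, $n$ is preserved since it is detected by a topological invariant of the glued manifold, essentially the order of torsion in $H_1$ or an equivalent invariant of $\pi_1$.

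Finally I would verify that only the two classes represented by $L(n;0,1;1)$ and $L(n;1,1;1)$ survive, and that these are generically distinct via a mod-$2$ invariant such as the second Stiefel--Whitney class or a spin-structure obstruction. The main obstacle is the combinatorial matrix reduction itself: the allowed operations from $\Gamma$ are constrained, and carrying them out while tracking the coprimality and B\'ezout conditions is delicate. One must check not only that every $A(n;p,q;m)$ reduces to one of the two representatives, but also that no further reduction collapses the two canonical forms into a single class.
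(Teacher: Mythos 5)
This statement is quoted from Pao \cite{MR431231}; the paper you are working from gives no proof of it, so there is nothing internal to compare against. Judged on its own terms, your proposal is a reasonable opening move (turn the classification into a normal-form problem for the gluing matrix under the re-parametrizations that extend over the two sides), but as written it has a genuine gap and stops short of the actual content of the theorem.

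The gap is your symmetry claim. The Pao manifold is not $N_0\cup_{f_A}N_1$: before gluing in $N_1$ one performs the surgery $D^2\times S^2\cup_h\bigl(N_0-(\mathrm{int}(c(D^3))\times S^1)\bigr)$, and this destroys the symmetry between the two sides. In the surgered piece the circle $S_2^1$ becomes null-homotopic in addition to $\partial D^2$, so the subgroup of $\mathrm{GL}(3;\mathbb{Z})$ realized by self-diffeomorphisms of that piece is not your $\Gamma$ (the stabilizer of the single meridian line) but something closer to the stabilizer of a rank-two sublattice of bounding classes. This is not a technicality: that larger group on the $N_0$ side is precisely the source of the extra moves that collapse the three-parameter family $(p,q,m)$ down to two normal forms, and with $\Gamma A\Gamma$ on both sides your double cosets would be too fine. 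Relatedly, the heart of the theorem --- the explicit reduction of $A(n;p,q;m)$ to $A(n;0,1;1)$ or $A(n;1,1;1)$ under the correct left and right actions, using $\gcd(m,n)=\gcd(n,p,q)=1$ and the B\'ezout relations --- is only announced, not carried out, and you would also need to justify that realizability on $H_1(T^3)$ suffices (i.e.\ that isotopy classes of gluings of $T^3$ are detected by $\mathrm{GL}(3;\mathbb{Z})$ and that every matrix in the relevant subgroups is realized by a diffeomorphism extending over the corresponding piece). Finally, a small but telling point: $L_n$ and $L_n'$ are diffeomorphic whenever $n$ is odd, so no mod-$2$ invariant can separate the two normal forms in general; the theorem does not assert they are distinct, and your last step should not try to prove that they are.
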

We write $L(n; 0, 1; 1)$ and $L(n; 1, 1; 1)$ as $L_n$ and $L_n'$, for short, respectively. 
The 4-manifolds $L_n$ and $L_n'$ are diffeomorphic if and only if $n$ is odd and are not homotopy equivalent if $n$ is even \cite[Theorem V.2]{MR431231}. 
It is known \cite{MR431231} that $L_n$ is diffeomorphic to $S(L(n,k))$. 

Handle diagrams of $L_n$ and $L_n'$ are depicted in Figure \ref{fig:Pao} from \cite[Figure 21]{hayano2011genus}. 
We note that $\pi_1(L_n) \cong \pi_1(L_n') \cong \mathbb{Z}_{|n|}$. 

\subsection{4-manifolds constructed by Iwase}\label{subsec:iwasemfd}
In Section \ref{sec:property}, we calculate the homology group of $\tau_{K}$ for any 2-knot $K$. In this subsection, we recall 4-manifolds constructed by Iwase that have the same homology group as that of $\tau_{K}$. 
Iwase \cite{MR941924}, \cite{MR1091159} investigated the diffeomorphism types of 4-manifolds 
obtained by torus surgeries of $S^4$. 

Let $T$ be a submanifold in $S^4$ that is diffeomorphic to a torus $T^2$. 
We call $T$ a $T^2$-\textit{knot}. 
Let $k$ be a 1-knot in $S^3$ and $B^3$ an open $3$-ball embedded in the exterior $E(k)=S^3-\mathrm{int}(N(k))$, where $N(k)$ is a tubular neighborhood of $k$. 
We define $T^2$-knots $T(k)$ and $\widetilde{T}(k)$ as follows: 

\begin{eqnarray*}
(S^4, T(k))&=&((S^3, k)-B^3)\times S^1\cup_{\mathrm{id}_{S^2 \times S^1}}S^2\times D^2, \\
(S^4, \widetilde{T}(k))&=&((S^3, k)-B^3)\times S^1\cup_{\iota}S^2\times D^2. 
\end{eqnarray*}

Let $T_{p, q}$ denote the torus knot of $(p, q)$-type in $S^3$. 
Note that $T_{1,0}$ is the trivial knot $O$. 

\begin{dfn}[{\cite[Definition 2.2]{MR941924}}]
A $T^2$-knot $T$ in $S^4$ is said to be \textit{unknotted} if $T$ bounds a solid torus $S^1\times D^2$ in $S^4$. 
\end{dfn}

Let $T_0$ be the unknotted $T^2$-knot in $S^4$.  

\begin{dfn}[{\cite[Definition 2.4]{MR941924}}]
A $T^2$-knot $T$ in $S^4$ is called a \textit{torus $T^2$-knot} if $T$ is incompressibly embedded in $\partial N(T_0)$. 
\end{dfn}

Note that the torus $T^2$-knots $T(T_{1,0})$ and $\widetilde{T}(T_{1,0})$ are unknotted.
Recall that $m=\{*\} \times S^1 \times \{*\}$, $l=\{*\} \times \{*\} \times \partial D^2$ in $P_{1,1}=S^1 \times ST \cup H$ and $s=S^1 \times \{*\} \times \{*\}$ in $S^1 \times \partial ST=\partial N(T)$. 

\begin{dfn}[{\cite[Definition 3.2]{MR941924}}]
Let $T$ be a torus $T^2$-knot in $S^4$, $i: \partial N(T)\to \partial E(N(T))$ the natural identification and $h: \partial N(T)\to \partial N(T)$ a diffeomorphism such that
$$i\circ h([m]) =\alpha[m]+\beta[l]+\gamma[s]. $$
Then, a 4-manifold $M(T; \alpha, \beta, \gamma)=(S^4-\mathrm{int}(N(T)))\cup_{i\circ h}N(T)$ is called an {\it Iwase manifold of $(\alpha, \beta, \gamma)$-type along $T$}. 
\end{dfn}

For a torus $T^2$-knot $T$, the Iwase manifold $M(T; \alpha, \beta, \gamma)$ is a torus surgery along $T$ for $S^4$. 

Let $M(p, q, 0; \alpha, \beta, \gamma)$ and $M(p, q, q; \alpha, \beta, \gamma)$ be 4-manifolds obtained by the Dehn surgeries of $(\alpha, \beta, \gamma)$-type along $T(T_{p, q})$ and $\widetilde{T}(T_{p, q})$, respectively.  
It is known \cite[Proposition 2.9]{MR941924} that any torus $T^2$-knot is isotopic to either $T(T_{p,q})$, $\widetilde{T}(T_{p,q})$ or the unknotted $T^2$-knot $T(T_{1,0})$. 
Thus, $M(T; \alpha, \beta, \gamma)$ is diffeomorphic to either $M(p,q,0; \alpha, \beta, \gamma)$, $M(p,q,q; \alpha, \beta, \gamma)$ ($1<p<q$, $\mathrm{gcd}(p,q)=1$) or $M(1,0,0; \alpha, \beta, \gamma)$. 

It is known \cite[Section 6]{MR1091159} that for $\alpha \not= 0$ and $r=0, q$, 
$$
H_n(M(p, q, r; \alpha, \beta, \gamma)) \cong 
\begin{cases}
   \mathbb{Z}&(n=0, 4), \\
   \mathbb{Z}_{|\alpha|}&(n=1, 2), \\
   0&(n=3).  
\end{cases}
$$

\cite[Proposition 7.1]{MR1091159} says that $\pi_1(M(p, q, r; \alpha, \beta, \gamma))$ is isomorphic to $\pi_1(M(p, q, r; \alpha, \beta, 0))$ for any $\gamma$. 
Let $S^3_{\alpha/\beta}(T_{p,q})$ be the 3-manifold obtained by the Dehn surgery for $S^3$ of $(\alpha, \beta)$-type along $T_{p, q}$. 
It is known \cite[Theorem 1.3 (iv)]{MR941924} that $M(p, q, r; \alpha, \beta, 0)$ is diffeomorphic to $S(S^3_{\alpha/\beta}(T_{p,q}))$ if $r=0$ and $\widetilde{S}(S^3_{\alpha/\beta}(T_{p,q}))$ if $r=q$. 

Recall that $e_0 : S^1 \times ST \to X$ is an embedding and $g_0 : \partial(S^1 \times ST ) \to \partial E((S^1 \times ST)_{e_0})$ is a diffeomorphism. 
If $e_0=e|_{S^1 \times ST}$, then we see that the pochette surgery $X(e,\alpha/\beta, 0)$ is the torus surgery with $e_0$ and $(g_0)_*([m])=\alpha[m]+\beta[l]$ (see \cite[Section 3]{iwase20044}). 
Thus, $X(e, \alpha/\beta, 0)$ is diffeomorphic to $S(S^3_{\alpha/\beta}(T_{p,q}))$ if $e_0(S^1 \times S^1 \times \{*\})=T(T_{p,q})$ and $\widetilde{S}(S^3_{\alpha/\beta}(T_{p,q}))$ if $e_0(S^1 \times S^1 \times \{*\})=\widetilde{T}(T_{p,q})$. 

\section{Properties of the non-simply connected Price twist for the 4-sphere}\label{sec:property}
In this section, we study some properties of the Price twist $\tau_K$. First, we show a relationship between Price twists and pochette surgeries.

Let $K$ be a $2$-knot in $X$ and $e_K: P_{1,1} \to X$ the embedding that the cord is trivial and the $2$-knot $(S_{1,1})_{e_K}$ in $(P_{1,1})_{e_K}$ is equal to $K$. 

\begin{prop}\label{prop:pricepochette}
The Price twist for $S^4$ on a $P^2$-knot of Kinoshita type is a special case of pochette surgery. 
Namely, the Price twists $S^4$, $\Sigma_{K\#P_0}(S^4)$ and $\tau_{K\#P_0}(S^4)$ are diffeomorphic to the pochette surgeries $S^4(e_K,1/0,0)$, $S^4(e_K,1/0,1)$ and $S^4(e_K,2,0)$, respectively.
\end{prop}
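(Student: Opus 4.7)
The plan is to establish the three claimed identifications in turn. The identification $S^4 \cong S^4(e_K,1/0,0)$ is immediate from the convention, recorded just before Proposition \ref{prop:homology pochette surgery of $4$-sphere}, that $(p/q,\varepsilon)=(1/0,0)$ is the trivial pochette surgery. For $\Sigma_{K \# P_0}(S^4) \cong S^4(e_K,1/0,1)$ I would combine two facts already in the paper: by the construction of pochette surgery recalled after Theorem \ref{thm:three conditions}, a surgery of the form $S^4(e,1/0,1)$ is nothing but the Gluck twist along the sphere $(S_{1,1})_e$; and by \cite[Theorem 0.1]{MR1721575}, recalled in the remark preceding our notation, the Price twist $\Sigma_{K \# P_0}(S^4)$ is diffeomorphic to the Gluck twist $\Sigma_K(S^4)$. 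Since $(S_{1,1})_{e_K}=K$ by the very definition of $e_K$, the two statements together give the claim.

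The main case is the third identification $\tau_{K \# P_0}(S^4) \cong S^4(e_K,2,0)$, which I would verify by explicit handle calculus. First, since the cord of $e_K$ is trivial, I would place $(P_{1,1})_{e_K}$ inside a standard handle diagram of $S^4$ as a dotted $1$-handle (arising from a thickening of the cord) together with a $0$-framed $2$-handle whose cocore sphere realises $(S_{1,1})_{e_K}=K$; by Remark \ref{rem:link0} the triviality of the cord forces the linking number $\ell$ to be zero, which is what allows this standard placement. Second, I would read off the handle diagram of $S^4(e_K,2,0)$ by replacing this $2$-handle with the $2$-handle of the re-glued pochette, attached along a curve representing $2[m]+[l] \in H_1(\partial(P_{1,1})_{e_K})$ and carrying an even framing (the mod $2$ framing $\varepsilon=0$). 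Third, after a handle slide of the new $2$-handle over itself, which accounts for the factor of $2$ on the meridian, and an isotopy through the $1$-handle, which accounts for the $[l]$-summand, I expect the diagram to reduce exactly to Figure \ref{fig:Pricetwist3}: the dotted $K$-circle together with the $\pm 2$-framed curve and $0$-framed curve from the $P_0$-summand as in Figure \ref{fig:exterior}, the added $0$-framed unknot of the Price twist, and the $k+1$ $3$-handles and the $4$-handle.

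The main obstacle is the bookkeeping in the last step: carefully tracking the new attaching curve and its framing through the required handle moves, and in particular verifying that the choice $\varepsilon=0$ really produces the $0$-framed unknot of Figure \ref{fig:Pricetwist3} rather than an odd-framed curve, which would instead reproduce the Gluck-twist case $\Sigma_{K \# P_0}(S^4)$ of Figure \ref{fig:Pricetwist2}. As an independent consistency check that I am on the right track, Proposition \ref{prop:homology pochette surgery of $4$-sphere}(i) applied with $\ell=0$, $p=2$, $q=1$ gives $H_1(S^4(e_K,2,0)) \cong \mathbb{Z}_2$, which is the first homology group that a direct Mayer--Vietoris computation produces for $\tau_{K \# P_0}(S^4)$ as well.
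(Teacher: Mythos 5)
Your proposal is correct and follows essentially the same route as the paper: the first two identifications are argued identically (trivial surgery convention, and Gluck twist via \cite[Theorem 0.1]{MR1721575}), and for the third you use the same strategy of placing $(P_{1,1})_{e_K}$ in the standard handle diagram of $S^4$ via the trivial cord and then drawing the diagram of the $(2,0)$-surgery and matching it with Figure \ref{fig:Pricetwist3}. The only difference is that the paper discharges the handle-diagram bookkeeping you flag as the remaining obstacle by citing \cite[Proposition 1]{zbMATH07751599} and \cite[Subsection 2F]{MR4619857}, which give exactly the recipe you propose to carry out by hand.
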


\begin{proof}
Since the pochette surgery for $(e_K, 1/0, 0)$ is nothing but the trivial surgery along the $2$-knot $(S_{1,1})_{e_K}$ in $(P_{1,1})_{e_K}$, the trivial Price twist $S^4$ is diffeomorphic to $S^4(e_K,1/0,0)$. 

Since the Price twist $\Sigma_{K\#P_0}(S^4)$ is diffeomorphic to the Gluck twist along the $2$-knot $K$ with normal Euler number 0 \cite[Theorem 4.1]{MR1721575} and the pochette surgery for $(e_K, 1/0, 1)$ is nothing but the Gluck twist along the $2$-knot $(S_{1,1})_{e_K}$ in $(P_{1,1})_{e_K}$, the Price twist $\Sigma_{K\#P_0}(S^4)$ is diffeomorphic to $S^4(e_K,1/0,1)$. 

If the cord of $e_K$ is trivial, then a handle diagram of $S^4$ can be taken as in Figure \ref{fig: trivial case S4} and the manifold $(P_{1,1})_{e_K}$ consists of the $2$-handle presented by the leftmost $0$-framed unknot, the $3$-handle and the $4$-handle in Figure \ref{fig: trivial case S4}. 

By \cite[Proposition 1]{zbMATH07751599} and the argument of \cite[Subsection 2F]{MR4619857}, we see that a handle diagram of the pochette surgery for $(e_K, 2, 0)$ is shown in Figure \ref{fig:Pricetwist3} from Figure \ref{fig: trivial case S4}. 
Therefore, the Price twist $\tau_{K}$ is diffeomorphic to $S^4(e_K,2,0)$. 
\end{proof}

\begin{figure}
    \centering
\begin{overpic}[scale=0.6]
{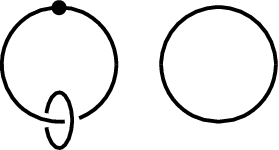}
\put(-8,41){$K$}
\put(27,0){$0$}

\put(97,41){$0$}

\put(115,32){$\cup$ $k+1$ $3$-handles}
\put(159.5,19){$4$-handle}
\end{overpic}
\caption{A handle diagram of the $4$-manifold $S^4$.}
\label{fig: trivial case S4}
\end{figure}

\begin{rem}
In fact, Proposition \ref{prop:pricepochette} can be generalized to any $4$-manifold.
Namely, if $K$ is a $2$-knot with $e(K)=0$ and $P_0^{\pm2}$ is the unknotted $P^2$-knot with $e(P_0^{\pm2})=\pm2$ in a 4-manifold $X$, 
then the Price twists $X$, $\Sigma_{K\#P_0^{\pm2}}(X)$ and $\tau_{K\#P_0^{\pm2}}(X)$ are diffeomorphic to the pochette surgeries $X(e_K,1/0,0)$, $X(e_K,1/0,1)$ and $X(e_K,2,0)$, respectively. 
This follows from the fact that the handle diagrams for $S^4$ in Figures \ref{fig:EKexterior}, \ref{fig:exterior}, \ref{fig:Pricetwist1}, \ref{fig:Pricetwist2} and \ref{fig:Pricetwist3} can also be interpreted as part of handle diagrams for a $4$-manifold $X$, a 2-knot $K$ in $X$ with $e(K)=0$ and the unknotted $P^2$-knot $P_0^{\pm2}$ in $X$ with $e(P_0^{\pm2})=\pm2$. 
\end{rem}

\begin{cor}\label{cor:homology}
The integral homology group $H_n(\tau_K)$ of $\tau_{K}$ is 
$$
H_n(\tau_K) \cong
\begin{cases}
   \mathbb{Z}&(n=0, 4), \\
   \mathbb{Z}_{2}&(n=1, 2), \\
   0&(n=3).  
\end{cases}
$$ 
In particular, the Price twist $\tau_K$ is not an integral homology $4$-sphere, but a rational homology $4$-sphere.
\end{cor}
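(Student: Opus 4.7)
The plan is to deduce the homology of $\tau_K$ directly from the identification of $\tau_K$ with a pochette surgery on $S^4$ already established in Proposition \ref{prop:pricepochette}, combined with the general homology computation for pochette surgeries of $S^4$ in Proposition \ref{prop:homology pochette surgery of $4$-sphere}.

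First, I invoke Proposition \ref{prop:pricepochette} to rewrite $\tau_K = \tau_{K\#P_0}(S^4)$ as the pochette surgery $S^4(e_K, 2, 0)$, so the slope is $p/q = 2/1$. Next, I need to pin down the linking number $\ell$ of the embedding $e_K$. By construction $e_K$ was defined so that its cord is trivial, so Remark \ref{rem:link0} lets me arrange $\ell = 0$. With $p=2$, $q=1$, $\ell=0$, the quantity $p + q\ell = 2$ is nonzero, so case (i) of Proposition \ref{prop:homology pochette surgery of $4$-sphere} applies and gives $H_n(\tau_K) \cong \mathbb{Z}$ for $n = 0, 4$, $H_n(\tau_K) \cong \mathbb{Z}_{2}$ for $n = 1, 2$, and $H_3(\tau_K) = 0$, which is exactly the claimed calculation.

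Finally, from the explicit homology I read off the last sentence: since $H_1(\tau_K) \cong \mathbb{Z}_2 \neq 0$, the manifold $\tau_K$ is not an integral homology $4$-sphere, while tensoring with $\mathbb{Q}$ kills all the torsion groups, so $H_n(\tau_K; \mathbb{Q}) \cong H_n(S^4; \mathbb{Q})$ and $\tau_K$ is a rational homology $4$-sphere.

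There is no real obstacle here: the work has been done in Proposition \ref{prop:pricepochette} and Proposition \ref{prop:homology pochette surgery of $4$-sphere}, and the only substantive check is that the embedding $e_K$ really does have trivial cord, which is part of its definition. Thus the proof is essentially a one-line citation after the appropriate bookkeeping of the parameters $(p, q, \ell)$.
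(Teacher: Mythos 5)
Your proof is correct and follows exactly the paper's own argument: identify $\tau_K$ with $S^4(e_K,2,0)$ via Proposition \ref{prop:pricepochette}, use the triviality of the cord and Remark \ref{rem:link0} to take $\ell=0$, and apply case (i) of Proposition \ref{prop:homology pochette surgery of $4$-sphere} with $p+q\ell=2$. The only addition is your explicit justification of the final sentence, which the paper leaves implicit.
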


\begin{proof}
From Proposition \ref{prop:pricepochette}, the Price twist $\tau_K$ is diffeomorphic to the pochette surgery $S^4(e_K,2,0)$. 
From Proposition \ref{prop:homology pochette surgery of $4$-sphere}, we obtain the desired result since the cord of the embedding $e_K$ is trivial and its linking number can be zero by Remark \ref{rem:link0}. 
\end{proof}

We next study a diffeomorphism type of the Price twist $\tau_K$ for the trivial case. The lens space of $(p,q)$-type is denoted by $L(p,q)$.

\begin{prop}\label{prop:pao}
For the unknotted $2$-knot $O$ in $S^4$, $\tau_O$ is diffeomorphic to $S(L(2, 1))$. 
\end{prop}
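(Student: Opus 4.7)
The plan is to combine Proposition~\ref{prop:pricepochette} with the dictionary between pochette surgeries on an unknotted pochette and torus surgeries on the unknotted $T^{2}$-knot recalled at the end of Subsection~\ref{subsec:iwasemfd}. Concretely, I would first rewrite the Price twist as a pochette surgery, then identify the relevant pochette surgery with a torus surgery on $T(T_{1,0})$, and finally read off the resulting spun $3$-manifold from the $2/1$-Dehn surgery on the unknot in $S^{3}$.

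First I would apply Proposition~\ref{prop:pricepochette} to obtain $\tau_{O}\cong S^{4}(e_{O},2,0)$, where the embedding $e_{O}\colon P_{1,1}\to S^{4}$ has trivial cord and satisfies $(S_{1,1})_{e_{O}}=O$. Using the decomposition $P_{1,1}=S^{1}\times ST\cup H$ fixed in Subsection~\ref{subsec:pochettesurgery}, I set $e_{0}:=e_{O}|_{S^{1}\times ST}$. Because $O$ is unknotted and the cord of $e_{O}$ is trivial, the pochette $(P_{1,1})_{e_{O}}$ is standardly embedded in $S^{4}$, and so $e_{0}$ sends the torus $S^{1}\times S^{1}\times\{*\}$ onto an unknotted, incompressibly embedded torus in $\partial N(T_{0})$, that is, onto the torus $T^{2}$-knot $T(T_{1,0})$.

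With this identification at hand, the pochette surgery $S^{4}(e_{O},2/1,0)$ coincides with the torus surgery $e_{0}$ with $(g_{0})_{*}([m])=2[m]+[l]$ and $\gamma=0$. By the final paragraph of Subsection~\ref{subsec:iwasemfd}, such a torus surgery is diffeomorphic to $S(S^{3}_{2/1}(T_{1,0}))$. Since $T_{1,0}$ is the unknot and $2$-surgery on the unknot yields the lens space $L(2,1)$, I conclude $\tau_{O}\cong S(L(2,1))$.

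The main obstacle is verifying the standardness step: namely that the embedding $e_{O}$, with trivial cord and unknotted $2$-knot $O$, really does embed $S^{1}\times S^{1}\times\{*\}$ as $T(T_{1,0})$ rather than as some other (possibly knotted) torus, and that the slope $p/q=2$ on the pochette translates to slope $(\alpha,\beta)=(2,1)$ on this torus with $\gamma=0$. This should follow from a direct handle-picture comparison using Figure~\ref{fig: trivial case S4}, together with the triviality of the cord (so that the linking number can be taken to be zero by Remark~\ref{rem:link0}) to pin down the framings.
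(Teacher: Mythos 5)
Your argument is correct in outline, but it takes a genuinely different route from the paper. The paper's proof, after the common first step $\tau_O\cong S^4(e_O,2,0)$ via Proposition~\ref{prop:pricepochette}, proceeds entirely by explicit handle calculus: it draws a handle diagram of $S^4(e_O,2,0)$ (Figure~\ref{fig:trivial tau}), builds a handle diagram of $S(L(2,1))$ by modifying the standard diagram of $L(2,1)\times S^1$ (Figures~\ref{fig:prespin21}--\ref{fig:anotherspin21}), and matches the two diagrams using \cite[Figure 5.9]{gompf20234}. You instead invoke the pochette-surgery/torus-surgery dictionary from the end of Subsection~\ref{subsec:iwasemfd} to identify $S^4(e_O,2/1,0)$ with the $(2,1,0)$-torus surgery along the core torus of $(S^1\times ST)_{e_O}$, and then read off $S(S^3_{2/1}(T_{1,0}))=S(L(2,1))$. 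This is legitimate, and the ``standardness'' step you flag is the only real content to supply: it follows from the fact (used by the paper in the proof of Proposition~\ref{prop:the case where core sphere is trivial}, via the proof of \cite[Theorem 1.5]{MR4619857}) that for $(S_{1,1})_e=O$ every cord is isotopic to the trivial one, so $e_O$ is the standard pochette embedding and its core torus is the unknotted $T^2$-knot; the residual ambiguity between $T(T_{1,0})$ and $\widetilde{T}(T_{1,0})$ is harmless since $S(L(p,q))\cong\widetilde{S}(L(p,q))$. What each approach buys: yours is shorter, more conceptual, and generalizes at once to all slopes (it is essentially the special case $p/q=2$, $\varepsilon=0$ of the paper's own Proposition~\ref{prop:the case where core sphere is trivial} together with the remark following it), at the cost of leaning on Iwase's and Pao's classification results; the paper's computation is self-contained and, more importantly, produces the explicit handle diagrams of $\tau_O$ and of the Pao manifolds that are reused throughout Sections~\ref{sec:diffeo type} and \ref{sec:main theorem for pochette}.
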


\begin{proof}
By Proposition \ref{prop:pricepochette}, the Price twist $\tau_O$ is diffeomorphic to $S^4(e_O,2,0)$. 
Using the argument in \cite[Subsection 2F]{MR4619857}, a handle diagram of $S^4(e_O,2,0)$ is shown in Figure \ref{fig:trivial tau}. 

A handle diagram of the closed $4$-manifold $L(2,1) \times S^1$ is depicted in Figure \ref{fig:prespin21} by \cite[Subsections 4.6 and 5.4]{gompf20234}. 
By the definition of $S(M)$, the spin $S(L(2,1))$ is obtained by removing the $3$-handle $D^3 \times D^1$ and the $4$-handle from $L(2,1) \times S^1$, then attaching a $0$-framed meridian to the $1$-handle that appears when we construct $L(2,1) \times S^1$ from $L(2,1)$, and finally gluing the $4$-handle. 
Therefore, a handle diagram of $S(L(2,1))$ is depicted in Figure \ref{fig:spin21}. 
By canceling a $1$-handle/$2$-handle pair, we obtain the handle diagram in Figure \ref{fig:anotherspin21}. 
From handle calculus of \cite[Figure 5.9]{gompf20234}, this diagram is exactly the same as in Figure \ref{fig:trivial tau}. 
This completes the proof. 
\end{proof}

\begin{figure}
    \centering
\begin{overpic}[scale=0.6]
{pao.eps}
\put(46,28){$2$}
\put(85,0){$0$}
\put(99,41){$0$}
\put(135,32){$\cup$ $3$-handle}
\put(147,19){$4$-handle}
\end{overpic}
\caption{A handle diagram of the Price twist $\tau_O$.}
\label{fig:trivial tau}
    \centering
\begin{overpic}[scale=0.6]
{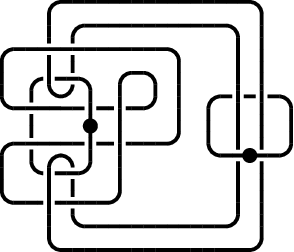}
\put(48,26){$2$}
\put(92,4){$0$}
\put(135,32){$\cup$ $2$ $3$-handles}
\put(157,19){$4$-handle}
\end{overpic}
\caption{A handle diagram of the closed manifold $L(2,1) \times S^1$.}
\label{fig:prespin21}
    \centering
\begin{overpic}[scale=0.6]
{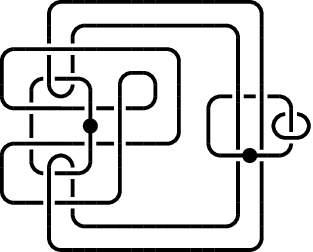}
\put(48,24){$2$}
\put(90,4){$0$}
\put(100,40){$0$}
\put(135,32){$\cup$ $3$-handle}
\put(147,19){$4$-handle}
\end{overpic}
\caption{A handle diagram of the spun 4-manifold $S(L(2,1))$.}
\label{fig:spin21}
    \centering
\begin{overpic}[scale=0.6]
{pao.eps}
\put(46,27.5){$2$}
\put(85,0){$0$}
\put(99,41){$2$}
\put(135,32){$\cup$ $3$-handle}
\put(147,19){$4$-handle}
\end{overpic}
\caption{Another handle diagram of the spun 4-manifold $S(L(2,1))$.}
\label{fig:anotherspin21}
\end{figure}

Note that in the proof of Proposition \ref{prop:pao}, the handle diagram of $\tau_O$ shown in Figure \ref{fig:trivial tau} is constructed via pochette surgery. However, we can also construct the handle diagram of $\tau_O$ directly since $E(O)$ is described by a dotted circle.

In the following, we consider the Price twist $\tau_{S(T_{2,2n+1})}$ along the $P^2$-knot $S(T_{2,2n+1}) \# P_0$ in $S^4$. 
A handle diagram of $\tau_{S(T_{2,2n+1})}$ is depicted in Figure \ref{fig:S(T_{2,2n+1})}. 
In particular, the $\tau$-handle diagrams for $n=1$ and $2$ are drawn in Figures \ref{fig:S(T_{2,3})} and \ref{fig:S(T_{2,5})}, respectively.

\begin{figure}[htbp]
    \centering
\begin{overpic}[scale=0.6
]
{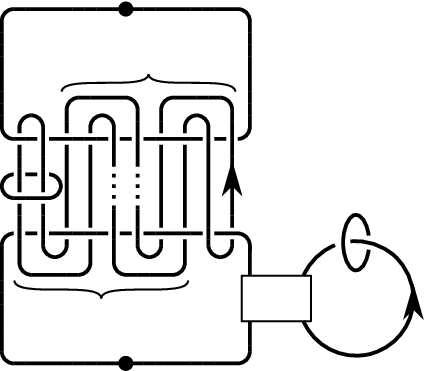}
    \put(-5,42){$0$}
    \put(60,45){$0$}
    \put(84,39){$0$}
    \put(95,26){$0$}
    \put(63,15){$2$}
    \put(32,73){$m$}
    \put(20,11){$m$}

    \put(3,78){$y$}
    \put(52,6){$x$}

    \put(113,49){$\cup$ $2$ $3$-handles}
    \put(128,40){$4$-handle}
    \end{overpic}
\caption{A handle diagram of the Price twist $\tau_{S(T_{2,2n+1})}$, where $m=n$ if $n \ge 0$ and $m=-n-1$ if $n \le -1$. }
\label{fig:S(T_{2,2n+1})}
\end{figure}

\begin{figure}[htbp]
    \centering
\begin{overpic}[scale=0.6
]
{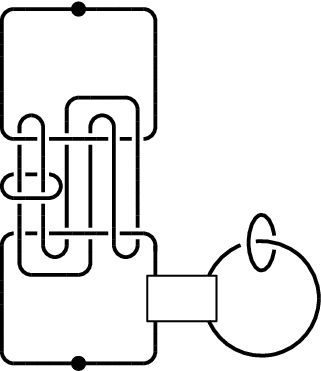}
    \put(-7,47){$0$}
    \put(42,50){$0$}
    \put(68,46){$0$}
    \put(82,31){$0$}
    \put(46,17){$2$}

    \put(113,49){$\cup$ $2$ $3$-handles}
    \put(130,38.5){$4$-handle}
    \end{overpic}
\caption{A handle diagram of the Price twist $\tau_{S(T_{2,3})}$.}
\label{fig:S(T_{2,3})}
\end{figure}

\begin{figure}[htbp]
    \centering
\begin{overpic}[scale=0.6
]
{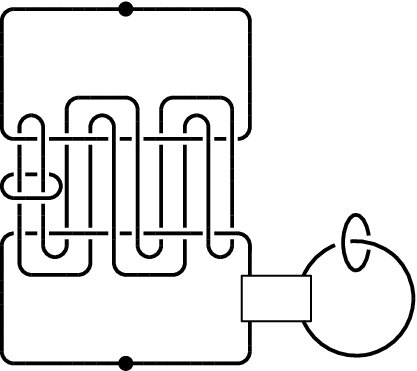}
    \put(-5,42){$0$}
    \put(60,45){$0$}
    \put(84,39){$0$}
    \put(98,26){$0$}
    \put(64,15){$2$}

    \put(113,49){$\cup$ $2$ $3$-handles}
    \put(128,40){$4$-handle}
    \end{overpic}
\caption{A handle diagram of the Price twist $\tau_{S(T_{2,5})}$.}
\label{fig:S(T_{2,5})}
\end{figure}

Let $D_m$ be the dihedral group of order $2m$, where $m$ is any positive integer. 
Recall that $D_m$ has the presentation
$$\langle a,b \mid a^2=1, (ab)^2=1, b^{m}=1\rangle$$
for each positive integer $m$. 
Note that $D_1$ is the finite cyclic group $\mathbb{Z}_2$ of order $2$.
We compute the fundamental group of $\tau_{S(T_{2,2n+1})}$ here. 
From the construction of spun 2-knots, we observe that $S(T_{2,m})$ is isotopic to $S(T_{2,-m})$ for any odd integer $m$. 
Therefore, $\tau_{S(T_{2,m})}$ is diffeomorphic to $\tau_{S(T_{2,-m})}$ for any odd integer $m$. 

We can obtain a presentation of the fundamental group $\pi_1(X)$ from a handle diagram of a 4-manifold $X$.  

In the relations of the presentation of $\pi_1(X)$, we adopt the convention that if a framed knot in this handle diagram passes a dotted circle from top to bottom, it contributes a generator corresponding to the dotted circle, and if the framed knot passes from bottom to top, it contributes the inverse of that generator. 

\begin{thm}\label{thm:pi1oftaut22n+1}
The fundamental group $\pi_1(\tau_{S(T_{2,2n+1})})$ is isomorphic to the dihedral group $D_{\abs{2n+1}}$.
\end{thm}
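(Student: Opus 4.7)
The plan is to read off a presentation of $\pi_1(\tau_{S(T_{2,2n+1})})$ directly from the handle diagram in Figure \ref{fig:S(T_{2,2n+1})}, and then apply Tietze transformations until it matches the standard presentation of $D_{\abs{2n+1}}$. Since $3$-handles and the $4$-handle do not contribute to $\pi_1$, only the two dotted circles (giving the generators $x,y$ labeled in the figure) and the five $2$-handles (each contributing one relation, obtained by recording with sign the passages of its attaching circle through the dotted circles) need to be tracked.

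I expect the five relations to split naturally into two packets. The two $0$-framed $2$-handles coming from the exterior $E(S(T_{2,2n+1}))$ portion of the diagram should yield the $2$-bridge presentation of the $(2,2n+1)$-torus knot group, namely $(xy)^n x = y(xy)^n$, which agrees with $\pi_1(S^4 \setminus S(T_{2,2n+1})) \cong \pi_1(S^3 \setminus T_{2,2n+1})$. The $2$-framed circle from the $P_0$-summand together with the additional $0$-framed Price twist meridian should force $x^2 = 1$ (equivalently $y^2 = 1$, since the two meridians are conjugate). This is consistent with Proposition \ref{prop:pricepochette}: the pochette surgery $S^4(e_K,2,0)$ has slope $2/1$, and the trivial-cord hypothesis (Remark \ref{rem:link0}) makes the longitude class null-homotopic in the exterior, so the surgery relation $g_*([m]) = 2[m]+[l] = 1$ reduces to $m^2 = 1$. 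Given the presentation $\langle x, y \mid (xy)^n x = y(xy)^n,\ x^2 = 1,\ y^2 = 1\rangle$, set $r = xy$. Then $(xy)(yx) = xy^2x = x^2 = 1$, so $yx = r^{-1}$ and hence $y = r^{-1}x$; combined with the tautology $y = xr$ (which uses $x^2=1$), this gives the dihedral relation $xrx = r^{-1}$. Substituting into the torus knot relation produces $r^{n+1} = xr^n x = r^{-n}$, i.e.\ $r^{\abs{2n+1}} = 1$, which together with $x^2 = 1$ is the standard presentation of $D_{\abs{2n+1}}$.

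The main obstacle will be the bookkeeping in the first step: one must traverse the attaching circles through the $m$-fold twisting region of Figure \ref{fig:S(T_{2,2n+1})} (where $m = n$ for $n \ge 0$ and $m = -n-1$ for $n \le -1$) while keeping track of the sign conventions for each crossing of a dotted circle, in order to verify that the words read off are indeed the torus knot word $(xy)^n x (yx)^{-n} y^{-1}$ (up to conjugacy and cyclic reordering) and the meridian-squared word. Once the words are pinned down, the algebraic reduction to $D_{\abs{2n+1}}$ is routine, and by the remark preceding the theorem statement that $\tau_{S(T_{2,2n+1})}$ is diffeomorphic to $\tau_{S(T_{2,-2n-1})}$, it suffices to carry out the verification for $n \ge 0$.
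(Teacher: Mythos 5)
Your proposal is correct and follows the same overall strategy as the paper---read a presentation of $\pi_1$ off the handle diagram in Figure \ref{fig:S(T_{2,2n+1})} and simplify by Tietze transformations---but the two arguments diverge in the middle and at the end in ways worth noting. The paper's first simplified presentation is $\langle x,y \mid x^2=1,\ (yx^{-1})^n y (xy^{-1})^n x=1\rangle$, whose second relation conjugates $y$ to $x^{-1}$ rather than to $x$; this is Tietze-equivalent to your expected presentation $\langle x,y\mid (xy)^nx=y(xy)^n,\ x^2=1\rangle$ (the discrepancy evaporates once $x^2=1$ is imposed), but you should be prepared for the diagram's sign conventions not to hand you the textbook $2$-bridge word on the nose. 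From there the paper performs the substitution $a=(xy)^nx$, $b=y^{-1}x^{-1}$ and a long chain of Tietze moves to reach $\langle a,b\mid a^2=1,(ab)^2=1,b^{2n+1}=1\rangle$, whereas your reduction via $r=xy$ (giving $xrx=r^{-1}$ and $r^{2n+1}=1$ in three lines) is shorter and, I think, cleaner; your framing of the group as the $2$-bridge torus knot group modulo $\langle\langle \mu^2\rangle\rangle$ is a genuine conceptual gain that the paper never makes explicit. Two small inaccuracies in your accounting: the relation $x^2=1$ comes from the $2$-framed circle alone (it runs twice over a dotted circle, reflecting $N(P_0)$ being the twisted disk bundle over $\mathbb{R}P^2$), while the additional $0$-framed Price-twist circle is a nullhomotopic meridian of a $2$-handle and contributes no relation at all---this last fact is exactly what makes $\pi_1(\tau_K)\cong\pi_1(S^4\setminus(K\#P_0))$ and hence validates your ``knot group mod meridian-squared'' shortcut, so it deserves to be stated rather than folded into a homological remark about the pochette slope. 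With the diagram bookkeeping actually carried out (which is the substance of the paper's first displayed computation), your argument goes through.
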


\begin{proof}
It suffices to show the statement in the case where $n \ge 0$.
By Figure \ref{fig:S(T_{2,2n+1})} and Tietze transformations, we obtain 
\begin{eqnarray*}
\pi_1(\tau_{S(T_{2,2n+1})})&\cong&\langle x,y \mid x^2=1, (yx^{-1})^ny(xy^{-1})^nx=1\rangle\\
&\cong&\langle x,y \mid x^2=1, (yx^{-1})^ny=x^{-1}(yx^{-1})^n\rangle\\
&\cong&\langle x,y \mid x^2=1, x^{-1}(yx^{-1})^n=(yx^{-1})^ny\rangle\\
&\cong&\langle x,y \mid x^2=1, (x^{-1}y)^nx^{-1}=y(x^{-1}y)^n\rangle \\
&\cong&\langle x,y \mid x^2=1, (xy)^nx=x^{-1}(xy)^{n+1}\rangle.
\end{eqnarray*} 
Note that in the third isomorphism, we swap the left- and right-hand sides of the second relation in the second finite presentation from the top.

Let $a=(xy)^nx$ and $b=y^{-1}x^{-1}$. Then, we get $x=b^na$ and $y=a^{-1}b^{-n-1}$. 
Thus, we have
$$\langle x,y \mid x^2=1, (xy)^nx=x^{-1}(xy)^{n+1} \rangle=\langle a,b \mid (b^na)^2=1, a^2b^{2n+1}=1\rangle.$$
Therefore, by Tietze transformations,
\begin{eqnarray*}
&&\langle a,b \mid (b^na)^2=1, a^2b^{2n+1}=1\rangle\\
&=&\langle a,b \mid (b^na)^2=1, (b^na)^2=1, a^2b^{2n+1}=1 \rangle\\
&=&\langle a,b \mid (b^na)^2=1, (ab^n)^2=1, b^{n+1}a=ab^{n} \rangle\\
&=&\langle a,b \mid b^nab^n=a^{-1}, (ab^{n+1})^2=1, b^{n+1}a=ab^{n} \rangle\\
&=&\langle a,b \mid b^nab^n=a^{-1}, ba^{-1}ba=1, b^{n+1}a=ab^{n} \rangle\\
&=&\langle a,b \mid b^nab^n=a^{-1}, bab=a, b^{n+1}a=ab^{n} \rangle\\
&=&\langle a,b \mid a=a^{-1}, bab=a, b^{n+1}a=ab^{n} \rangle\\
&=&\langle a,b \mid a^2=1, bab=a, b^{n+1}a=ab^{n} \rangle\\
&=&\langle a,b \mid a^2=1, ab=b^{-1}a, b^{2n+1}=1\rangle\\
&=&\langle a,b \mid a^2=1, (ab)^2=1, b^{2n+1}=1\rangle\\
&\cong& D_{2n+1}.
\end{eqnarray*} 
This completes the proof. 
\end{proof}

Note that in fact, we can omit the latter Tietze transformations (see \cite[p.11]{MR562913}).

We recalled in Subsection \ref{subsec:spinmfd} that $S(M)$ is a rational homology 4-sphere if $M$ is a rational homology 3-sphere. Moreover, Proposition \ref{prop:pao} says that $\tau_O$ is diffeomorphic to $S(L(2,1))$, and also to $\widetilde{S}(L(2,1))$ (see Subsection \ref{subsec:spinmfd}). 
Thus, it is natural to compare $\tau_K$ with $S(M)$ and $\widetilde{S}(M)$.

\begin{cor}\label{cor:not spin case}
The Price twists $\tau_{S(T_{2,2n+1})}$ and $\tau_{S(T_{2,2m+1})}$ are not homotopy equivalent to each other if $\abs{2n+1} \neq \abs{2m+1}$. In particular, when $n \neq -1,0$, $\tau_{S(T_{2,2n+1})}$ is homotopy equivalent to neither $S(M)$ nor $\widetilde{S}(M)$ for any closed $3$-manifold $M$.
\end{cor}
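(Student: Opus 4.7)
The plan is to use Theorem \ref{thm:pi1oftaut22n+1} together with the $\pi_1$-invariance of homotopy equivalence to reduce both assertions to group-theoretic statements about dihedral groups. For the first assertion, a hypothetical homotopy equivalence $\tau_{S(T_{2,2n+1})}\simeq\tau_{S(T_{2,2m+1})}$ would give $D_{\abs{2n+1}}\cong D_{\abs{2m+1}}$; since $D_k$ has order $2k$, comparing orders immediately forces $\abs{2n+1}=\abs{2m+1}$. This step is routine.

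For the second assertion I would argue by contradiction. Suppose $\tau_{S(T_{2,2n+1})}\simeq S(M)$ for some closed $3$-manifold $M$ (the case of $\widetilde{S}(M)$ is handled identically, using $\pi_1(\widetilde{S}(M))\cong\pi_1(M)$ from Subsection \ref{subsec:spinmfd}). Combining $\pi_1(S(M))\cong\pi_1(M)$ with Theorem \ref{thm:pi1oftaut22n+1} yields $\pi_1(M)\cong D_{\abs{2n+1}}$, which is finite. The next step is to pass to the universal cover $\widetilde{M}$: it is a closed simply connected $3$-manifold, hence diffeomorphic to $S^3$ by the Poincar\'e conjecture, so $D_{\abs{2n+1}}$ must act freely on $S^3$.

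To close out the argument I would invoke Milnor's classical theorem that in any finite group acting freely on a sphere every subgroup of order $2p$ (with $p$ prime) must be cyclic. Since $n\neq -1,0$, the order $\abs{2n+1}$ is an odd integer at least $3$ and therefore admits an odd prime divisor $p$; a reflection in $D_{\abs{2n+1}}$ together with a rotation of order $p$ generates a non-cyclic dihedral subgroup of order $2p$, contradicting Milnor's condition. The only conceptual step is the appeal to the Poincar\'e conjecture used to identify $\widetilde{M}$ with $S^3$; once that input is accepted, the remainder of the proof is immediate from Theorem \ref{thm:pi1oftaut22n+1} and elementary group theory.
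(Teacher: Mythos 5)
Your proof is correct, and its overall architecture coincides with the paper's: both parts are reduced via Theorem \ref{thm:pi1oftaut22n+1} to statements about $\pi_1$, the first claim by comparing orders of dihedral groups exactly as in the paper, and the second by showing that $D_{\abs{2n+1}}$ (for $n\neq -1,0$) cannot be the fundamental group of a closed $3$-manifold. Where you diverge is in how that last fact is justified. The paper invokes the elliptization conjecture to write $M$ as $S^3/\pi_1(M)$ and then cites the classification of groups acting freely on $S^3$ (Orlik) to exclude dihedral groups; you instead pass to the universal cover, identify it with $S^3$ using only the Poincar\'e conjecture, and rule out a free $D_{\abs{2n+1}}$-action by Milnor's $2p$-condition, exhibiting a non-cyclic subgroup of order $2p$ for an odd prime $p$ dividing $\abs{2n+1}$. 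Your route uses a strictly weaker geometric input (Poincar\'e rather than full elliptization) and a classical, self-contained group-theoretic obstruction in place of the space-form classification; indeed, since Milnor's theorem applies to free actions on mod $2$ homology spheres, one could even drop the Poincar\'e conjecture from your argument, as the universal cover is automatically a homotopy $3$-sphere. The trade-off is that the paper's citation settles the matter in one line for any finite group on the relevant list, whereas your argument is tailored to dihedral groups --- which is all that is needed here.
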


\begin{proof}
The first claim follows from Theorem \ref{thm:pi1oftaut22n+1} and $|D_{\abs{2n+1}}|=\abs{4n+2}\neq\abs{4m+2}=|D_{\abs{2m+1}}|$ if $\abs{2n+1} \neq \abs{2m+1}$. 

In general, if the fundamental group $\pi_1(M)$ of a closed 3-manifold $M$ is a finite, then the manifold $M$ is diffeomorphic to $S^3/\pi_1(M)$ from the positive solution of Thurston's geometrization conjecture (especially the elliptization conjecture) \cite{arXiv:math/0307245}. 
On the other hand, since the dihedral group $D_{\abs{2n+1}}$ does not act freely on $S^3$ from \cite[Subsection 6.2]{orlik2006seifert}, we have 
$$\pi_1(S(M))\cong\pi_1(\widetilde{S}(M))\cong\pi_1(M) \not\cong D_{\abs{2n+1}} \cong \pi_1(\tau_{S(T_{2,2n+1})})$$
for any closed 3-manifold $M$ and $n \neq -1,0$. 
\end{proof}

We may expect from Proposition \ref{prop:pao} that $\tau_{S(T_{2,2n+1})}$ for $n \neq -1,0$ is also diffeomorphic to the Pao manifold $L_m$ for some $m$ since $S(L(2,1))$ is diffeomorphic to the Pao manifold $L_2$.

\begin{cor}\label{cor:tauandPao}
The Price twist $\tau_{S(T_{2,2n+1})}$ is not homotopy equivalent to any Pao manifold for each $n \neq -1,0$. 
\end{cor}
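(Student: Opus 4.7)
The plan is to distinguish $\tau_{S(T_{2,2n+1})}$ from every Pao manifold by comparing their fundamental groups, which is a homotopy invariant.

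First, I would invoke Theorem \ref{thm:pi1oftaut22n+1} to identify $\pi_1(\tau_{S(T_{2,2n+1})})$ with the dihedral group $D_{\abs{2n+1}}$. Next, I would recall from Subsection \ref{subsec:paomfd} that $\pi_1(L_m) \cong \pi_1(L_m') \cong \mathbb{Z}_{\abs{m}}$ for every integer $m$, so the fundamental group of any Pao manifold is cyclic. Hence it suffices to show that $D_{\abs{2n+1}}$ is non-cyclic whenever $n \neq -1, 0$.

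For $n \neq -1, 0$, the integer $2n+1$ is odd with $\abs{2n+1} \geq 3$. The defining presentation $\langle a, b \mid a^2=1,\ (ab)^2=1,\ b^{\abs{2n+1}}=1\rangle$ then yields the relation $ab = b^{-1}a$ with $b$ of order $\geq 3$, so $a$ and $b$ do not commute and $D_{\abs{2n+1}}$ is non-abelian. In particular, it is not isomorphic to any cyclic group $\mathbb{Z}_{\abs{m}}$.

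Since homotopy equivalent spaces have isomorphic fundamental groups, $\tau_{S(T_{2,2n+1})}$ cannot be homotopy equivalent to any $L_m$ or $L_m'$, i.e., to any Pao manifold. There is no main obstacle here; the argument is a direct comparison of fundamental groups enabled by Theorem \ref{thm:pi1oftaut22n+1}.
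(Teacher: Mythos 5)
Your proposal is correct and matches the paper's proof: both distinguish $\tau_{S(T_{2,2n+1})}$ from the Pao manifolds by comparing $\pi_1(\tau_{S(T_{2,2n+1})}) \cong D_{\abs{2n+1}}$ (Theorem \ref{thm:pi1oftaut22n+1}) with the cyclic groups $\pi_1(L_k)\cong\pi_1(L_k')\cong\mathbb{Z}_{\abs{k}}$. Your only addition is to spell out why $D_{\abs{2n+1}}$ is non-cyclic (non-abelian for $\abs{2n+1}\ge 3$), which the paper leaves implicit.
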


\begin{proof}
From Theorem \ref{thm:pi1oftaut22n+1} and Figure \ref{fig:Pao}, we obtain 
$$\pi_1(L_k)\cong\pi_1(L_k')\cong\mathbb{Z}_{|k|}\not\cong D_{\abs{2n+1}} \cong \pi_1(\tau_{S(T_{2,2n+1})}).$$
\end{proof}

Note that it also follows from Corollary \ref{cor:not spin case} that $\tau_{S(T_{2,2n+1})}$ is not homotopy equivalent to the Pao manifold $L_p$ for $n \neq -1,0$ since $L_p$ is diffeomorphic to $S(L(p,q))$.

We reviewed in Subsection \ref{subsec:iwasemfd} and Corollary \ref{cor:homology} that $M(p, q, r; \alpha, \beta, \gamma)$ is a rational homology 4-sphere if $\alpha \not= 0$, and the homology group of $M(p, q, r; \pm2, \beta, \gamma)$ is the same as that of $\tau_{K}$.
Thus, it is natural to ask whether they are homotopy equivalent or not. 

\begin{cor}\label{cor:tauandIwasemfd}
The Price twist $\tau_{S(T_{2,2n+1})}$ is not homotopy equivalent to any Iwase manifold $M(p,q,r;\alpha,\beta,\gamma)$ for each $n \neq -1,0$. 
\end{cor}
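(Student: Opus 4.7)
The plan is to compare fundamental groups, exactly in the spirit of Corollaries~\ref{cor:not spin case} and \ref{cor:tauandPao}. Since a homotopy equivalence induces an isomorphism on $\pi_1$, it suffices to show that $\pi_1(M(p,q,r;\alpha,\beta,\gamma)) \not\cong D_{\abs{2n+1}}$ for every allowed parameter choice whenever $n \neq -1, 0$.

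First I would eliminate the parameter $\gamma$ by \cite[Proposition 7.1]{MR1091159}, which gives $\pi_1(M(p,q,r;\alpha,\beta,\gamma)) \cong \pi_1(M(p,q,r;\alpha,\beta,0))$. Next, \cite[Theorem 1.3 (iv)]{MR941924} identifies $M(p,q,r;\alpha,\beta,0)$ with the spin $S(S^3_{\alpha/\beta}(T_{p,q}))$ when $r=0$ and with the twist-spin $\widetilde{S}(S^3_{\alpha/\beta}(T_{p,q}))$ when $r=q$; the unknotted case $(p,q,r)=(1,0,0)$ is analogously a spin of a lens space $L(\alpha,\beta)$ (or of $S^2 \times S^1$ for degenerate slopes). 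Combined with the identity $\pi_1(S(N)) \cong \pi_1(\widetilde{S}(N)) \cong \pi_1(N)$ recalled in Subsection~\ref{subsec:spinmfd}, this shows that every Iwase manifold has $\pi_1$ isomorphic to $\pi_1(N)$ for some closed $3$-manifold $N$.

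To conclude, I would reuse the argument from Corollary~\ref{cor:not spin case}. By Theorem~\ref{thm:pi1oftaut22n+1}, $\pi_1(\tau_{S(T_{2,2n+1})}) \cong D_{\abs{2n+1}}$, which for $n \neq -1, 0$ is a finite group of order $2\abs{2n+1} \geq 6$. If it were isomorphic to $\pi_1(N)$ for a closed $3$-manifold $N$, the elliptization conjecture \cite{arXiv:math/0307245} would force $N \cong S^3/D_{\abs{2n+1}}$, contradicting the fact (\cite[Subsection 6.2]{orlik2006seifert}) that $D_{\abs{2n+1}}$ admits no free action on $S^3$. Hence $\pi_1(M(p,q,r;\alpha,\beta,\gamma)) \not\cong \pi_1(\tau_{S(T_{2,2n+1})})$, so the two manifolds cannot be homotopy equivalent.

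The only delicate point I foresee is verifying that Iwase's Theorem~1.3(iv) really covers every parameter triple appearing in the definition of $M(p,q,r;\alpha,\beta,\gamma)$ (in particular the unknotted case $(1,0,0)$ and degenerate slopes such as $\alpha = 0$); this is a routine case check from the handle/torus-surgery description recalled in Subsection~\ref{subsec:iwasemfd}, and in each case the $\pi_1$ is manifestly that of the $3$-dimensional Dehn-surgery base, so no new idea is required.
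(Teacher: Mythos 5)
Your proposal is correct and follows essentially the same route as the paper: eliminate $\gamma$ via \cite[Proposition 7.1]{MR1091159}, identify $M(p,q,r;\alpha,\beta,0)$ with a spin or twist-spin of a Dehn surgery via \cite[Theorem 1.3 (iv)]{MR941924}, and then rule out $D_{\abs{2n+1}}$ as a $3$-manifold group by elliptization and the non-existence of a free dihedral action on $S^3$ (which is exactly the content of Corollary \ref{cor:not spin case} that the paper invokes). The only difference is that the paper first disposes of $\alpha\neq\pm2$ by comparing homology groups and reserves the $\pi_1$ argument for $\alpha=\pm2$, whereas you apply the $\pi_1$ argument uniformly to all $\alpha$; both are valid, and your version is marginally more streamlined.
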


\begin{proof}
If $\alpha \not= \pm2$, $\tau_{S(T_{2,2n+1})}$ and $M(p,q,r;\alpha,\beta,\gamma)$ are not homotopy equivalent since their homology groups are different by Subsection \ref{subsec:iwasemfd} and Corollary \ref{cor:homology}.

It is known \cite{MR1091159} that $\pi_1(M(p,q,r;\alpha,\beta,\gamma)) \cong \pi_1(M(p,q,r;\alpha,\beta,0))$ for each $\gamma$. Thus, if $\alpha=\pm2$, it suffices to show that $\tau_{S(T_{2,2n+1})}$ is not homotopy equivalent to $M(p,q,r;\pm2,\beta,0)$ for each $n \neq -1,0$. We see from \cite[Theorem 1.3 (iv)]{MR941924} that $M(p,q,r;\pm2,\beta,0)$ is diffeomorphic to $S(M)$ or $\widetilde{S}(M)$ for some closed 3-manifold $M$. However, we show in Corollary \ref{cor:not spin case} that $\tau_{S(T_{2,2n+1})}$ is not homotopy equivalent to $S(M)$ or $\widetilde{S}(M)$ for each $n \neq -1,0$. Hence, $\tau_{S(T_{2,2n+1})}$ is not homotopy equivalent to $M(p,q,r;\pm2,\beta,0)$ for each $n \neq -1,0$.
This completes the proof. 
\end{proof}

Therefore, the Price twist $\tau_{S(T_{2,2n+1})}$ is homotopy equivalent to neither $S(M)$, $\widetilde{S}(M)$, $L_m$, $L_m'$ nor $M(p,q,r;\alpha,\beta,\gamma)$ from Corollaries \ref{cor:not spin case}, \ref{cor:tauandPao} and \ref{cor:tauandIwasemfd} for $n \neq -1,0$.

By considering the above corollaries, the following question naturally arises. 

\begin{que}
Does there exist a $2$-knot $K$ except for the unknotted $2$-knot such that $\tau_K$ is diffeomorphic to $S(M)$, $\widetilde{S}(M)$ for some closed $3$-manifold $M$, a Pao manifold or an Iwase manifold?
\end{que}

\section{Diffeomorphism types of non-simply connected Price twists for the 4-sphere}\label{sec:diffeo type}

In this section, we study the diffeomorphism type of the Price twist $\tau_K$ for some ribbon 2-knots $K$. 

Let $X$ be a $4$-manifold. 
Recall that $N(Y)$ is a tubular neighborhood of a submanifold $Y$ of $X$ and $E(Y)$ is the exterior $X-N(Y)$ of $Y$. 
We use schematic pictures defined as in Figure \ref{fig:schematic pictures} for some handle diagrams. 
\begin{figure}[htbp]
    \centering
\begin{overpic}[scale=0.6
]
{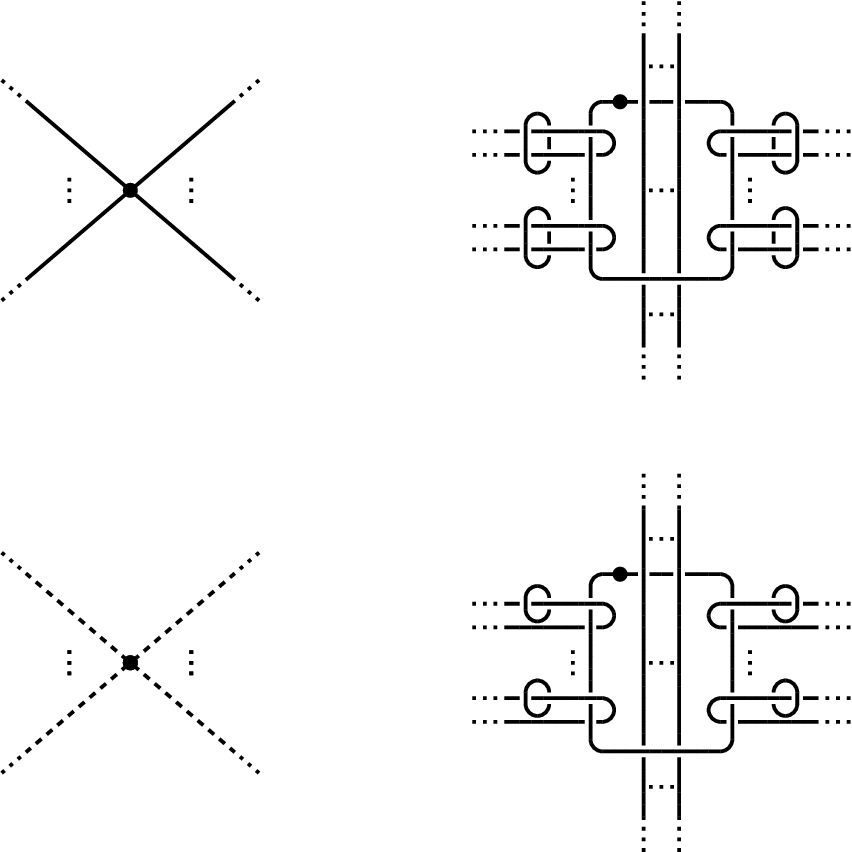}
    \put(40,77){$:=$}
    \put(60,87){$0$}
    \put(57,77){$0$}
    \put(60,76){$0$}
    \put(57,66){$0$}

    \put(94,87){$0$}
    \put(97,77){$0$}
    \put(94,76){$0$}
    \put(97,66){$0$}

    \put(40,21){$:=$}
    \put(60,32){$0$}
    \put(57,22){$0$}
    \put(60,21){$0$}
    \put(57,11){$0$}

    \put(94,32){$0$}
    \put(97,22){$0$}
    \put(94,21){$0$}
    \put(97,11){$0$}

    \end{overpic}
\caption{The definitions of the schematic pictures. A vertex (black dot) corresponds to a dotted circle, and edges are correspond to 2-handles that intertwine with the dotted circle.}
\label{fig:schematic pictures}
\end{figure}

Let $K$ be a ribbon $2$-knot in the $4$-sphere $S^4$. 
It can be seen that a handle diagram of the exterior $E(K)$ can be shown in Figure \ref{fig:egraph} (see \cite[Figure 12.38 (b)]{gompf20234}). 
Note that the number of $3$-handles in Figure \ref{fig:egraph} is the same as that of the edges in Figure \ref{fig:egraph} and the number of $4$-handles in Figure \ref{fig:egraph} is $0$. 
\begin{figure}[htbp]
    \centering
\begin{overpic}[scale=0.6
]
{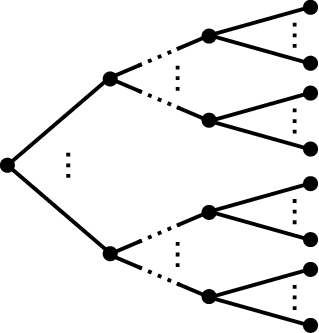}
    \end{overpic}
\caption{A schematic diagram of $E(K)$.}
\label{fig:egraph}
\end{figure}

Let $D(K)$ be a 4-manifold described in the schematic handle diagram depicted in Figure \ref{fig:egraph2}, where the shape of the graph and the numbers of $3,4$-handles are the same as those in Figure \ref{fig:egraph}. 
\begin{figure}[htbp]
    \centering
\begin{overpic}[scale=0.6
]
{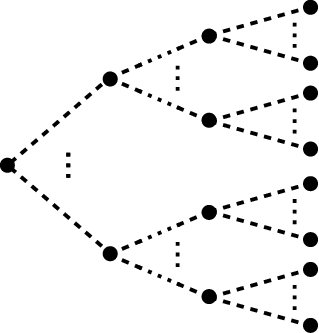}
    \end{overpic}
\caption{A schematic diagram of $D(K)$.}
\label{fig:egraph2}
\end{figure}

\begin{lem}\label{lem:predouble}
Let $K$ be a ribbon $2$-knot in the $4$-sphere $S^4$. 
Then, the exterior $E(K)$ is diffeomorphic to the $4$-manifold $D(K)$. 
\end{lem}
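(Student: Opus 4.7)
The plan is to derive the schematic of $D(K)$ from that of $E(K)$ by handle calculus, starting from the ribbon presentation of $K$. I would first unpack Figure \ref{fig:egraph}: following Gompf--Stipsicz \cite[Figure~12.38]{gompf20234}, a ribbon 2-knot $K$ presented by a ribbon disk with $v$ disks and $e$ bands yields a handle decomposition of $E(K)$ with one 0-handle, $v$ 1-handles (realized as the dotted circles at the vertices of the graph), $e$ 2-handles (realized as the edges, meeting the dotted circles in a standard ribbon pattern), $e$ 3-handles, and no 4-handle. The schematic in Figure \ref{fig:egraph} is exactly a repackaging of this decomposition.

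To pass from Figure \ref{fig:egraph} to Figure \ref{fig:egraph2}, I would perform a local handle move at each vertex of the graph. The natural candidate is the 1-handle/2-handle trading move (see \cite[Section~5.4]{gompf20234}): at each dotted circle, introduce a canceling 2-handle/3-handle pair whose new 2-handle is attached along a $0$-framed meridian of the dotted circle, and then re-interpret the resulting 1-handle/2-handle pair with the roles of the two handles exchanged. This rearranges the local picture at each vertex into the shape prescribed by the $D(K)$-schematic. The bookkeeping is automatic: each vertex trade creates exactly one new 3-handle and no new 4-handle, so that the global shape of the graph and the numbers of 3- and 4-handles recorded in Figure \ref{fig:egraph2} agree with those of Figure \ref{fig:egraph}.

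The main obstacle will be verifying that the edge 2-handles survive the sequence of vertex trades with their attaching circles and framings matching those prescribed by Figure \ref{fig:egraph2}. The ribbon hypothesis is essential here: every edge corresponds to an embedded ribbon band, meeting its incident dotted circles in a standard, unclasped way, so the linkings of the edges with the dotted circles are purely algebraic and transform predictably under the local trades. A careful check for a single vertex, propagated along the graph, should then identify the resulting diagram with Figure \ref{fig:egraph2} edge-by-edge, giving the desired diffeomorphism $E(K)\cong D(K)$.
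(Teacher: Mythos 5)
Your overall strategy (derive Figure \ref{fig:egraph2} from Figure \ref{fig:egraph} by local handle calculus at the vertices) is in the right spirit, but the central move you invoke does not work as stated. A 1-handle/2-handle trade --- exchanging the roles of a dotted circle and a 0-framed circle --- is \emph{not} a diffeomorphism-preserving operation on the 4-manifold when other 2-handles run over that 1-handle: it amounts to a surgery on the corresponding circle, which in general changes $\pi_1$ and $H_2$ (the discussion in \cite[Section 5.4]{gompf20234} shows only that the boundary 3-manifold is preserved). For the exterior of a ribbon 2-knot the edge 2-handles do run over each dotted circle and $\pi_1(E(K))$ is the knot group, so an unjustified trade at a vertex would typically destroy the fundamental group. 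To make such an exchange legitimate you must first slide every strand off the 1-handle over the newly created meridian, and those slides are exactly the nontrivial content of the lemma; your proposal defers them to ``a careful check for a single vertex'' without supplying the mechanism. Your bookkeeping is also inconsistent: you introduce a cancelling 2-3 pair at \emph{every} vertex and assert that the 3-handle count still matches Figure \ref{fig:egraph2}, but both $E(K)$ and $D(K)$ have one 3-handle per \emph{edge}, so adding one 3-handle per vertex (and never cancelling these pairs) cannot reproduce the stated diagram.

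The paper's proof avoids trading altogether. It creates a \emph{single} cancelling 2-3 pair, whose 2-handle is an auxiliary 0-framed unknot, and then propagates that unknot through the graph using 2-handle slides only --- the local transformations (A) and (B) of Figures \ref{fig:localA} and \ref{fig:localB}, in which the 0-framed unknot is slid over the edge 2-handles and over their 0-framed meridians --- converting each local vertex configuration of Figure \ref{fig:egraph} into that of Figure \ref{fig:egraph2}; at the end the relocated 2-3 pair is cancelled, so the handle counts are manifestly unchanged (Figure \ref{fig:total}). If you want to salvage your plan, you would need to replace the ``trade'' at each vertex by an explicit sequence of such slides, at which point you are reconstructing the paper's argument.
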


\begin{proof}
By the handle calculus in Figures \ref{fig:total}, \ref{fig:localA} and \ref{fig:localB}, we obtain the handle diagram depicted in Figure \ref{fig:egraph2} from Figure \ref{fig:egraph}. 
\end{proof}

\begin{figure}[htbp]
    \centering
\begin{overpic}[scale=0.6
]
{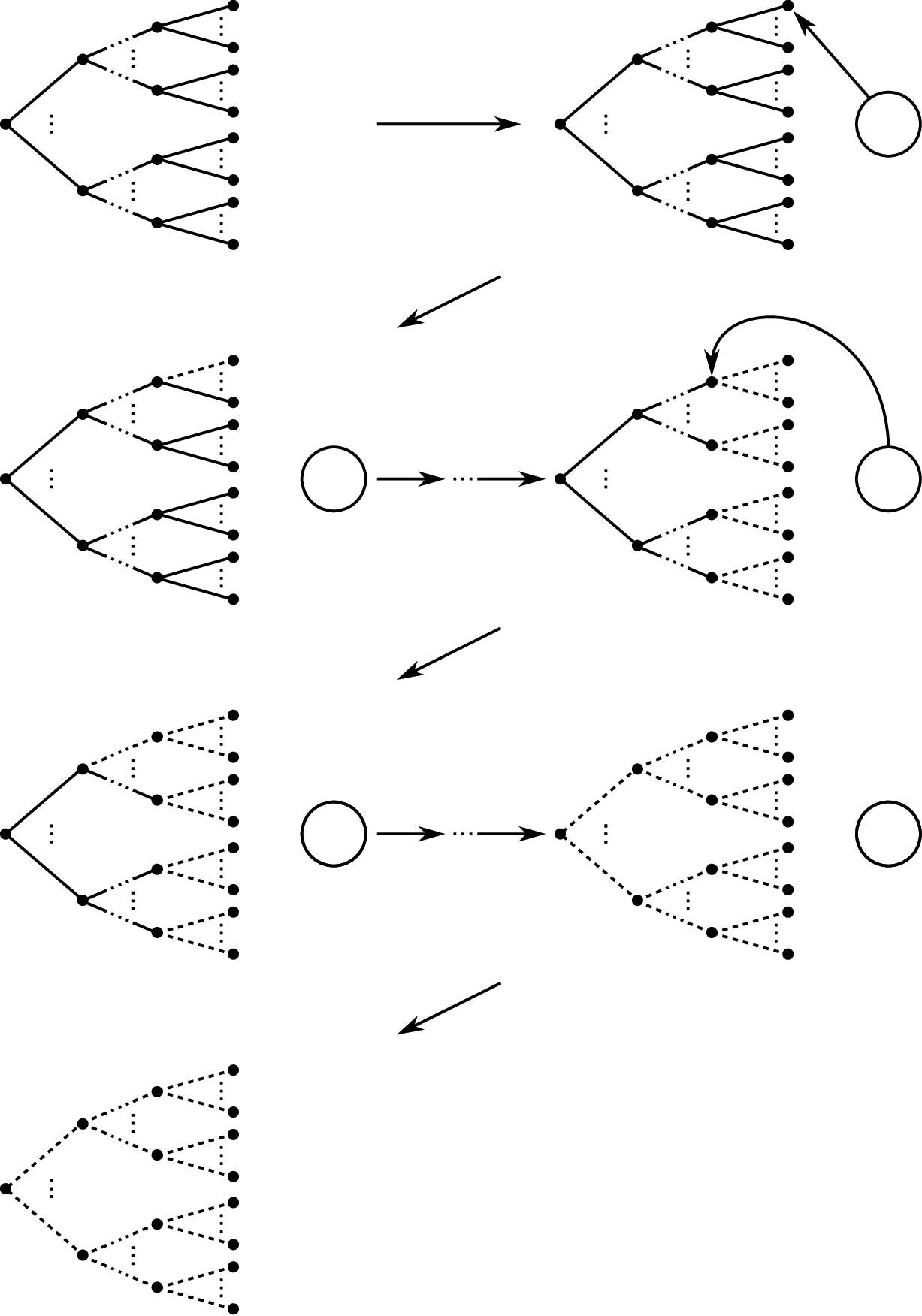}
    \put(31,92){create}
    \put(69,93){$0$}  

    \put(32,79){(A)} 

    \put(27,66){$0$} 
    \put(29,65){(A)} 
    \put(37,65){(A)} 
    \put(69,66){$0$}

    \put(32,52){(B)} 

    \put(27,39){$0$} 
    \put(29,38){(B)} 
    \put(37,38){(B)} 
    \put(69,39){$0$}

    \put(30,25){cancel} 
    \end{overpic}
\caption{A schematic diagram of the proof of Lemma \ref{lem:predouble}. 3-handles are omitted. The first calculus (i.e. the creation) is the creation of a cancelling 2-3 pair.}
\label{fig:total}
\end{figure}

\begin{figure}[htbp]
    \centering
\begin{overpic}[scale=0.6
]
{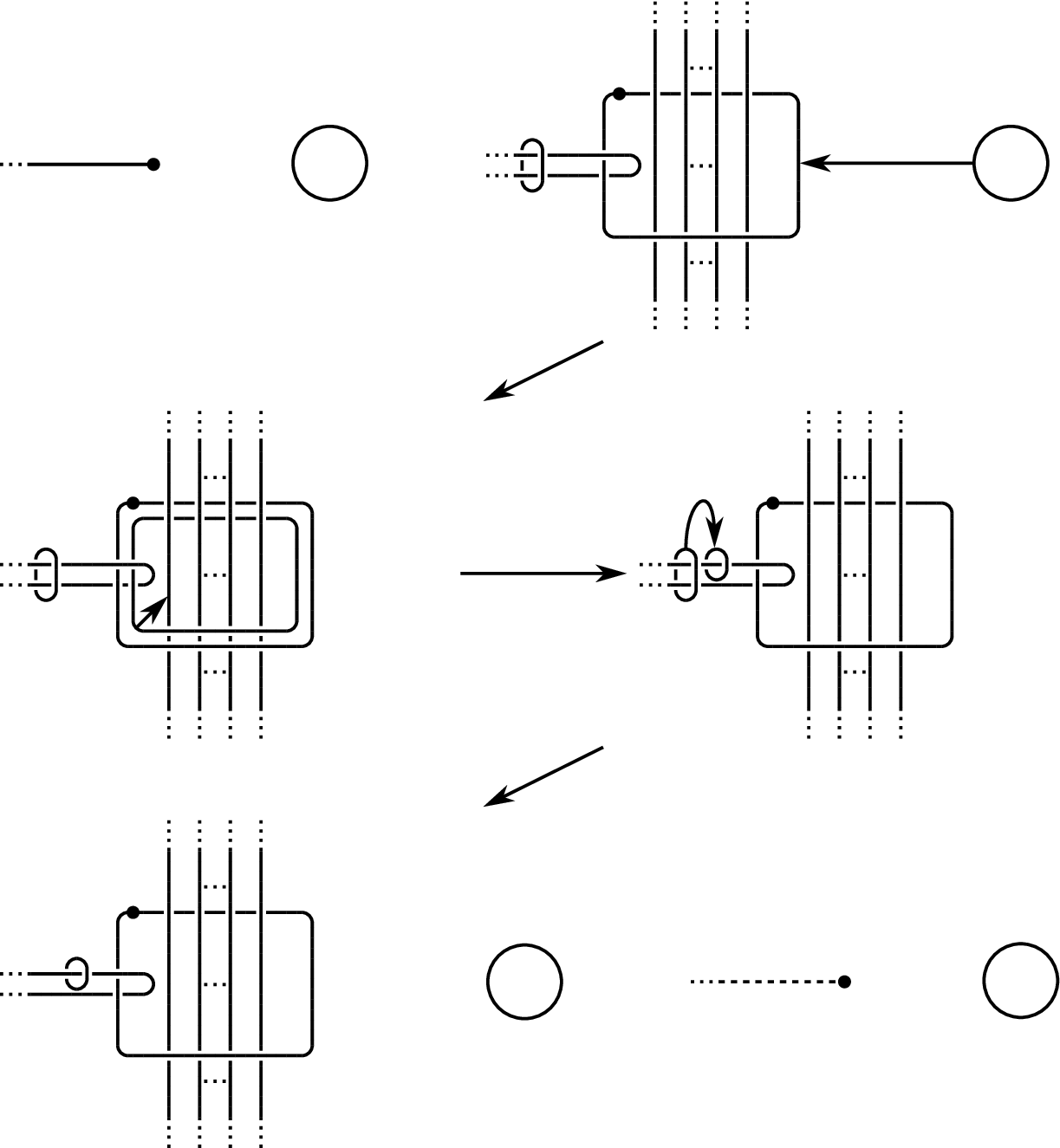}
    \put(32,88){$0$}
    \put(37,85){$=$}
    \put(47,88){$0$}
    \put(49,82){$0$}
    \put(91,88){$0$}

    \put(44,69){slide}

    \put(5,53){$0$}
    \put(7,47){$0$}
    \put(24,52){$0$}
    \put(91,88){$0$}

    \put(44,52){slides}

    \put(59,45){$0$}
    \put(63,47){$0$}
    \put(63,52){$0$}

    \put(43,34){slides}

    \put(7,17){$0$}
    \put(8,11){$0$}
    \put(49,17){$0$}

    \put(54,14){$=$}

    \put(91.5,17){$0$}
    \end{overpic}
\caption{Transformation details (A). In the second calculus (i.e. the second slide), we slide the 0-framed unknot over some 0-framed unknots that intertwine with two lines that describe 2-handles and over the 0-framed meridians.}
\label{fig:localA}
\end{figure}

\begin{figure}[htbp]
    \centering
\begin{overpic}[scale=0.6
]
{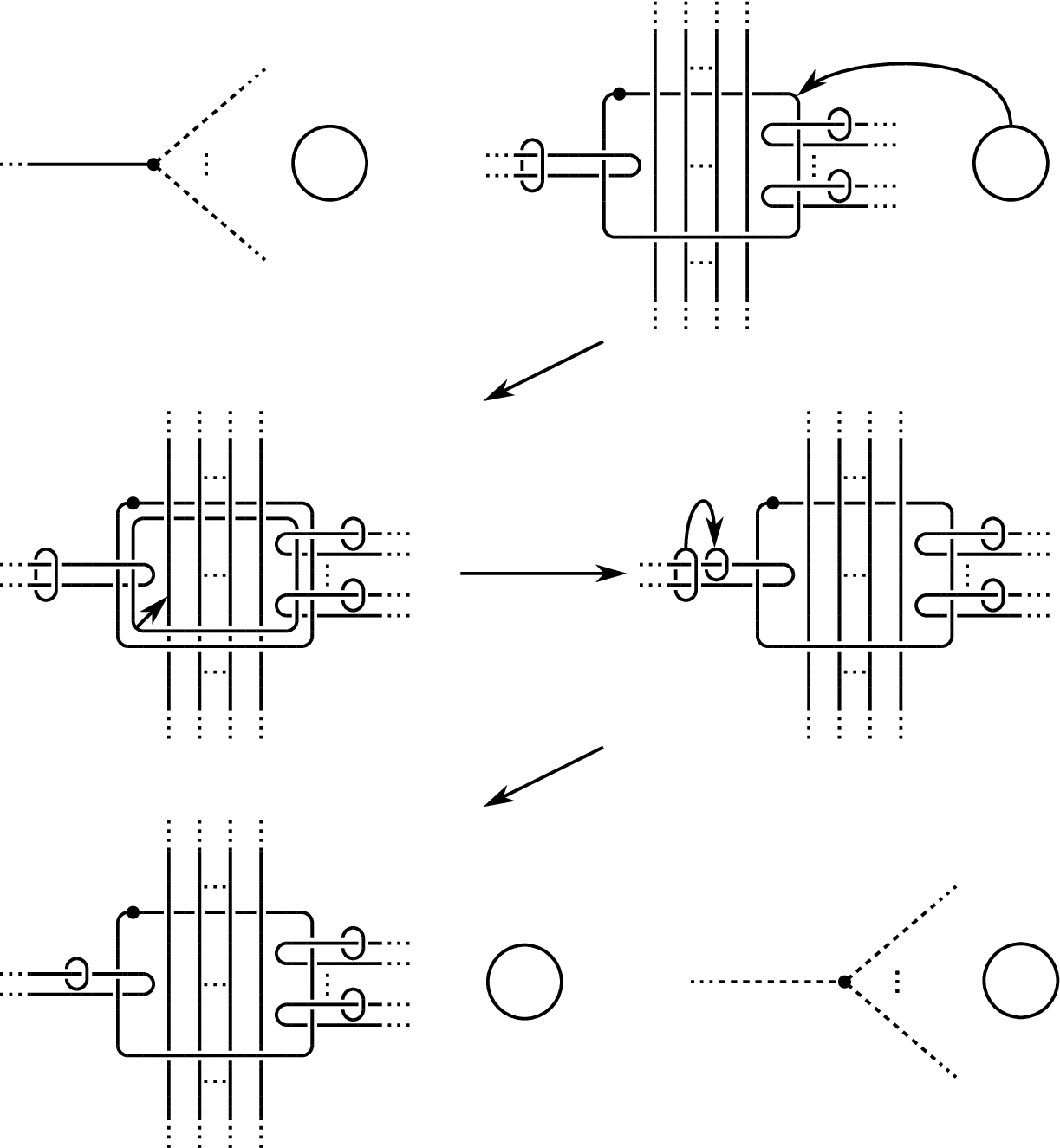}
    \put(32,88){$0$}
    \put(37,85){$=$}
    \put(47,88){$0$}
    \put(49,82){$0$}
    \put(91,88){$0$}

    \put(74,90){$0$}
    \put(76,85){$0$}
    \put(74,84.5){$0$}
    \put(76,80){$0$}

    \put(44,69){slide}

    \put(5,53){$0$}
    \put(7,47){$0$}
    \put(12,52){$0$}
    \put(91,88){$0$}

    \put(32,54){$0$}
    \put(34,49){$0$}
    \put(32,48.5){$0$}
    \put(34,44){$0$}

    \put(44,52){slides}

    \put(59,45){$0$}
    \put(63,47){$0$}
    \put(63,52){$0$}

    \put(88,54){$0$}
    \put(90,49){$0$}
    \put(88,48.5){$0$}
    \put(90,44){$0$}

    \put(43,34){slides}

    \put(7,17){$0$}
    \put(8,11){$0$}
    \put(49,17){$0$}

    \put(32,19){$0$}
    \put(34,13.5){$0$}
    \put(32,13.5){$0$}
    \put(34,8.5){$0$}

    \put(54,14){$=$}

    \put(91.5,17){$0$}

    \end{overpic}
\caption{Transformation details (B). In the second calculus (i.e. the second slide), we slide the 0-framed unknot over some 0-framed unknots that intertwine with two lines that describe 2-handles and over the 0-framed meridians.}
\label{fig:localB}
\end{figure}

Recall that $P_0$ is the unknotted $P^2$-knot in $S^4$. 
Let $DX=X\cup_{\mathrm{id}_{\partial X}}(-X)$ denote the double of $X$. 

Let $F(K)$ denote the $2$-handlebody obtained by removing all the 2-handles that do not intertwine with dotted circles (i.e. all the 0-framed meridians of $D(K)$) and all the 3-handles from $E(K)$. 
We describe the handle diagram of $F(K)$ as in Figure \ref{fig:QuasiExteriorpre}. 
Let $F(K\#P_0)$ denote the 2-handlebody described by the handle diagram in Figure \ref{fig:QuasiExterior}. 
For example, if $K$ is the spun trefoil knot $S(T_{2,3})$, handle diagrams of $F(K)$ and $F(K\#P_0)$ are shown in Figures \ref{fig:FKexm} and \ref{fig:FKPexm}, respectively.

\begin{figure}[htbp]
    \centering
\begin{overpic}[scale=0.6]
{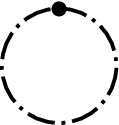}
\put(-20,72){$K$}
\end{overpic}
\caption{A handle diagram of a 2-handlebody $F(K)$.}
\label{fig:QuasiExteriorpre}
\end{figure}

\begin{figure}[htbp]
    \centering
\begin{overpic}[scale=0.6
]
{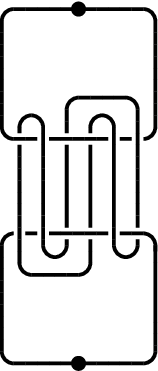}
\put(41,47){$0$}
\end{overpic}
\caption{A handle diagram of a 2-handlebody $F(S(T_{2,3}))$.}
\label{fig:FKexm}
\end{figure}

\begin{figure}[htbp]
    \centering
\begin{overpic}[scale=0.6
]
{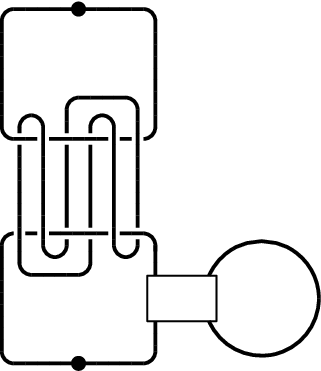}
\put(46,17){$2$}
\put(41,47){$0$}
\put(82,32){$0$}
\end{overpic}
\caption{A handle diagram of a 2-handlebody $F(S(T_{2,3})\#P_0)$.}
\label{fig:FKPexm}
\end{figure}

\begin{thm}\label{thm:double}
Let $K$ be a ribbon $2$-knot in the $4$-sphere $S^4$. 
Then, the Price twist $\tau_K$ is diffeomorphic to the double $DF(K\#P_0)$ of the $2$-handlebody $F(K\#P_0)$. 
\end{thm}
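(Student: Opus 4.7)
The plan is to establish the diffeomorphism by handle calculus, converting the handle diagram of $\tau_K$ in Figure \ref{fig:Pricetwist3} into the canonical handle diagram of $DF(K\#P_0)$. Recall that the double $DF$ of a $2$-handlebody $F$ is built from $F$ by attaching a $0$-framed meridian $2$-handle dual to each $2$-handle of $F$, a $3$-handle dual to each $1$-handle, and a single $4$-handle; it therefore suffices to exhibit these dual handles, in the correct count, inside the diagram of $\tau_K$.

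First, I would use Lemma \ref{lem:predouble} to replace the $E(K)$ sub-handlebody inside $\tau_K$ by the equivalent handlebody $D(K)$ of Figure \ref{fig:egraph2}. After this substitution, the intertwining $2$-handles of $D(K)$ and the $0$-framed meridians present in its graph-like description become explicit, and one can recognize $F(K\#P_0)$ (Figure \ref{fig:QuasiExterior}) as a sub-handlebody of the transformed diagram: its $1$-handles are the dotted circles, and its $2$-handles are the intertwining $2$-handles of $D(K)$ (i.e.\ the $2$-handles of $F(K)$), together with the $\pm 2$-framed $2$-handle coming from the $P_0$ factor and the $0$-framed $2$-handle also arising from $P_0$.

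Second, I would match the handles of $\tau_K$ lying outside $F(K\#P_0)$ to the dual handles required by the double construction. The $0$-framed meridians of $D(K)$ that do not belong to $F(K)$ serve as dual $2$-handles for the $2$-handles of $F(K)$. The $0$-framed $2$-handle from $P_0$ is, by construction, a $0$-framed meridian of the $\pm 2$-framed $2$-handle, and hence simultaneously plays the role of a $2$-handle of $F(K\#P_0)$ and of the dual $2$-handle of the $\pm 2$-framed one. Finally, a handle slide of the Price twist $0$-framed unknot over the $\pm 2$-framed $2$-handle exhibits it as a $0$-framed meridian of the remaining $P_0$-related $0$-framed $2$-handle, producing the last dual $2$-handle. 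The $3$-handles and the $4$-handle of $\tau_K$ then match the dual $3$-handles and $4$-handle by direct book-keeping.

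The main obstacle will be the explicit handle slide in the third step: verifying that the Price twist $0$-framed unknot, after sliding over the $\pm 2$-framed $2$-handle, is indeed a $0$-framed meridian of the target $0$-framed $2$-handle, both as an attaching circle and as a framing. This requires careful tracking of the attaching data through the $\pm 2$-framed $2$-handle, in the spirit of the local moves used in the proof of Lemma \ref{lem:predouble} (Figures \ref{fig:localA} and \ref{fig:localB}), and it is where the bulk of the technical work is expected to lie.
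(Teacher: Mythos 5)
Your overall route is the paper's: put the $E(K)$ part into the form $D(K)$ via Lemma \ref{lem:predouble}, and then recognize the resulting diagram of $\tau_K$ as the standard diagram of a double (the diagram of $F(K\#P_0)$ with a $0$-framed meridian dual to each $2$-handle, plus $3$-handles and a $4$-handle). The paper compresses the remaining work into the ``white circle'' identity of Figure \ref{fig:whitecircle} and the passage from the left to the right of Figure \ref{fig:taugraph}; your third step is your attempt to make that explicit, and it is exactly where your argument does not close.

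The problem is a handle count. Let $v$ be the number of $1$-handles of $E(K)$ in the paper's normal form, so that $F(K\#P_0)$ has $v+1$ $1$-handles and $v+1$ $2$-handles, and the standard diagram of $DF(K\#P_0)$ has $2v+2$ $2$-handles and $v+1$ $3$-handles. The diagram of $\tau_K$ in Figure \ref{fig:Pricetwist3}, even after applying Lemma \ref{lem:predouble}, has only $2v+1$ $2$-handles and $v$ $3$-handles (test this on $K=O$: Figure \ref{fig:trivial tau} shows three $2$-handles and one $3$-handle, whereas $DF(O\#P_0)$ needs four and two). You are therefore short one dual $2$-handle and one $3$-handle, and no sequence of isotopies and slides alone can remedy that. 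Your assertion that the $0$-framed $2$-handle coming from $P_0$ ``simultaneously plays the role of a $2$-handle of $F(K\#P_0)$ and of the dual $2$-handle of the $\pm2$-framed one'' is the symptom: in a double every $2$-handle of $F$ must carry its own $0$-framed meridian, so a single handle cannot serve both roles, and for the same reason the $3$-handles do not ``match by direct book-keeping.'' The missing move is the creation of a cancelling $2$-$3$ pair in the $P_0$/Price-twist cluster --- equivalently, one observes that in the standard diagram of $DF(K\#P_0)$ the dual meridian of the $\pm2$-framed handle is parallel to the $0$-framed handle of $F(K\#P_0)$, so after a slide it becomes a split $0$-framed unknot and cancels a $3$-handle. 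This is precisely what the definition of the white circle in Figure \ref{fig:whitecircle} encodes, converting a cluster of three $2$-handles (one of them $2$-framed) into four. Once you insert that $2$-$3$ pair creation, your remaining matching --- the meridians of $D(K)$ as duals of the $F(K)$ handles, and the Price-twist unknot, after a slide, as the dual of the $P_0$ $0$-framed handle --- is sound and recovers the paper's proof.
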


\begin{proof}
Figure \ref{fig:taugraph} (left) is a handle diagram of $\tau_K$. 
Here, the white circle in Figure \ref{fig:taugraph} is assumed to be as shown in the handle diagram in Figure \ref{fig:whitecircle}. 
By Lemma \ref{lem:predouble}, we obtain the handle diagram depicted in Figure \ref{fig:taugraph} (right) from Figure \ref{fig:taugraph} (left) by using handle calculus. 
Figure \ref{fig:taugraph} (right) is a handle diagram of the double of a 2-handlebody with a 0-handle, $n$ 1-handles and $n$ 2-handles, which is just $F(K\#P_0)$, where $n$ is some non-negative integer. 
This completes the proof. 
\end{proof}

\begin{figure}[htbp]
    \centering
\begin{overpic}[scale=0.6]
{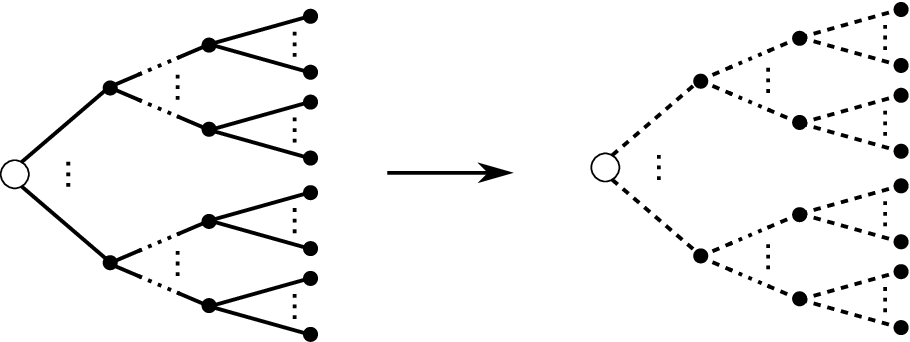}
    \end{overpic}
\caption{A handle diagram for the Price twist $\tau_K$.}
\label{fig:taugraph}
\end{figure}

\begin{figure}[htbp]
    \centering
\begin{overpic}[scale=0.6
]
{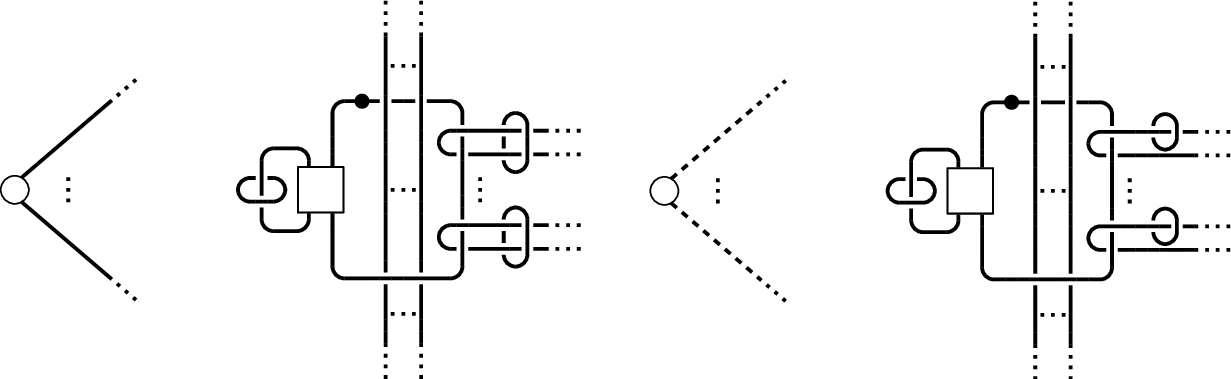}
    \put(12,14.5){$:=$}
    \put(18,17){$0$}
    \put(22,9.5){$0$}
    \put(25.5,14.5){$2$}
    \put(42,7){$0$}
    \put(45,21){$0$}
    \put(42,14.5){$0$}
    \put(45,13){$0$}
    
    \put(50,10){,}

    \put(65,14.5){$:=$}
    \put(71,17){$0$}
    \put(75,9.5){$0$}
    \put(78,14.5){$2$}
    \put(94,22){$0$}
    \put(97,8){$0$}
    \put(94,14.5){$0$}
    \put(97,15.5){$0$}

    \end{overpic}
\caption{Definitions of the white circles.}
\label{fig:whitecircle}
\end{figure}

\begin{rem}
By changing the definition of the white circle in the proof of Theorem \ref{thm:double}, Theorem \ref{thm:double} holds for pochette surgeries for $(e_K, p/q, \varepsilon)$
(see Proposition \ref{prop:pricepochette}).
For details, see Section \ref{sec:main theorem for pochette}. 
\end{rem}

\subsection{A construction method of special handle diagrams}\label{subsec:diagram}

Using Theorem \ref{thm:double}, we introduce two kinds of deformations used in the proofs of the main theorems.

\begin{prop}
\label{prop:alpha}
The handle diagram depicted on the left side of Figure \ref{fig:alpha} is isotopic to the handle diagram on the right side of Figure \ref{fig:alpha}. 
\begin{figure}[htbp]
    \centering
\begin{overpic}[scale=0.6]
{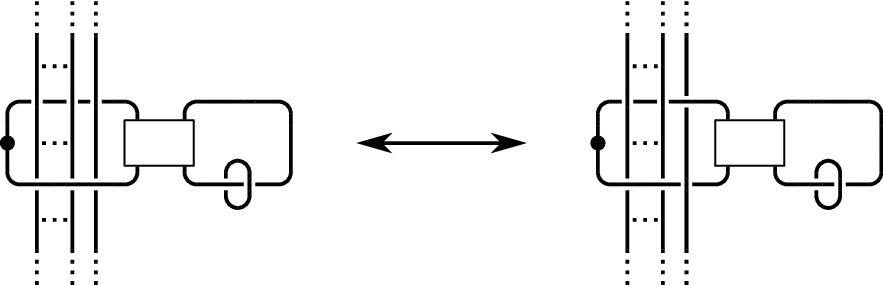}
    \put(33,21){$0$}
    \put(26,5){$0$}
    \put(17,15){$2$}

    \put(100,21){$0$}
    \put(93,5){$0$}
    \put(84,15){$2$}
    \end{overpic}
\caption{A deformation $\alpha$. }
\label{fig:alpha}
\end{figure}
\end{prop}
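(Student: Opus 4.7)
The plan is to verify Proposition~\ref{prop:alpha} by exhibiting an explicit sequence of handle moves (isotopies of the attaching link in $S^3$, together with handle slides if needed) that carries the framed link on the left of Figure~\ref{fig:alpha} onto the framed link on the right. Since both diagrams display the same three labels $0$, $0$, $2$ on corresponding components, and since the deformation is a local move — its effect is confined to the region drawn, and the surrounding portion of $F(K\#P_0)$ and of the diagram for $\tau_K$ is fixed — it suffices to work inside a $3$-ball containing the three components and check that an isotopy of framed links connects the two sides.

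The key steps I would carry out, in order, are: first, overlay the two diagrams to identify which arc (or strand) is being moved by $\alpha$, i.e.\ which component on the left is transported to its new position on the right while the other two are kept fixed; second, realize this motion as an ambient isotopy of $S^3$, typically a finger move or a rotation of the moving component past a strand of one of its neighbors; third, if the isotopy causes the moving component to cross over a $2$-handle, compensate by a handle slide so that the framing coefficients remain $0$, $0$, $2$; and finally, read off the resulting diagram and match it with the right-hand side of Figure~\ref{fig:alpha} up to planar isotopy. If any slides are used, I would record them explicitly so that the move $\alpha$ can be reapplied cleanly in later arguments (it will be combined with the deformation $\beta$ in the proof of Theorem~\ref{thm:1-fusion}).

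The main obstacle I expect is the framing bookkeeping. The $2$-framed component is sensitive to twisting, and any isotopy that drags a strand through the others may pick up or lose a full twist, which would corrupt the framing. Concretely, when two strands of a single component cross over or under a strand of another during the isotopy, the blackboard framing changes by $\pm 2$ per crossing change, and I must ensure these cancel or are absorbed by a deliberate handle slide. Once the framings are shown to match — and the linking pattern between the three components is seen to be preserved — the proposition follows, since the surrounding handle diagram (including the dotted circles encoding the ribbon structure of $K$ and the $2$-framed loop from $P_0$) plays no role in the local move.
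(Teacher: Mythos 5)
Your proposal treats the deformation $\alpha$ as a local ambient isotopy of the framed link inside a $3$-ball, with the surrounding diagram ``playing no role.'' That is exactly where the argument breaks down. The two sides of Figure \ref{fig:alpha} are \emph{not} isotopic as framed links with dotted circles: as Corollary \ref{cor:alpha} records, the move replaces a relator $uav=1$ by $ua^{-1}v=1$, i.e.\ it reverses the direction in which a strand of a $2$-handle passes through the dotted circle carrying the $2$-framed loop. An isotopy would leave the word read off from the diagram unchanged, so no sequence of finger moves confined to the pictured region can carry one side to the other. The equivalence is a diffeomorphism of the ambient $4$-manifold realized by handle slides, and it genuinely uses data not visible in the local picture.

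Concretely, the paper's proof first invokes Theorem \ref{thm:double} ($\tau_K\cong DF(K\#P_0)$) to guarantee that \emph{every} $2$-handle in the diagram carries a $0$-framed meridian; the handle calculus of Figures \ref{fig:alpha1} and \ref{fig:alpha2} then slides the moving strand over the $2$-framed unknot (exploiting the relation $a^2=1$ it encodes) and over these $0$-framed meridians to restore the framings and the linking pattern. Your plan mentions compensating handle slides only as an afterthought to fix framings picked up during an isotopy, and never identifies the need for the auxiliary meridians or for the doubling hypothesis. That this global input is essential is confirmed by Remark \ref{rem:deformation}: the deformation $\alpha$ fails for pochette surgeries with slopes other than $\pm 2$, which could not happen if it were a formal local identity of diagrams. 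To repair your argument you would need to (i) assume the diagram is in the doubled form of Theorem \ref{thm:double}, and (ii) replace the isotopy by the explicit slide sequence over the $2$-framed handle and the $0$-framed meridians.
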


\begin{proof}
Due to Theorem \ref{thm:double}, we can suppose that each 2-handle in the handle diagram has a 0-framed meridian. Thus, we can perform handle calculus described in Figures \ref{fig:alpha1} and \ref{fig:alpha2}. This completes the proof.
\begin{figure}[htbp]
    \centering
\begin{overpic}[scale=0.6
]
{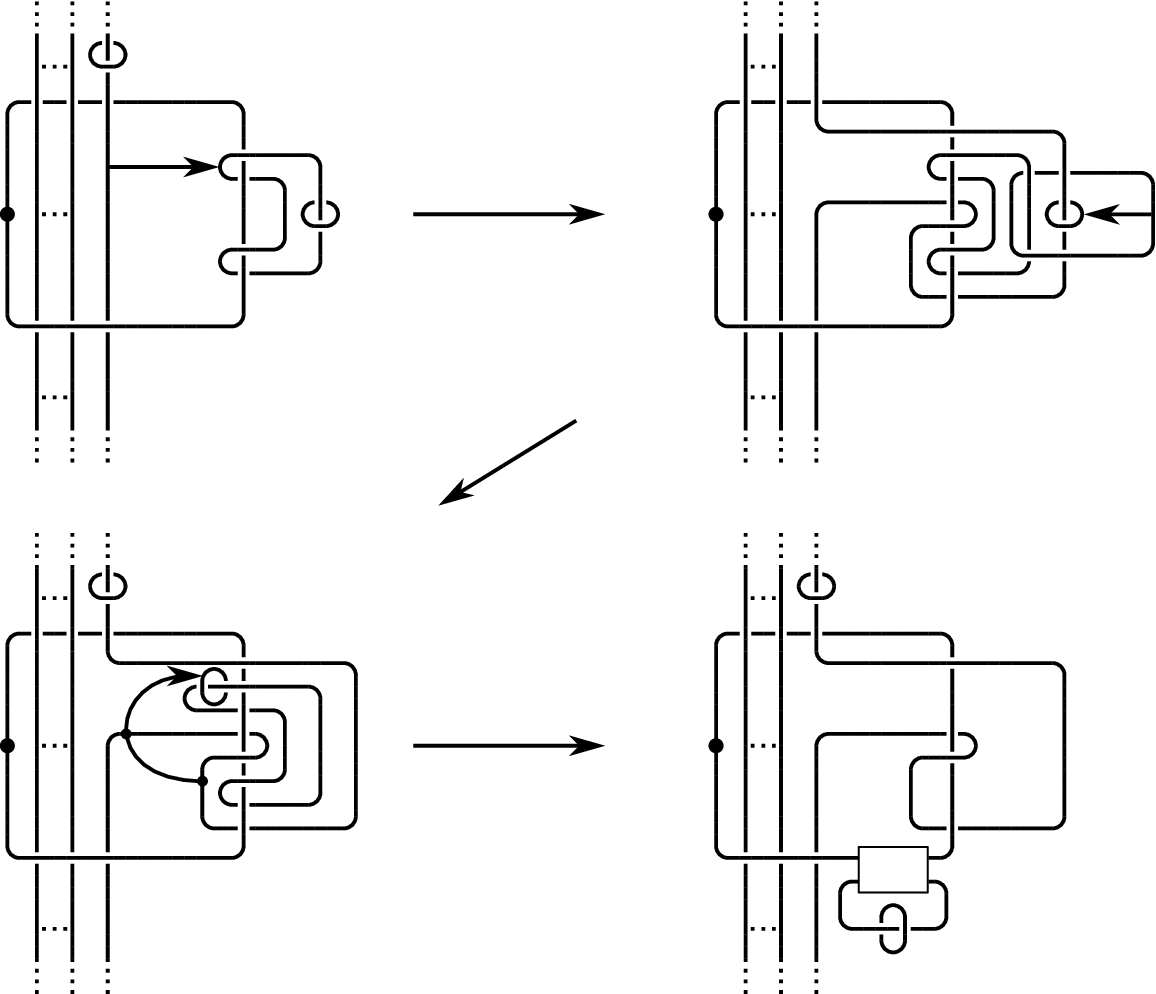}
    \put(12,80){$0$}
    \put(28,72){$0$}
    \put(30,66){$0$}

    \put(40,69){\text{slide}}

    \put(78,71){$0$}
    \put(93,64.5){$0$}
    \put(95,61){$0$}

    \put(40,48){\text{slide}}

    \put(12,34){$0$}
    \put(14,24){$0$}
    \put(28,20){$0$}

    \put(40,23){\text{slides}}

    \put(73,34){$0$}
    \put(76.5,10){$2$}
    \put(83,8){$0$}
    \put(77,1){$0$}
    \end{overpic}
\caption{Handle calculus in the proof for a deformation $\alpha$ (1/2).}
\label{fig:alpha1}
\end{figure}

\begin{figure}[htbp]
    \centering
\begin{overpic}[scale=0.6
]
{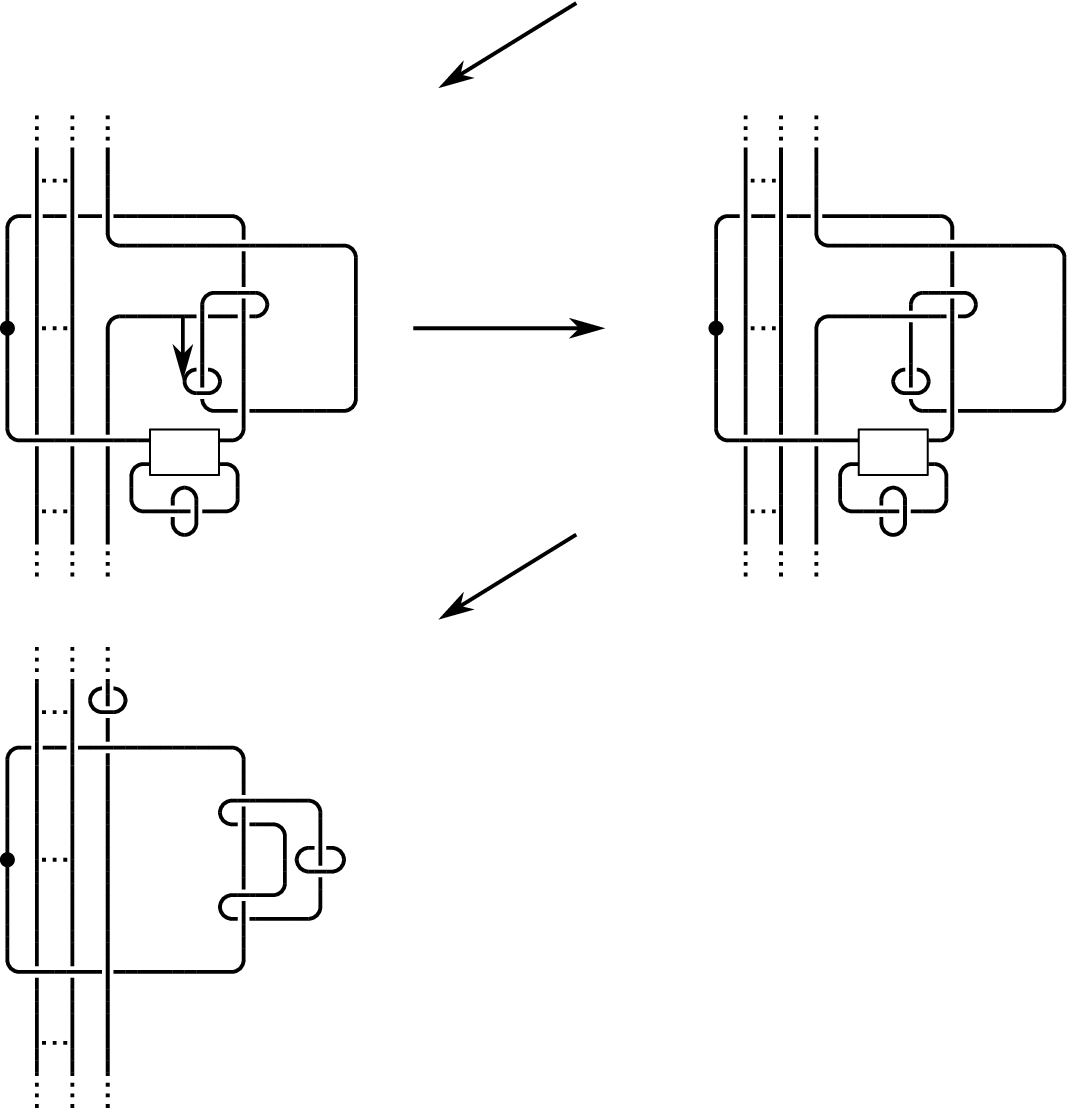}
    \put(34,96){\text{isotopy}}

    \put(14.5,64){$0$}
    \put(16,58){$2$}    
    \put(23,56){$0$}
    \put(18,50){$0$}

    \put(40,72){\text{slide}}

    \put(78.5,64){$0$}
    \put(80,58){$2$}    
    \put(87,56){$0$}
    \put(82,50){$0$}

    \put(34,49){\text{isotopy}}

    \put(12,35){$0$}
    \put(29,28){$0$}
    \put(32,20){$0$}
    \end{overpic}
\caption{Handle calculus in the proof for a deformation $\alpha$ (2/2).}
\label{fig:alpha2}
\end{figure}
\end{proof}

Let  
$$\langle a, {\bf b} \mid a^2=1, uav=1, {\bf w}={\bf 1} \rangle$$
be the presentation of $\pi_1(\tau_K)$ for the handle decomposition of $\tau_K$ corresponding to the handle diagram in the left side of Figure \ref{fig:alpha}. 
Here ${\bf b}$ is a generating subset $\{b_1,\ldots,b_n\}$, $u$ and $v$ are words in the generating set $\{a, {\bf b}\}$, and ${\bf w}={\bf 1}$ is a set of relations $\{w_1=1,\ldots,w_{n-1}=1\}$. 

\begin{cor}
\label{cor:alpha}
The following Tietze transformation on finitely presented groups of the fundamental group $\pi_1(\tau_K)$ does not change the diffeomorphism type of $\tau_K$: 
\[
\langle a, {\bf b} \mid a^2=1, uav=1, {\bf w}={\bf 1} \rangle=\langle a, {\bf b} \mid a^2=1, ua^{-1}v=1, {\bf w}={\bf 1} \rangle
\]
\end{cor}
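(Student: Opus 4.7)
The plan is to read the fundamental group presentation off both sides of Figure~\ref{fig:alpha} and to verify that the isotopy provided by Proposition~\ref{prop:alpha} realizes exactly the Tietze transformation in question.

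First, I would identify the relator $uav=1$ with the $2$-handle appearing on the left of Figure~\ref{fig:alpha} whose attaching circle pierces the dotted circle corresponding to $a$. Using the top-to-bottom versus bottom-to-top convention recalled just before Theorem~\ref{thm:pi1oftaut22n+1}, the letter $a$ inside the word $uav$ records the direction of exactly this one crossing, while the words $u$ and $v$ record the rest of the same attaching circle. The handle giving $a^2=1$ (the dotted circle with label $2$), the remaining generators ${\bf b}$, and the remaining relators ${\bf w}={\bf 1}$ are all left untouched by the local move.

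Next, I would examine the right side of Figure~\ref{fig:alpha}. The deformation $\alpha$ reverses the direction of the single crossing identified above and leaves every other feature of the diagram intact, so reading the presentation by the same convention from the right-hand diagram yields
\[
\langle a, {\bf b} \mid a^2=1,\ ua^{-1}v=1,\ {\bf w}={\bf 1} \rangle.
\]
Since Proposition~\ref{prop:alpha} asserts that the two diagrams are isotopic as Kirby diagrams, the $4$-manifolds they describe are diffeomorphic, and this is precisely the claim of the corollary.

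The main point requiring care is that the local picture in Figure~\ref{fig:alpha} can genuinely be inserted into the ambient handle diagram of $\tau_K$ without disturbing anything else. This in turn relies on the hypothesis used in the proof of Proposition~\ref{prop:alpha}, namely that by Theorem~\ref{thm:double} every $2$-handle in the working diagram of $\tau_K$ may be assumed to carry a $0$-framed meridian; under this assumption the calculus of Figures~\ref{fig:alpha1} and~\ref{fig:alpha2} really alters only the one crossing, so only the sign of the single occurrence of $a$ in $uav$ is flipped.
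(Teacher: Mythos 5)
Your proposal is correct and follows the same route as the paper: the paper's proof of Corollary \ref{cor:alpha} is simply the observation that it follows from Proposition \ref{prop:alpha}, and your reading of the presentations off the two sides of Figure \ref{fig:alpha} (with the sign of the single occurrence of $a$ flipped by the local move, everything else untouched) makes explicit exactly what the paper leaves implicit. Your remark that the argument relies on Theorem \ref{thm:double} guaranteeing a $0$-framed meridian on each $2$-handle matches the hypothesis invoked in the proof of Proposition \ref{prop:alpha}.
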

\begin{proof}
This claim is obtained from Proposition \ref{prop:alpha}. 
\end{proof}

We call the deformation in Proposition \ref{prop:alpha} or Corollary \ref{cor:alpha} a \textit{deformation} $\alpha$. 

\begin{prop}
\label{prop:beta}
The handle diagram on the left side of Figure \ref{fig:beta} is isotopic to the handle diagram on the right side of Figure \ref{fig:beta}. 
\end{prop}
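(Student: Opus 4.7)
The plan is to follow the same template as the proof of Proposition \ref{prop:alpha}. That argument exploited a single decisive structural fact: by Theorem \ref{thm:double}, the Price twist $\tau_K$ is presented as the double $DF(K\#P_0)$, so every 2-handle in the Kirby diagram may be assumed to come equipped with a 0-framed meridian. These meridians are precisely the ``reservoirs'' that make the required handle slides legal, and they will be equally available in the local picture for deformation $\beta$.

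Concretely, I would proceed as follows. First, invoking Theorem \ref{thm:double}, replace each 2-handle in the local region shown on the left of Figure \ref{fig:beta} by its doubled counterpart, so that 0-framed meridians appear alongside every 2-handle attaching circle. Second, perform a finite sequence of handle slides and isotopies, choreographed in the same spirit as Figures \ref{fig:alpha1}--\ref{fig:alpha2}: slide the attaching curves over the newly introduced 0-framed meridians (and, where necessary, over the $2$-framed component and the adjacent $0$-framed strands) to shuffle the configuration on the left into the configuration on the right. Third, perform a planar isotopy to match the target picture, and finally cancel any auxiliary $2/3$-handle pairs that were introduced, so that the final diagram is again of the doubled form. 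Together these steps realize the claimed isotopy.

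The main obstacle will be bookkeeping. Unlike a generic handle slide computation, here one must ensure two things simultaneously: the framings remain correct so that the right-hand diagram is recovered \emph{on the nose}, and the doubled structure is preserved so that the deformation is legitimate at the level of $\tau_K$ rather than just at the level of some intermediate $4$-manifold. Each slide across the $2$-framed curve contributes a framing change that must be cancelled by a compensating slide across a $0$-framed meridian, so the slides come in matched pairs. Identifying that pairing correctly -- which is the analogue of the two-step slide sequence carried out in the proof of Proposition \ref{prop:alpha} -- is the delicate part. Once the diagrammatic identity is verified, it will in particular (in parallel with Corollary \ref{cor:alpha}) correspond to a Tietze transformation of the presentation of $\pi_1(\tau_K)$, which will clarify the role of deformation $\beta$ in the subsequent classification arguments.
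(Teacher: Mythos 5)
Your overall strategy---exploit the $0$-framed meridians guaranteed by the doubled structure and push the $2$-framed curve around by handle slides, in the spirit of the proof of Proposition \ref{prop:alpha}---is the right family of ideas, but as written the plan defers exactly the step the paper has to supply. You correctly identify that the framing/twisting bookkeeping is the crux, and then leave it at ``identifying that pairing correctly is the delicate part.'' The paper resolves this not by pairing slides across the $2$-framed curve with compensating slides across meridians, but by a prior normalization: it first applies deformation $\alpha$ (Proposition \ref{prop:alpha}) to the $0$-framed knot running between the two dotted circles, so that this connecting band can be assumed to have no twist (i.e.\ to lie parallel to the plane of the page). Only after that normalization does the short sequence of two slides and an isotopy in Figure \ref{fig:betaproof} transport the $2$-framed curve from one dotted circle to the other. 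Without that preliminary untwisting, the slides you describe do not obviously land on the right-hand diagram of Figure \ref{fig:beta} on the nose, and your proposal gives no mechanism for removing the twists. So the missing idea is concrete: use deformation $\alpha$ itself as the first step of the proof of deformation $\beta$.

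Two smaller points. First, your invocation of Theorem \ref{thm:double} is heavier than necessary here: the paper notes that Proposition \ref{prop:beta} can be proved without it, because the $2$-handle that links the dotted circle twice already carries the required $0$-framed meridian in the local picture. Second, no auxiliary $2$/$3$-handle pairs need to be created or cancelled in this argument---the calculus of Figure \ref{fig:betaproof} is purely slides and isotopy---so that part of your outline is not needed.
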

\begin{figure}[htbp]
    \centering
\begin{overpic}[scale=0.6
]
{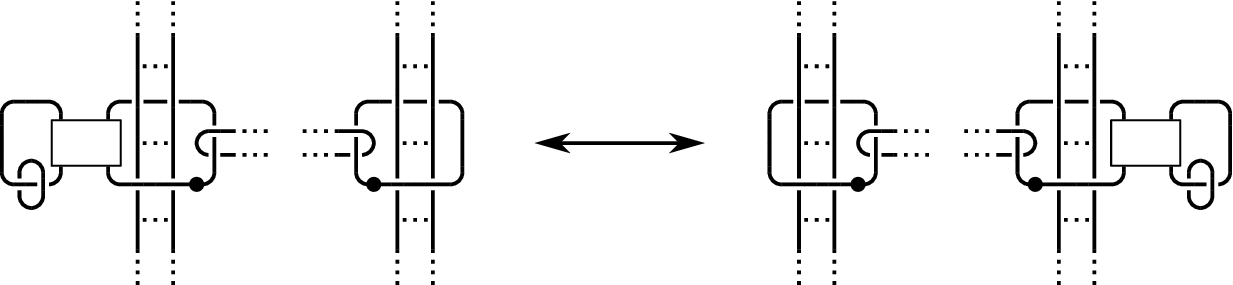}
    \put(-2,12){$0$}
    \put(0,5){$0$}
    \put(6,10.5){$2$}
    \put(18,7.5){$0$}

    \put(100.5,12){$0$}
    \put(94.75,5){$0$}
    \put(92,10.5){$2$}
    \put(72,7.5){$0$}
    \end{overpic}
\caption{A deformation $\beta$. }
\label{fig:beta}
\end{figure}

\begin{proof}
Due to the deformation $\alpha$, the 0-framed knot between the two dotted circles in Figure \ref{fig:beta} can always be deformed so that it has no twist (i.e. it is parallel to the plane of the paper). Then, handle calculus described in Figure \ref{fig:betaproof} complete the proof.
\end{proof}

\begin{figure}[htbp]
    \centering
\begin{overpic}[scale=0.6
]
{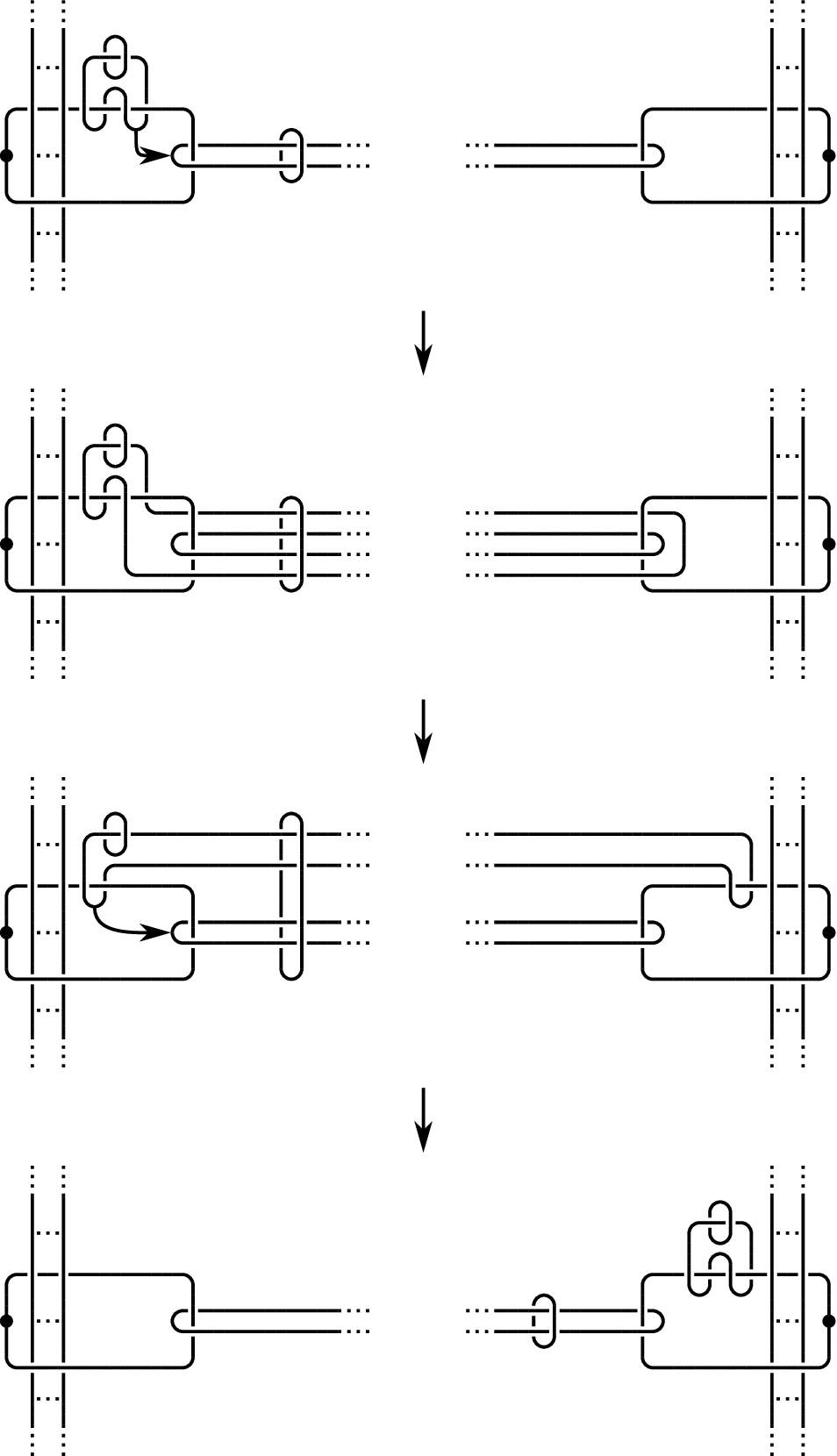}
    \put(9,97){$0$}
    \put(10.5,93.5){$0$}
    \put(20,91.5){$0$}
    \put(14,86.5){$0$}

    \put(31,76){slide}

    \put(9,70.5){$0$}
    \put(10.5,67){$0$}
    \put(20,66.5){$0$}
    \put(10.5,61.5){$0$}

    \put(31,49){isotopy}

    \put(9,43.5){$0$}
    \put(12.5,43){$0$}
    \put(20,45){$0$}
    \put(11,33.5){$0$}

    \put(31,23){slide}

    \put(47.5,17){$0$}
    \put(45.5,14){$0$}
    \put(42,6.5){$0$}
    \put(37.5,11.5){$0$}
\end{overpic}
\caption{Handle calculus in the proof for a deformation $\beta$.}
\label{fig:betaproof}
\end{figure}

Note that Proposition \ref{prop:beta} actually can be shown without using Theorem \ref{thm:double} since the 2-handle that links twice with the dotted circle has the 0-framed meridian.

Let  
$$\langle a, b,{\bf c} \mid a^2=1, a=ub^{\pm1}u^{-1}, {\bf w}={\bf 1}\rangle$$
be the presentation of $\pi_1(\tau_K)$ for the handle decomposition of $\tau_K$ corresponding to the handle diagram in the left side of Figure \ref{fig:beta}. 
Here ${\bf c}$ is a generating subset $\{c_1,\ldots,c_n\}$, $u$ is a word in the generating set $\{a, b,{\bf c}\}$, and ${\bf w}={\bf 1}$ is a set of relations $\{w_1=1,\ldots,w_n=1\}$. 
\begin{cor}
\label{cor:beta}
The following Tietze transformation on finitely presented groups of the fundamental group $\pi_1(\tau_K)$ does not change the diffeomorphism type of $\tau_K$: 
\begin{eqnarray*}
&&\langle a, b,{\bf c} \mid a^2=1, a=ub^{\pm1}u^{-1}, {\bf w}={\bf 1}\rangle\\
&=&\langle a, b,{\bf c} \mid b^2=1, a=ub^{\pm1}u^{-1}, {\bf w}={\bf 1}\rangle. 
\end{eqnarray*}
\end{cor}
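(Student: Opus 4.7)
The plan is to derive Corollary \ref{cor:beta} directly from Proposition \ref{prop:beta} by reading off the fundamental group presentation from each of the two handle diagrams appearing in Figure \ref{fig:beta}.

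First, I would take the generators $a$ and $b$ to be meridians of the two dotted 1-handles in the left diagram of Figure \ref{fig:beta}, while the remaining generators $\mathbf{c}$ and the relations $\mathbf{w} = \mathbf{1}$ come from the parts of the full handle diagram of $\tau_K$ lying outside this local picture. Within the local picture, the 2-framed 2-handle (together with its $0$-framed meridian, which Theorem \ref{thm:double} supplies) contributes the relation $a^2 = 1$, while the $0$-framed 2-handle running between the two dotted circles contributes a conjugacy relation of the form $a = u b^{\pm 1} u^{-1}$ for some word $u$ determined by its attaching map. This is exactly the left-hand presentation in the statement.

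Next, I would invoke Proposition \ref{prop:beta} to replace the left diagram by the right diagram in Figure \ref{fig:beta}; since the proposition is realized by a sequence of handle slides and isotopies, the underlying 4-manifold $\tau_K$ is unchanged. After the deformation $\beta$, the 2-framed 2-handle (with its meridian) is attached on the $b$-side, so the same reading convention now yields the relation $b^2 = 1$ in place of $a^2 = 1$, while the 2-handle linking the two dotted circles continues to realize the conjugacy relation $a = u b^{\pm 1} u^{-1}$ with the same word $u$. Thus both presentations describe $\pi_1$ of the same 4-manifold, and the stated equality is the Tietze transformation induced by this diffeomorphism.

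The main delicate point will be to verify that the conjugating word $u$ and the auxiliary relations $\mathbf{w} = \mathbf{1}$ really are literally unchanged by the deformation $\beta$, rather than being replaced by merely Tietze-equivalent expressions. To this end I would track the handle moves in Figure \ref{fig:betaproof} step by step; Theorem \ref{thm:double} guarantees that every 2-handle in the diagram carries a $0$-framed meridian, so each slide performed in $\beta$ can be arranged to affect only the two 2-handles explicitly pictured, leaving the remainder of the diagram, and hence the data $u$ and $\mathbf{w}$, intact. As an algebraic sanity check, one observes that the relation $a = u b^{\pm 1} u^{-1}$ already forces $a^2 = u b^{\pm 2} u^{-1}$, so in the resulting group $a^2 = 1$ is equivalent to $b^2 = 1$; thus the abstract group identity in Corollary \ref{cor:beta} is not surprising, and the content of the corollary is precisely the geometric realization of this identity by a handle move.
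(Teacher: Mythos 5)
Your overall strategy — read off the fundamental group presentation from the local handle picture, apply the geometric deformation, and read off the presentation again — is exactly how the paper treats this corollary; its proof is the single sentence ``This claim is obtained from Propositions \ref{prop:alpha} and \ref{prop:beta}.'' The substantive difference is that you invoke only Proposition \ref{prop:beta}, whereas the paper deliberately cites Proposition \ref{prop:alpha} as well, and this is not a stylistic choice.

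The gap is the following. Proposition \ref{prop:beta} is a statement about one specific local configuration: in Figure \ref{fig:beta} the $0$-framed $2$-handle joining the two dotted circles meets each of them with a fixed sign and with no twisting of the band. The relation in Corollary \ref{cor:beta} is $a=ub^{\pm 1}u^{-1}$, so in general the connecting $2$-handle may pass through the $b$-circle (and the $a$-circle) with the opposite sign from the one drawn, and the band may carry twists; in that case the left-hand diagram of Figure \ref{fig:beta} simply is not present in your handle diagram and Proposition \ref{prop:beta} does not literally apply. The deformation $\alpha$ (Proposition \ref{prop:alpha}/Corollary \ref{cor:alpha}) is precisely the move that flips the sign of such an intersection and removes the twisting, so one first normalizes the local picture by $\alpha$, then applies $\beta$, and if necessary applies $\alpha$ again to restore the original exponent of $b$ in the relation $a=ub^{\pm1}u^{-1}$. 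Your ``delicate point'' paragraph addresses a different issue (that $u$ and $\mathbf{w}$ survive the slides unchanged, which is fine thanks to the $0$-framed meridians from Theorem \ref{thm:double}), but it does not cover this sign and twist normalization. Adding the appeal to Proposition \ref{prop:alpha} closes the gap and recovers the paper's argument.
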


\begin{proof}
This claim is obtained from Propositions \ref{prop:alpha} and \ref{prop:beta}. 
\end{proof}

We call the deformation in Proposition \ref{prop:beta} or Corollary \ref{cor:beta} a \textit{deformation} $\beta$. 

\begin{rem}\label{rem:deformation}
We cannot apply the deformation $\alpha$ to pochette surgery with slopes other than $\pm2$. 
On the other hand, we can apply the deformation $\beta$ to all pochette surgeries including the Price twist $\tau_K$ (see Proposition \ref{prop:pricepochette}).
\end{rem}

By Theorem \ref{thm:double}, we can construct a handle diagram of $\tau_K$ so that the framing coefficient of each framed knot entangled with some dotted circles is $0$, and each such knot has exactly one $0$-framed meridian.
Furthermore, the handle diagram of $\tau_K$ shown in Figure \ref{fig:taugraph} is a tree.
Therefore, we can construct a handle diagram of $\tau_K$ so that any two elements of the generating set in a finite presentation of $\pi_1(\tau_K)$ obtained from the handle diagram are conjugate to each other via some relations in the presentation.
Hence, by the deformations $\alpha$ and $\beta$, we may assume that the $0$-framed knot entangled exactly twice with some dotted circle is entangled exactly twice with any one of the dotted circles.
Since the deformation $\alpha$ can be applied to any dotted circle due to this assumption, we may suppose that each $0$-framed knot entangled between any two dotted circles has no twist.
From the above, we see that the diffeomorphism type of $\tau_K$ can be determined even if the handle diagram of $\tau_K$ shown in Figure \ref{fig:tauHD} is abbreviated as in Figure \ref{fig:tauSHD}.
We call such a simplified handle diagram a \textit{$\tau$-handle diagram}.
Note that, just as in ordinary handle diagrams, 3- and 4-handles can be omitted.
By Theorem \ref{thm:double}, over/under crossings in each framed knot entangled with a dotted circle in a handle diagram of $\tau_K$ can be modified arbitrarily by its $0$-framed meridian.
We remark that changing over/under crossings in a $\tau$-handle diagram does not affect the diffeomorphism type of $\tau_K$ represented by the $\tau$-handle diagram.

\begin{figure}[htbp]
    \centering
\begin{overpic}[scale=0.6]
{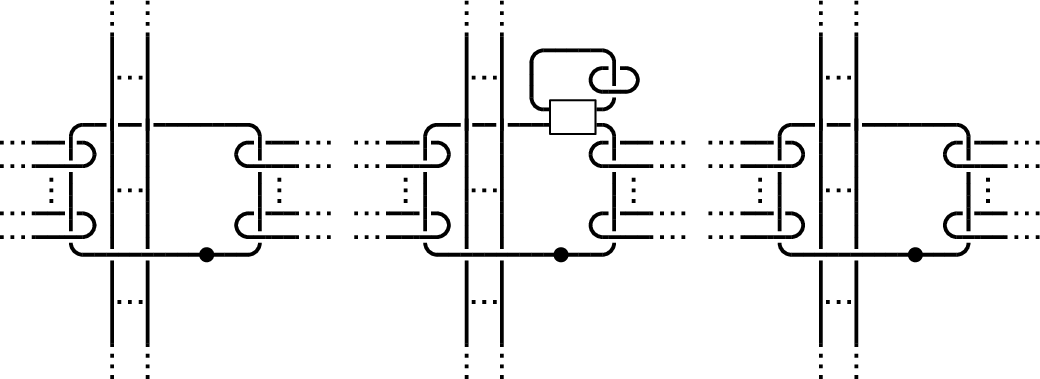}
    \put(54,24.2){$2$}
    \put(55,32.5){$0$}
    \put(62,28){$0$}
    \end{overpic}
\caption{A handle diagram of $\tau_{K}$. The framing coefficient of each 2-handle is 0, and each 2-handle has the 0-framed meridian.}
\label{fig:tauHD}
\end{figure}

\begin{figure}
    \centering
\begin{overpic}[scale=0.6]
{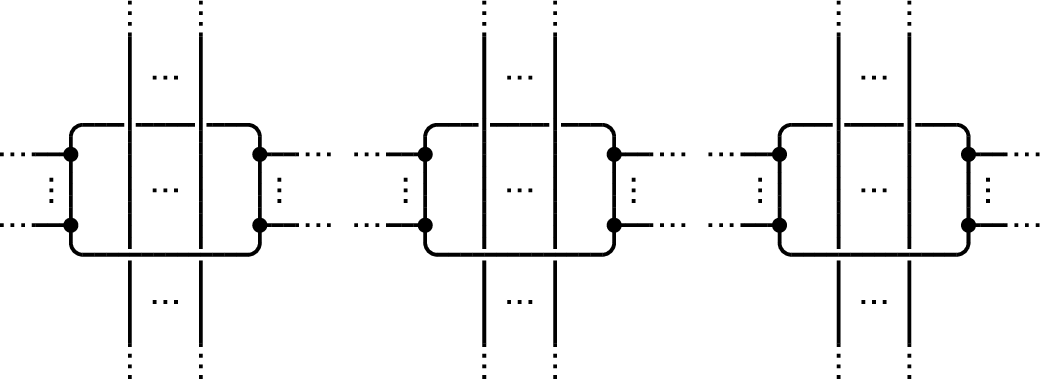}
    \end{overpic}
\caption{A simplified handle diagram of $\tau_{K}$. We call such a diagram a $\tau$-handle diagram.}
\label{fig:tauSHD}
\end{figure}

\begin{exm}
From the handle diagram of $\tau_{S(T_{2,2n+1})}$ depicted in Figure \ref{fig:S(T_{2,2n+1})} or the left side of Figure \ref{fig:S(T_{2,2n+1})second}, we obtain a $\tau$-handle diagram of $\tau_{S(T_{2,2n+1})}$ depicted in the right side of Figure \ref{fig:S(T_{2,2n+1})second}.  
\end{exm}

\begin{figure}[htbp]
    \centering
\begin{overpic}[scale=0.6]
{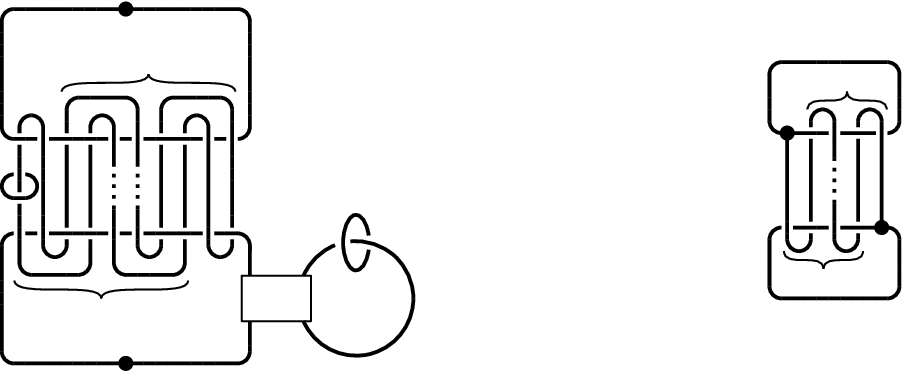}
    \put(-2.5,19){$0$}
    \put(27,20){$0$}
    \put(41,17){$0$}
    \put(46,3){$0$}
    \put(30,7){$2$}
    \put(15,34){$n$}
    \put(10,5){$n$}

    \put(50,19){$\cup$ $2$ $3$-handles}
    \put(57,15){$4$-handle}

    \put(93,31.5){$n$}
    \put(90,9){$n$}
    \end{overpic}
\caption{(Left) A handle diagram of the Price twist $\tau_{S(T_{2,2n+1})}$. (Right) A $\tau$-handle diagram of the Price twist $\tau_{S(T_{2,2n+1})}$. See Figures \ref{fig:S(T_{2,3})} and \ref{fig:S(T_{2,5})} for $n=1$ and $2$, respectively.}
\label{fig:S(T_{2,2n+1})second}
\end{figure}

\subsection{Special handle calculus}
Suppose that $K$ is a ribbon $2$-knot and the number of the 1-handles in a handle decomposition for which a $\tau$-handle diagram of $\tau_K$ can be drawn is $n$. 
From Subsection \ref{subsec:diagram}, we can obtain the presentation $\langle {\bf x}  \mid {\bf r}={\bf 1}\rangle$ of $\pi_1(\tau_{K})$ from a handle diagram of $\tau_{K}$.  
Here, ${\bf x}$ is a generating set $\{x_1,\ldots,x_n\}$ that corresponds to the dotted circles in a handle diagram and ${\bf r}={\bf 1}$ is a set of relations $\{r_1=1,\ldots,r_n=1\}$ that corresponds to framed knots that entangle some dotted circles in the handle diagram. 
We call this presentation a {\it $\tau$-presentation of $\tau_{K}$}. 
Without loss of generality, we may assume 
$r_1=x_1^2$ and $r_k=x_{i_k}w_kx_{j_k}w_k^{-1}$ ($k\ge2$), where $(i_k,j_k)\in\{(m_1,m_2)\in\mathbb{Z}^2 \mid 1 \le m_1,m_2\le n, m_1 \neq m_2\}$ and $w_k$ is a word in the generating set ${\bf x}$. 
The operation of obtaining a finite presentation of $\pi_1(\tau_K)$ from a $\tau$-handle diagram is sometimes denoted by $\tau$-d.
\begin{exm}
The presentation $\langle x_1, x_2 \mid x_1^2=1, x_1(x_2x_1)^nx_2((x_2x_1)^n)^{-1}=1\rangle$ is a $\tau$-presentation of $\tau_{S(T_{2,2n+1})}$ (see Figure \ref{fig:S(T_{2,2n+1})second}). 
\end{exm}

In this subsection, we introduce some calculus for $\tau$-presentations that correspond to handle calculus for handle diagrams.
All of the following deformations on finitely presented groups of $\pi_1(\tau_K)$ can be realized as handle calculus that preserve the diffeomorphism type of $\tau_K$:
\begin{itemize}
\item[(a)] \textbf{Isotopy} \  For any word $u, v, w$ in the generating set ${\bf x}$, we obtain 
\centerline{$uww^{-1}v=1 \longleftrightarrow uv=1 \longleftrightarrow uw^{-1}wv=1$.}
These transformations are sometimes denoted by i.

\item[(b)] \textbf{Handle slide} \ For any relations $r_i=1$ and $r_j=1$, we obtain 
\centerline{$r_i=1$, $r_j=1$ $\longleftrightarrow$ $r_i=1$, $r_ir_j=1$.}
These transformations are sometimes denoted by s.

\item[(c)] \textbf{Handle canceling/creating} \ For any element $x_k$ in ${\bf x}$ and any set of relations ${\bf r'}= {\bf1}$ that each relation does not contain $x_k$, we obtain 
 $$\langle x_k, {\bf x'} \mid x_kw^{-1}=1 , {\bf r'}= {\bf1}\rangle=\langle {\bf x'} \mid {\bf r'}= {\bf1}\rangle$$
Transforming from the left side to the right side corresponds to handle cancellation, and transforming in the opposite direction corresponds to handle creation. 
Note that handle canceling/creating corresponds to only a canceling 1-2 pair. 
These transformations are sometimes denoted by c. 

\item[(d)] \textbf{Deformations $\boldsymbol{\alpha}$ and $\boldsymbol{\beta}$} \ 
By combining Corollaries \ref{cor:alpha} and \ref{cor:beta}, we obtain 
\begin{eqnarray*}
&&x_{i_k}w_kx_{j_k}w_k^{-1}=1 \longleftrightarrow x_{i_k}w_kx_{j_k}^{-1}w_k^{-1}=1\\
&\longleftrightarrow&x_{i_k}^{-1}w_kx_{j_k}w_k^{-1}=1 \longleftrightarrow  x_{i_k}^{-1}w_kx_{j_k}^{-1}w_k^{-1}=1.
\end{eqnarray*}
Also, a word $x_i$ (resp. $x_i^{-1}$) in $w_k$ can be changed to $x_i^{-1}$ (resp. $x_i$). 
These deformations $\alpha$ (resp. $\beta$) are sometimes denoted by $\alpha$ (resp. $\beta$).
\end{itemize}

Note that in a handle diagram of $\tau_K$, changing a self-intersection of a framed knot entangled with a dotted circle preserves the diffeomorphism type of $\tau_K$. 
We also note that base transformations (inversion and permutation of generators and relators) in the $\tau$-presentation of $\tau_K$ do not change the diffeomorphism type of $\tau_K$. 

\begin{lem}
Let $K_1$ and $K_2$ be ribbon $2$-knots in $S^4$. 
The Price twists $\tau_{K_1}$ and $\tau_{K_2}$ are diffeomorphic if and only if their $\tau$-presentations are related by a finite sequence of the above calculus $(a)$, $(b)$, $(c)$ and $(d)$, changing a self-intersection of a framed knot entangled with a dotted circle and base transformations, and handle canceling or handle creating a canceling $2$-$3$ or $3$-$4$ pair in handle diagrams. 
\end{lem}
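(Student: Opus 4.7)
The plan is to split the statement into the sufficiency (``if'') and necessity (``only if'') directions and handle them separately. Sufficiency is a checklist at the level of handle diagrams, while necessity reduces to an application of the relative Kirby/Cerf theorem, combined with a normalization argument that keeps us inside $\tau$-handle diagram form.

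For sufficiency, I would verify that each listed operation preserves the diffeomorphism type of the underlying closed $4$-manifold. The operations in (a) are planar isotopies of attaching maps; (b) is precisely a $2$-handle slide; (c) is a canceling $1$-$2$ handle pair move; the deformations in (d) were realized as handle isotopies in Propositions \ref{prop:alpha} and \ref{prop:beta}. Changing a self-intersection of a framed knot entangled with a dotted circle is possible because Theorem \ref{thm:double} supplies each such $2$-handle with a $0$-framed meridian, over which the crossing change can be achieved via a pair of handle slides (cf.\ Figure \ref{fig:whitecircle}). Base transformations are relabelings, and $2$-$3$ or $3$-$4$ pair creations/cancellations are standard Cerf moves. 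Any finite composition of such moves therefore preserves the diffeomorphism type, so a relation between $\tau$-presentations by the listed calculus implies $\tau_{K_1}\cong\tau_{K_2}$.

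For necessity, I would invoke the relative Kirby theorem: if $\tau_{K_1}$ and $\tau_{K_2}$ are diffeomorphic, then any handle diagrams representing them are related by a finite sequence of handle isotopies, handle slides, and creations/cancellations of canceling handle pairs. Starting from a $\tau$-handle diagram of $\tau_{K_1}$, I would proceed by induction on the length of such a Kirby sequence, showing that after each elementary Kirby move one can always renormalize the resulting diagram back to $\tau$-handle form via a finite composition of (a)--(d), self-intersection changes, base transformations and $2$-$3$ or $3$-$4$ pair cancellations. Reading off $\tau$-presentations at each normalized stage yields a sequence of $\tau$-presentations linked by the listed calculus, from that of $\tau_{K_1}$ to that of $\tau_{K_2}$.

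The main obstacle is this normalization step. An arbitrary $2$-handle slide or isotopy may produce a $2$-handle whose framing is no longer $0$, or an attaching curve that enters a dotted circle in a configuration not admitted by a $\tau$-handle diagram (extra twists, parallel entanglements, or a $2$-handle without its accompanying $0$-framed meridian). The key tool for absorbing such aberrations is the collection of $0$-framed meridians guaranteed by Theorem \ref{thm:double}: sliding over these meridians removes extra crossings and twists, and deformations $\alpha$ and $\beta$ from Propositions \ref{prop:alpha} and \ref{prop:beta} realize the remaining sign ambiguities in the attaching data. I would conclude by enumerating the finitely many local configurations that can appear after one elementary Kirby move on a $\tau$-handle diagram and exhibiting, for each, an explicit normalization built from the listed operations; carrying this out completes the induction and hence the proof.
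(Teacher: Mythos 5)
Your overall skeleton matches the paper's: the paper's entire proof of this lemma is a one-line appeal to Cerf's theorem (that two handle diagrams of diffeomorphic closed $4$-manifolds are related by a finite sequence of isotopies, handle slides, and creations/cancellations of canceling pairs), and your necessity direction rests on exactly the same citation. Your sufficiency direction is also what the paper implicitly relies on: (a)--(d) are realized by isotopies, slides, $1$-$2$ cancellations, and the deformations of Propositions \ref{prop:alpha} and \ref{prop:beta}, and the self-intersection changes are absorbed by the $0$-framed meridians supplied by Theorem \ref{thm:double}. Where you diverge is that you explicitly recognize that the necessity direction is not automatic: an arbitrary Kirby move applied to a $\tau$-handle diagram need not land back in $\tau$-handle form, so one must renormalize after each elementary move before a $\tau$-presentation can be read off. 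The paper does not address this at all; you do, but only as a promise --- ``enumerating the finitely many local configurations \dots and exhibiting, for each, an explicit normalization'' is precisely the content that would constitute a complete proof of the nontrivial implication, and it is deferred rather than carried out. Note also that the lemma as stated permits general $2$-$3$ and $3$-$4$ pair moves \emph{in handle diagrams} alongside the $\tau$-presentation calculus, which softens the normalization burden considerably: one is not required to stay in $\tau$-handle form at every intermediate stage, and your induction could be simplified by exploiting this. As written, your proposal is a more honest account of where the work lies than the paper's own proof, but the key normalization step remains a sketch.
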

\begin{proof}
It is known \cite{PMIHES_1970__39__5_0} that two closed 4-manifolds are diffeomorphic if and only if two corresponding handle diagrams are related by a finite sequence of isotopy, handle slide, handle cancellation and handle creation. 
\end{proof}

A finite sequence of transformations consisting of $(a)$, $(b)$, $(c)$ and $(d)$ in handle diagrams is called {\it $\tau$-handle calculus}. 
This process transforms all the relations in a $\tau$-presentation of $\pi_1(\tau_K)$ while preserving conjugacy between any two generators in each generating set. 

\begin{rem}
Performing $\tau$-handle calculus on a $\tau$-handle diagram corresponds to a sequence of transformations that preserve the conjugacy between any two generators in a generating set of a $\tau$-presentation.
When we perform $\tau$-handle calculus using an ordinary handle diagram, it suffices to preserve the conjugacy between any two generators in a $\tau$-presentation only immediately before applying the deformations $\alpha$ and $\beta$. 
\end{rem}

Finally, we explain how to construct a $\tau$-handle diagram of the Price twist $\tau_{K}$ from a ribbon 1-knot $k$ that is the equatorial knot of a ribbon 2-knot $K$ (the definition of the equatorial knot is given before Corollary \ref{cor:1-fusion}). 
For ribbon bands placed on a knot diagram $D(k)$ of $k$ as in the left side of Figure \ref{fig:knottotau} (red), a $\tau$-handle diagram of $\tau_{K}$ can be obtained by replacing all the ribbon bands as in the right side of Figure \ref{fig:knottotau}. 
\begin{figure}[htbp]
    \centering
\begin{overpic}[scale=0.6]
{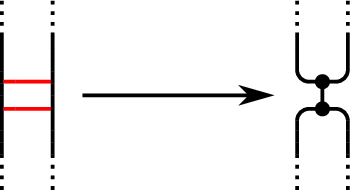}
\end{overpic}
\caption{A method to change a knot diagram $D(k)$ of a ribbon 1-knot $k$ to a $\tau$-handle diagram of $\tau_{K}$.}
\label{fig:knottotau}
\end{figure}

\subsection{Diffeomorphism types of some non-simply connected Price twists}\label{subsec:diffeo type of Price twist}
Recall that for a ribbon 2-knot $K$ obtained from a trivial $(n + 1)$-component 2-link by adding $n$ 1-handles, the {\it ribbon fusion number} (or simply {\it fusion number}) $rf(K)$ of $K$ is the minimal number of $n$ possible for $K$. Moreover, recall that the $(p,q)$-torus knot and the spun knot of a 1-knot $k$ are denoted by $T_{p,q}$ and $S(k)$, respectively.

\begin{thm}\label{thm:1-fusion}
Let $K$ be a ribbon $2$-knot of $1$-fusion. Then, $\tau_K$ is diffeomorphic to $\tau_{S(T_{2,n})}$, where $n=\det(K)$. 
\end{thm}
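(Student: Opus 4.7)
The plan is to reduce a $\tau$-handle diagram of $\tau_K$, obtained from the given $1$-fusion ribbon presentation of $K$, by $\tau$-handle calculus to the standard diagram of $\tau_{S(T_{2,n})}$ displayed on the right of Figure \ref{fig:S(T_{2,2n+1})second}, and then to identify $n$ with $\det(K)$. Since $K$ is a $1$-fusion ribbon $2$-knot, it is obtained from a trivial two-component $2$-link by attaching a single band. Combining Theorem \ref{thm:double} with the recipe of Figure \ref{fig:knottotau} yields a $\tau$-handle diagram of $\tau_K \cong DF(K\#P_0)$ with exactly two dotted circles, giving generators $x_1$ and $x_2$, the relation $x_1^2 = 1$ coming from the $P_0$ summand, and a single band relation of the form $x_1 \cdot w\, x_2^{\pm 1}\, w^{-1} = 1$, where $w$ is a word in $x_1, x_2$ that records the band.

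Next I would simplify this band word using the full $\tau$-handle calculus. The deformation $\beta$ (Corollary \ref{cor:beta}) immediately promotes $x_2$ to an involution at the level of the diagram without changing the diffeomorphism type, so that the underlying group becomes a quotient of the infinite dihedral group $\langle x_1, x_2 \mid x_1^2 = x_2^2 = 1 \rangle \cong \mathbb{Z}_2 \ast \mathbb{Z}_2$. In this group every element is, up to conjugation and free reduction, of the form $(x_1 x_2)^k$ or $(x_1 x_2)^k x_1$. Applying the deformation $\alpha$ (Corollary \ref{cor:alpha}) to flip the signs of individual occurrences of generators inside $w$, together with handle slides that insert or remove copies of the relators $x_1^2$ and $x_2^2$, I would drive the band relation into the canonical form $(x_1 x_2)^{2m+1} = 1$ for some integer $m$. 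This reduction brings the $\tau$-handle diagram into exactly the shape of Figure \ref{fig:S(T_{2,2n+1})second}, and hence exhibits a diffeomorphism $\tau_K \cong \tau_{S(T_{2,2m+1})}$.

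Finally, it remains to identify $2m+1$ with $\det(K)$. This I expect to be the main obstacle, because the deformations $\alpha$ and $\beta$ are not visibly $\det$-preserving. My plan is to pin down $2m+1$ from the fundamental group independently on the two sides: by Theorem \ref{thm:pi1oftaut22n+1}, the target $\tau_{S(T_{2,2m+1})}$ has $\pi_1 \cong D_{|2m+1|}$, while a direct computation starting from the original presentation $\langle x_1, x_2 \mid x_1^2 = 1,\ x_1 w x_2^{\pm 1} w^{-1} = 1\rangle$ of $\pi_1(\tau_K)$, together with the Fox-calculus formula $\det(K) = |\Delta_K(-1)|$ applied to the $1$-fusion presentation of $K$, shows that $\pi_1(\tau_K) \cong D_{|\det(K)|}$. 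Equating the two then forces $|2m+1| = \det(K)$ and completes the proof.
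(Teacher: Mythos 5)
Your overall strategy --- reduce a $\tau$-handle diagram of $\tau_K$ to the standard one for $\tau_{S(T_{2,n})}$ by the deformations $\alpha$ and $\beta$, then identify $n$ --- matches the paper for the geometric half, but you identify $n$ with $\det(K)$ by a genuinely different route. The paper works with the explicit $1$-fusion ribbon presentation $R(m_1,n_1,\ldots,m_s,n_s)$: the deformation $\alpha$ reduces each twist box modulo $2$ (Figure \ref{fig:1-fusionpre}), so the diagram collapses to that of $R(1,\ldots,1)=S(T_{2,p})$, and the closed-form Alexander polynomial of $R(m_1,\ldots,n_s)$ shows that $\det(K)=|\Delta_K(-1)|$ depends only on these same parities, whence $n=\det(K)$ falls out of the very reduction that simplifies the diagram. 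You instead propose to compute $\pi_1(\tau_K)\cong D_{|\det(K)|}$ independently and match it against Theorem \ref{thm:pi1oftaut22n+1}. That route is viable, but the sentence ``a direct computation \dots together with the Fox-calculus formula \dots shows that $\pi_1(\tau_K)\cong D_{|\det(K)|}$'' is carrying real weight and needs to be unpacked: you must argue that $\pi_1(\tau_K)=\pi_1(S^4\setminus K)/\langle\!\langle\mu^2\rangle\!\rangle$ is a quotient of $\mathbb{Z}_2*\mathbb{Z}_2$ (fine, since $x_2$ is conjugate to $x_1^{\pm1}$), that $H_1(\tau_K)\cong\mathbb{Z}_2$ (Corollary \ref{cor:homology}) then forces it to be $D_k$ with $k$ odd and finite, and that $k=|[G,G]|$ equals $|H_1|$ of the double branched cover of $(S^4,K)$, which is $|\Delta_K(-1)|=\det(K)$ by the standard Fox--Goeritz argument transplanted to $2$-knots. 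Each step is standard, but together they invoke noticeably more machinery than the paper's elementary parity bookkeeping, and in the paper's logical order this $\pi_1$ statement is a \emph{consequence} of Theorems \ref{thm:pi1oftaut22n+1} and \ref{thm:1-fusion} rather than an input. A second, smaller caveat: your reduction of the band relator to the canonical form $(x_1x_2)^{2m+1}=1$ argues from normal forms of elements of the infinite dihedral group, whereas the moves available are the specific diagrammatic ones $(a)$--$(d)$; the paper sidesteps any worry here by performing the reduction explicitly on the concrete twist-box form of the $1$-fusion presentation. If you fill in the dihedral-group computation (or simply adopt the paper's determinant bookkeeping for the last step), your argument is complete.
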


\begin{proof}
We first show that $\tau_K$ is diffeomorphic to $\tau_{S(T_{2,n})}$ for some odd integer $n \ge 1$.
Suppose that $K$ has a ribbon presentation $R(m_1, n_1, \ldots, m_s, n_s)$ described as in Figure \ref{fig:pre1-fusion} (see also \cite[Figures 1 and 2]{MR4160334} for example). 
Then, a $\tau$-handle diagram of $\tau_K$ can be drawn as in the left side of Figure \ref{fig:1-fusion}.
By repeatedly applying the deformation $\alpha$ (Proposition \ref{prop:alpha}) to the left side of Figure \ref{fig:1-fusionpre}, we obtain the transformations depicted in Figure \ref{fig:1-fusionpre}.
By applying the transformations in Figure \ref{fig:1-fusionpre} and the deformation $\beta$ (Proposition \ref{prop:beta}) to the left side of Figure \ref{fig:1-fusion} some times, we obtain the right side of Figure \ref{fig:1-fusion}, which is just the $\tau$-handle diagram of $\tau_{S(T_{2,n})}$ depicted in Figure \ref{fig:S(T_{2,2n+1})second} for some odd integer $n \ge 1$. 

We next show that $n=\det(K)$. Let $\Delta_K(t)$ be an Alexander polynomial of $K$. We see from the ribbon presentation $R(m_1, n_1, \ldots, m_s, n_s)$ of $K$ that 

\begin{eqnarray*}
&&\Delta_K(t)=t^{m_1 + m_2 + \cdots + m_s}(1-t^{-n_s}+t^{-m_s-n_s}-t^{-m_{s-1}-n_s-m_s}\\
&&+\cdots-t^{-n_1-\cdots-m_{s-1}-n_s-m_s}+t^{-m_1-n_1-\cdots-m_{s-1}-n_s-m_s})
\end{eqnarray*}
(see for example \cite{MR1701683, MR133126, MR4160334, MR467763}). 
Thus, we have 

\begin{eqnarray*}
&&\det(K)=|\Delta_K(-1)|\\
&=&|1-(-1)^{n_s}+(-1)^{m_s+n_s}-(-1)^{m_{s-1}+n_s+m_s}\\
&&+\cdots-(-1)^{n_1+\cdots+m_{s-1}+n_s+m_s}+(-1)^{m_1+n_1+\cdots+m_{s-1}+n_s+m_s}|.
\end{eqnarray*}

We can see that $\det(K)=\det(R(1,1,\ldots,1,1))$, where the number of 1 is $p-1$ for $p:=\det(K)$. Moreover, we see that $S(T_{2,p})$ has the ribbon presentation $R(1,1,\ldots,1,1)$. Thus, we have $n=p=\det(K)$.
This completes the proof.
\end{proof}

\begin{rem}
The methods in the proof of Theorem \ref{thm:1-fusion} are similar to those given in \cite[Appendix]{viro1973local}. 
\end{rem}

Note that by Theorem \ref{thm:pi1oftaut22n+1} (Corollary \ref{cor:not spin case}), Theorem \ref{thm:1-fusion} classifies the diffeomorphism types of $\tau_K$ completely for ribbon 2-knots $K$ of 1-fusion. 

\begin{figure}[htbp]
    \centering
\begin{overpic}[scale=0.6
]
{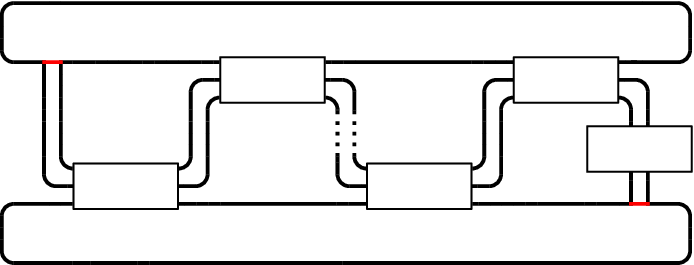}
\put(13,10){$-m_1$}
\put(37,26){$n_1$}
\put(55,10){$-m_s$}
\put(79,26){$n_s$}
\put(91,15.5){$x$}
\end{overpic}
\caption{A knot diagram of the equatorial knot of $K$ and its ribbon band (red). The definition of the equatorial knot is given before Example \ref{exm:12 cross}. A box labeled $n$ represents $n$ full twists. Note that $x=\sum_{i=1}^{s}(m_i-n_i)$. }
\label{fig:pre1-fusion}
\end{figure}

\begin{figure}[htbp]
    \centering
\begin{overpic}[scale=0.6
]
{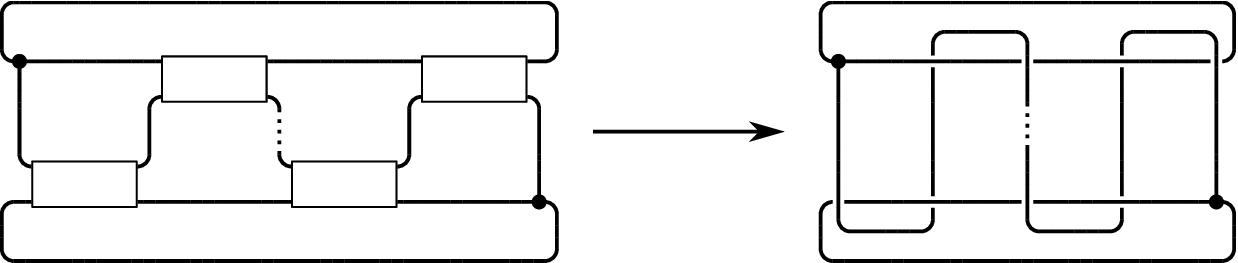}
\put(4,5.5){$-m_1$}
\put(16,14.5){$n_1$}
\put(25,5.5){$-m_s$}
\put(37,14.5){$n_s$}
\end{overpic}
\caption{A $\tau$-handle diagram of $\tau_K$ for a ribbon 2-knot $K$ of 1-fusion.}
\label{fig:1-fusion}
\end{figure}

\begin{figure}[htbp]
    \centering
\begin{overpic}[scale=0.6]
{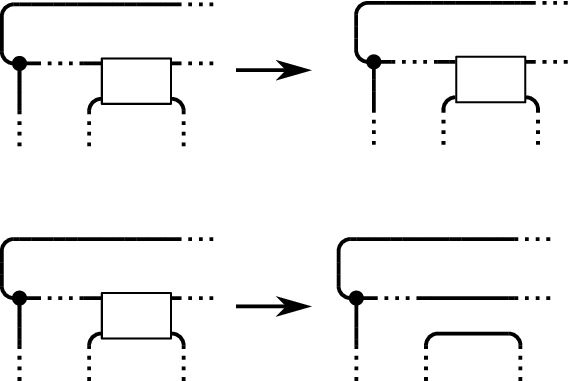}
\put(22,51){$n$}
\put(45,57){$\alpha$}
\put(84,51){$1$}
\put(110,51){($n$ is odd)}

\put(22,9.5){$n$}
\put(45,15.5){$\alpha$}
\put(110,9.5){($n$ is even)}
\end{overpic}
\caption{Handle calculus using the deformation $\alpha$ (top: $n$ is odd , bottom: $n$ is even).}
\label{fig:1-fusionpre}
\end{figure}

\begin{cor}\label{cor:2-bridge}
Let $k$ be a $2$-bridge knot. Then, $\tau_{S(k)}$ is diffeomorphic to $\tau_{S(T_{2, n})}$, where $n=\det(k)$.
\end{cor}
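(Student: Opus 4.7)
The plan is to reduce this statement to Theorem \ref{thm:1-fusion} applied to the ribbon $2$-knot $S(k)$. To do this, I would verify two claims: \textbf{(A)} if $k$ is a $2$-bridge knot, then the spun $2$-knot $S(k)$ is a ribbon $2$-knot of $1$-fusion, and \textbf{(B)} $\det(S(k))=\det(k)$. Once these are in hand, Theorem \ref{thm:1-fusion} gives $\tau_{S(k)} \cong \tau_{S(T_{2,n})}$ with $n=\det(S(k))=\det(k)$.

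For claim \textbf{(A)}, I would start from a standard $2$-bridge presentation of $k$, i.e.\ realize $k$ as the numerator closure of a rational tangle lying in a $3$-ball $B$ meeting the bridge sphere in exactly two arcs. Spinning this presentation about the spinning axis (which we can arrange to miss a $3$-ball $B^3\subset E(k)$ disjoint from the bridge sphere) produces a $2$-sphere that bounds a ribbon disk in $D^4$ built from two trivial $2$-disks joined by a single $1$-handle, i.e.\ a ribbon presentation with a single fusion band. Concretely, one gets a trivial $2$-component $2$-link together with one $1$-handle, showing $rf(S(k))\le 1$. Since a $2$-bridge knot with determinant $n\ge 3$ is non-trivial, $S(k)$ is non-trivial, so $rf(S(k))\ge 1$, and hence $rf(S(k))=1$. (The case $n=1$ is the unknot, where $S(k)$ is the unknotted $2$-knot $O$; the case $n=2$ does not occur for $2$-bridge knots.)

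For claim \textbf{(B)}, I would invoke the well-known fact that spinning preserves the Alexander polynomial: $\Delta_{S(k)}(t)=\Delta_k(t)$. Since $\det(K)=|\Delta_K(-1)|$ for any $2$-knot, this immediately yields $\det(S(k))=|\Delta_k(-1)|=\det(k)$. Alternatively, one can read $\det(S(k))$ directly off the ribbon presentation produced in \textbf{(A)} using the formula for $\Delta_K(t)$ recalled in the proof of Theorem \ref{thm:1-fusion} and check that it matches the classical determinant of the rational knot $k$.

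The main obstacle is \textbf{(A)}: making the passage from an abstract $2$-bridge presentation of $k$ to an explicit $1$-fusion ribbon presentation $R(m_1,n_1,\ldots,m_s,n_s)$ of $S(k)$ rigorous, so that Theorem \ref{thm:1-fusion} really does apply. The quickest route is probably to cite the classical fact that the spun $2$-knot of an $n$-bridge knot is a ribbon $2$-knot of fusion number at most $n-1$, but if a self-contained argument is preferred one needs to track the rational tangle carefully under spinning. Everything after \textbf{(A)} and \textbf{(B)} is a one-line invocation of Theorem \ref{thm:1-fusion}.
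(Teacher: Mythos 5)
Your proposal is correct and follows essentially the same route as the paper: the paper also reduces to Theorem \ref{thm:1-fusion} by citing that $rf(S(k))=1$ for a $2$-bridge knot $k$ (your claim (A), for which the paper invokes \cite[Proposition 4]{zbMATH01117469} rather than re-deriving it from the bridge presentation) and by noting that $\det(S(k))=\det(k)$ because spinning preserves the Alexander polynomial up to units (your claim (B)). The only difference is that you sketch a geometric argument for (A) where the paper simply cites the known result, which you yourself identify as the quickest route.
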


\begin{proof}
We see from \cite[Proposition 4]{zbMATH01117469} that $rf(S(k))=1$. 
Thus, by Theorem \ref{thm:1-fusion}, $\tau_{S(k)}$ is diffeomorphic to $\tau_{S(T_{2,n})}$, where $n=\det(S(k))$. Moreover, we know that for each 1-knot $j$, $\det(S(j))=\det(j)$ since their Alexander polynomials are the same up to $\pm t^m$. This completes the proof.
\end{proof}

For the determinant of a 2-bridge knot, the following are known (for example, see \cite[p.20]{zbMATH05522298}).

\begin{lem}\label{lem:2-bridge}
Let $a_0 \in \mathbb{R}$, $a_1, \ldots, a_n \in \mathbb{R}-\{0\}$, $p_0:=a_0$, $p_1:=a_0a_1+1$, $p_k:=a_kp_{k-1}+p_{k-2}$, $q_0:=1$, $q_1:=a_1$ and $q_k:=a_kq_{k-1}+q_{k-2}$ $(k \ge 2)$. If $q_1, \ldots, q_k \not= 0$, then
\[
[a_0, a_1, \ldots, a_k]=\cfrac{p_k}{q_k},
\]
where
\[
[a_0, a_1, \ldots, a_k]:=a_0+\cfrac{1}{a_1+\cfrac{1}{\cdots+{\cfrac{1}{a_k}}}}.
\]
\end{lem}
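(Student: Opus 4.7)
The plan is to prove the statement by induction on $k$, which is the standard approach for the classical convergent recursion of continued fractions.

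For the base cases, when $k=0$ we have $[a_0] = a_0 = a_0/1 = p_0/q_0$, and when $k=1$ we have $[a_0,a_1] = a_0 + 1/a_1 = (a_0 a_1 + 1)/a_1 = p_1/q_1$, matching the defined recursions.

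For the inductive step, suppose the formula holds for all tuples of length at most $k$. The key identity is
\[
[a_0, a_1, \ldots, a_{k-1}, a_k] = \left[a_0, a_1, \ldots, a_{k-2}, a_{k-1} + \tfrac{1}{a_k}\right],
\]
which is immediate from the definition of the nested fraction. The right-hand side is a continued fraction of length $k$, so by the inductive hypothesis it equals $p'_{k-1}/q'_{k-1}$, where $p'_{k-1}, q'_{k-1}$ are computed from the same recursion but with $a_{k-1}$ replaced by $a_{k-1} + 1/a_k$. Since $p_{k-2}, p_{k-3}, q_{k-2}, q_{k-3}$ depend only on $a_0, \ldots, a_{k-2}$, the recursion gives
\[
p'_{k-1} = \left(a_{k-1} + \tfrac{1}{a_k}\right) p_{k-2} + p_{k-3} = p_{k-1} + \tfrac{1}{a_k} p_{k-2},
\]
and similarly $q'_{k-1} = q_{k-1} + \tfrac{1}{a_k} q_{k-2}$. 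Multiplying numerator and denominator by $a_k$ and using the definitions of $p_k$ and $q_k$ then yields $p'_{k-1}/q'_{k-1} = p_k/q_k$.

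There is essentially no obstacle here beyond careful bookkeeping; the main subtlety is to handle the base cases $k=0,1$ separately so that the recursion using $p_{k-3}, q_{k-3}$ in the inductive step is well-founded (requiring the argument to start from $k \geq 2$ in the inductive step after establishing $k=0,1$ directly). The hypothesis $q_1, \ldots, q_k \neq 0$ ensures all the denominators appearing along the way are nonzero, so each intermediate expression is well-defined.
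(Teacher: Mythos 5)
Your induction is the standard and correct argument for the convergent recursion, and the key step --- rewriting $[a_0,\ldots,a_{k-1},a_k]$ as $[a_0,\ldots,a_{k-2},a_{k-1}+\tfrac{1}{a_k}]$, applying the inductive hypothesis to the shorter tuple, and clearing the factor $\tfrac{1}{a_k}$ --- is exactly right. Note that the paper itself gives no proof of this lemma; it is stated as a known fact with a citation to a reference, so there is no in-paper argument to compare against. Two small points of bookkeeping you should make explicit if you write this up in full: first, the case $k=2$ also needs separate treatment (or the convention $p_{-1}=1$, $q_{-1}=0$), since your inductive formula invokes $p_{k-3}$ and $q_{k-3}$; second, to legitimately apply the inductive hypothesis to the modified tuple you should check that its denominators are nonzero, which follows from $q'_{k-1}=q_{k-1}+\tfrac{1}{a_k}q_{k-2}=\tfrac{q_k}{a_k}\neq 0$ together with $q'_j=q_j$ for $j\le k-2$. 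With those remarks added, the proof is complete.
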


We use $C[a_1, \ldots, a_n]$ as the Conway notation of 2-bridge knots.
 
\begin{cor}\label{cor:2-bridgedet}
Let $d_n$ be the determinant of a $2$-bridge knot $C[a_1, \ldots, a_n]$. Then, $d_n=a_n d_{n-1} + d_{n-2}$, where $d_0:=1$ and $d_1=a_1$.
\end{cor}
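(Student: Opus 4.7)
The plan is to reduce the claim to the continued fraction recurrence of Lemma \ref{lem:2-bridge} via the classical formula for the determinant of a 2-bridge knot. I would first invoke the well-known theorem (going back to Schubert's classification) that if the 2-bridge knot $C[a_1,\ldots,a_n]$ corresponds, via the Conway notation, to the rational number $p/q = [a_1,a_2,\ldots,a_n]$ (expressed as a continued fraction, in lowest terms), then $\det(C[a_1,\ldots,a_n]) = |p|$. This reduces the problem to establishing the stated linear recurrence for the numerators of the convergents of $[a_1,\ldots,a_n]$.

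Next, I would apply Lemma \ref{lem:2-bridge} after a one-step shift of indices: the lemma's sequence $a_0,a_1,\ldots$ is to be identified with our $a_1,a_2,\ldots$. Under this relabelling, the numerators $p_0,p_1,p_2,\ldots$ of Lemma \ref{lem:2-bridge} become $d_1 = a_1$, $d_2 = a_1 a_2 + 1$, and generally $d_k = a_k d_{k-1} + d_{k-2}$ for $k\ge 2$. Setting $d_0 := 1$ is consistent with the $k=2$ case since $a_2 d_1 + d_0 = a_1 a_2 + 1 = d_2$, so the recurrence in fact holds for all $n \ge 2$ with the stated initial data.

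Combining these two ingredients gives $d_n = |p_n|$, from which the recurrence $d_n = a_n d_{n-1} + d_{n-2}$ follows. The only subtlety is the potential sign issue arising from taking absolute values; this is handled by the standard convention that in the Conway notation $C[a_1,\ldots,a_n]$ (as used in Lemma \ref{lem:2-bridge}) the $a_i$ are chosen so that each $p_k$ is positive, hence $d_n = p_n$ and no sign ambiguity arises. The main obstacle is merely bookkeeping: aligning the indexing convention of Lemma \ref{lem:2-bridge} (which starts from $a_0$) with the indexing used for Conway notation here (which starts from $a_1$), after which the recurrence is essentially tautological from the definition of a continued fraction.
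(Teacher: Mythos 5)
Your proposal is correct and takes essentially the same route as the paper: identify $\det(C[a_1,\ldots,a_n])$ with the numerator of the continued fraction $[a_1,\ldots,a_n]$ and then apply Lemma \ref{lem:2-bridge} after a one-step index shift, noting that $d_0:=1$ makes the recurrence start correctly at $n=2$. The paper's own proof is a terser version of exactly this argument (it does not even spell out the index shift or the sign convention that you discuss).
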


\begin{proof}
The numerator of $[a_1, \ldots, a_n]$ is the determinant of the $2$-bridge knot $C[a_1, \ldots, a_n]$. Thus, we can apply $d_n$ to $p_n$ in Lemma \ref{lem:2-bridge}.
\end{proof}

\begin{exm}\label{exm:2-plat 2-knot}
Let $\beta$ be a 2-dimensional $2n$-braid in $D^4$. 
For the definition of a 2-dimensional braid, see \cite{kamada2017surface}. 
Then $\partial \beta$ is a $2n$-component link in $S^3$. 
A surface link obtained from $\beta$ by trivially gluing $n$ annuli to $\partial \beta$ is called the {\it plat closure} of the $2n$-braid $\beta$ (for details, see \cite{arXiv:2105.08634}). 
A 2-knot $K$ is said to be {\it $n$-plat} if $K$ is ambiently isotopic to the plat closure of some $2n$-braid. 
An $n$-plat 2-knot is first defined in \cite{arXiv:2506.15401}. 
Any 1-plat 2-knot is either a trivial 2-knot or a trivial non-orientable surface knot \cite[Theorem 1.1]{arXiv:2105.08634}. 
Yasuda \cite{arXiv:2506.15401} introduced normal forms of 2-plat 2-knots using rational numbers. 
Let $p$ and $a$ be integers which satisfy that $p$ is positive and $\gcd(p, a) = 1$. 

Let $F(p/a)$ be the 2-knot whose equatorial knot is represented by the knot diagram depicted in Figure \ref{fig:2plat2knot}, where $p/a = [c_1, \ldots, c_m]$. 
Note that the roles of the numerator $p$ and the denominator $a$ of the fraction $p/a$ for $F(p/a)$ are reversed in \cite{arXiv:2506.15401}. 
Any 2-plat 2-knot $K$ is isotopic to $F(p/a)$ for some positive odd integer $p$ and integer $a$ with $\gcd(p,a)=1$ by \cite[Theorem 1.1]{arXiv:2506.15401}. 
The 2-plat 2-knot $F(p/a)$ is a ribbon 2-knot of 1-fusion (see \cite[Proposition 2.6]{arXiv:2506.15401}). 
Thus, by Theorem \ref{thm:1-fusion}, the Price twist $\tau_{F(p/a)}$ is diffeomorphic to $\tau_{S(T_{2,n})}$, where $n=\det(F(p/a))=p$ (see \cite[Corollary 1.4]{arXiv:2506.15401}). 
Note that while the Alexander polynomial of the spun knot $S(T_{2,p})$ of the torus knot $T_{2,p}$ is reciprocal (i.e. $\Delta_{S(T_{2,p})}(t) \, \dot{=} \, \Delta_{S(T_{2,p})}(t^{-1})$, where, $g(t) \, \dot{=} \, h(t)$ means that $g(t)$ equals $h(t)$ up to multiplication by $\pm t^m$ for some integer $m$), the 2-plat 2-knot $F(p/a)$ with $p\le2000$ is not (i.e. $\Delta_{F(p/a)}(t) \, \dot{\neq} \, \Delta_{F(p/a)}(t^{-1})$) from \cite[Theorem 1.7]{arXiv:2506.15401}. 

It follows from this example that there exist two 2-knots $K_1$ and $K_2$ such that $K_1$ is not isotopic to $K_2$, $rf(K_1) = rf(K_2) = 1$, and $\tau_{K_1}$ is diffeomorphic to $\tau_{K_2}$.

\begin{figure}[htbp]
    \centering
\begin{overpic}[scale=0.6
]
{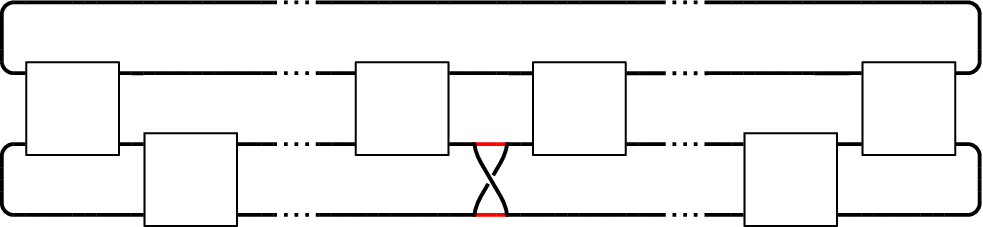}
\put(6,11){$c_1$}
\put(17,4){$-c_2$}
\put(39,11){$c_m$}

\put(56,11){$-c_m$}
\put(79,4){$c_2$}
\put(90,11){$-c_1$}
\end{overpic}
\caption{A knot diagram for the equatorial knot of $F(p/a)$ and a ribbon band (red).}
\label{fig:2plat2knot}
\end{figure}
\end{exm}

For a diagram $D(k)$ of a ribbon 1-knot $k$, let $R(D(k))$ denote a ribbon 2-knot obtained by taking the double of a ribbon disk properly embedded in $D^4$ that bounds the ribbon 1-knot $k$ described by $D(k)$. The ribbon 1-knot $k$ is called the {\it equatorial knot} of $R(D(k))$. Note that every ribbon 2-knot is described by $R(D(k))$ for some knot diagram $D(k)$ of some ribbon 1-knot $k$. 

\begin{cor}\label{cor:1-fusion}
Let $k$ be a ribbon $1$-knot of $1$-fusion. 
Then, there exists a knot diagram $D(k)$ of $k$ such that $rf(R(D(k))) \le 1$ and $\tau_{R(D(k))}$ is diffeomorphic to $\tau_{S(T_{2,n})}$, where $n=\sqrt{\det(k)}$. 
\end{cor}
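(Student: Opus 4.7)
The plan is to reduce the statement to Theorem~\ref{thm:1-fusion} by doubling a suitable ribbon disk for $k$.

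Since $k$ is a ribbon 1-knot of 1-fusion, it admits a knot diagram $D(k)$ presenting $k$ as the fusion of a 2-component trivial link along a single band; this diagram describes a ribbon disk $D \subset D^4$ with $\partial D = k$ consisting of two trivial disks joined by one band. Doubling $D$ across an equatorial $S^3 \subset S^4$ yields the ribbon 2-knot $R(D(k))$, and the single fusion band for $k$ doubles to a single 1-handle joining the two doubled trivial disks. Hence $R(D(k))$ is itself a ribbon 2-knot of 1-fusion, so $rf(R(D(k))) \le 1$.

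Next, applying Theorem~\ref{thm:1-fusion} to $R(D(k))$ yields a diffeomorphism $\tau_{R(D(k))} \cong \tau_{S(T_{2,m})}$ with $m = \det(R(D(k)))$. It therefore suffices to show that $m = \sqrt{\det(k)}$. For this I would invoke the classical Alexander polynomial factorization for a ribbon 1-knot realized via its doubled ribbon 2-knot, namely
\[
\Delta_k(t) \,\dot{=}\, \Delta_{R(D(k))}(t) \cdot \Delta_{R(D(k))}(t^{-1}).
\]
Evaluating at $t=-1$ gives $\det(k) = \Delta_{R(D(k))}(-1)^2 = \det(R(D(k)))^2$, whence $m = \sqrt{\det(k)}$ as desired; in particular $\det(k)$ is automatically a perfect square, as the statement of the corollary requires.

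The main obstacle is justifying the displayed polynomial identity. One approach is explicit Fox calculus on the 1-fusion ribbon presentation of $k$ and its double: doubling produces a symmetric block structure in the Alexander matrix from which the factorization can be read off directly, in parallel with the computation of $\Delta_K(t)$ carried out in the proof of Theorem~\ref{thm:1-fusion}. Alternatively, one may quote the classical Kinoshita--Terasaka type factorization result for the Alexander polynomial of a ribbon 1-knot together with its realization via the doubled ribbon 2-knot. In either case the 1-fusion hypothesis keeps the combinatorics simple enough to carry out explicitly from a presentation of the form depicted in Figure~\ref{fig:pre1-fusion}.
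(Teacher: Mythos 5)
Your proposal is correct and follows essentially the same route as the paper: double a one-band ribbon disk to get a 1-fusion ribbon 2-knot, apply Theorem~\ref{thm:1-fusion}, and identify $\det(R(D(k)))=\sqrt{\det(k)}$ via the factorization $\Delta_k(t)\,\dot{=}\,\Delta_{R(D(k))}(t)\Delta_{R(D(k))}(t^{-1})$. The paper establishes that identity by the explicit computation you sketch as your first option (writing $\Delta_k(t)=f(t)f(t^{-1})$ from the standard 1-fusion presentation $k(m_1,n_1,\ldots,m_s,n_s)$ and matching $f$ with $\Delta_{R(D(k))}$), together with a short reduction showing an arbitrary 1-fusion ribbon knot admits a diagram whose double is isotopic to one from that standard family.
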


\begin{proof}
First, we prove this claim for the 1-knot $k=k(m_1,n_1,\ldots,m_s,n_s)$ whose diagram $D(k)$ is shown in Figure \ref{fig:pre1-fusion}. 
In this case, the $2$-knot $R(D(k))$ is isotopic to $R(m_1,n_1,\ldots,m_s,n_s)$ and $rf(R(D(k)))\le1$. 
From \cite{MR467763} or \cite[Remark 1.8]{mizuma2005ribbon}, we obtain 
$\Delta_k(t)=f(t)f(t^{-1})$, where 
$$
f(t)=\sum_{i=1}^s(t^{\phi(i)}-t^{\psi(i)})+1, \phi(i)=\sum_{j=i}^s(m_j+n_j) \text{ and } \psi(i)=-m_i+\sum_{j=i}^s(m_j+n_j). 
$$
Therefore, we have
\begin{eqnarray*}
f(t)&=&1+\sum_{i=1}^s(-t^{\psi(s-i+1)}+t^{\phi(s-i+1)})\\    
&=&1-t^{n_s}+t^{m_s+n_s}-t^{m_{s-1}+n_s+m_s}\\
&&+\cdots-t^{n_1+\cdots+m_{s-1}+n_s+m_s}+t^{m_1+n_1+\cdots+m_{s-1}+n_s+m_s}\\ 
&\dot{=}&(t^{-1})^{m_1 + m_2 + \cdots + m_s}(1-(t^{-1})^{-n_s}+(t^{-1})^{-m_s-n_s}-(t^{-1})^{-m_{s-1}-n_s-m_s}\\
&&+\cdots-(t^{-1})^{-n_1-\cdots-m_{s-1}-n_s-m_s}+(t^{-1})^{-m_1-n_1-\cdots-m_{s-1}-n_s-m_s})\\ 
&=&\Delta_{R(D(k))}(t^{-1}). 
\end{eqnarray*}
Similarly, we obtain $f(t^{-1})\,\dot{=}\,\Delta_{R(D(k))}(t)$. 
Therefore, we have 
$$\Delta_{k}(t)\,\dot{=}\,\Delta_{R(D(k))}(t)\Delta_{R(D(k))}(t^{-1}). $$
This means that $\det(k)=(\det(R(D(k))))^2$, that is, $\det(R(D(k)))=\sqrt{\det(k)}$. 
Thus, by Theorem \ref{thm:1-fusion}, $\tau_{R(D(k))}$ is diffeomorphic to $\tau_{S(T_{2,n})}$, where $n=\sqrt{\det(k)}$. 

We next prove this claim for any ribbon 1-knot $k$ of 1-fusion. 
There exist a positive integer $s$ and integers $m_1, n_1, \ldots, m_s$ and $n_s$ such that $k$ and $k(m_1,n_1,\ldots,m_s,n_s)$ differ only an integer number of full twists and self-intersections of a ribbon band and isotopy. 
By \cite{MR467763} or \cite[Remark 1.8]{mizuma2005ribbon}, these differences do not affect the Alexander polynomial. 
Therefore, we have
$\Delta_k(t)\,\dot{=}\,\Delta_{k(m_1,n_1,\ldots,m_s,n_s)}(t)$. 
In this case, the 1-knot $k$ admits a knot diagram $D(k)$ consisting of two disks and a single band. 
Moreover, by performing finitely many band self-crossing changes and full twists of the band in $D(k)$, one can arrange $D(k)$ so that the resulting knot diagram and the attachment of the ribbon disks coincide with those of $k(m_1,n_1,\ldots,m_s,n_s)$ depicted in the top side of Figure \ref{fig:1-2correspond}.
Then, there exists a knot diagram $D(k)$ such that the $2$-knot $R(D(k))$ is isotopic to $R(m_1,n_1,\ldots,m_s,n_s)$ and $rf(R(D(k)))\le1$ (see Figure \ref{fig:1-2correspond}). 
Therefore, we obtain 
$$\Delta_{R(D(k))}(t)=\Delta_{R(m_1,n_1,\ldots,m_s,n_s)}(t).$$ 
Thus, we have 
$$\Delta_{k}(t)\,\dot{=}\,\Delta_{R(D(k))}(t)\Delta_{R(D(k))}(t^{-1})$$
and by Theorem \ref{thm:1-fusion}, $\tau_{R(D(k))}$ is diffeomorphic to $\tau_{S(T_{2,n})}$, where $n=\sqrt{\det(k)}$. 
This completes the proof. 
\end{proof}

\begin{figure}[htbp]
    \centering
\begin{overpic}[scale=0.6
]
{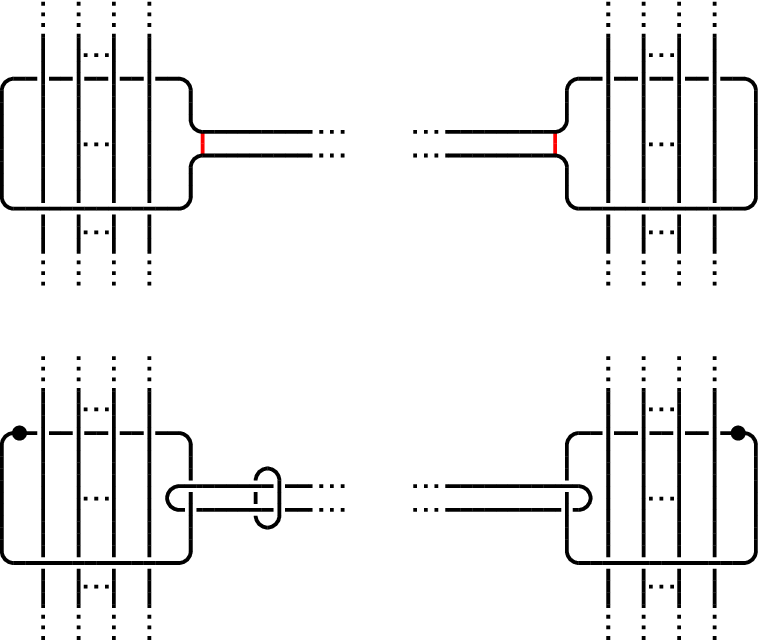}
\put(35,23.5){$0$}
\put(70,21){$0$}
\end{overpic}
\caption{Top: The knot diagram $D(k)$ of the ribbon 1-knot $k$ and a ribbon band (red). 
Note that the two disks at both ends share the same side (front or back) of the ribbon disk. 
Bottom: A handle diagram of the exterior $E(R(D(k)))$. }
\label{fig:1-2correspond}
\end{figure}

Recall that the mirror image of a knot $k$ is denoted by $k^*$. We see some examples of Corollary \ref{cor:1-fusion}. 

\begin{exm}\label{exm:12 cross}
Let $k$ be a ribbon $1$-knot up to $12$ crossings except for $12a_{631}$, $12a_{990}$, $12n_{553}$, $12n_{556}$, $3_1\#6_1\#3_1^*$ and $3_1\#3_1\#3_1^*\#3_1^*$. We see that the fusion number of $k$ is $1$ from the knot diagram in Figure \ref{fig:composite cases (ribbon)} or Table \ref{tab:12 crossing ribbon 1-knot}. 
For the knot diagram $D(k)$ mentioned in Table \ref{tab:12 crossing ribbon 1-knot}, we obtain that $rf(R(D(k)))\le1$. 
Therefore, $\tau_{R(D(k))}$ is diffeomorphic to $\tau_{S(T_{2,n})}$ by Proposition \ref{prop:pao} and Corollary \ref{cor:1-fusion}, where $n=\det(R(D(k)))=\sqrt{\det(k)}$. 
If $\det(k)\neq1$, then $\det(R(D(k)))\neq1$. 
Note that $\det(O)=1$. 
Thus, $R(D(k))$ is not isomorphic to $O$ and $rf(R(D(k)))=1$. 
If $\det(k)=1$, then $k$ is $0_1, 10_{153}, 11n_{42}, 11n_{49}, 11n_{116}, 12n_{19}, 12n_{214}, 12n_{309}, 12n_{313}, 12n_{318}$ or $12n_{430}$. 
From handle diagrams of the exteriors $E(R(D(k))$ obtained by the diagrams $D(k)$ and presentations of the fundamental groups $\pi_1(R(D(k)))$ from these handle diagrams, we can check that the 2-knots $R(D(0_1))$, $R(D(10_{153}))$, $R(D(11n_{42}))$, $R(D(11n_{49}))$, $R(D(11n_{116}))$, $R(D(12n_{19}))$, $R(D(12n_{214}))$, $R(D(12n_{309}))$, $R(D(12n_{313}))$, $R(D(12n_{318}))$ and $R(D(12n_{430}))$ are isotopic to $O$, $R(1,2)$, $O$, $R(-1,2)$, $R(-1,2)$, $R(-1,-2)$, $R(1,2)$, $R(1,2)$, $O$, $R(1,2)$ and $O$, respectively. 
Note that $rf(K)=0$ if and only if $K$ is isotopic to $O$.  
Thus, $rf(R(D(k)))=0$ if and only if $k$ is $0_1$, $11n_{42}$, $12n_{313}$ or $12n_{430}$ in this case.

It follows from this example that there exists a $2$-knot $K$ such that $rf(K)\neq0$ and $\tau_K$ is diffeomorphic to $\tau_O$. 
\end{exm}

\begin{figure}[htbp]
    \centering
\begin{overpic}[scale=0.6]
{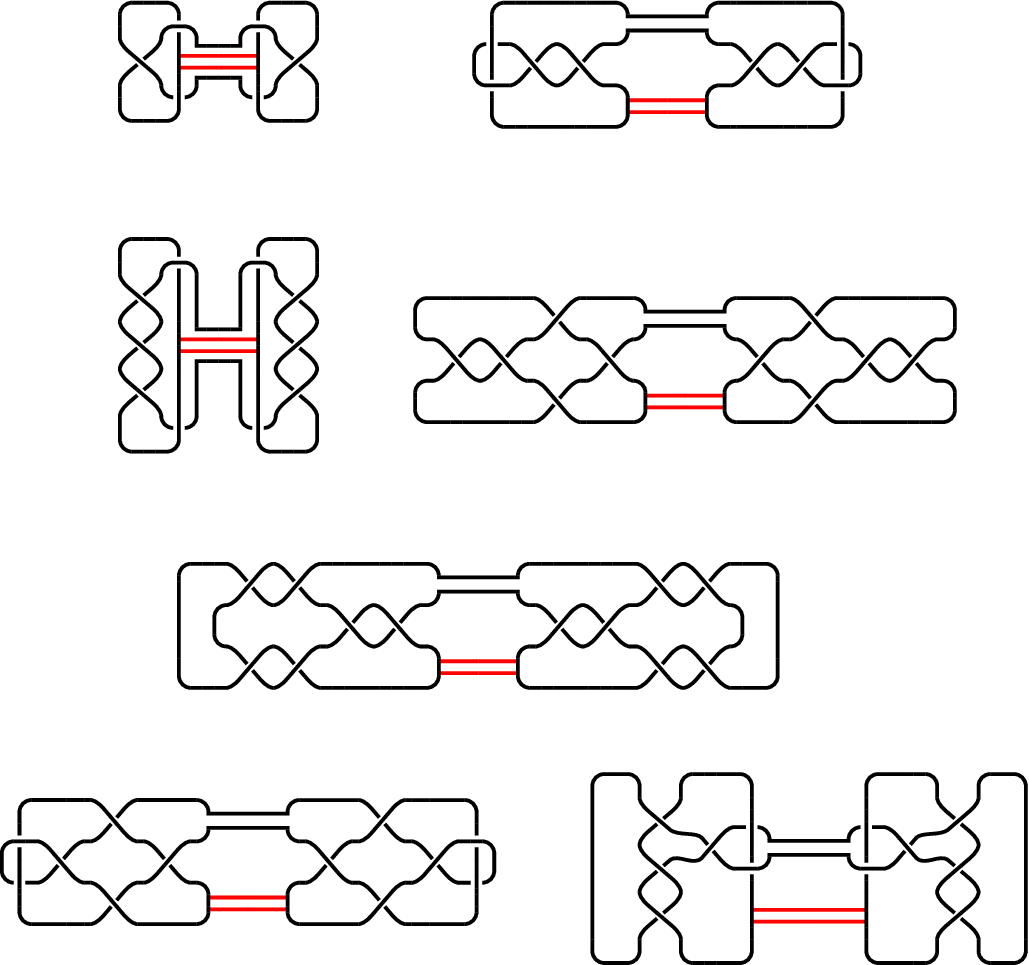}
\put(17,77){$3_1\#3_1^*$}
\put(61,77){$4_1\#4_1^*$}
\put(17,46){$5_1\#5_1^*$}
\put(62,48){$5_2\#5_2^*$}
\put(43,23){$6_1\#6_1^*$}
\put(20,-1){$6_2\#6_2^*$}
\put(74,-3){$6_3\#6_3^*$}
\end{overpic}
\caption{Knot diagrams of the composite ribbon knots $k\#k^*$ up to 12 crossings and ribbon bands (red).}
\label{fig:composite cases (ribbon)}
\end{figure}

\begin{exm}\label{exm:pretzel}
Let $k$ be the pretzel knot $P(-p,p,q)$ for any odd integer $p$ and any integer $q$ and $P(1,a,-a-4)$ for any odd integer $a$. 
From the knot diagram $D(P(-p,p,q))$ of $P(-p,p,q)$ depicted in Figure \ref{fig:pretzel}, we can see that 
$$
rf(P(-p,p,q))
=
\begin{cases}
0&(\abs{p}=1),\\
1&(\abs{p}\neq1).
\end{cases}
$$
If $\abs{p}=1$, then $R(D(P(-p,p,q)))$ is isotopic to $O$. 
If $\abs{p}\neq1$, then we have 
\begin{eqnarray*}
&&\pi_1(E(R(D(P(-p,p,q)))))\\
&\cong&
\begin{cases}
\langle x, y \mid x = ((x^{-1}y^{-1})^{\frac{\abs{p}-1}{2}})^{-1}y(x^{-1}y^{-1})^{\frac{\abs{p}-1}{2}}\rangle&(q\text{ is even}),\\
\langle x, y \mid x = ((xy^{-1})^{\frac{\abs{p}-1}{2}})^{-1}y(xy^{-1})^{\frac{\abs{p}-1}{2}}\rangle&(q\text{ is odd})
\end{cases}
\end{eqnarray*}
from the right side of Figure \ref{fig:pretzel}. 
Therefore, if $q$ is even, the $2$-knot $R(D(P(-p,p,q)))$ is isotopic to $R(1,1,\ldots,1,1)$, where the number of $1$ is $\abs{p}-1$. 
Furthermore, we can see that the $2$-knot $R(1,1,\ldots,1,1)$ is isotopic to $S(T_{2,p})$. Thus, $R(D(P(-p,p,q)))$ is isotopic to $S(T_{2,p})$. 
If $q$ is odd, the $2$-knot $R(D(P(-p,p,q)))$ is isotopic to $R(1,-1,\ldots,1,-1)$, where the numbers of $1$ and $-1$ is $(\abs{p}-1)/2$. 
Furthermore, we can see that the $2$-knot $R(1,-1,\ldots,1,-1)$ is isotopic to $F(p)$. Thus, $R(D(P(-p,p,q)))$ is isotopic to $F(p)$. 
Hence, we have 
$$
rf(D(P(-p,p,q)))
=
\begin{cases}
0&(\abs{p}=1),\\
1&(\abs{p}\neq1).
\end{cases}
$$
Thus, by Proposition \ref{prop:pao}, Corollary \ref{cor:1-fusion} and Example \ref{exm:2-plat 2-knot}, $\tau_{R(D(P(-p,p,q))}$ is diffeomorphic to $\tau_{S(T_{2,p})}$. 
The statement for $P(1,a,-a-4)$ also holds from Example \ref{exm: 2-bridge ribbon} since $P(1,a,-a-4)$ is the 2-bridge knot $C[a+1,a+3]$ which belongs to Family 0. 
Note that $\det(P(-p,p,q))=p^2$ for any odd integer $p$ and any integer $q$. 
Indeed, $\det(P(p,q,r))=\abs{pq+qr+rp}$ for odd integers $p$, $q$ and $r$. Thus, if $q$ is odd, then $\det(P(-p,p,q))=p^2$.
Since $P(-p,p,0)=T_{2,p}\#T_{2,p}^*$, we have
$$\Delta_{P(-p,p,0)}(t)=\Delta_{T_{2,p}\#T_{2,p}^*}(t)=\Delta_{T_{2,p}}(t)^2=(t^{p-1}-t^{p-2}+ \dots +t^2-t+1)^2.$$
For any even integer $q$, we obtain 
$$\Delta_{P(-p,p,q)}(t)-\Delta_{P(-p,p,q+2)}(t)=-(t^{1/2}-t^{-1/2})\Delta_{o \sqcup o}(t)=0.$$
Thus, we have
$$\det(P(-p,p,q))=|\Delta_{P(-p,p,q)}(-1)|=|\Delta_{P(-p,p,0)}(-1)|=p^2$$
for any even integer $q$.
We can also calculate the determinant directly from \cite[Theorem 1]{arXiv:2502.10370}. 
Therefore, the statement $\det(R(D(P(-p,p,q))))=\sqrt{\det(P(-p,p,q))}$ also holds.

\begin{figure}[htbp]
    \centering
\begin{overpic}[scale=0.6
]
{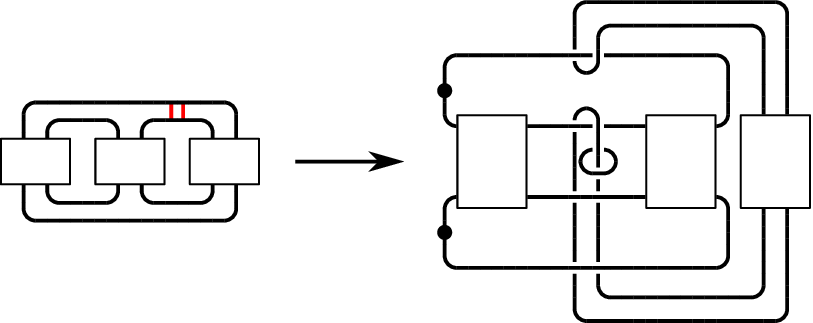}
\put(1,19){$-p$}
\put(15,19){$p$}
\put(27,19){$q$}

\put(76.5,19){$0$}
\put(98,35){$0$}
\put(57,19){$-\displaystyle\frac{p}{2}$}
\put(82,19){$\displaystyle\frac{p}{2}$}
\put(94,19){$\displaystyle\frac{q}{2}$}
\end{overpic}
\caption{Left: The knot diagram $D(P(-p,p,q))$ of the pretzel knot $P(-p,p,q)$ and a ribbon band (red). 
Note that when $\abs{p}=1$, the ribbon band in this ribbon presentation is unnecessary.
Right: A handle diagram of the exterior $E(R(D(P(-p,p,q))))$. 
A box labeled $n/2$ represents $n$ half twists. }
\label{fig:pretzel}
\end{figure}
\end{exm}

\begin{exm}\label{exm: 2-bridge ribbon}
It is known \cite{MR900252, MR2302495} that a $2$-bridge $1$-knot $k$ is ribbon if and only if $k$ is one of the following appearing in \cite{MR4394065} (see also \cite{zbMATH07899097}):

\begin{itemize}
\item (Family 0) $C[a_1,a_2, \ldots, a_{n-1}, a_n, a_n+2, a_{n-1}, \ldots, a_2, a_1]$ with $a_i >0$ for $i=1,2,\ldots,n$,
\item (Family 1) $C[2a,2,2b,-2,-2a,2b]$ with $a,b\not=0$,
\item (Family 2) $C[2a,2,2b,2a,2,2b]$ with $a,b\not=0$.
\end{itemize}
From Corollary \ref{cor:2-bridgedet}, we have $\det(C[a_1,a_2, \ldots, a_{n-1}, a_n, a_n+2, a_{n-1}, \ldots, a_2, a_1])>a_1\ge1$, $\det(C[2a,2,2b,-2,-2a,2b])=(8ab+2b-1)^2>1$ and $\det(C[2a,2,2b,2a,2,2b])=(8ab+2a+2b+1)^2>1$. 
We see from the knot diagram $D(k)$ in \cite{MR4394065} (see also Figures \ref{fig:Family0} and \ref{fig:Family1and2}) and $\det(k)\neq1$ that the fusion numbers of Family 0, 1 and 2 are $1$. 
Since the $2$-knot $R(D(k))$ for the knot diagram $D(k)$ is $1$-fusion, $\tau_{R(D(k))}$ is diffeomorphic to $\tau_{S(T_{2,n})}$ by Corollary \ref{cor:1-fusion}, where $n=\det(R(D(k)))=\sqrt{\det(k)}$. 
\end{exm}

\begin{figure}[htbp]
    \centering
\begin{overpic}[scale=0.6
]
{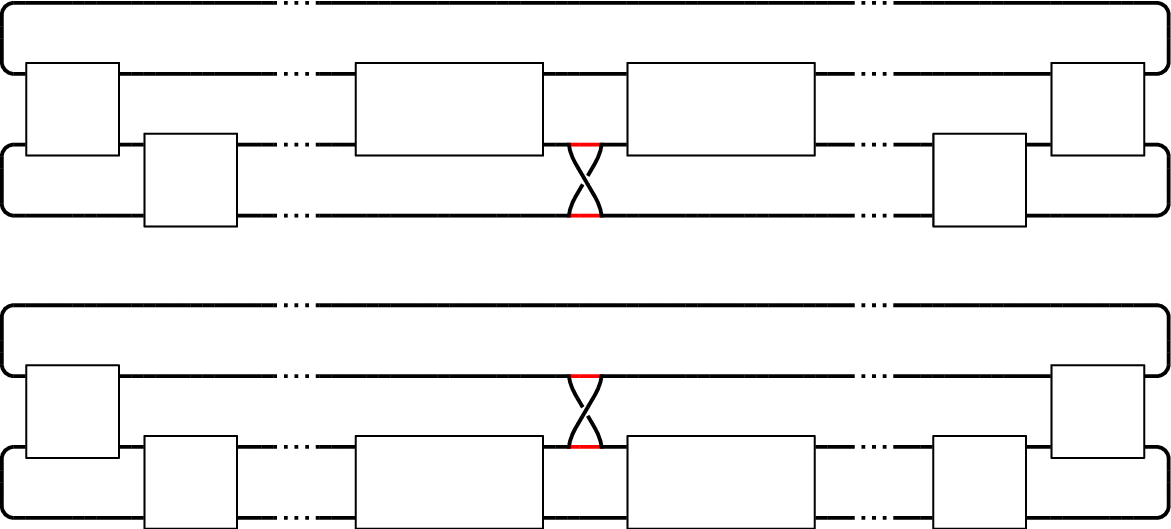}
\put(5,35){$a_1$}
\put(14,29){$-a_2$}
\put(34,35){$a_n+1$}

\put(56,35){$-a_n-1$}
\put(82,29){$a_2$}
\put(91,35){$-a_1$}

\put(5,9){$a_1$}
\put(14,3){$-a_2$}
\put(33,3){$-a_n-1$}

\put(58,3){$a_n+1$}
\put(82,3){$a_2$}
\put(91,9){$-a_1$}
\end{overpic}
\caption{The knot diagram $D(k)$ of the ribbon knot $k=C[a_1,a_2, \ldots, a_{n-1}, a_n, a_n+2, a_{n-1}, \ldots, a_2, a_1]$ of Family 0 and a ribbon band (red). Top: $n$ is odd. Bottom: $n$ is even.}
\label{fig:Family0}
\end{figure}

\begin{figure}[htbp]
    \centering
\begin{overpic}[scale=0.6
]
{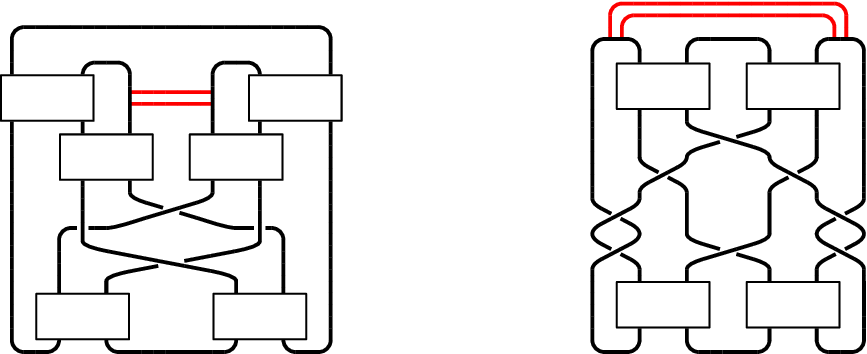}
\put(2,28){$-2b$}
\put(32,28){$2b$}

\put(8,21){$-2a$}
\put(25,21){$2a$}

\put(6,3){$-2a$}
\put(28,3){$2a$}

\put(74.5,29.5){$2a$}
\put(88,29.5){$-2a$}

\put(73,4){$-2b$}
\put(90,4){$2b$}
\end{overpic}
\caption{Left: The knot diagram $D(k_1)$ of the ribbon knot $k_1=C[2a,2,2b,-2,-2a,2b]$ of Family 1 and a ribbon band (red). Right: The knot diagram $D(k_2)$ of the ribbon knot $k_2=C[2a,2,2b,2a,2,2b]$ of Family 2 and a ribbon band (red).}
\label{fig:Family1and2}
\end{figure}

\begin{rem}\label{rem:1-fusion}
In Example \ref{exm:12 cross}, we except for $12a_{631}$, $12a_{990}$, $12n_{553}$, $12n_{556}$, $3_1\#6_1\#3_1^*$ and $3_1\#3_1\#3_1^*\#3_1^*$. We immediately see from \cite{lamm2021search}, Figures \ref{fig:316131} and \ref{fig:31313131andtau} that the fusion numbers of these exceptional knots are all $2$ or less. 

\begin{figure}[htbp]
    \centering
\begin{overpic}[scale=0.6
]
{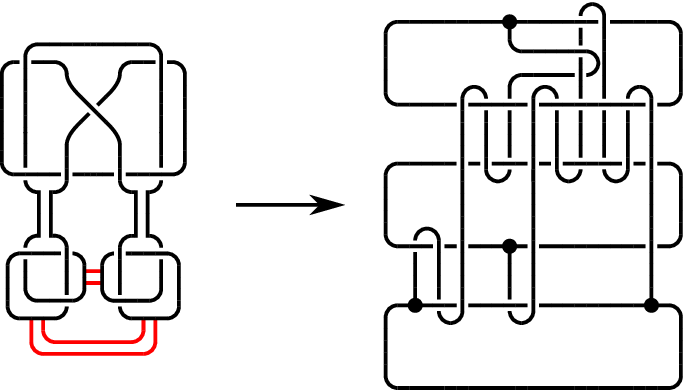}
\put(59,46){$x_1$}
\put(59,26){$x_2$}
\put(59,3){$x_3$}
\end{overpic}
\caption{Left: The knot diagram $D(3_1\#6_1\#3_1^*)$ of $3_1\#6_1\#3_1^*$ and ribbon bands (red). Right: A $\tau$-handle diagram of $\tau_{R(D(3_1\#6_1\#3_1^*))}$. 
}
\label{fig:316131}
\end{figure}

\begin{figure}[htbp]
    \centering
\begin{overpic}[scale=0.6
]
{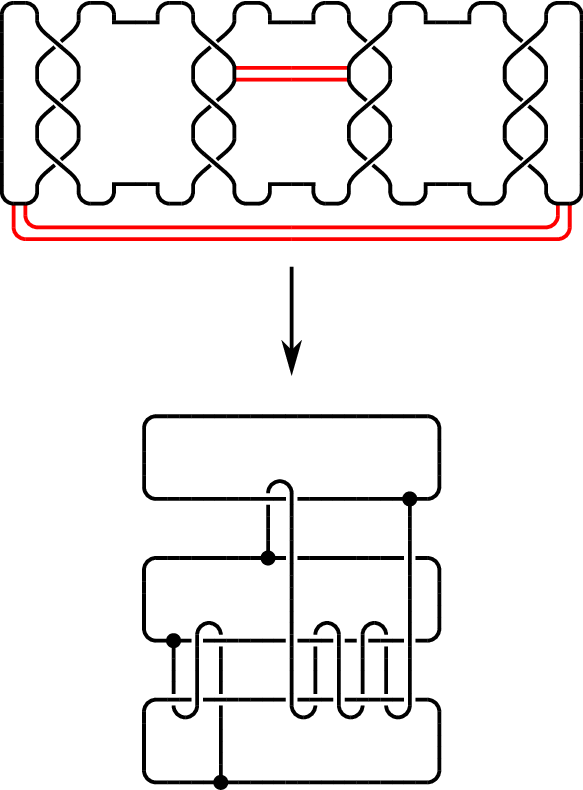}
\put(20,4){$x_3$}
\put(20,24){$x_2$}
\put(20,43){$x_1$}
\end{overpic}
\caption{Top: The knot diagram $D(3_1\#3_1\#3_1^*\#3_1^*)$ of $3_1\#3_1\#3_1^*\#3_1^*$ and ribbon bands (red). Bottom: A $\tau$-handle diagram of $\tau_{R(D(3_1\#3_1\#3_1^*\#3_1^*))}$. 
}
\label{fig:31313131andtau}
\end{figure}

It is known \cite{MR704925} that $rf(\ell) \ge m(\ell)/2$, where $m(\ell)$ is the Nakanishi index of a 1-knot $\ell$. We see from KnotInfo and \cite{MR634000} that $m(12n_{553}) = m(12n_{556}) = 3$ and $m(3_1\#3_1\#3_1^*\#3_1^*) =4$. Thus, we have that $rf(12n_{553})$, $rf(12n_{556})$ and $rf(3_1\#3_1\#3_1^*\#3_1^*) \ge 2$. 
Hence, we have that $rf(12n_{553}) = rf(12n_{556}) = rf(3_1\#3_1\#3_1^*\#3_1^*) = 2$. 
Note that it is not known whether $rf(12a_{990})$ is $1$ or $2$ (see \cite[Question 6.3]{zbMATH07379313} and \cite[Question 2]{abetange} for example). 

These contents including Example \ref{exm:12 cross} are summarized in Table \ref{tab:12 crossing ribbon 1-knot}. 
\end{rem}

\begin{prop}\label{prop:irregularcase}
There exist knot diagrams $D(12n_{553})$, $D(12n_{556})$, $D(3_1\#6_1\#3_1^*)$ and $D(3_1 \# 3_1 \# 3_1^* \# 3_1^*)$ such that the Price twists $\tau_{R(D(12n_{553}))}$, $\tau_{R(D(12n_{556}))}$, $\tau_{R(D(3_1\#6_1\#3_1^*))}$ and $\tau_{R(D(3_1 \# 3_1 \# 3_1^* \# 3_1^*))}$ are diffeomorphic to one another. 
\end{prop}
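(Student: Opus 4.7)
The plan is to exhibit explicit $2$-fusion ribbon presentations $D(k)$ for each of the four knots $k \in \{12n_{553},\ 12n_{556},\ 3_1\#6_1\#3_1^*,\ 3_1\#3_1\#3_1^*\#3_1^*\}$, apply the procedure of Figure~\ref{fig:knottotau} to obtain a $\tau$-handle diagram of each $\tau_{R(D(k))}$, and then reduce all four $\tau$-handle diagrams to a single common one by $\tau$-handle calculus. By Theorem~\ref{thm:double} and Subsection~\ref{subsec:diagram}, such a reduction suffices to conclude that the four Price twists are mutually diffeomorphic.

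For the two composite knots I would use the explicit ribbon presentations already drawn in Figures~\ref{fig:316131} and~\ref{fig:31313131andtau}; for $12n_{553}$ and $12n_{556}$ I would take $2$-fusion ribbon presentations from the enumeration in~\cite{lamm2021search}. In each case the ribbon presentation has three ribbon disks and two fusion bands, so the associated $\tau$-handle diagram has three dotted circles $x_1,x_2,x_3$, the $2$-framed component coming from $P_0$ (contributing the relation $x_1^2=1$), and two $0$-framed components (contributing two conjugation relations of the form $x_i = w\, x_j^{\pm 1}\, w^{-1}$). The hint that all four fundamental groups are isomorphic to the Coxeter group $W(3,3,\infty)$ (see Remark~\ref{rem:rf=2 example} and Remark~\ref{rem:fusion 2 case}(1)) tells us what the common normal form should be: after applying the deformation $\beta$ to the relation $x_1^2=1$, one can enforce $x_i^2=1$ for every $i$, and the remaining two relations describe the two defining braid relations of $W(3,3,\infty)$.

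The reduction is then carried out step by step: first apply deformation $\beta$ (Corollary~\ref{cor:beta}) in each diagram to move the involution relation across the other dotted circles; next apply deformation $\alpha$ (Corollary~\ref{cor:alpha}) to correct signs inside the conjugating words $w$; finally apply handle slides and isotopies (moves (a), (b)) to bring the conjugating words into the chosen normal form. The point is that once the $\tau$-presentation has been arranged so that $x_i^2=1$ for all $i$, any two relations of the form $w\,x_j\,w^{-1} = x_i$ that present the same element of the free product $\mathbb{Z}_2*\mathbb{Z}_2*\mathbb{Z}_2$ are related by the $\tau$-handle moves (a)--(d), and this reduces the diffeomorphism problem to a word comparison.

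The main obstacle will be the bookkeeping of the $\tau$-handle moves for the prime knots $12n_{553}$ and $12n_{556}$, whose $2$-fusion ribbon bands are considerably more tangled than those of the two composite knots: extracting the conjugating words $w_1,w_2$ from these diagrams, and then massaging them into the common form via repeated applications of $\alpha,\beta$ and handle slides, is where the bulk of the work lies. Once each explicit sequence of $\tau$-handle moves has been written down, the four resulting $\tau$-handle diagrams agree, and the proposition follows from Theorem~\ref{thm:double}.
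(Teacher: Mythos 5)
Your proposal follows the same route as the paper: take the ribbon presentations of Figures \ref{fig:316131} and \ref{fig:31313131andtau} for the composite knots and Lamm's presentations for $12n_{553}$ and $12n_{556}$, pass to $\tau$-handle diagrams, and reduce all four $\tau$-presentations to the common normal form $\langle x_1,x_2,x_3 \mid x_1^2=1,\ x_1(x_2x_1)x_2(x_2x_1)^{-1}=1,\ x_2(x_3x_2)x_3(x_3x_2)^{-1}=1\rangle$ by the $\tau$-handle calculus of moves (a)--(d); this is exactly what the paper does.

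One caveat: your closing justification --- that once $x_i^2=1$ is available the problem ``reduces to a word comparison'' in $\mathbb{Z}_2*\mathbb{Z}_2*\mathbb{Z}_2$ --- is not sufficient as the actual mechanism. In the paper's computation for $12n_{553}$, for instance, the conjugating word $x_3x_2^{-1}x_1x_2^{-1}x_1^{-1}x_2x_1^{-1}x_2$ is reduced to $x_3x_2$, and these two words are \emph{not} equal in the free product of three copies of $\mathbb{Z}_2$ (they differ by a braid-type relation $(x_1x_2)^3=1$ that is only a consequence of the \emph{other} conjugation relation). The reduction therefore genuinely requires handle slides (move (b)) over the remaining relators, interleaved with $\alpha$, $\beta$ and isotopy, and cannot be outsourced to equality of words modulo the square relations alone. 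Also note that the $\tau$-presentation never literally contains $x_2^2=1$ and $x_3^2=1$; the deformation $\beta$ lets you transport the single square relation, and the extra square relations appearing in Remark \ref{rem:rf=2 example} are added only at the level of a group isomorphism, not of $\tau$-handle calculus. So the explicit slide sequences you defer to ``bookkeeping'' are where the proof actually lives, and they must be exhibited for all four diagrams as in Figures \ref{fig:12n553}, \ref{fig:12n556}, \ref{fig:316131} and \ref{fig:31313131andtau}.
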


\begin{proof}
A knot diagram $D(12n_{553})$ of $12n_{553}$ and a $\tau$-handle diagram of $\tau_{R(D(12n_{553}))}$ is depicted in Figure \ref{fig:12n553}, which is obtained from Lamm's ribbon representation in \cite{lamm2021search}. 
\begin{figure}[htbp]
    \centering
\begin{overpic}[scale=0.6
]
{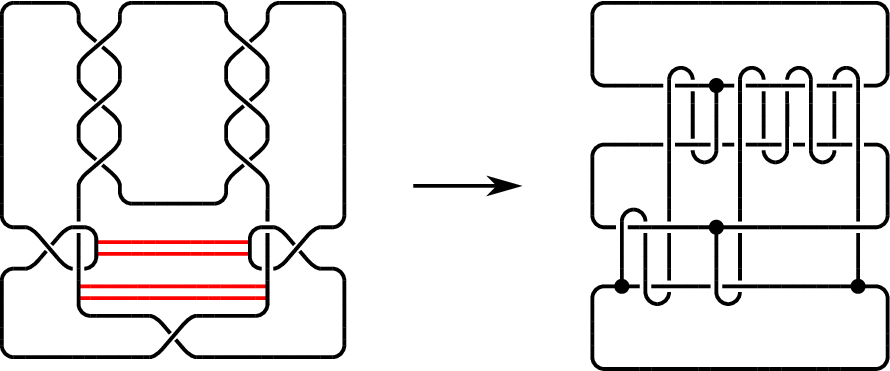}
\put(69,36){$x_1$}
\put(69,21){$x_2$}
\put(69,4){$x_3$}
\end{overpic}
\caption{Left: The knot diagram $D(12n_{553})$ of $12n_{553}$ and ribbon bands (red). Right: A $\tau$-handle diagram of $\tau_{R(D(12n_{553}))}$.}
\label{fig:12n553}
\end{figure}
From the $\tau$-handle diagram in Figure \ref{fig:12n553} and some $\tau$-handle calculus, we have 

\begin{eqnarray*}
&&\pi_1(\tau_{R(D(12n_{553}))}) \\
&\underset{\tau\text{-d.}}{=}& 
\left\langle
  \begin{lgathered} 
    x_1, x_2, \\
    x_3
  \end{lgathered} 
\, \middle| \,
  \begin{lgathered} 
    x_1^2=1, x_1(x_2x_1^{-1}x_2x_3^{-1}x_2^{-1})x_3(x_2x_1^{-1}x_2x_3^{-1}x_2^{-1})^{-1}=1, \\
    x_2(x_3x_2^{-1}x_1x_2^{-1}x_1^{-1}x_2x_1^{-1}x_2)x_3(x_3x_2^{-1}x_1x_2^{-1}x_1^{-1}x_2x_1^{-1}x_2)^{-1}=1
  \end{lgathered} 
  \right\rangle\\
&\underset{\text{s.}, \alpha, \beta, \text{i.}}{=}& 
\left\langle
    x_1, x_2, x_3
\, \middle| \,
  \begin{lgathered} 
    x_1^2=1, x_1(x_2x_1^{-1}x_2x_3^{-1}x_2^{-1})x_3(x_2x_1^{-1}x_2x_3^{-1}x_2^{-1})^{-1}=1, \\
    x_2(x_3x_2)x_3(x_3x_2)^{-1}=1
  \end{lgathered} 
  \right\rangle\\
&\underset{\text{s.}, \alpha, \beta, \text{i.}}{=}& 
\left\langle x_0,x_1,x_2 \, \middle| \,
  \begin{lgathered} 
    x_1^2=1, x_1(x_2x_1)x_2(x_2x_1)^{-1}=1, \\
    x_2(x_3x_2)x_3(x_3x_2)^{-1}=1
  \end{lgathered} 
  \right\rangle. 
\end{eqnarray*}

A knot diagram $D(12n_{556})$ of $12n_{556}$ and a $\tau$-handle diagram of $\tau_{R(D(12n_{556}))}$ is depicted in Figure \ref{fig:12n556}, which is obtained from Lamm's ribbon representation in \cite{lamm2021search}. 
\begin{figure}[htbp]
    \centering
\begin{overpic}[scale=0.6
]
{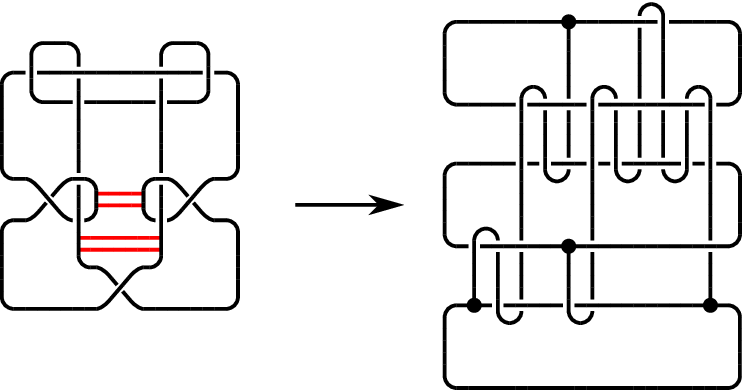}
\put(61,42){$x_1$}
\put(61,24){$x_2$}
\put(61,4){$x_3$}
\end{overpic}
\caption{Left: The knot diagram $D(12n_{556})$ of $12n_{556}$ and ribbon bands (red). Right: A $\tau$-handle diagram of $\tau_{R(D(12n_{556}))}$.}
\label{fig:12n556}
\end{figure}
From the diagram in Figure \ref{fig:12n556} and some $\tau$-handle calculus, we have 

\begin{eqnarray*}
&&\pi_1(\tau_{R(D(12n_{556}))}) \\
&\underset{\tau\text{-d.}}{=}& 
\left\langle
  \begin{lgathered} 
    x_1, x_2, \\
    x_3
  \end{lgathered} 
\, \middle| \,
  \begin{lgathered} 
    x_1^2=1, x_1(x_2^{-1}x_1^{-1}x_2x_3^{-1}x_2^{-1})x_3(x_2^{-1}x_1^{-1}x_2x_3^{-1}x_2^{-1})^{-1}=1, \\
    x_2(x_3x_2^{-1}x_1x_2x_1x_2^{-1}x_1^{-1}x_2)x_3(x_3x_2^{-1}x_1x_2x_1x_2^{-1}x_1^{-1}x_2)^{-1}=1
  \end{lgathered} 
  \right\rangle\\
&\underset{\alpha, \beta}{=}& 
\left\langle
  \begin{lgathered} 
    x_1, x_2, \\
    x_3
  \end{lgathered} 
\, \middle| \,
  \begin{lgathered} 
    x_1^2=1, x_1(x_2x_1^{-1}x_2x_3^{-1}x_2^{-1})x_3(x_2x_1^{-1}x_2x_3^{-1}x_2^{-1})^{-1}=1, \\
    x_2(x_3x_2^{-1}x_1x_2^{-1}x_1^{-1}x_2x_1^{-1}x_2)x_3(x_3x_2^{-1}x_1x_2^{-1}x_1^{-1}x_2x_1^{-1}x_2)^{-1}=1
  \end{lgathered} 
  \right\rangle\\
&\underset{\tau\text{-d.}}{=}& 
\pi_1(\tau_{R(D(12n_{553}))}). 
\end{eqnarray*}

A knot diagram $D(3_1\#6_1\#3_1^*)$ of $3_1\#6_1\#3_1^*$ and a $\tau$-handle diagram of $\tau_{R(D(3_1\#6_1\#3_1^*))}$ is depicted in Figure \ref{fig:316131}. 
From the $\tau$-handle diagram in Figure \ref{fig:316131} and some $\tau$-handle calculus, we have 

\begin{eqnarray*}
&&\pi_1(\tau_{R(D(3_1\#6_1\#3_1^*))}) \\
&\underset{\tau\text{-d.}}{=}& 
\left\langle x_1, x_2, x_3 \, \middle| \,
  \begin{lgathered} 
    x_1^2=1, x_1(x_2^{-1}x_1^{-1}x_3x_2)x_3(x_2^{-1}x_1^{-1}x_3x_2)^{-1}=1, \\
    x_2(x_3^{-1}x_1x_2x_1x_2^{-1}x_1^{-1})x_3(x_3^{-1}x_1x_2x_1x_2^{-1}x_1^{-1})^{-1}=1. 
  \end{lgathered} 
  \right\rangle\\
&\underset{\text{s.}, \alpha, \beta, \text{i.}}{=}& 
\left\langle x_1, x_2, x_3 \, \middle| \,
  \begin{lgathered} 
    x_1^2=1, x_1(x_2x_1x_3x_2)x_3(x_2x_1x_3x_2)^{-1}=1, \\
    x_2(x_3x_2)x_3(x_3x_2)^{-1}=1. 
  \end{lgathered} 
  \right\rangle\\
&\underset{\text{s.}, \alpha, \beta, \text{i.}}{=}& 
\left\langle x_1, x_2, x_3 \, \middle| \,
  \begin{lgathered} 
    x_1^2=1, x_1(x_2x_1)x_2(x_2x_1)^{-1}=1, \\
    x_2(x_3x_2)x_3(x_3x_2)^{-1}=1. 
  \end{lgathered} 
  \right\rangle. 
\end{eqnarray*}

A knot diagram $D(3_1\#3_1\#3_1^*\#3_1^*)$ of $3_1\#3_1\#3_1^*\#3_1^*$ and a $\tau$-handle diagram of $\tau_{R(D(3_1\#3_1\#3_1^*\# 3_1^*))}$ is depicted in Figure \ref{fig:31313131andtau}. 

From the $\tau$-handle diagram in Figure \ref{fig:31313131andtau} and some $\tau$-handle calculus, we have 

\begin{eqnarray*}
&&\pi_1(\tau_{R(D(3_1\#3_1\#3_1^*\#3_1^*))}) \\
&\underset{\tau\text{-d.}}{=}& 
\left\langle x_1,x_2,x_3 \, \middle| \,
  \begin{lgathered} 
    x_1^2=1, x_2(x_3^{-1}x_2)x_3(x_3^{-1}x_2)^{-1}=1, \\
    x_1(x_3x_2^{-1}x_3^{-1}x_2x_3^{-1}x_1)x_2(x_3x_2^{-1}x_3^{-1}x_2x_3^{-1}x_1)^{-1}=1
  \end{lgathered} 
  \right\rangle\\
&\underset{\text{s.}, \alpha, \beta, \text{i.}}{=}& 
\left\langle x_1,x_2,x_3 \, \middle| \,
  \begin{lgathered} 
    x_1^2=1, x_2(x_3x_2)x_3(x_3x_2)^{-1}=1, \\
    x_1(x_2x_1)x_2(x_2x_1)^{-1}=1
  \end{lgathered} 
  \right\rangle\\
&=& 
\left\langle x_1,x_2,x_3 \, \middle| \,
  \begin{lgathered} 
    x_1^2=1, x_1(x_2x_1)x_2(x_2x_1)^{-1}=1, \\
    x_2(x_3x_2)x_3(x_3x_2)^{-1}=1
  \end{lgathered} 
  \right\rangle. 
\end{eqnarray*}

These four $\tau$-presentations are the same. 
This completes the proof.
\end{proof}

\begin{rem}\label{rem:rf=2 example}
Let $m_1$, $m_2$ and $m_3$ be integers greater than or equal to $2$ or $\infty$ and $W(m_1,m_2,m_3)$ the Coxeter group 
$$\langle x_1, x_2, x_3 \mid x_1^2=x_2^2=x_3^2=1, (x_1x_2)^{m_1}= (x_2x_3)^{m_1}=(x_3x_1)^{m_3}=1\rangle,$$ 
where the relation $(x_ix_j)^{\infty}=1$ means that no relation of the form $(x_ix_j)^{m}=1$ for any integer $m \ge 2$ is imposed. 

From the proof of Proposition \ref{prop:irregularcase}, we can see that 
there exist knot diagrams $D(12n_{553})$, $D(12n_{556})$, $D(3_1\#6_1\#3_1^*)$ and $D(3_1 \# 3_1 \# 3_1^* \# 3_1^*)$ such that the Price twists $\tau_{R(D(12n_{553}))}$, $\tau_{R(D(12n_{556}))}$, $\tau_{R(D(3_1\#6_1\#3_1^*))}$ and $\tau_{R(D(3_1 \# 3_1 \# 3_1^* \# 3_1^*))}$ 
have the same $\tau$-handle diagram depicted in Figure \ref{fig:asimple2fusion}. 
One can check that the fundamental groups of these four Price twists are isomorphic to the Coxeter group $W(3,3,\infty)$ since from Proposition \ref{prop:irregularcase}, we obtain 
\begin{eqnarray*}
&&\pi_1(\tau_{R(D(12n_{553}))}) \cong \pi_1(\tau_{R(D(12n_{556}))})\\
&\cong& \pi_1(\tau_{R(D(3_1\#6_1\#3_1^*))})\cong\pi_1(\tau_{R(D(3_1\#3_1\#3_1^*\#3_1^*))})\\
&\cong& 
\left\langle x_1,x_2,x_3 \, \middle| \,
  \begin{lgathered} 
    x_1^2=1, x_1(x_2x_1)x_2(x_2x_1)^{-1}=1, \\
    x_2(x_3x_2)x_3(x_3x_2)^{-1}=1
  \end{lgathered} 
  \right\rangle\\
&\cong& 
\left\langle x_1,x_2,x_3 \, \middle| \,
  \begin{lgathered} 
    x_1^2=1, x_1^2=1, x_1^2=1, x_1(x_2x_1)x_2(x_2x_1)^{-1}=1, \\
    x_2(x_3x_2)x_3(x_3x_2)^{-1}=1
  \end{lgathered} 
  \right\rangle\\
&=& 
\left\langle x_1,x_2,x_3 \, \middle| \,
  \begin{lgathered} 
    x_1^2=x_2^2=x_3^2=1, x_1(x_2x_1)x_2(x_2x_1)^{-1}=1, \\
    x_2(x_3x_2)x_3(x_3x_2)^{-1}=1
  \end{lgathered} 
  \right\rangle\\
&=&
\langle x_1, x_2, x_3 \mid x_1^2=x_2^2=x_3^2=1, (x_1x_2)^3=(x_2x_3)^3=1\rangle=W(3,3,\infty). 
\end{eqnarray*}

\begin{figure}[htbp]
    \centering
\begin{overpic}[scale=0.6
]
{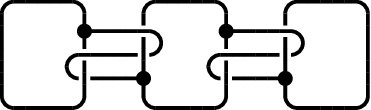}
\put(5,5){$x_1$}
\put(45,5){$x_2$}
\put(84,5){$x_3$}
\end{overpic}
\caption{A simple $\tau$-handle diagram.}
\label{fig:asimple2fusion}
\end{figure}

Since $W(3,3,\infty)$ is an infinite group, $W(3,3,\infty)$ is not isomorphic to the dihedral group $D_{n}$ for any positive integer $n$. 
Thus, we obtain 
\begin{eqnarray*}
&&rf(R(D(12n_{553})))=rf(R(D(12n_{556})))\\
&=&rf(R(D(3_1\#6_1\#3_1^*)))=rf(R(D(3_1\#3_1\#3_1^*\#3_1^*)))=2
\end{eqnarray*}
from Proposition \ref{prop:pao} and Theorems \ref{thm:pi1oftaut22n+1} and \ref{thm:1-fusion} (see also Table \ref{tab:12 crossing ribbon 1-knot}). 

This implies that Proposition \ref{prop:pao} and Theorems \ref{thm:pi1oftaut22n+1} and \ref{thm:1-fusion} provide one approach to proving that the fusion number of a ribbon $2$-knot is $2$. 
\end{rem}

\begin{rem}\label{rem:12a990}
Here we present an example other than Remark \ref{rem:rf=2 example}, in which we can determine that the fusion number of a $2$-knot is $2$. 

A knot diagram $D(12a_{990})$ of $12a_{990}$ and a $\tau$-handle diagram of $\tau_{R(D(12a_{990}))}$ is depicted in Figure \ref{fig:12a990}, which is obtained from Lamm's ribbon representation in \cite{lamm2021search}. 
From the knot diagram in Figure \ref{fig:12a990}, we have $rf(R(D(12a_{990}))) \le 2$. 
From the $\tau$-handle diagram in Figure \ref{fig:12a990} and some $\tau$-handle calculus, we have 
\begin{eqnarray*}
&&\pi_1(\tau_{R(D(12a_{990}))}) \\
&\underset{\tau\text{-d.}}{=}& 
\left\langle x_1,x_2,x_3 \, \middle| \,
  \begin{lgathered} 
    x_1^2=1, x_1(x_3^{-1}x_1x_3^{-1})x_3(x_3^{-1}x_1x_3^{-1})^{-1}=1, \\
    x_2(x_1^{-1}x_2^{-1}x_3^{-1}x_2)x_3(x_1^{-1}x_2^{-1}x_3^{-1}x_2)^{-1}=1
  \end{lgathered} 
  \right\rangle\\
&\underset{i.,\alpha, \beta}{=}& 
\left\langle x_1,x_2,x_3 \, \middle| \,
  \begin{lgathered} 
    x_1^2=1, x_1(x_3x_1)x_3(x_3x_1)^{-1}=1, \\
    x_2(x_1x_2x_3x_2)x_3(x_1x_2x_3x_2)^{-1}=1
  \end{lgathered} 
  \right\rangle\\
&\cong&
\left\langle x_1,x_2,x_3 \, \middle| \, 
x_1^2=x_2^2=x_3^2=1, (x_1x_2)^2=(x_2x_3)^3, (x_1x_3)^3=1
\right\rangle. 
\end{eqnarray*}

One can check that $\pi_1(\tau_{R(D(12a_{990}))})$ is not isomorphic to $D_{\abs{2n+1}}$ for any integer $n$. 
Thus, we obtain $rf(R(D(12a_{990})))=2$ from Proposition \ref{prop:pao} and Theorems \ref{thm:pi1oftaut22n+1} and \ref{thm:1-fusion} (see also Table \ref{tab:12 crossing ribbon 1-knot}). 

\begin{figure}[htbp]
    \centering
\begin{overpic}[scale=0.6
]
{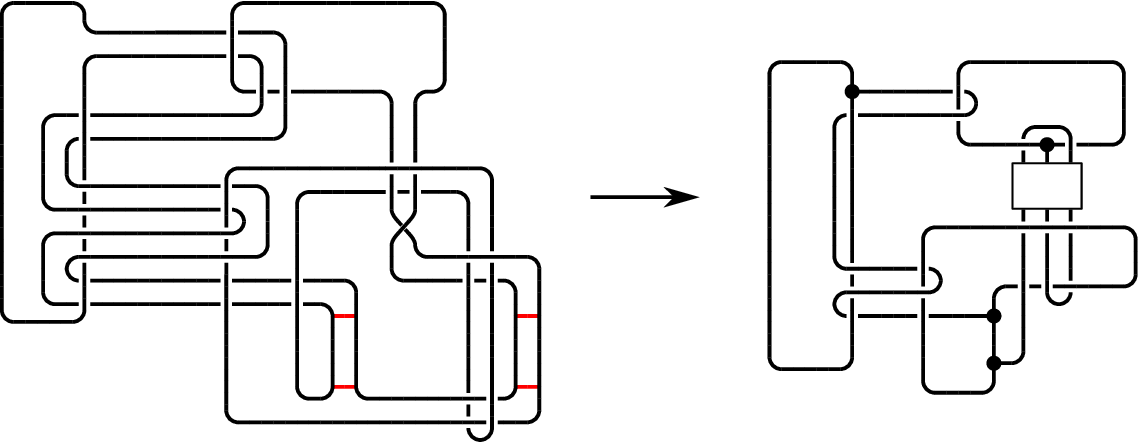}
\put(91.3,21.5){$1$}

\put(90,30){$x_1$}
\put(69,30){$x_2$}
\put(84,16){$x_3$}
\end{overpic}
\caption{Left: The knot diagram $D(12a_{990})$ of $12a_{990}$ and ribbon bands (red). Right: A $\tau$-handle diagram of $\tau_{R(D(12a_{990}))}$.}
\label{fig:12a990}
\end{figure}
\end{rem}

\begin{rem}\label{rem:fusion 2 case} 
We claim that there exist knot diagrams $D_1(k)$ and $D_2(k)$ of the same ribbon $1$-knot $k$ such that $R(D_1(k))$ and $R(D_2(k))$ do not have the same fusion number. \\
\noindent
(1) \ Let $D_1(10_{99})$ be a knot diagram of $10_{99}$, which is obtained from Kawauchi's ribbon representation in \cite{zbMATH00795683}. 
From Example \ref{exm:12 cross}, we obtain $rf(R(D_1(10_{99})))=1$ and $\tau_{R(D_1(10_{99}))}$ is diffeomorphic to $\tau_{S(T_{2,9})}$.  
Therefore, $\pi_1(\tau_{R(D_1(10_{99}))})$ is isomorphic to $D_{9}$ from Theorem \ref{thm:pi1oftaut22n+1}. 

Let $D_2(10_{99})$ be the knot diagram of $10_{99}$ depicted in the left side of Figure \ref{fig:1099}, which is obtained from Kishimoto-Shibuya-Tsukamoto-Ishikawa's ribbon representation in \cite{kishimoto2021alexander}.  
A $\tau$-handle diagram of $\tau_{R(D_2(10_{99}))}$ is depicted in the right side of Figure \ref{fig:1099}. 

\begin{figure}[htbp]
    \centering
\begin{overpic}[scale=0.6
]
{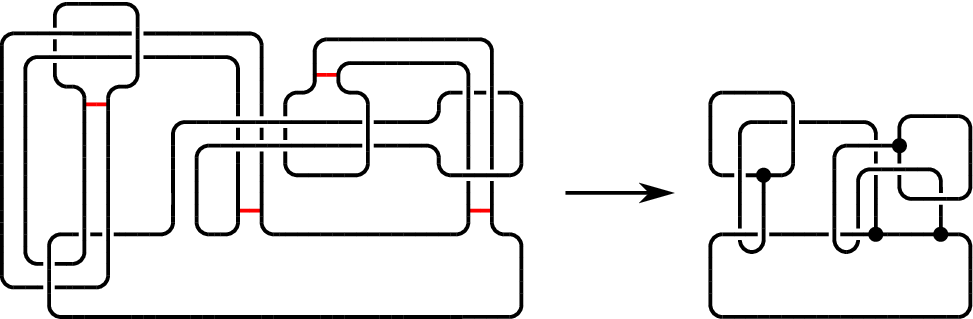}
\put(77,17){$x_1$}
\put(75,3){$x_2$}
\put(95,17){$x_3$}
\end{overpic}
\caption{Left: The knot diagram $D_2(10_{99})$ of $10_{99}$ and ribbon bands (red). Right: A $\tau$-handle diagram of $\tau_{R(D_2(10_{99}))}$.}
\label{fig:1099}
\end{figure}

From the $\tau$-handle diagram in Figure \ref{fig:1099} and some $\tau$-handle calculus, we have 
\begin{eqnarray*}
&&\pi_1(\tau_{R(D_2(10_{99}))}) \\
&\underset{\tau\text{-d.}}{=}& 
\left\langle x_1,x_2,x_3 \, \middle| \,
  \begin{lgathered} 
    x_1^2=1, x_1(x_2x_1)x_2(x_2x_1)^{-1}=1, \\
    x_2(x_3^{-1}x_2^{-1})x_3(x_3^{-1}x_2^{-1})^{-1}=1
  \end{lgathered} 
  \right\rangle\\
&\underset{\alpha, \beta}{=}& 
\left\langle x_1,x_2,x_3 \, \middle| \,
  \begin{lgathered} 
    x_1^2=1, x_1(x_2x_1)x_2(x_2x_1)^{-1}=1, \\
    x_2(x_3x_2)x_3(x_3x_2)^{-1}=1
  \end{lgathered} 
  \right\rangle\\
&\cong&
\left\langle x_1,x_2,x_3 \, \middle| \, 
x_1^2=x_2^2=x_3^2=1, (x_1x_2)^3=(x_2x_3)^3=1
\right\rangle
=
W(3,3,\infty)
. 
\end{eqnarray*}
One can check that $\pi_1(\tau_{R(D_2(10_{99}))})$ is not isomorphic to $D_{\abs{2n+1}}$ for any integer $n$. 
Thus, we obtain $rf(R(D_2(10_{99})))=2$ from Proposition \ref{prop:pao} and Theorems \ref{thm:pi1oftaut22n+1} and \ref{thm:1-fusion}. 
Then, we see that $\tau_{R(D_1(10_{99}))}$ is not homotopy equivalent to $\tau_{R(D_2(10_{99}))}$ and the $2$-knots $R(D_1(10_{99}))$ and $R(D_2(10_{99}))$ are not isotopic. 
Note that $\tau_{R(D_2(10_{99}))}$ have the $\tau$-handle diagram depicted in Figure \ref{fig:asimple2fusion}. 

\noindent
(2) \ Let $D_1(12a_{427})$ be a knot diagram of $12a_{427}$, which is obtained from the ribbon representation in \cite{arXiv:2409.12910}. 
From Example \ref{exm:12 cross}, we obtain $rf(R(D_1(12a_{427})))=1$ and $\tau_{R(D_1(12a_{427}))}$ is diffeomorphic to $\tau_{S(T_{2,15})}$.  
Therefore, $\pi_1(\tau_{R(D_1(12a_{427}))})$ is isomorphic to $D_{15}$ from Theorem \ref{thm:pi1oftaut22n+1}. 

Let $D_2(12a_{427})$ be the knot diagram of $12a_{427}$ depicted in the left side of Figure \ref{fig:12a427}, which is obtained from Lamm's ribbon representation in \cite{lamm2021search}.  
A $\tau$-handle diagram of $\tau_{R(D_2(12a_{427}))}$ is depicted in the right side of Figure \ref{fig:12a427}. 

\begin{figure}[htbp]
    \centering
\begin{overpic}[scale=0.6
]
{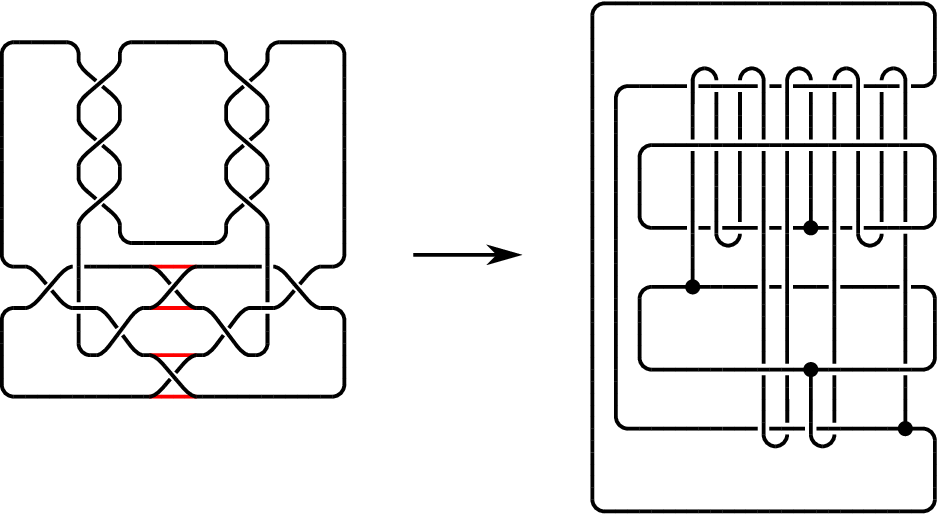}
\put(69,49){$x_1$}
\put(69,33){$x_2$}
\put(69,18){$x_3$}
\end{overpic}
\caption{Left: The knot diagram $D_2(12a_{427})$ of $12a_{427}$ and ribbon bands (red). Right: A $\tau$-handle diagram of $\tau_{R(D_2(12a_{427}))}$.}
\label{fig:12a427}
\end{figure}

From the $\tau$-handle diagram in Figure \ref{fig:12a427} and some $\tau$-handle calculus, we have 
\begin{eqnarray*}
&&\pi_1(\tau_{R(D_2(12a_{427}))}) \\
&\underset{\tau\text{-d.}}{=}& 
\left\langle 
  \begin{lgathered} 
    x_1, \\
    x_2, \\
    x_3
  \end{lgathered} 
\, \middle| \,
  \begin{lgathered} 
    x_1^2=1, x_1(x_3^{-1}x_2x_1x_2x_1^{-1}x_2^{-1}x_3x_1^{-1}) 
    x_3(x_3^{-1}x_2x_1x_2x_1^{-1}x_2^{-1}x_3x_1^{-1})^{-1}=1, \\
    x_2(x_1^{-1}x_2^{-1}x_3x_1^{-1}x_3^{-1}x_2x_1x_2x_1^{-1}x_2^{-1})\\
    \cdot x_3(x_1^{-1}x_2^{-1}x_3x_1^{-1}x_3^{-1}x_2x_1x_2x_1^{-1}x_2^{-1})^{-1}=1
  \end{lgathered} 
  \right\rangle\\
&\cong&
\left\langle 
    x_1, 
    x_2, 
    x_3
\, \middle| \,
  \begin{lgathered} 
    x_1^2=x_2^2=x_3^2=1, \\
    (x_1x_2)^3=(x_1x_3)^5=1
  \end{lgathered} 
  \right\rangle
= W(3,5,\infty). 
\end{eqnarray*}
One can check that $\pi_1(\tau_{R(D_2(12a_{427}))})$ is not isomorphic to $D_{\abs{2n+1}}$ for any integer $n$. 
Thus, we obtain $rf(R(D_2(12a_{427})))=2$ from Proposition \ref{prop:pao} and Theorems \ref{thm:pi1oftaut22n+1} and \ref{thm:1-fusion}. 
Then, we see that $\tau_{R(D_1(12a_{427}))}$ is not homotopy equivalent to $\tau_{R(D_2(12a_{427}))}$ and the $2$-knots $R(D_1(12a_{427}))$ and $R(D_2(12a_{427}))$ are not isotopic.

\noindent
(3) \ Let $D_1(12a_{1225})$ be a knot diagram of $12a_{1225}$, which is obtained from Miller's ribbon representation in \cite{zbMATH07379313}. 
From Example \ref{exm:12 cross}, we obtain $rf(R(D_1(12a_{1225})))=1$ and $\tau_{R(D_1(12a_{1225}))}$ is diffeomorphic to $\tau_{S(T_{2,15})}$.  
Therefore, $\pi_1(\tau_{R(D_1(12a_{1225}))})$ is isomorphic to $D_{15}$ from Theorem \ref{thm:pi1oftaut22n+1}. 

Let $D_2(12a_{1225})$ be the knot diagram of $12a_{1225}$ depicted in the left side of Figure \ref{fig:12a1225}, which is obtained from Lamm's ribbon representation in \cite{lamm2021search}.  
A $\tau$-handle diagram of $\tau_{R(D_2(12a_{1225}))}$ is depicted in the right side of Figure \ref{fig:12a1225}. 

\begin{figure}[htbp]
    \centering
\begin{overpic}[scale=0.6
]
{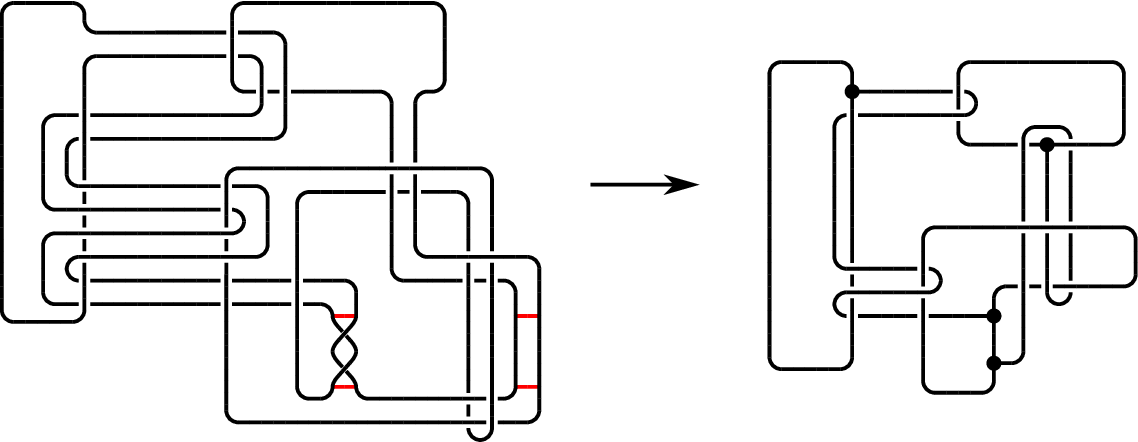}
\put(90,30){$x_1$}
\put(69,30){$x_2$}
\put(84,16){$x_3$}
\end{overpic}
\caption{Left: The knot diagram $D_2(12a_{1225})$ of $12a_{1225}$ and ribbon bands (red). Right: A $\tau$-handle diagram of $\tau_{R(D_2(12a_{1225}))}$.}
\label{fig:12a1225}
\end{figure}

From the $\tau$-handle diagram in Figure \ref{fig:12a1225} and some $\tau$-handle calculus, we have 
\begin{eqnarray*}
&&\pi_1(\tau_{R(D_2(12a_{1225}))}) \\
&\underset{\tau\text{-d.}}{=}& 
\left\langle x_1,x_2,x_3 \, \middle| \,
  \begin{lgathered} 
    x_1^2=1, x_1(x_3^{-1}x_1^{-1}x_3^{-1})x_3(x_3^{-1}x_1^{-1}x_3^{-1})^{-1}=1, \\
    x_2(x_1^{-1}x_2^{-1}x_3^{-1}x_2)x_3(x_1^{-1}x_2^{-1}x_3^{-1}x_2)^{-1}=1
  \end{lgathered} 
  \right\rangle\\
&\underset{i.,\alpha, \beta}{=}& 
\left\langle x_1,x_2,x_3 \, \middle| \,
  \begin{lgathered} 
    x_1^2=1, x_1(x_3x_1)x_3(x_3x_1)^{-1}=1, \\
    x_2(x_1x_2x_3x_2)x_3(x_1x_2x_3x_2)^{-1}=1
  \end{lgathered} 
  \right\rangle\\
&\cong&
\left\langle x_1,x_2,x_3 \, \middle| \, 
x_1^2=x_2^2=x_3^2=1, (x_1x_2)^2=(x_2x_3)^3, (x_1x_3)^3=1
\right\rangle. 
\end{eqnarray*}
One can check that $\pi_1(\tau_{R(D_2(12a_{1225}))})$ is not isomorphic to $D_{\abs{2n+1}}$ for any integer $n$. 
Thus, we obtain $rf(R(D_2(12a_{1225})))=2$ from Proposition \ref{prop:pao} and Theorems \ref{thm:pi1oftaut22n+1} and \ref{thm:1-fusion}. 
Then, we see that $\tau_{R(D_1(12a_{1225}))}$ is not homotopy equivalent to $\tau_{R(D_2(12a_{1225}))}$ and the $2$-knots $R(D_1(12a_{1225}))$ and $R(D_2(12a_{1225}))$ are not isotopic. 
Note that $\tau_{R(D_2(12a_{1225}))}$ is diffeomorphic to $\tau_{R(D(12a_{990}))}$ in Remark \ref{rem:12a990}. 
\end{rem}

Let $p$ and $q$ be integers with $\mathrm{gcd}(p,q)=1$ and $1<p<q$. 
It is known \cite[Theorem 1]{zbMATH01117469} that $rf(S(T_{p,q}))=\min\{p,q\}-1=p-1$.

\begin{prop}\label{prop:S(Tp,q)}
Figure \ref{fig:S(Tp,q)} is a $\tau$-handle diagram of $\tau_{S(T_{p,q})}$, where $\alpha_{p,q}$ in Figure \ref{fig:S(Tp,q)} is the remainder when we divide $q$ by $p$.
\end{prop}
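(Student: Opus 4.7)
The plan is to construct a $\tau$-handle diagram of $\tau_{S(T_{p,q})}$ from an explicit ribbon presentation of the spun torus knot $S(T_{p,q})$ and then simplify it with $\tau$-handle calculus until the arithmetic of dividing $q$ by $p$ becomes visible. Since $S(T_{p,q})$ is a ribbon $2$-knot with $rf(S(T_{p,q}))=p-1$ by \cite[Theorem 1]{zbMATH01117469}, Theorem~\ref{thm:double} and the recipe around Figure~\ref{fig:knottotau} reduce the problem to the choice of a convenient ribbon diagram for the equatorial knot of $S(T_{p,q})$.

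First I would start from the standard braid presentation $T_{p,q}=\widehat{(\sigma_1\sigma_2\cdots\sigma_{p-1})^q}$ and, writing $q=ap+\alpha_{p,q}$ with $0\le\alpha_{p,q}<p$, decompose the braid word as the $a$-fold full twist $(\sigma_1\cdots\sigma_{p-1})^{ap}$ followed by the residual factor $(\sigma_1\cdots\sigma_{p-1})^{\alpha_{p,q}}$. Placing $p-1$ fusion bands between consecutive strands at a height where the full twists are unwound realizes an explicit $(p-1)$-fusion ribbon presentation of the equatorial knot of $S(T_{p,q})$ (this is the minimal ribbon presentation matching Yanagawa's bound). Applying the conversion in Figure~\ref{fig:knottotau} then produces a preliminary $\tau$-handle diagram with $p-1$ dotted circles $x_1,\ldots,x_{p-1}$, each carrying a $2$-handle whose attaching circle records the way the strands are permuted by the braid.

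Next I would run $\tau$-handle calculus to bring this preliminary diagram into the normal form of Figure~\ref{fig:S(Tp,q)}. The $a$-fold full twist is a pure braid and, after the deformations $\alpha$ and $\beta$ (Propositions~\ref{prop:alpha}, \ref{prop:beta}) together with handle slides along the $0$-framed meridians guaranteed by Theorem~\ref{thm:double}, its contribution can be absorbed: each full twist cyclically permutes the roles of the generators $x_i$ but leaves the diagram invariant up to relabeling. What survives is exactly the residual factor $(\sigma_1\cdots\sigma_{p-1})^{\alpha_{p,q}}$, which encodes the cyclic offset by which the $p-1$ bands are reattached, and this is the datum recorded by $\alpha_{p,q}$ in Figure~\ref{fig:S(Tp,q)}. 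The base case $p=2$ specializes to Figure~\ref{fig:S(T_{2,2n+1})second}, where $\alpha_{2,2n+1}=1$ and the remaining parameter $n$ is the quotient $a=\lfloor q/p\rfloor$, providing a sanity check.

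The main obstacle is the second step: verifying that all occurrences of the full twist can in fact be eliminated by $\tau$-handle calculus without altering the diffeomorphism type of $\tau_{S(T_{p,q})}$. This requires tracking how each half-twist $\sigma_i$ interacts with the dotted circles $x_i, x_{i+1}$ under deformations $\alpha$ and $\beta$, and checking inductively that the conjugation patterns appearing in successive band attachments collapse to the simpler pattern dictated by the residue $\alpha_{p,q}$. Once this reduction is carried out, the resulting $\tau$-handle diagram is by construction the one depicted in Figure~\ref{fig:S(Tp,q)}.
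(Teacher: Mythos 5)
Your first step is essentially the paper's: the paper takes the symmetric ribbon presentation of the equatorial knot $T_{p,q}\#T_{p,q}^*$ (Figure \ref{fig:Tp,qTp,q}, with the tangle $T=(\prod_{i=1}^{p-1}\sigma_i)^q$ and its mirror $T^*$), notes that $R(D(T_{p,q}\#T_{p,q}^*))$ is isotopic to $S(T_{p,q})$, and reads off the $\tau$-handle diagram directly via the band-replacement recipe of Figure \ref{fig:knottotau}. Two points of your setup are off, though: the resulting diagram has $p$ dotted circles $a_1,\dots,a_p$ (one per trivial sphere of the $(p-1)$-fusion presentation), not $p-1$; and no further simplification is needed --- Figure \ref{fig:S(Tp,q)} is what the recipe produces as is.

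The genuine gap is your second step, the claim that the $l_{p,q}$-fold full twist can be ``absorbed'' by $\tau$-handle calculus so that ``what survives is exactly the residual factor $(\sigma_1\cdots\sigma_{p-1})^{\alpha_{p,q}}$.'' If that were true, the diffeomorphism type of $\tau_{S(T_{p,q})}$ would depend only on the residue $\alpha_{p,q}$, which is false: for $p=2$ one has $\alpha_{2,q}=1$ for every odd $q$, yet $\pi_1(\tau_{S(T_{2,2n+1})})\cong D_{\abs{2n+1}}$ by Theorem \ref{thm:pi1oftaut22n+1}, so $\tau_{S(T_{2,3})}$ and $\tau_{S(T_{2,5})}$ are not even homotopy equivalent (Corollary \ref{cor:not spin case}). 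Your own ``sanity check'' exposes the inconsistency: you observe that the quotient $n$ remains as a parameter of Figure \ref{fig:S(T_{2,2n+1})second}, which contradicts the assertion that the full twists have been eliminated. In the paper's normal form both pieces of the division survive: the remainder $\alpha_{p,q}$ appears as the index offset $a_k\mapsto a_{k+\alpha_{p,q}}$, while the quotient $l_{p,q}$ appears as the exponent in the conjugating word $w_{p,q}=\bigl(\prod_{i=1}^{p}a_{i+k}\bigr)^{l_{p,q}}\prod_{j=1}^{\alpha_{p,q}-1}a_{k+j}$. The deformations $\alpha$ and $\beta$ and slides over $0$-framed meridians let you change crossings and signs of letters, but they cannot delete a full pass of a band through all the dotted circles, so the reduction you propose cannot be carried out.
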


\begin{figure}[htbp]
    \centering
\begin{overpic}[scale=0.6]
{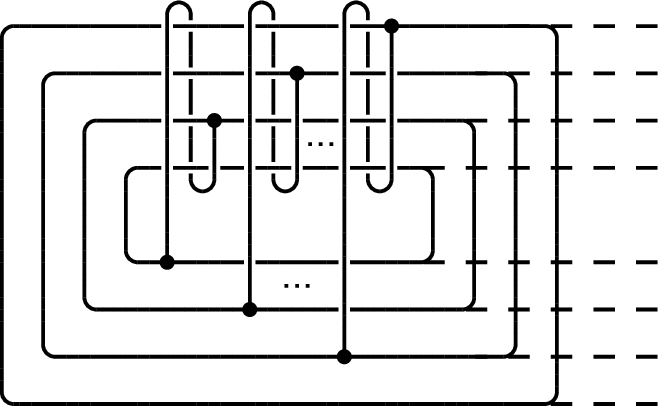}
\put(103,57){$a_{\alpha_{p,q}+p-1}$}
\put(103,50){$a_{\alpha_{p,q}+2}$}
\put(103,43){$a_{\alpha_{p,q}+1}$}
\put(103,36){$a_1$}

\put(103,21){$a_1$}
\put(103,14){$a_2$}
\put(103,7){$a_{p-1}$}
\put(103,0){$a_p$}
\end{overpic}
\caption{A $\tau$-handle diagram of $\tau_{S(T_{p,q})}$. 
The words in the fundamental group $\pi_1(\tau_{S(T_{p,q})})$ are read under the assumption that the circle corresponding to $a_i$ lies below that of $a_{i+1}$.
}
\label{fig:S(Tp,q)}
\end{figure}

\begin{proof}
A knot diagram $D(T_{p,q} \# T_{p,q}^*)$ in Figure \ref{fig:Tp,qTp,q} has a ribbon presentation depicted in Figure \ref{fig:Tp,qTp,q}, where the tangle $T$ in Figure \ref{fig:Tp,qTp,q} is defined by $(\prod_{i=1}^{p-1}\sigma_{i})^q$ and $T^*$ is the mirror image of $T$. 
Then, the $2$-knot $R(D(T_{p,q} \# T_{p,q}^*))$ is isotopic to $S(T_{p,q})$ and a $\tau$-handle diagram of $\tau_{R(D(T_{p,q} \# T_{p,q}^*))}$ depicted in Figure \ref{fig:S(Tp,q)} is obtained from Figure \ref{fig:Tp,qTp,q}.  
\end{proof}

\begin{figure}[htbp]
    \centering
\begin{overpic}[scale=0.6]
{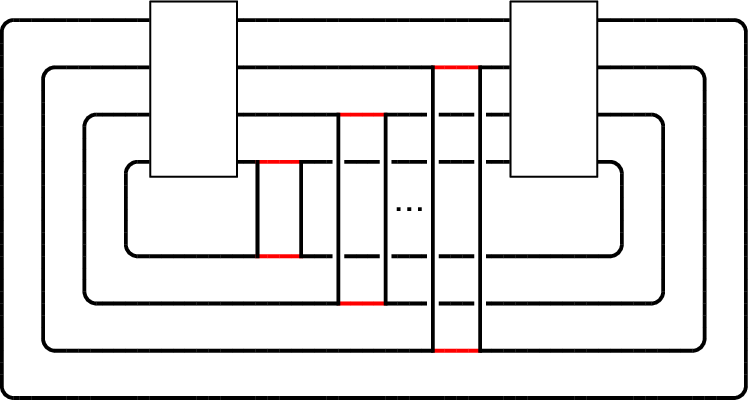}
\put(24,40){$T$}
\put(72,40){$T^*$}
\end{overpic}
\caption{A knot diagram of $T_{p,q}\#T_{p,q}^{*}$ and ribbon bands (red).}
\label{fig:Tp,qTp,q}
\end{figure}

Let $l_{p,q}$ be the quotient when we divide $q$ by $p$. 
From Proposition \ref{prop:S(Tp,q)}, a $\tau$-presentation of $\tau_{S(T_{p,q})}$ is obtained  from Figure \ref{fig:S(Tp,q)} as follows:
$$
\pi_1(\tau_{S(T_{p,q})})
\underset{\text{$\tau$-d.}}{=}
\left\langle  
    a_1, \ldots, a_p
\, \middle| \, 
  \begin{lgathered} 
    a_1^2=1, a_kw_{p,q}a_{k+\alpha_{p,q}}w_{p,q}^{-1}=1 \ (k=1,\ldots,p-1)\\
    \text{The index of each $a_i$ is taken modulo $p$.}
 \end{lgathered} 
\right\rangle,
$$
where 
$$w_{p,q}=\left(\prod_{i=1}^pa_{i+k}\right)^{l_{p,q}}\prod_{j=1}^{\alpha_{p,q}-1}a_{k+j}.$$

Thus, the following question naturally arises: 

\begin{que}\label{que:coxeter}
Is the fundamental group of $\tau_{S(T_{p,q})}$, a Coxeter group?
\end{que}

Note that the dihedral group $D_{\abs{2n+1}}$ that is the fundamental group of $\tau_{S(T_{2,2n+1})}$ is also a Coxeter group. By considering Theorem \ref{thm:1-fusion}, we ask the following question furthermore:

\begin{que}\label{que:n-fusion}
Let $K$ be a ribbon 2-knot of $n$-fusion for $n \ge 2$. Is $\tau_K$ diffeomorphic to $\tau_{S(T_{n+1,m})}$ for some integer $m \ge n+1$?
\end{que}

\subsection{Double coverings of some non-simply connected Price twists}
In this subsection, we study a double covering of the Price twist $\tau_{S(T_{2,2n+1})}$.
Since the dihedral group $D_{\abs{2n+1}}$ has only one subgroup $\mathbb{Z}_{\abs{2n+1}}$ of index 2, there exists only one double (cyclic) covering of $\tau_{S(T_{2,2n+1})}$ up to homeomorphism from Theorem \ref{thm:pi1oftaut22n+1}. 
Let $h: \Sigma_2(\tau_{S(T_{2,2n+1})}) \to \tau_{S(T_{2,2n+1})}$ be a double covering of $\tau_{S(T_{2,2n+1})}$. 
Then, the group $h_\#(\pi_1(\Sigma_2(\tau_{S(T_{2,2n+1})}))$ is the subgroup of index 2 in $\pi_1(\tau_{S(T_{2,2n+1})}) \cong D_{\abs{2n+1}}$, where $h_\# : \pi_1(\Sigma_2(\tau_{S(T_{2,2n+1})}) \to \pi_1(\tau_{S(T_{2,2n+1})})$ is the induced homomorphism of the covering $h$. 
Thus, $h_\#(\pi_1(\Sigma_2(\tau_{S(T_{2,2n+1})}))$ is isomorphic to $\mathbb{Z}_{\abs{2n+1}}$. 
Note that $\mathbb{Z}_{\abs{2n+1}}$ is the fundamental group of the Pao manifold $L_{2n+1}$.

\begin{prop}\label{prop:covering}
There exists a double cover $\Sigma_2(\tau_{S(T_{2,2n+1})})$ of $\tau_{S(T_{2,2n+1})}$ such that $\Sigma_2(\tau_{S(T_{2,2n+1})})$ is diffeomorphic to $L_{2n+1} \# S^2 \times S^2$. 
\end{prop}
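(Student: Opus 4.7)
The plan is to build a handle decomposition of the double cover $\Sigma_2(\tau_{S(T_{2,2n+1})})$ by lifting the handle decomposition in Figure~\ref{fig:S(T_{2,2n+1})} via the unique index-two subgroup of $\pi_1$, and then to simplify the resulting diagram by handle calculus.

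First, since $\pi_1(\tau_{S(T_{2,2n+1})})\cong D_{\abs{2n+1}}$ by Theorem~\ref{thm:pi1oftaut22n+1} and $H_1(\tau_{S(T_{2,2n+1})})\cong\mathbb{Z}_2$ by Corollary~\ref{cor:homology}, the abelianization supplies a unique nontrivial homomorphism $\varphi:\pi_1(\tau_{S(T_{2,2n+1})})\to\mathbb{Z}_2$; its kernel $\mathbb{Z}_{\abs{2n+1}}$ is the unique index-two subgroup and thus determines, up to equivalence, the unique connected double cover. In the generators used in the proof of Theorem~\ref{thm:pi1oftaut22n+1}, $\varphi$ sends the order-$2$ generator $x_1$ to the nontrivial element and the other generator $x_2$ to $0$.

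Next, I would lift the handle decomposition of Figure~\ref{fig:S(T_{2,2n+1})} handle by handle, following the standard recipe: the $0$-handle doubles; the $1$-handle $x_1$ (with $\varphi(x_1)\ne 0$) lifts to a single $1$-handle joining the two lifted $0$-handles, forming a cancelable $0$/$1$ pair; the $1$-handle $x_2$ (with $\varphi(x_2)=0$) lifts to two parallel $1$-handles; a $2$-handle attached along a word $w$ with $\varphi(w)=0$ lifts to two disjoint $2$-handles (one in each sheet), while a $2$-handle with $\varphi(w)\ne 0$ has a single lift attached along the connected double cover of its attaching circle, with framing computed from two parallel sheet-copies of the pushoff. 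The $3$-handles and the $4$-handle each lift to two copies.

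Applied to Figure~\ref{fig:S(T_{2,2n+1})}, the $2$-framed $2$-handle (word $x_1^2$, hence $\varphi$-trivial) lifts to two $2$-framed $2$-handles, one in each sheet; together with the corresponding lifts of their $0$-framed meridians coming from the double description of Theorem~\ref{thm:double}, these should split off an $S^2\times S^2$ summand by a sequence of handle slides. The lift of the remaining relator $2$-handle (whose word has nontrivial $\varphi$-image) is a single $2$-handle whose attaching circle interacts with the two lifts of $x_2$ according to the sheet-switch pattern along the word $x_1(x_2x_1)^n x_2(x_2x_1)^{-n}$; after canceling the $0$/$1$ pair and further isotopies, I expect this piece to reduce to the Pao handle diagram of $L_{2n+1}$ in Figure~\ref{fig:Pao} (with the parameter equal to $2n+1$ and $\varepsilon=0$). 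The main obstacle will be the framing and winding bookkeeping at the single lifted $2$-handle: matching its framing coefficient and its winding numbers around the two lifts of $x_2$ to those of the $L_{2n+1}$ diagram is where the calculation will be most delicate. As sanity checks, $\pi_1(L_{2n+1}\#S^2\times S^2)\cong\mathbb{Z}_{\abs{2n+1}}$ matches the kernel of $\varphi$, and Euler characteristic and homology computations give $\chi=4$ and $H_2\cong\mathbb{Z}_{\abs{2n+1}}\oplus\mathbb{Z}^2$ on both sides.
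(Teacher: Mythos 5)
Your overall strategy---realize the unique connected double cover by lifting a handle decomposition through the index-two subgroup and then simplify by handle calculus---is the same as the paper's, which invokes \cite[Subsection 6.3]{gompf20234} for the lifting step and then performs the reduction explicitly. However, your description of the lifted diagram contains a concrete error. The relator read off from the long $2$-handle is $x_1(x_2x_1)^nx_2\bigl((x_2x_1)^n\bigr)^{-1}$; abelianizing this together with $x_1^2=1$ gives $x_1+x_2=0$ in $H_1(\tau_{S(T_{2,2n+1})})\cong\mathbb{Z}_2$, so the classifying homomorphism $\varphi$ sends \emph{both} dotted-circle generators to the nontrivial element, not just the order-two generator. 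Hence the $1$-handle $x_2$ also lifts sheet-swappingly, contrary to your claim that it lifts to two parallel $1$-handles lying in separate sheets. More fundamentally, the attaching circle of every $2$-handle represents a relator of $\pi_1(\tau_{S(T_{2,2n+1})})$, hence automatically lies in $\ker\varphi$; every $2$-handle therefore lifts to two disjoint $2$-handles, and your case of ``a $2$-handle with $\varphi(w)\neq 0$ having a single connected lift'' cannot occur for a handle decomposition of the closed manifold being covered. Since your proposed reduction to the Pao diagram rests on the long relator $2$-handle having a single connected lift interacting with two parallel lifts of $x_2$, the diagram you intend to simplify is not the diagram of the double cover.

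By contrast, the paper first symmetrizes the base diagram---applying the deformation $\alpha$ and slides over $0$-framed meridians to pass from Figure \ref{fig:S(T_{2,2n+1})} to Figures \ref{fig:S(T_{2,2n+1})third} and \ref{fig:double coveringpre}---before lifting, and then exhibits $L_{2n+1}\#S^2\times S^2$ through an explicit chain of slides and cancellations (Figures \ref{fig:double covering1}--\ref{fig:double covering5}), treating $n=0$ separately via $\tau_O$. Your sanity checks on $\pi_1$, $\chi$ and $H_*$ are consistent but do not substitute for this handle calculus, which is where the entire content of the proposition lies; as written, the proposal would need its lifting bookkeeping corrected before any such calculus could begin.
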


\begin{proof}
It suffices to show the statement in the case where $n \ge 0$.

First, we prove the case where $n=0$. 
Since the 2-knot $S(T_{2,1})$ is isotopic to the unknotted 2-knot $O$, we see that the Price twist $\tau_{S(T_{2,1})}$ is diffeomorphic to $\tau_{O}$. 
Thus, a handle diagram of a double covering $\Sigma_2(\tau_{S(T_{2,1})})$ shown in the top left of Figure \ref{fig:double covering0} is obtained from the handle diagram of $\tau_{O}$ depicted in Figure \ref{fig:anotherspin21} by \cite[Subsection 6.3]{gompf20234}.
Then, by performing handle calculus described in
Figure \ref{fig:double covering0}, we see that $\Sigma_2(\tau_{S(T_{2,1})})$ is diffeomorphic to $S^2 \times S^2$. Note that $L_1$ is diffeomorphic to $S^4$.
\begin{figure}[htbp]
    \centering
\begin{overpic}[scale=0.6
]
{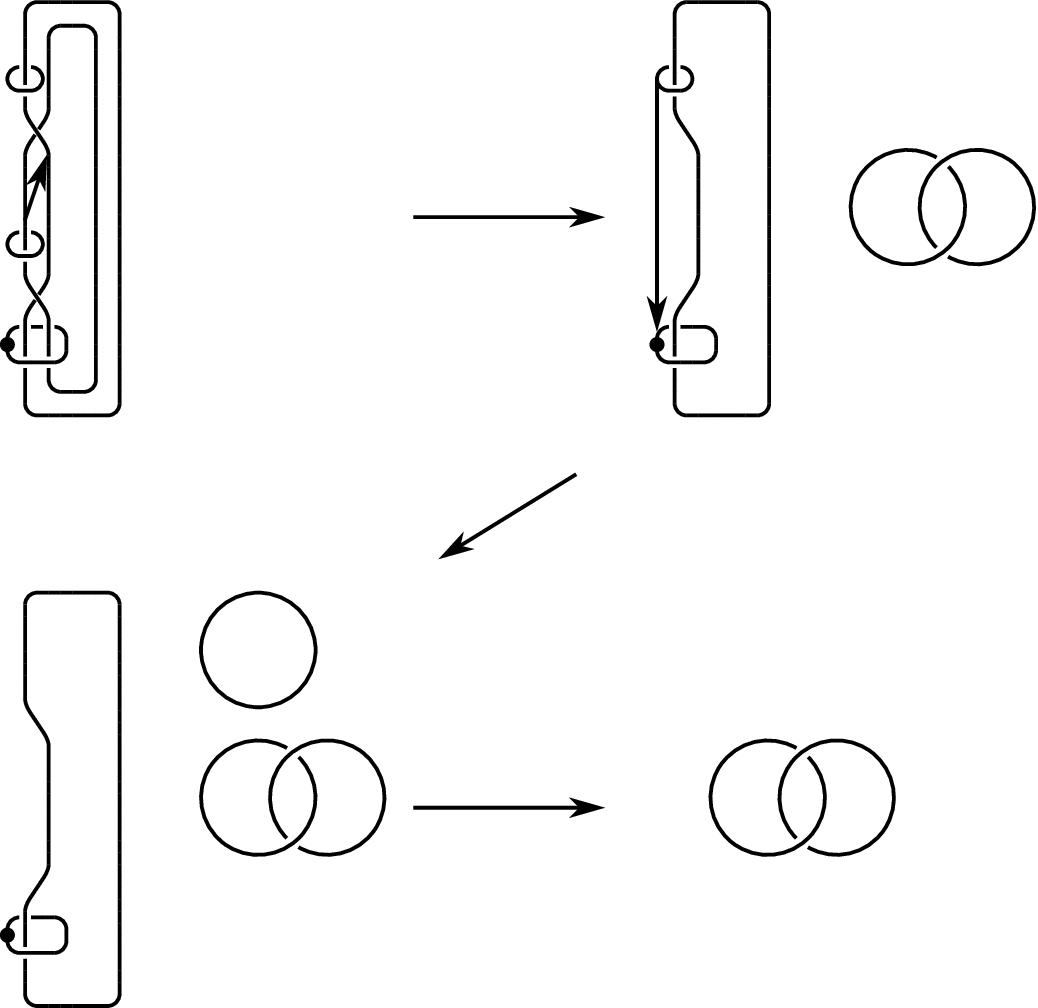}
    \put(-2,88){$0$}
    \put(0,92){$0$}
    \put(-2,72){$0$}
    \put(0,76){$0$}

    \put(20,64){$\cup$ 3-handle}
    \put(23.5,60){4-handle}

    \put(45,78){slides}

    \put(60.5,88){$0$}
    \put(62.5,92){$0$}

    \put(80,80){$0$}
    \put(100,80){$0$}

    \put(80,64){$\cup$ 3-handle}
    \put(83.5,60){4-handle}

    \put(43,50){slide}

    \put(0,34){$0$}

    \put(30,39){$0$}

    \put(17,23){$0$}
    \put(37,23){$0$}

    \put(20,4){$\cup$ 3-handle}
    \put(23.5,0){4-handle}

    \put(43,21){cancels}

    \put(66,23){$0$}
    \put(86,23){$0$}
    
    \put(80,4){$\cup$ 4-handle}
    \end{overpic}
\caption{Handle calculus in the proof for the case where $n=0$. 
In the first calculus (i.e. the first slide), we use several handle slides on a 0-framed meridian in the top left diagram. 
Each handle diagram is a handle diagram of $\Sigma_2(\tau_{S(T_{2,1})})$.}
\label{fig:double covering0}
\end{figure}

Next, we prove the case where $n>0$. 
By Proposition \ref{prop:alpha}, the handle diagram of $\tau_{S(T_{2,2n+1})}$ in the left side of Figure \ref{fig:S(T_{2,2n+1})second} can be changed to that depicted in Figure \ref{fig:S(T_{2,2n+1})third}. 

\begin{figure}[htbp]
    \centering
\begin{overpic}[scale=0.6
]
{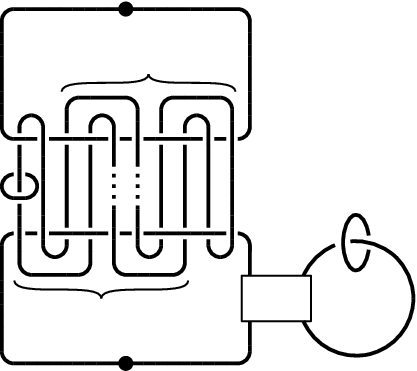}
    \put(-5,42){$0$}
    \put(59,45){$0$}
    \put(86,39){$0$}
    \put(97,26){$0$}
    \put(64,15){$2$}
    \put(33,73){$n$}
    \put(22,11){$n$}

    \put(113,49){$\cup$ $2$ $3$-handles}
    \put(128,40){$4$-handle}
    \end{overpic}
\caption{A handle diagram of the Price twist $\tau_{S(T_{2,2n+1})}$. }
\label{fig:S(T_{2,2n+1})third}
\end{figure}

By several handle slides on a 0-framed meridian, the handle diagram of $\tau_{S(T_{2,2n+1})}$ depicted in Figure \ref{fig:S(T_{2,2n+1})third} can be changed to that depicted in Figure \ref{fig:double coveringpre}. 
Then, a handle diagram of $\Sigma_2(\tau_{S(T_{2,2n+1})})$ shown in Figure \ref{fig:double covering1} is obtained from the handle diagram in Figure \ref{fig:double coveringpre} by \cite[Subsection 6.3]{gompf20234}. 
By the handle slide indicated in Figure \ref{fig:double covering1} and several handle slides on a 0-framed meridian in Figure \ref{fig:double covering1}, we obtain the handle diagram depicted in Figure \ref{fig:double covering2}. 
By canceling the pair of the leftmost string and the dotted circle, and the pair of the leftmost 0-framed meridian and a 3-handle in Figure \ref{fig:double covering2}, we obtain the handle diagram depicted in Figure \ref{fig:double covering3}. 
By several handle slides on 0-framed meridians and canceling the pair of the leftmost 0-framed knot and a 3-handle in Figure \ref{fig:double covering3}, we obtain the handle diagram depicted in Figure \ref{fig:double covering4}. 
By the handle slide indicated in Figure \ref{fig:double covering4}, several handle slides on 0-framed meridians and canceling the pair of the rightmost dotted circle and a framed knot in Figure \ref{fig:double covering4}, we obtain the handle diagram depicted in Figure \ref{fig:double covering5}. 
This handle diagram describes $L_{2n+1} \# S^2 \times S^2$ (see Figure \ref{fig:Pao}).
\end{proof}

\begin{figure}[htbp]
    \centering
\begin{overpic}[scale=0.6
]
{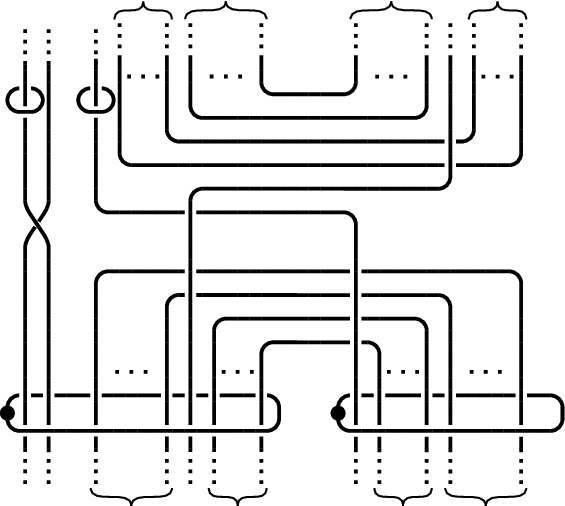}
    \put(18,91){$n-1$}
    \put(33,91){$n+1$}
    \put(62,91){$n+1$}
    \put(81,91){$n-1$}

    \put(-3,70){$0$}
    \put(10,70){$0$}

    \put(0,60){$0$}
    \put(13,60){$0$}

    \put(21,-5){$n$}
    \put(40,-5){$n$}
    \put(69,-5){$n$}
    \put(84,-5){$n$}

    \put(105,46){$\cup$ 2 3-handles}
    \put(116.5,39){4-handle}
    \end{overpic}
\caption{Another handle diagram of $\tau_{S(T_{2,2n+1})}$. The strings at the top and bottom are identified starting from the left end.}
\label{fig:double coveringpre}
\end{figure}

\begin{figure}[htbp]
    \centering
\begin{overpic}[scale=0.6
]
{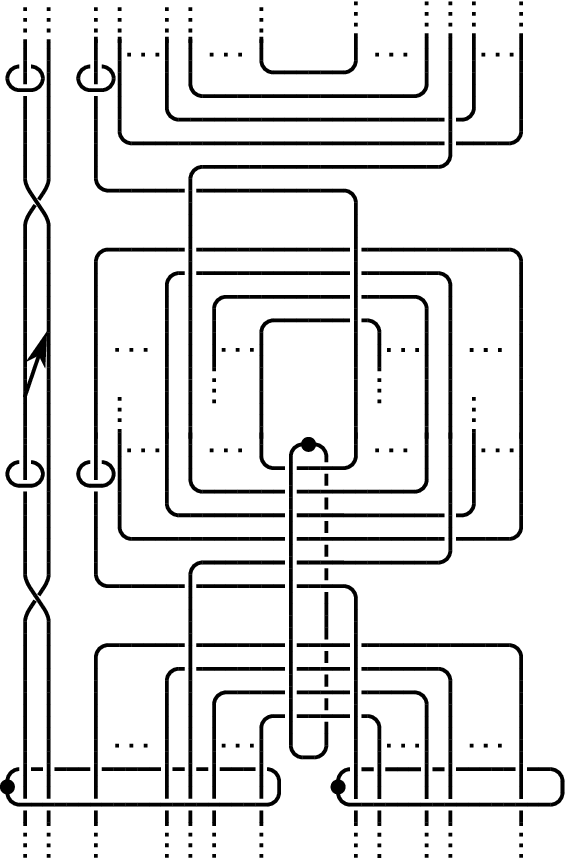}
    \put(-2,90){$0$}
    \put(6.5,90){$0$}

    \put(0,80){$0$}
    \put(8.5,80){$0$}

    \put(-2,43){$0$}
    \put(6.5,43){$0$}

    \put(0,33){$0$}
    \put(8.5,33){$0$}

    \put(72,19){$\cup$ 3 3-handles}
    \put(79.5,14){4-handle}
    \end{overpic}
\caption{A handle diagram of $\Sigma_2(\tau_{S(T_{2,2n+1})})$.}
\label{fig:double covering1}
\end{figure}

\begin{figure}[htbp]
    \centering
\begin{overpic}[scale=0.6
]
{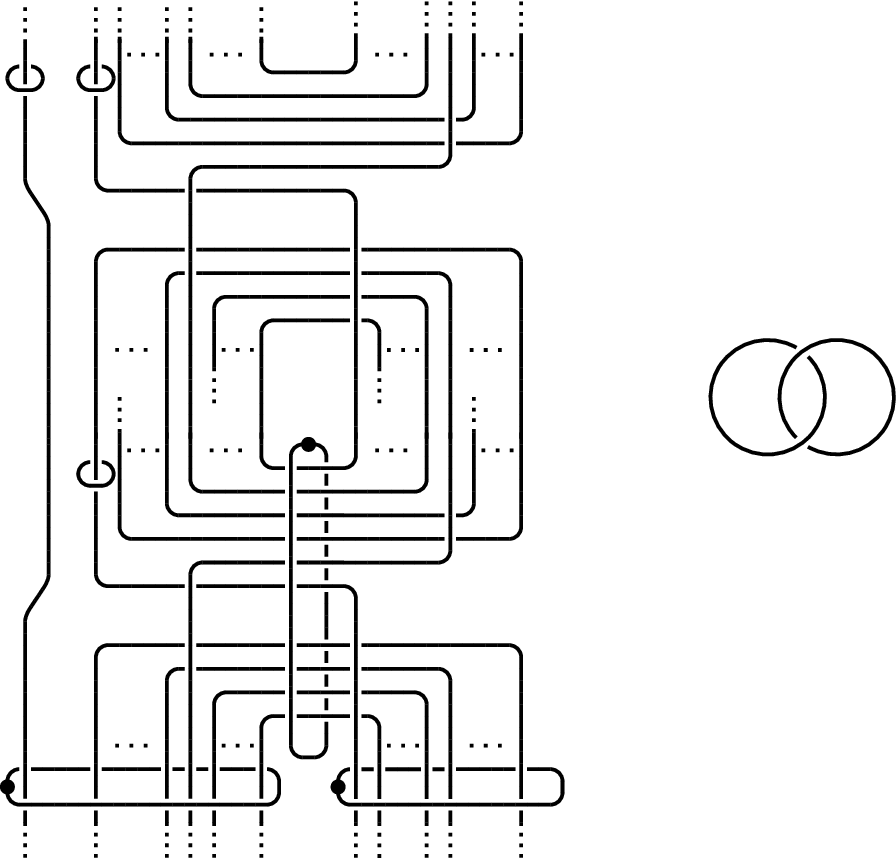}
    \put(-2,87){$0$}
    \put(6.5,87){$0$}

    \put(0,77){$0$}
    \put(8,77){$0$}

    \put(6.5,43){$0$}

    \put(8,33){$0$}

    \put(77,56){$0$}
    \put(99,56){$0$}

    \put(72,19){$\cup$ 3 3-handles}
    \put(79.5,14){4-handle}
    \end{overpic}
\caption{A handle diagram of $\Sigma_2(\tau_{S(T_{2,2n+1})})$ obtained from the handle slide indicated in Figure \ref{fig:double covering1} and several handle slides on a 0-framed meridian in Figure \ref{fig:double covering1}.}
\label{fig:double covering2}
\end{figure}

\begin{figure}[htbp]
    \centering
\begin{overpic}[scale=0.6
]
{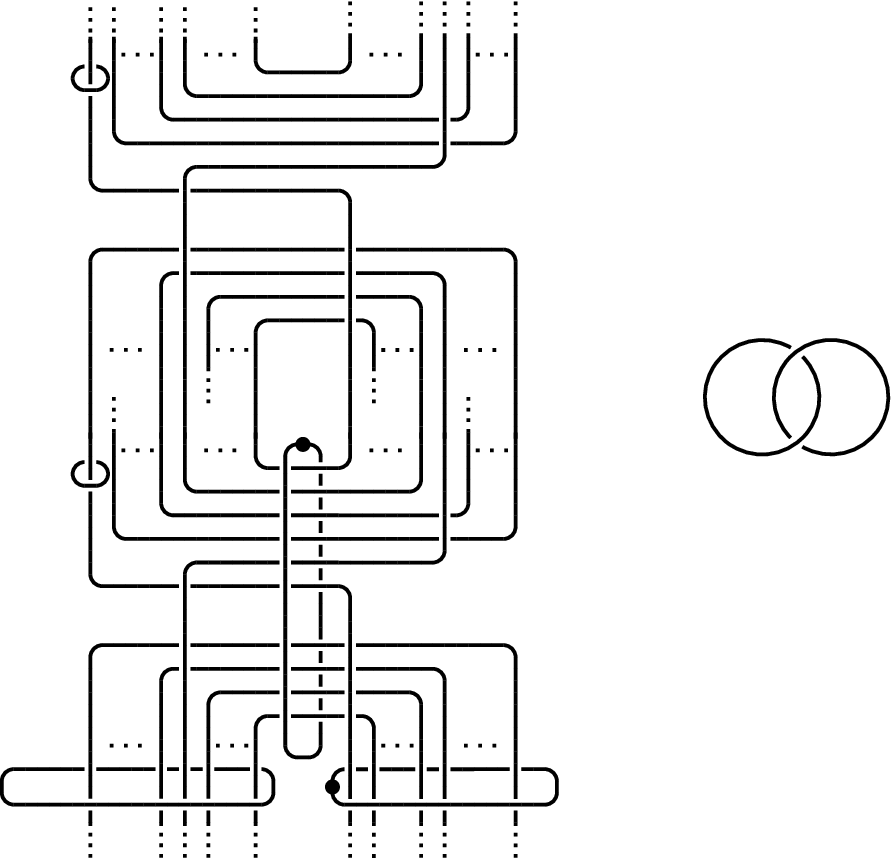}
    \put(5,87){$0$}

    \put(7,77){$0$}

    \put(5,43){$0$}

    \put(7,33){$0$}

    \put(77,56){$0$}
    \put(99,56){$0$}

    \put(5,11){$0$}

    \put(72,19){$\cup$ 2 3-handles}
    \put(79.5,14){4-handle}
    \end{overpic}
\caption{A handle diagram of $\Sigma_2(\tau_{S(T_{2,2n+1})})$ obtained from Figure \ref{fig:double covering2} by canceling the pair of the leftmost string and the dotted circle, and the pair of the leftmost $0$-framed meridian and a $3$-handle.}
\label{fig:double covering3}
\end{figure}

\begin{figure}[htbp]
    \centering
\begin{overpic}[scale=0.6
]
{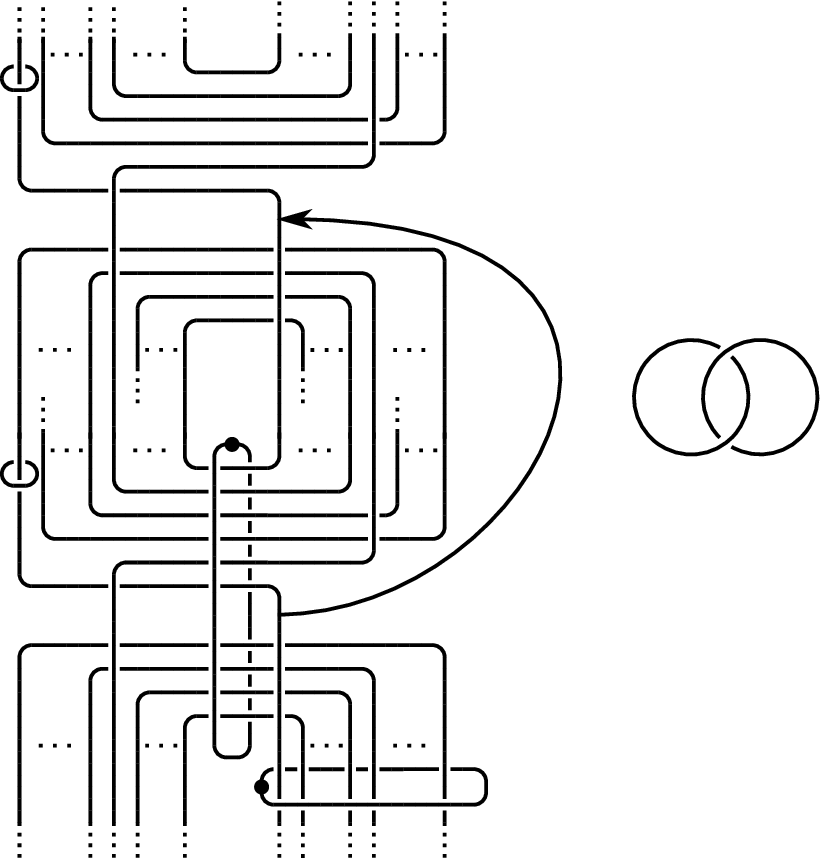}
    \put(-3,87){$0$}

    \put(-1,77){$0$}

    \put(-3,43){$0$}

    \put(-1,33){$0$}

    \put(72,58){$0$}
    \put(95,58){$0$}

    \put(75.5,19){$\cup$ 3-handle}
    \put(79.5,14){4-handle}
    \end{overpic}
\caption{A handle diagram of $\Sigma_2(\tau_{S(T_{2,2n+1})})$ obtained from Figure \ref{fig:double covering3} by several handle slides on 0-framed meridians and canceling the pair of the leftmost 0-framed knot and a 3-handle.}
\label{fig:double covering4}
\end{figure}

\begin{figure}[htbp]
    \centering
\begin{overpic}[scale=0.6
]
{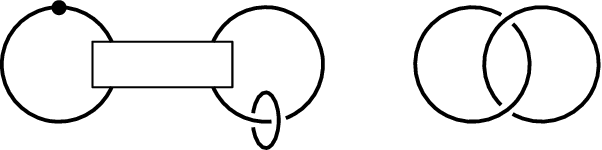}
    \put(19,12){$2n+1$}
    \put(47,0){$0$}
    \put(53,20){$0$}

    \put(66,20){$0$}
    \put(100,20){$0$}

    \put(118,16){$\cup$ 3-handle}
    \put(123.5,10){$4$-handle}
    \end{overpic}
\caption{A handle diagram of $\Sigma_2(\tau_{S(T_{2,2n+1})})$ obtained from Figure \ref{fig:double covering4} by the handle slide indicated in Figure \ref{fig:double covering4}, several handle slides on 0-framed meridians and canceling the pair of the rightmost dotted circle and a framed knot in Figure \ref{fig:double covering4}. This is a handle diagram of $L_{2n+1} \# S^2 \times S^2$.}
\label{fig:double covering5}
\end{figure}

\section{Theorems in terms of pochette surgery}\label{sec:main theorem for pochette}

In this section, we rephrase the results in Sections \ref{sec:property} and \ref{sec:diffeo type} in terms of pochette surgery by using Proposition \ref{prop:pricepochette}.

Let $F(K,p,\varepsilon)$ be a $2$-handlebody described by the handle diagram in Figure \ref{fig:exteriorP}. 
Note that $F(K,2,0)$ is nothing but $F(K\#P_0)$ (see Figure \ref{fig:QuasiExterior}). 
We recall that $e_K: P_{1,1} \to X$ is the embedding that the cord is trivial and the $2$-knot $(S_{1,1})_{e_K}$ in $(P_{1,1})_{e_K}$ is equal to $K$. 

\begin{figure}[htbp]
    \centering
\begin{overpic}[scale=0.6]
{Quasi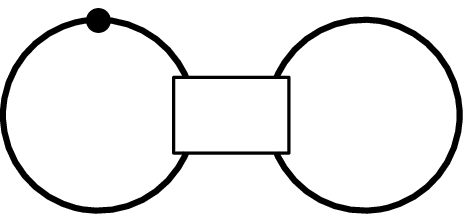}
\put(46,18.5){$p$}
\put(-10,32){$K$}
\put(97,32.5){$\varepsilon$}
\end{overpic}
\caption{A handle diagram of a $2$-handlebody $F(K,p,\varepsilon)$.}
\label{fig:exteriorP}
\end{figure}

\begin{thm}\label{thm:pochettever}
Let $K$ be a ribbon $2$-knot in the $4$-sphere $S^4$. Then, the pochette surgery $S^4(e_K,p/q,\varepsilon)$ is diffeomorphic to the double $DF(K,p,\varepsilon)$ of the $2$-handlebody $F(K,p,\varepsilon)$. 
\end{thm}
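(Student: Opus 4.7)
The plan is to follow the same strategy as the proof of Theorem \ref{thm:double}, replacing the white-circle gadget of Figure \ref{fig:whitecircle} by a local piece adapted to the slope $p/q$ and the mod $2$ framing $\varepsilon$. First, I would use the standard recipe for drawing handle diagrams of pochette surgeries (\cite[Proposition 1]{zbMATH07751599} and \cite[Subsection 2F]{MR4619857}) to produce a handle diagram of $S^4(e_K,p/q,\varepsilon)$ whose global shape coincides with the diagram of $\tau_K=S^4(e_K,2,0)$ in the left of Figure \ref{fig:taugraph}, except that the local white-circle gadget is replaced by one encoding the data $(p,q,\varepsilon)$. Concretely, the new gadget is obtained from Figure \ref{fig:whitecircle} by changing the $2$-framing to $p$, adjusting the linking between the central framed component and the meridian of the dotted circle according to $q$, and imposing the mod $2$ framing $\varepsilon$; crucially, the new gadget still carries a $0$-framed meridian around the $2$-handle that entangles the dotted circle of $(P_{1,1})_{e_K}$.

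Next, I would apply Lemma \ref{lem:predouble} to the $E(K)$ part of this diagram, replacing it by $D(K)$. The key observation is that the handle calculus proving Lemma \ref{lem:predouble} (Figures \ref{fig:total}, \ref{fig:localA}, \ref{fig:localB}) is entirely internal to the $E(K)$ region: each slide is performed over a $0$-framed meridian of some $2$-handle of $E(K)$, and the white-circle gadget is never touched. Consequently the calculus is insensitive to the value of $(p,q,\varepsilon)$ encoded locally, and the transformation goes through verbatim.

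After this replacement, the resulting diagram presents the double of a $2$-handlebody consisting of $n$ $1$-handles, $n$ $2$-handles cancelling in the symmetric part, and the local gadget attached as in Figure \ref{fig:exteriorP}; by construction this $2$-handlebody is precisely $F(K,p,\varepsilon)$. Hence $S^4(e_K,p/q,\varepsilon)\cong DF(K,p,\varepsilon)$.

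The main technical obstacle is the verification, at the level of handle diagrams, that the modified white-circle gadget indeed represents the pochette surgery with slope $p/q$ and mod $2$ framing $\varepsilon$; this requires carefully unpacking the construction of \cite[Subsection 2F]{MR4619857} in the presence of a nontrivial $q$ and reconciling it with the local picture in Figure \ref{fig:exteriorP}. Once this is done, the rest of the argument is a direct transcription of the proof of Theorem \ref{thm:double}, since the white-circle gadget is carried along untouched throughout the handle calculus supplied by Lemma \ref{lem:predouble}.
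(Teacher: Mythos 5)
Your overall strategy --- generalize the white-circle gadget, rerun the doubling argument of Lemma \ref{lem:predouble} and Theorem \ref{thm:double}, and observe that the calculus never touches the gadget --- is exactly how the paper handles the integer-slope case. The gap is in how you treat the denominator $q$. You describe the modified gadget as having its linking ``adjusted according to $q$,'' so the $2$-handlebody you end up doubling would a priori depend on $q$; yet the target $F(K,p,\varepsilon)$ of Figure \ref{fig:exteriorP} depends only on $p$ and $\varepsilon$. You flag this as ``the main technical obstacle'' but never resolve it, and resolving it is precisely the content that distinguishes the theorem from Theorem \ref{thm:double}. The paper avoids the issue by splitting the proof in two: first it runs your doubling argument only for integer slope $p$ (where the gadget is the unambiguous, $q$-free one of Figure \ref{fig:exteriorP}), obtaining $S^4(e_K,p,\varepsilon)\cong DF(K,p,\varepsilon)$; then it separately invokes the combination of \cite[Subsection 2F]{MR4619857} and the proof of \cite[Proposition 1]{zbMATH07751599} (an argument going back to \cite{murase}) to conclude that $S^4(e_K,p/q,\varepsilon)$ is diffeomorphic to $S^4(e_K,p,\varepsilon)$, i.e.\ that for these embeddings the denominator of the slope does not change the diffeomorphism type.

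To repair your write-up you should either (i) restructure it along the paper's lines, proving the integer-slope case first and then citing the $p/q\leadsto p$ reduction as a separate step, or (ii) actually carry out the ``unpacking'' you defer: show that in the handle diagram produced by \cite[Proposition 1]{zbMATH07751599} for a trivial cord, the $q$-dependence can be slid and cancelled away, leaving the gadget of Figure \ref{fig:exteriorP}. As it stands, the assertion that the double ``is precisely $F(K,p,\varepsilon)$'' is exactly the point at issue, not a consequence of the construction.
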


\begin{proof}
Using the same arguments as Lemma \ref{lem:predouble} and Theorem \ref{thm:double}, the pochette surgery $S^4(e_K,p,\varepsilon)$ is diffeomorphic to the double $DF(K,p,\varepsilon)$ of the $2$-handlebody $F(K,p,\varepsilon)$. 
Furthermore, by combining the argument of \cite[Subsection 2F]{MR4619857} and the proof of \cite[Proposition 1]{zbMATH07751599} (this argument originates from \cite{murase}), the pochette surgery $S^4(e_K,p/q,\varepsilon)$ is diffeomorphic to $S^4(e_K,p,\varepsilon)$.
Therefore, the pochette surgery $S^4(e_K,p/q,\varepsilon)$ is diffeomorphic to $DF(K,p,\varepsilon)$. 
\end{proof}

\begin{rem}
Let $D(K,p,\varepsilon)$ be a closed $4$-manifold described in Figure \ref{fig:exterior(P)}, where the integer $k$ in Figure \ref{fig:exterior(P)} is the number of the $1$-handles of the handle diagram in Figure \ref{fig:exterior(P)}. 
From the argument in Section \ref{sec:diffeo type} and Theorem \ref{thm:pochettever}, the pochette surgery $S^4(e_K,p/q,\varepsilon)$ is diffeomorphic to $D(K,p,\varepsilon)$. 
\end{rem}

\begin{figure}[htbp]
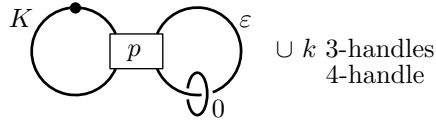

    \centering
\begin{overpic}[scale=0.6]
{pao.eps}
\put(45.5,28.5){$p$}
\put(-10,42){$K$}
\put(98,42){$\varepsilon$}
\put(85,0){$0$}
\put(115,28){$\cup$ $k$ $3$-handles}
\put(138.5,16){$4$-handle}
\end{overpic}
\caption{A handle diagram of $D(K,p,\varepsilon)$.}
\label{fig:exterior(P)}
\end{figure}

\begin{cor}\label{cor:(p/q)-pochette and tau}
Let $K$ be a ribbon $2$-knot in the $4$-sphere $S^4$. Then, the pochette surgery $S^4(e_K,2/(2m+1),0)$ is diffeomorphic to $\tau_K$ for any integer $m$. 
\end{cor}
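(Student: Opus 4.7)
The plan is to read the claim as a direct consequence of Proposition \ref{prop:pricepochette} and Theorem \ref{thm:pochettever}; the only substantive observation is that, for fixed integer $p$ (here $p=2$) and fixed mod $2$ framing $\varepsilon$ (here $\varepsilon=0$), the pochette surgery $S^4(e_K,p/q,\varepsilon)$ is insensitive to the choice of $q$ among integers coprime to $p$.

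First I would use Proposition \ref{prop:pricepochette} to rewrite $\tau_K$ as the pochette surgery $S^4(e_K,2/1,0)$. Next I would apply Theorem \ref{thm:pochettever} to both slopes $2/1$ and $2/(2m+1)$: since $\gcd(2,2m+1)=1$ for every integer $m$, the slope $2/(2m+1)$ is admissible, and the theorem yields
$$
S^4(e_K,2/(2m+1),0)\;\cong\;DF(K,2,0)\;\cong\;S^4(e_K,2/1,0).
$$
Here the middle $2$-handlebody $DF(K,2,0)$ has a handle diagram (Figure \ref{fig:exteriorP}) whose only numerical data are $p=2$ and $\varepsilon=0$, so $q$ has genuinely disappeared from the right-hand side. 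Chaining these diffeomorphisms with the one supplied by Proposition \ref{prop:pricepochette} delivers the corollary.

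There is essentially no obstacle: all of the geometric content has been absorbed into Theorems \ref{thm:double} and \ref{thm:pochettever}. If one wished for a self-contained argument bypassing Theorem \ref{thm:pochettever}, the same conclusion could be reached by exhibiting an explicit handle move on the diagram of Figure \ref{fig:Pricetwist3} that changes the framing coefficient of the attaching curve of the surgery $2$-handle by $\pm 2$ via a slide over the $0$-framed meridian linking the dotted circle twice; this is exactly the calculus underlying \cite[Proposition 1]{zbMATH07751599}, which is precisely what makes the slope $p/q$ depend only on $p$ and the mod $2$ framing $\varepsilon$ once the corresponding $2$-handle is replaced as in Theorem \ref{thm:pochettever}.
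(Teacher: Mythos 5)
Your proposal is correct and follows essentially the same route as the paper, whose proof is exactly the one-line deduction from Proposition \ref{prop:pricepochette} and Theorem \ref{thm:pochettever} that you spell out (with the key point being that $DF(K,p,\varepsilon)$ depends only on $p$ and $\varepsilon$, so the denominator $q=2m+1$ drops out). The only quibble is terminological: $DF(K,2,0)$ is the double of the $2$-handlebody $F(K,2,0)$, hence a closed $4$-manifold, and Figure \ref{fig:exteriorP} depicts $F(K,p,\varepsilon)$ rather than its double.
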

\begin{proof}
This follows directly from Proposition \ref{prop:pricepochette} and Theorem \ref{thm:pochettever}. 
\end{proof}
Here, we perform a complete classification of the diffeomorphism types of the pochette surgeries for $S^4$ that satisfy $(S_{1,1})_e=O$. 
\begin{prop}\label{prop:the case where core sphere is trivial}
If the $2$-knot $(S_{1,1})_e$ is trivial, then the pochette surgery $S^4(e_O, p/q, \varepsilon)$ is diffeomorphic to the Pao manifold $L(p; \varepsilon, 1; 1)$. 
\end{prop}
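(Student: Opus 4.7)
The plan is to generalize the handle-calculus argument used in the proof of Proposition \ref{prop:pao} (which is the special case $p=2$, $\varepsilon=0$). By Theorem \ref{thm:pochettever}, $S^4(e_O, p/q, \varepsilon)$ is diffeomorphic to $D(O, p, \varepsilon)$, whose handle diagram is obtained from Figure \ref{fig:exterior(P)} by specializing $K=O$. Since $(S_{1,1})_{e_O}=O$ is trivial, the dotted circle labeled $K=O$ can be drawn as a standard unknotted dotted circle (there is no further ``$K$-part'' of the diagram, because a handle decomposition of $E(O)\cong S^1\times D^3$ uses only a single $1$-handle and no $2$-handles that intertwine with it).

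First, I would write down the handle diagram of $D(O,p,\varepsilon)$ explicitly: one dotted circle coming from $O$, a $p$-framed $2$-handle and an $\varepsilon$-framed $2$-handle in the standard pochette configuration, a $0$-framed meridian of the pochette, together with one $3$-handle and one $4$-handle. Second, I would write down the handle diagram of the Pao manifold $L(p;\varepsilon,1;1)$ as in Figure \ref{fig:Pao} (setting $n=p$): a $p$-framed unknot, a $0$-framed unknot, an $\varepsilon$-framed unknot in the prescribed linking pattern, plus a $3$-handle and a $4$-handle. Third, I would perform a single $1$-handle/$2$-handle cancellation on the dotted circle for $O$ in $D(O,p,\varepsilon)$, exactly as in the passage from Figure \ref{fig:spin21} to Figure \ref{fig:anotherspin21} in the proof of Proposition \ref{prop:pao}, and check that the resulting framed link coincides with the Pao diagram of $L(p;\varepsilon,1;1)$.

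Since the handle moves used in the proof of Proposition \ref{prop:pao} never made essential use of the specific values $p=2$ or $\varepsilon=0$, the same sequence of moves should work verbatim with $p$ and $\varepsilon$ left as free parameters, provided the framings are tracked correctly. The only delicate point, which I view as the main obstacle, is to ensure that the mod $2$ framing parameter $\varepsilon$ on the meridian of the pochette matches the third entry of $L(p;\varepsilon,1;1)$ on the nose, so that the case $\varepsilon=1$ yields the twisted Pao manifold $L_p'=L(p;1,1;1)$ rather than $L_p=L(p;0,1;1)$. This is however built into the construction: the mod $2$ framing is designed exactly so that $S^4(e,1/0,1)$ is the Gluck twist and $S^4(e,1/0,0)$ is trivial, and carrying that convention through the cancellation above should match it with the $\varepsilon$-framing appearing in Figure \ref{fig:Pao}.
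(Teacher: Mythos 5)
Your strategy is sound, and the authors themselves endorse it: immediately after their proof they remark that Proposition \ref{prop:the case where core sphere is trivial} ``can be shown directly from the handle diagram of $D(K,p,\varepsilon)$ in Figure \ref{fig:exterior(P)} with $K=O$ and that of the Pao manifold in Figure \ref{fig:Pao}.'' That is exactly your route: reduce $S^4(e_O,p/q,\varepsilon)$ to the double $DF(O,p,\varepsilon)$ (equivalently $D(O,p,\varepsilon)$) via Theorem \ref{thm:pochettever}, observe that the $E(O)$ part of the diagram is a single standard dotted circle, and match the resulting framed link with Figure \ref{fig:Pao}. The paper's written proof is genuinely different: it first identifies $S^4(e_O,0/1,0)$ with the double $DP_{1,1}$ (Figures \ref{fig:Eml} and \ref{fig:DP}), then re-embeds the pochette by the inclusion $i_{P_{1,1}}\colon P_{1,1}\hookrightarrow DP_{1,1}$ and performs a $(q/p,\varepsilon)$-surgery there; the key point is that meridian and longitude swap roles ($m_{e_O}=l_{i_{P_{1,1}}}$, $l_{e_O}=m_{i_{P_{1,1}}}$), so by Theorem \ref{thm:three conditions} the composite realizes exactly the $(p/q,\varepsilon)$-surgery, and the resulting diagram (Figure \ref{fig:trivial case p.s.}) is the Pao diagram on the nose. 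The factorization through $DP_{1,1}$ makes the dependence on $(p,\varepsilon)$ alone transparent and pushes the mod~$2$ framing bookkeeping (your stated ``main obstacle'') into Theorem \ref{thm:three conditions} rather than into a diagram manipulation; your approach buys a shorter argument that reuses the machinery already established for Theorem \ref{thm:pochettever}.

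One ingredient you omit and should add: the hypothesis concerns an arbitrary embedding $e$ with $(S_{1,1})_e$ trivial, whereas both your argument and Theorem \ref{thm:pochettever} are stated for the standard embedding $e_K$ whose cord is trivial. The paper's first step is the reduction to $e_O$, using that when the core sphere is unknotted every cord of $E((P_{1,1})_e)$ is isotopic to the trivial one (by the proof of \cite[Theorem 1.5]{MR4619857}); without this the proposition is established only for $e_O$ itself and does not yield the complete classification of pochette surgeries with unknotted core sphere that the surrounding text claims. The remaining steps of your outline (the single $1$-handle/$2$-handle cancellation and the framing check) are routine verifications.
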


\begin{proof}
If the $2$-knot $(S_{1,1})_e$ is the unknotted $2$-knot $O$, then each cord in $E((P_{1,1})_{e_O})$ is isotopic to the trivial cord by the proof of \cite[Theorem 1.5]{MR4619857}. 
Thus, a handle diagram of $S^4$ can be taken as in Figure \ref{fig:2-knot trivial case s4} from Figure \ref{fig: trivial case S4} and the $4$-manifold $(P_{1,1})_{e_O}$ consists of the $2$-handle presented by the leftmost $0$-framed unknot, the $3$-handle, and the $4$-handle in Figure \ref{fig:2-knot trivial case s4}. 
Therefore, a handle diagram of the exterior $E((P_{1,1})_{e_O})$ and the positions of $m_{e_O}$ and $l_{e_O}$ are shown as in Figure \ref{fig:Eml} by \cite[Figure 4]{MR4619857}. 
From Figure \ref{fig:Eml} and the proof of \cite[Proposition 1]{zbMATH07751599}, a handle diagram of the pochette surgery $S^4(e_O,0/1,0)$ is depicted in Figure \ref{fig:DP}. 
Therefore, the pochette surgery $S^4(e_O,0/1,0)$ is diffeomorphic to the double $DP_{1,1}$ of $P_{1,1}$ by Figure \ref{fig:DP}. 
Let $i_{P_{1,1}}: P_{1,1} \hookrightarrow DP_{1,1}$ be the inclusion. 
We note that the pochette $i_{P_{1,1}}(P_{1,1})=P_{1,1}$ consists of the $0$-handle, the $1$-handle presented by the leftmost dotted circle, and the $2$-handle presented by the rightmost $0$-framed unknot in Figure \ref{fig:DP}. 
Note that $m_{e_O}=l=l_{i_{P_{1,1}}}$ and $l_{e_O}=m=m_{i_{P_{1,1}}}$.
We define 
$$g:=g_{i_{P_{1,1}},p/q,\varepsilon} \circ g_{e_O,0/1,0}. $$
Then, we have 
\begin{eqnarray*}
g([m])&=&(g_{i_{P_{1,1}},q/p,\varepsilon})_{*}((g_{e_O,0/1,0})_*([m]))\\
&=&(g_{i_{P_{1,1}},q/p,\varepsilon})_{*}([l_{e_O}]))=(g_{i_{P_{1,1}},q/p,\varepsilon})_{*}([m])\\
&=&(g_{i_{P_{1,1}},q/p,\varepsilon})_{*}([m])=q[m]+p[l]=p[l]+q[m]\\
&=&p[m_{e_O}]+q[l_{e_O}]. 
\end{eqnarray*}
Then, the slope of the homology class $g_*([m])$ in $H_1(\partial P_{1, 1})$ is $p/q$. 
Furthermore, the mod $2$ framing around the knot $g(m)$ is $\varepsilon$. 
Therefore, the pochette surgery $S^4(e_O,p/q,\varepsilon)$ is diffeomorphic to $S^4(e_O,g)$ from Theorem \ref{thm:three conditions}. 
Note that the pochette surgery $S^4(e_O,g)$ is diffeomorphic to $S^4(e_O,0/1,0)(i_{P_{1,1}},q/p,\varepsilon)$. 
From Figure \ref{fig:DP}, a handle diagram of the pochette surgery $DP_{1,1}(i_{P_{1,1}},q/p,\varepsilon)$ is shown in Figure \ref{fig:trivial case p.s.} by \cite{murase} and \cite[Proposition 1]{zbMATH07751599}. 
By comparing Figure \ref{fig:trivial case p.s.} with Figure \ref{fig:Pao}, we see that the pochette surgery $DP_{1,1}(i_{P_{1,1}},q/p,\varepsilon)$ is diffeomorphic to the Pao manifold $L(p; \varepsilon, 1; 1)$. 
This completes the proof.

\begin{figure}
    \centering
\begin{overpic}[scale=0.6]
{foursphere.eps}
\put(26.5,0){$0$}

\put(97,41){$0$}

\put(115,32){$\cup$ $3$-handle}
\put(127,19){$4$-handle}
\end{overpic}
\caption{A handle diagram of the $4$-sphere $S^4$.}
\label{fig:2-knot trivial case s4}
\end{figure}

\begin{figure}
    \centering
\begin{overpic}[scale=0.6]
{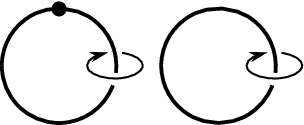}
\put(35.5,2){$m_{e_O}$}

\put(88,35){$0$}
\put(94,2){$l_{e_O}$}

\end{overpic}
\caption{A handle diagram of the exterior $E((P_{1,1})_{e_O})$ and the positions of the meridian $m_{e_O}$ and the longitude $l_{e_O}$.}
\label{fig:Eml}
\end{figure}

\begin{figure}
    \centering
\begin{overpic}[scale=0.6]
{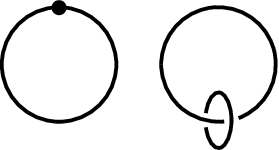}
\put(85,0){$0$}
\put(99,41){$0$}
\put(115,32){$\cup$ $3$-handle}
\put(127,19){$4$-handle}
\end{overpic}
\caption{A handle diagram of the $4$-manifold $S^4(e,0/1,0)=DP_{1,1}$.}
\label{fig:DP}
\end{figure}

\begin{figure}
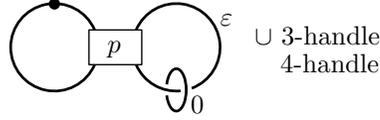

    \centering
\begin{overpic}[scale=0.6]
{pao.eps}
\put(46,28){$p$}
\put(85,0){$0$}
\put(99,41){$\varepsilon$}
\put(115,32){$\cup$ $3$-handle}
\put(127,19){$4$-handle}
\end{overpic}
\caption{A handle diagram of the $4$-manifold $DP_{1,1}(i_{P_{1,1}},q/p,\varepsilon)$.}
\label{fig:trivial case p.s.}
\end{figure}
\end{proof}
Note that Proposition \ref{prop:the case where core sphere is trivial} can be shown directly from the handle diagram of $D(K,p,\varepsilon)$ in Figure \ref{fig:exterior(P)} with $K=O$ and that of the Pao manifold in Figure \ref{fig:Pao}. 

\begin{rem}
From Proposition \ref{prop:the case where core sphere is trivial}, if $(S_{1,1})_e$ is the unknotted 2-knot, $S^4(e,p/q,\varepsilon)$ is diffeomorphic to $S(L(p,q))$ if $p$ is odd or $\varepsilon$ is zero, 
and not homotopy equivalent to $S(L(p,q))$ in the other cases. 
\end{rem}

Corollary \ref{cor:(p/q)-pochette and tau} implies that there exist an infinite homotopy types of pochette surgeries for $S^4$ with slope $2/(2m+1)$ for any integer $m$.
From Corollaries \ref{cor:not spin case}, \ref{cor:tauandPao}, \ref{cor:tauandIwasemfd} and \ref{cor:(p/q)-pochette and tau}, for any integer $m$, the pochette surgery $S^4(e_{S(T_{2,2n+1})},2/(2m+1),0)$ is not homotopy equivalent to the spun 4-manifold $S(M)$ and the twist spun 4-manifold $\widetilde{S}(M)$ for any closed $3$-manifold $M$, any Pao manifold or any Iwase manifold for each $n \neq -1,0$. 

\begin{rem}
From Remark \ref{rem:deformation}, if the slope is $p/q$ ($|p| \ge 3$) or $\varepsilon=1$, the deformation $\alpha$ cannot be applied, so a similar argument cannot be made.
In particular, we highlight the difference in the difficulty of classifying diffeomorphism types for $|p| = 1$, $|p| = 2$ and $|p| \ge 3$.
\end{rem}

Finally, we add a comment on the relationship between the Price twist $\tau_{S(T_{2, 2n+1})}$ and the Iwase manifolds. 
Any Iwase manifold corresponds to a torus surgery on $S^4$ along a torus $T^2$-knot.
In other words, as mentioned in \cite[Section 1]{MR941924}, any Iwase manifold can be interpreted as a $4$-dimensional version of a Dehn surgery on the $3$-sphere $S^3$ along a torus knot. 
In Subsection \ref{subsec:iwasemfd}, any pochette surgery on $S^4$ with mod $2$ framing $0$, which corresponds to a Iwase manifold, is diffeomorphic to the spin or twist-spin of Dehn surgery on $S^3$ along a torus knot. 
On the other hand, the Price twist $\tau_{S(T_{2, 2n+1})}$ is not diffeomorphic to any Iwase manifold from Corollary \ref{cor:tauandIwasemfd}. 
For any $2$-knot $K$, the Price twist $\tau_K$ is a pochette surgery, i.e., a torus surgery from Proposition \ref{prop:pricepochette}. 
So the torus surgery $\tau_{S(T_{2, 2n+1})}$ can be interpreted as a $4$-dimensional version of Dehn surgery on $S^3$ along a non-torus knot (i.e., a hyperbolic knot or a satellite knot) for each $n \neq -1,0$. 

\section*{Acknowledgement}
The authors would like to thank Hisaaki Endo and Yuichi Yamada for their comments on an earlier version of this paper and for inspiring them to work on this updated version. The authors also would like to thank Nobuo Iida, Brendan Owens, Masaaki Suzuki and Jumpei Yasuda for helpful comments. The first author was partially supported by JSPS KAKENHI Grant Number JP23KJ0888 and JP25KJ0301. The second author was partially supported by JST SPRING, Grant Number JPMJSP2106. 

\bibliographystyle{amsalpha}
\bibliography{pricetwist,Gluck}

\begin{longtable}{c c c c c c c}
\hline $1$-knot $k$ & $\det(k)$ & $rf(k)$ & $\tau_{R(D(k))}$ & knot diagram $D(k)$ & \\ \hline \hline
\endfirsthead

\hline $1$-knot $k$ & $\det(k)$ & $rf(k)$ & $\tau_{R(D(k))}$ & knot diagram $D(k)$ & \\ \hline \hline 
\endhead

\endlastfoot
   $0_1$ & 1 & 0 & 1 & the circle $S^1$ \\ \hline 
   $6_1$ & 9 & 1 & 3 & \cite[Appendix F]{zbMATH00795683} \\ \hline 
   $3_1 \# 3_1^*$ & 9 & 1 & 3 & Figure \ref{fig:composite cases (ribbon)} \\ \hline 
   $8_8$ & 25 & 1 & 5 & \cite[Appendix F]{zbMATH00795683}\\ \hline 
   $8_9$ & 25 & 1 & 5 & \cite[Appendix F]{zbMATH00795683}\\ \hline 
   $8_{20}$ & 9 & 1 & 3 & \cite[Appendix F]{zbMATH00795683}\\ \hline 
   $4_1 \# 4_1^*$ & 25 & 1 & 5 & Figure \ref{fig:composite cases (ribbon)} \\ \hline 
   $9_{27}$ & 49 & 1 & 7 & \cite[Appendix F]{zbMATH00795683}\\ \hline 
   $9_{41}$ & 49 & 1 & 7 & \cite[Appendix F]{zbMATH00795683}\\ \hline 
   $9_{46}$ & 9 & 1 & 3 & \cite[Appendix F]{zbMATH00795683}\\ \hline 
   $10_3$ & 25 & 1 & 5 & \cite[Appendix F]{zbMATH00795683}\\ \hline 
   $10_{22}$ & 49 & 1 & 7 & \cite[Appendix F]{zbMATH00795683}\\ \hline 
   $10_{35}$ & 49 & 1 & 7 & \cite[Appendix F]{zbMATH00795683}\\ \hline 
   $10_{42}$ & 81 & 1 & 9 & \cite[Appendix F]{zbMATH00795683}\\ \hline 
   $10_{48}$ & 49 & 1 & 7 & \cite[Appendix F]{zbMATH00795683}\\ \hline 
   $10_{75}$ & 81 & 1 & 9 & \cite[Appendix F]{zbMATH00795683}\\ \hline 
   $10_{87}$ & 81 & 1 & 9 & \cite[Appendix F]{zbMATH00795683}\\ \hline 
   $10_{99}$ & 81 & 1 & 9 & \cite[Appendix F]{zbMATH00795683}\\ \hline 
   $10_{123}$ & 121 & 1 & 11 & \cite[Appendix F]{zbMATH00795683}\\ \hline 
   $10_{129}$ & 25 & 1 & 5 & \cite[Appendix F]{zbMATH00795683}\\ \hline 
   $10_{137}$ & 25 & 1 & 5 & \cite[Appendix F]{zbMATH00795683}\\ \hline 
   $10_{140}$ & 9 & 1 & 3 & \cite[Appendix F]{zbMATH00795683}\\ \hline 
   $10_{153}$ & 1 & 1 & 1 & \cite[Appendix F]{zbMATH00795683}\\ \hline 
   $10_{155}$ & 25 & 1 & 5 & \cite[Appendix F]{zbMATH00795683}\\ \hline 
   $5_1 \# 5_1^*$ & 25 & 1 & 5 & Figure \ref{fig:composite cases (ribbon)} \\ \hline 
   $5_2 \# 5_2^*$ & 49 & 1 & 7 & Figure \ref{fig:composite cases (ribbon)} \\ \hline 
   $11a_{28}$ & 121 & 1 & 11 & \cite[Appendix]{lamm2021search}\\ \hline 
   $11a_{35}$ & 121 & 1 & 11 & \cite[Appendix]{lamm2021search}\\ \hline 
   $11a_{36}$ & 121 & 1 & 11 & \cite[Appendix]{lamm2021search}\\ \hline 
   $11a_{58}$ & 81 & 1 & 9 & \cite[Appendix]{lamm2021search}\\ \hline 
   $11a_{87}$ & 121 & 1 & 11 & \cite[Appendix]{lamm2021search}\\ \hline 
   $11a_{96}$ & 121 & 1 & 11 & \cite[Appendix]{lamm2021search}\\ \hline 
   $11a_{103}$ & 81 & 1 & 9 & \cite[Figure 7]{lamm2021search}\\ \hline 
   $11a_{115}$ & 121 & 1 & 11 & \cite[Appendix]{lamm2021search}\\ \hline 
   $11a_{164}$ & 169 & 1 & 13 & \cite[Appendix]{lamm2021search}\\ \hline 
   $11a_{165}$ & 81 & 1 & 9 & \cite[Figure 7]{lamm2021search}\\ \hline 
   $11a_{169}$ & 121 & 1 & 11 & \cite[Appendix]{lamm2021search}\\ \hline 
   $11a_{201}$ & 81 & 1 & 9 & \cite[Figure 7]{lamm2021search}\\ \hline 
   $11a_{316}$ & 121 & 1 & 11 & \cite[Appendix]{lamm2021search}\\ \hline 
   $11a_{326}$ & 169 & 1 & 13 & \cite[Appendix]{lamm2021search}\\ \hline 
   $11n_{4}$ & 49 & 1 & 7 & \cite[Appendix]{lamm2021search}\\ \hline 
   $11n_{21}$ & 49 & 1 & 7 & \cite[Appendix]{lamm2021search}\\ \hline 
   $11n_{37}$ & 25 & 1 & 5 & \cite[Appendix]{lamm2021search}\\ \hline 
   $11n_{39}$ & 25 & 1 & 5 & \cite[Appendix]{lamm2021search}\\ \hline 
   $11n_{42}$ & 1 & 1 & 1 & \cite[Appendix]{lamm2021search}\\ \hline 
   $11n_{49}$ & 1 & 1 & 1 & \cite[Appendix]{lamm2021search}\\ \hline 
   $11n_{50}$ & 25 & 1 & 5 & \cite[Appendix]{lamm2021search}\\ \hline 
   $11n_{67}$ & 9 & 1 & 3 & \cite[Figure 5]{lamm2021search}\\ \hline 
   $11n_{73}$ & 9 & 1 & 3 & \cite[Figure 5]{lamm2021search}\\ \hline 
   $11n_{74}$ & 9 & 1 & 3 & \cite[Figure 5]{lamm2021search}\\ \hline 
   $11n_{83}$ & 49 & 1 & 7 & \cite[Appendix]{lamm2021search}\\ \hline 
   $11n_{97}$ & 9 & 1 & 3 & \cite[Figure 5]{lamm2021search}\\ \hline 
   $11n_{116}$ & 1 & 1 & 1 & \cite[Appendix]{lamm2021search}\\ \hline 
   $11n_{132}$ & 25 & 1 & 5 & \cite[Appendix]{lamm2021search}\\ \hline 
   $11n_{139}$ & 9 & 1 & 3 & \cite[Appendix]{lamm2021search}\\ \hline 
   $11n_{172}$ & 49 & 1 & 7 & \cite[Appendix]{lamm2021search}\\ \hline 
   $3_1 \# 8_{10}$ & 81 & 1 & 9 & \cite[Figure 7]{lamm2021search}\\ \hline 
   $3_1 \# 8_{11}$ & 81 & 1 & 9 & \cite[Figure 7]{lamm2021search}\\ \hline 
   $12a_3$ & 169 & 1 & 13 & \cite[Appendix]{lamm2021search}\\ \hline 
   $12a_{54}$ & 169 & 1 & 13 & \cite[Appendix]{lamm2021search}\\ \hline 
   $12a_{77}$ & 225 & 1 & 15 & \cite[Appendix]{lamm2021search}\\ \hline 
   $12a_{100}$ & 225 & 1 & 15 & \cite[Appendix]{lamm2021search}\\ \hline 
   $12a_{173}$ & 169 & 1 & 13 & \cite[Appendix]{lamm2021search}\\ \hline 
   $12a_{183}$ & 121 & 1 & 11 & \cite[Appendix]{lamm2021search}\\ \hline 
   $12a_{189}$ & 225 & 1 & 15 & \cite[Appendix]{lamm2021search}\\ \hline 
   $12a_{211}$ & 169 & 1 & 13 & \cite[Appendix]{lamm2021search}\\ \hline 
   $12a_{221}$ & 169 & 1 & 13 & \cite[Appendix]{lamm2021search}\\ \hline 
   $12a_{245}$ & 225 & 1 & 15 & \cite[Appendix]{lamm2021search}\\ \hline 
   $12a_{258}$ & 169 & 1 & 13 & \cite[Appendix]{lamm2021search}\\ \hline 
   $12a_{279}$ & 169 & 1 & 13 & \cite[Appendix]{lamm2021search}\\ \hline 
   $12a_{348}$ & 225 & 1 & 15 & ? ($rf(12a_{348})=1$ by \cite{zbMATH07921118})\\ \hline 
   $12a_{377}$ & 225 & 1 & 15 & \cite[Appendix]{lamm2021search}\\ \hline 
   $12a_{425}$ & 81 & 1 & 9 & \cite[Appendix]{lamm2021search}\\ \hline 
   $12a_{427}$ & 225 & 1 & 15 & \cite[Figure 11]{arXiv:2409.12910} \\ \hline 
   $12a_{435}$ & 225 & 1 & 15 & \cite[Appendix]{lamm2021search}\\ \hline 
   $12a_{447}$ & 121 & 1 & 11 & \cite[Appendix]{lamm2021search}\\ \hline 
   $12a_{456}$ & 225 & 1 & 15 & \cite[Appendix]{lamm2021search}\\ \hline 
   $12a_{458}$ & 289 & 1 & 17 & \cite[Appendix]{lamm2021search}\\ \hline 
   $12a_{464}$ & 225 & 1 & 15 & \cite[Appendix]{lamm2021search}\\ \hline 
   $12a_{473}$ & 289 & 1& 17 & \cite[Appendix]{lamm2021search}\\ \hline 
   $12a_{477}$ & 169 & 1 & 13 & \cite[Appendix]{lamm2021search}\\ \hline 
   $12a_{484}$ & 289 & 1 & 17 & \cite[Appendix]{lamm2021search}\\ \hline 
   $12a_{606}$ & 169 & 1 & 13 & \cite[Appendix]{lamm2021search}\\ \hline 
   $12a_{631}$ & 225 & 1, 2 & ? & \cite[Appendix]{lamm2021search}\\ \hline 
   $12a_{646}$ & 169 & 1 & 13 & \cite[Appendix]{lamm2021search}\\ \hline 
   $12a_{667}$ & 121 & 1 & 11 & \cite[Appendix]{lamm2021search}\\ \hline 
   $12a_{715}$ & 169 & 1 & 13 & \cite[Appendix]{lamm2021search}\\ \hline 
   $12a_{786}$ & 169 & 1 & 13 & \cite[Appendix]{lamm2021search}\\ \hline 
   $12a_{819}$ & 169 & 1 & 13 & \cite[Appendix]{lamm2021search}\\ \hline 
   $12a_{879}$ & 121 & 1 & 11 & \cite[Appendix]{lamm2021search}\\ \hline 
   $12a_{887}$ & 289 & 1 & 17 & \cite[Appendix]{lamm2021search}\\ \hline 
   $12a_{975}$ & 225 & 1 & 15 & \cite[Appendix]{lamm2021search}\\ \hline 
   $12a_{979}$ & 225 & 1 & 15 & \cite[Appendix]{lamm2021search}\\ \hline 
   $12a_{990}$ & 225 & 1, 2 & F & \cite[Figure 8]{lamm2021search}\\ \hline 
   $12a_{1011}$ & 121 & 1 & 11 & \cite[Appendix]{lamm2021search}\\ \hline 
   $12a_{1019}$ & 361 & 1 & 19 & \cite[Appendix]{lamm2021search}\\ \hline 
   $12a_{1029}$ & 81 & 1 & 9 & \cite[Appendix]{lamm2021search}\\ \hline 
   $12a_{1034}$ & 121 & 1 & 11 & \cite[Appendix]{lamm2021search}\\ \hline 
   $12a_{1083}$ & 169 & 1 & 13 & \cite[Appendix]{lamm2021search}\\ \hline 
   $12a_{1087}$ & 225 & 1 & 15 & \cite[Appendix]{lamm2021search}\\ \hline 
   $12a_{1105}$ & 289 & 1 & 17 & \cite[Appendix]{lamm2021search}\\ \hline 
   $12a_{1119}$ & 169 & 1 & 13 & \cite[Appendix]{lamm2021search}\\ \hline 
   $12a_{1202}$ & 169 & 1 & 13 & \cite[Appendix]{lamm2021search}\\ \hline 
   $12a_{1225}$ & 225 & 1 & 15 & \cite[Figure 49]{zbMATH07379313}\\ \hline 
   $12a_{1269}$ & 169 & 1 & 13 & \cite[Appendix]{lamm2021search}\\ \hline 
   $12a_{1277}$ & 121 & 1 & 11 & \cite[Appendix]{lamm2021search}\\ \hline 
   $12a_{1283}$ & 81 & 1 & 9 & \cite[Appendix]{lamm2021search}\\ \hline 
   $12n_4$ & 81 & 1 & 9 & \cite[Appendix]{lamm2021search}\\ \hline 
   $12n_{19}$ & 1 & 1 & 1 & \cite[Appendix]{lamm2021search}\\ \hline 
   $12n_{23}$ & 9 & 1 & 3 & \cite[Appendix]{lamm2021search}\\ \hline 
   $12n_{24}$ & 49 & 1 & 7 & \cite[Appendix]{lamm2021search}\\ \hline 
   $12n_{43}$ & 81 & 1 & 9 & \cite[Appendix]{lamm2021search}\\ \hline 
   $12n_{48}$ & 49 & 1 & 7 & \cite[Appendix]{lamm2021search}\\ \hline 
   $12n_{49}$ & 81 & 1 & 9 & \cite[Appendix]{lamm2021search}\\ \hline 
   $12n_{51}$ & 9 & 1 & 3 & \cite[Figure 5]{lamm2021search}\\ \hline 
   $12n_{56}$ & 9 & 1 & 3 & \cite[Figure 5]{lamm2021search}\\ \hline 
   $12n_{57}$ & 9 & 1 & 3 & \cite[Figure 5]{lamm2021search} \\ \hline 
   $12n_{62}$ & 81 & 1 & 9 & \cite[Figure 7]{lamm2021search} \\ \hline 
   $12n_{66}$ & 81 & 1 & 9 & \cite[Figure 7]{lamm2021search} \\ \hline 
   $12n_{87}$ & 49 & 1 & 7 & \cite[Appendix]{lamm2021search}\\ \hline 
   $12n_{106}$ & 81 & 1 & 9 & \cite[Appendix]{lamm2021search}\\ \hline 
   $12n_{145}$ & 25 & 1 & 5 & \cite[Appendix]{lamm2021search}\\ \hline 
   $12n_{170}$ & 81 & 1 & 9 & \cite[Appendix]{lamm2021search}\\ \hline 
   $12n_{214}$ & 1 & 1 & 1 & \cite[Appendix]{lamm2021search}\\ \hline 
   $12n_{256}$ & 25 & 1 & 5 & \cite[Appendix]{lamm2021search}\\ \hline 
   $12n_{257}$ & 25 & 1 & 5 & \cite[Appendix]{lamm2021search}\\ \hline 
   $12n_{268}$ & 9 & 1 & 3 & \cite[Appendix]{lamm2021search}\\ \hline 
   $12n_{279}$ & 25 & 1 & 5 & \cite[Appendix]{lamm2021search}\\ \hline 
   $12n_{288}$ & 49 & 1 & 7 & \cite[Appendix]{lamm2021search}\\ \hline 
   $12n_{309}$ & 1 & 1 & 1 & \cite[Appendix]{lamm2021search}\\ \hline 
   $12n_{312}$ & 49 & 1 & 7 & \cite[Appendix]{lamm2021search}\\ \hline 
   $12n_{313}$ & 1 & 1 & 1 & \cite[Appendix]{lamm2021search}\\ \hline 
   $12n_{318}$ & 1 & 1 & 1 & \cite[Appendix]{lamm2021search}\\ \hline 
   $12n_{360}$ & 49 & 1 & 7 & \cite[Appendix]{lamm2021search}\\ \hline 
   $12n_{380}$ & 81 & 1 & 9 & \cite[Appendix]{lamm2021search}\\ \hline 
   $12n_{393}$ & 49 & 1 & 7 & \cite[Appendix]{lamm2021search}\\ \hline 
   $12n_{394}$ & 25 & 1 & 5 & \cite[Appendix]{lamm2021search}\\ \hline 
   $12n_{397}$ & 49 & 1 & 7 & \cite[Appendix]{lamm2021search}\\ \hline 
   $12n_{399}$ & 81 & 1 & 9 & \cite[Appendix]{lamm2021search}\\ \hline 
   $12n_{414}$ & 25 & 1 & 5 & \cite[Appendix]{lamm2021search}\\ \hline 
   $12n_{420}$ & 81 & 1 & 9 & \cite[Appendix]{lamm2021search}\\ \hline 
   $12n_{430}$ & 1 & 1 & 1 & \cite[Appendix]{lamm2021search}\\ \hline 
   $12n_{440}$ & 81 & 1 & 9 & \cite[Appendix]{lamm2021search}\\ \hline 
   $12n_{462}$ & 25 & 1 & 5 & \cite[Appendix]{lamm2021search}\\ \hline 
   $12n_{501}$ & 49 & 1 & 7 & \cite[Appendix]{lamm2021search}\\ \hline 
   $12n_{504}$ & 121 & 1 & 11 & \cite[Appendix]{lamm2021search}\\ \hline 
   $12n_{553}$ & 81 & 2 & F & \cite[Appendix]{lamm2021search}\\ \hline 
   $12n_{556}$ & 81 & 2 & F & \cite[Appendix]{lamm2021search}\\ \hline 
   $12n_{582}$ & 9 & 1 & 3 & \cite[Appendix]{lamm2021search}\\ \hline 
   $12n_{605}$ & 9 & 1 & 3 & \cite[Appendix]{lamm2021search}\\ \hline 
   $12n_{636}$ & 81 & 1 & 9 & \cite[Appendix]{lamm2021search}\\ \hline 
   $12n_{657}$ & 81 & 1 & 9 & \cite[Appendix]{lamm2021search}\\ \hline 
   $12n_{670}$ & 25 & 1 & 5 & \cite[Appendix]{lamm2021search}\\ \hline 
   $12n_{676}$ & 9 & 1 & 3 & \cite[Appendix]{lamm2021search}\\ \hline 
   $12n_{702}$ & 121 & 1 & 11 & \cite[Appendix]{lamm2021search}\\ \hline 
   $12n_{706}$ & 49 & 1 & 7 & \cite[Appendix]{lamm2021search}\\ \hline 
   $12n_{708}$ & 49 & 1 & 7 & \cite[Appendix]{lamm2021search}\\ \hline 
   $12n_{721}$ & 25 & 1 & 5 & \cite[Appendix]{lamm2021search}\\ \hline 
   $12n_{768}$ & 25 & 1 & 5 & \cite[Appendix]{lamm2021search}\\ \hline 
   $12n_{782}$ & 81 & 1 & 9 & \cite[Appendix]{lamm2021search}\\ \hline 
   $12n_{802}$ & 121 & 1 & 11 & \cite[Appendix]{lamm2021search}\\ \hline 
   $12n_{817}$ & 49 & 1 & 7 & \cite[Appendix]{lamm2021search}\\ \hline 
   $12n_{838}$ & 25 & 1 & 5 & \cite[Appendix]{lamm2021search}\\ \hline 
   $12n_{870}$ & 25 & 1 & 5 & \cite[Appendix]{lamm2021search}\\ \hline 
   $12n_{876}$ & 81 & 1 & 9 & \cite[Appendix]{lamm2021search}\\ \hline 
   $6_1 \# 6_1^*$ & 81 & 1 & 9 & Figure \ref{fig:composite cases (ribbon)} \\ \hline 
   $6_2 \# 6_2^*$ & 121 & 1 & 11 & Figure \ref{fig:composite cases (ribbon)} \\ \hline 
   $6_3 \# 6_3^*$ & 169 & 1 & 13 & Figure \ref{fig:composite cases (ribbon)} \\ \hline 
   $3_1 \# 6_1 \# 3_1^*$ & 81 & 1, 2 & F & Figure \ref{fig:316131} \\ \hline 
   $3_1 \# 3_1 \# 3_1^* \# 3_1^*$ & 81 & 2 & F
& Figure \ref{fig:31313131andtau} \\ \hline 
\caption{Ribbon $1$-knots $k$ up to $12$ crossings and corresponding $\tau_{R(D(k))}$ for knot diagrams $D(k)$. In column $rf(k)$, the fusion number of $k$ is written. In column $\tau_{R(D(k))}$, we write the number $n$ of $\tau_{S(T_{2,n})}$ that is diffeomorphic to $\tau_{R(D(k))}$. The notation F means that $\tau_{R(D(k))}$ with F is not homotopy equivalent to $\tau_{S(T_{2,n})}$ (see Proposition \ref{prop:irregularcase} and Remark \ref{rem:12a990}). 
In column knot diagram $D(k)$, we write a reference that a ribbon presentation used in Example \ref{exm:12 cross}, Remark \ref{rem:1-fusion}, Proposition \ref{prop:irregularcase} and Remark \ref{rem:12a990} is depicted explicitly. We can read the upper bound of the fusion number by using the ribbon presentation.} \label{tab:12 crossing ribbon 1-knot} 
\end{longtable}
\end{document}